\def\Quintic{Q}
\def\Coeff{{\mathrm{Coef}}}
\def\n{{\mathrm{N}}}
\def\msp{^{\mathrm{msp}}}
\def\nmsp{\mathrm{NMSP}}
\def\ft{{t}}
\def\uev{^{\mathrm{ev}}}
\def\msp{^{M}}
\def\VV{\mathscr E}
\newcolumntype{C}[1]{>{\centering\arraybackslash$}m{#1}<{$}}
\newlength{\mycolwd}                                         
\newlength{\mycolwdm}                                         
\newlength{\mycolwda}                                         
\newlength{\mycolwdb}                                         
\newlength{\mycolwdc}                                         
\newlength{\mycolwdd}                                         
\newlength{\mycolwddd}                                         
\newlength{\mycolwddc}                                         
\def\tp{\varphi}
\def\bp{\psi}
\def\01{{[0,\!1]}}
\def\1i{{[1\!,\infty]}}
\def\locg{\Theta}
\def\bipg{\Lambda}
\def\alp{\alpha}
\def\ffp{\mathrm{pt}}
\numberwithin{equation}{section}
\def\cG{{\mathcal G}}
\def\E{{\mathcal E}}
\newcommand{\id}{\mathrm{I}}
\newcommand{\bt}{\mathbf{t}}
\newcommand{\sbigotimes}{%
  \mathop{\mathchoice{\textstyle\bigotimes}{\bigotimes}{\bigotimes}{\bigotimes}}%
}
\def\BCOV{\mathrm{BCOV}}
\def\ba{{\mathbf a}}
\def\bb{{\mathbf b}}
\def\bA{{\text{\bf A}}}
\def\bB{{\text{\bf B}}}
\def\fS{{\mathfrak S}}
\def\bB{{\mathbf B}}
\def\diag{{\mathrm {diag}}}
\def\sR{{ {\mathscr R}}}
\def\ff{{p}}
\def\virt{^{\vir}}
\newcommand{\tw}{\mathrm{tw}}
\def\lsta{_{\ast}}
\def\sO{\mathscr{O}}
\newcommand{\CC}{\mathbb{C}}
\newcommand{\EE}{\mathbb{E}}
\newcommand{\PP}{\mathbb{P}}
\newcommand{\QQ}{\mathbb{Q}}
\newcommand{\RR}{\mathbb{R}}
\newcommand{\ZZ}{\mathbb{Z}}
\newcommand{\vir}{ {\mathrm{vir}} }
\newcommand{\ev}{ \mathrm{ev} }
\newcommand{\cal}{\mathcal}
\def\cC{{\cal C}}
\def\cW{{\cal W}}
\def\dual{^{\vee}}
\def\sta{^\ast}
\newcommand{\Ga}{\Gamma}
\def\sQ{\mathrm Q}
\DeclareMathOperator{\End}{End}
\DeclareMathOperator{\Hom}{Hom}
\DeclareMathOperator{\Aut}{Aut}
\def\tt{\tau}
\def\aA{\mathbb A}
\newtheorem{dummy}{}[section]
\newtheorem{lemma}[dummy]{Lemma}
\newtheorem{proposition}[dummy]{Proposition}
\newtheorem{theorem}[dummy]{Theorem}
\newtheorem{thm}{Theorem}
\newtheorem{corollary}[dummy]{Corollary}
\newtheorem{conjecture}[dummy]{Conjecture}
\newtheorem*{theorem1}{Theorem}
\theoremstyle{definition}
\newtheorem{convention}[dummy]{Convention}
\newtheorem{definition}[dummy]{Definition}
\newtheorem{example}[dummy]{Example}
\newtheorem{remark}[dummy]{Remark}
\def\bd{\bold d}
\def\lloc{{\mathrm{loc}}}
\def\loc{^{\mathrm{loc}}}
\def\npt{\n\mathrm{pt}}
\def\pt{\mathrm{pt}}
\def\beq{\begin{equation}}
\def\eeq{\end{equation}}
\def\Pf{{\PP^4}} \def\bone{{\mathbf 1}}
\def\ti{\tilde}
\def\Lam{{\Lambda}}
\def\sub{\subset}
\let\ga=\Ga
\DeclareMathOperator{\sspan}{span}
 \DeclareMathOperator{\Cont}{Cont}
  \DeclareMathOperator{\Contr}{Contri}
\def\sH{{\mathscr H}}
\newcommand{\M}{\overline{\mathcal{M}}}
\renewcommand{\ev}{\mathrm{ev}}
\newcommand{\Res}{\mathrm{Res}}
\def\lra{\longrightarrow}
\def\and{\quad\mathrm{and}\quad}
\def\bl{\bigl(} \def\br{\bigr)}
\def\upmo{^{-1}}
\title[BCOV via NMSP]{BCOV's Feynman rule of quintic $3$-folds}
\author{Huai-Liang Chang}
\address{Department of Mathematics, Hong Kong University of Science and Technology, Hong Kong} \email{mahlchang@ust.hk}
\thanks{${}^1$Partially supported by Hong Kong grant GRF 16301515 and  GRF 16301717}
\author[Shuai Guo]{Shuai Guo}
\address{School of Mathematical Sciences and Beijing International Center for Mathematical Research, Peking University
}
\email{guoshuai@math.pku.edu.cn}
\thanks{${}^2$Partially supported by NSFC grants 11431001 and 11501013}
\author{Jun Li}
\address{Department of Mathematics, Stanford University,
USA; \hfil\newline 
\indent Shanghai Center for Mathematical Sciences, Fudan University, China} \email{jli@stanford.edu}
\thanks{${}^3$Partially supported by   NSF grant DMS-1564500 and DMS-1601211.  }
\begin{document}

\begin{abstract}
We prove the Bershadsky-Cecotti-Ooguri-Vafa's conjecture for  all genus Gromov-Witten potentials of the quintic $3$-folds, by identifying the Feynman graph sum with the $\nmsp$ stable graph sum  via an $R$-matrix action. The Yamaguchi-Yau functional equations (HAE) 
are direct
consequences of the BCOV Feynman sum rule.
\end{abstract}
\maketitle
\vspace{-0.6cm}
{
\tableofcontents}
\vspace{-0.6cm}



\section{Introduction}
 
The landmark work of Witten \cite{Wit2} and Candelas-Ossa-Green-Parkes
\cite{Cand} have initiated a new era of enumerating curves in 
projective (symplectic) manifolds. The mathematical foundation of this theory, called the Gromov-Witten (GW) theory,
was laid by the work of Ruan-Tian \cite{RT} for semi-positive symplectic manifolds, and by Li-Tian and Behrend-Fantechi
\cite{LT, BF} for projective manifolds.

Since then, a central problem is to find the explicit formulae for all genus GW generating functions $F_g$ of the
distinguished CY threefold, the quintic threefold $Q$, among other CY threefolds. 
For genus zero case, $F_0$ is determined by the celebrated mirror
symmetry conjecture \cite{Cand}, which was mathematically proved by Givental \cite{Givental} and by Lian-Liu-Yau \cite{LLY}. 
For higher genus cases, Bershadsky-Cecotti-Ooguri-Vafa (BCOV)  
conjectured a Feynman rule for any CY threefold based on Super-Strings theories \cite{BCOV}. This rule gives an algorithm which effectively calculated the GW potential $F_g$ for all $g>0$, via the lower genus GW-potentials and finitely many 
(yet to be determined) initial conditions.  BCOV's Feynman rule is a cornerstone in the GW theory of CY threefolds. 
  The main result of this paper is (see \S 0.2 for a more explicit statement)

\smallskip
\noindent
{\bf Main Theorem}. {\sl The BCOV Conjecture for quintics 
holds for all genus.}

\medskip

\subsection{Earlier developments}

Using Mirror Symmetry Conjecture, Super-String theorists have computed the genus zero
GW-invariants $F_0$ for many CY threefolds, by effectively evaluating certain
variation of Hodge structures of the mirror CY at large complex structure limits,
following the lead by Candelas et.\,al.. As we will be focusing on
high genus GW-invariants, we will bypass listing any references along this line of development.

The theory developed in \cite{BCOV} 
is fundamental in the study of higher genus GW-invariants of CY threefolds.
For a CY threefold $M$ the authors used path integral to form a B-model topological 
partition function,
which is a non-holomorphic extension of the GW potential $F^{\hat M}_g(q)$ of the mirror 
CY threefold $\hat M$. They further showed that this B-model topological partition function 
satisfies the {\sl holomorphic anomaly equation}.  Solving the equations and  using mirror symmetry, they deduced their (BCOV) Feynman rule.

As will be demonstrated in the later part of the introduction, the BCOV's Feynman 
rule provides an effective algorithm to determine recursively all genus GW potentials of a CY threefold $M$, 
after the finite many ambiguity can be found at each $g$.


Huang-Klemm-Quackenbush in \cite{HKQ} has pushed the work of \cite{BCOV} further,
demonstrating how to effectively find all initial conditions necessary for determining
genus $g\le 51$ GW generating function $F_g$ for the quintic threefold $\Quintic$. 

\smallskip
The task of mathematically proving these formulas (algorithms) for $F_g$ has progressed as well.
In \cite{Kont}, Kontsevich showed how 
to use a hyperplane property of genus zero GW-invariants of $\Quintic$ to relate that of $\Quintic$ 
with that of $\PP^4$, and to evaluate them
using localization via the $\CC\sta$-action on $\PP^4$.
Based on this,
the genus zero formula of Candelas for $F_0$ was proved independently by 
Givental \cite{Givental} and Lian-Liu-Yau \cite{LLY}. 

For $F_1$, 
Li-Zinger developed a theory of reduced genus one GW-invariants of the quintics, which made
using $\CC\sta$ localization to evaluation $F_1$ possible \cite{LZ}.
Shortly after, by overcoming daunting obstacles, 
Zinger in \cite{Zi}, using the results proved by Zagier-Zinger \cite{ZZ}, proved the explicit formula of
$F_1$ 
obtained by BCOV.
It is also worth mentioning that Kim and Lho \cite{KLh}
gave an independent proof of 
BCOV's formula for $F_1$.

Another line of attacks on $F_g$ (for the quintic $\Quintic$) is via using 
the algebraic relative GW-invariants and the degeneration formula of GW-invariants \cite{Li3,Li4}. 
(For the symplectic version, see \cite{Li-R, IP}.)
 In \cite{MP} Maulik-Pandharipande developed
an algorithm, which in principle can evaluate all genus GW-invariants of the quintic $\Quintic$. 
They also proposed an alternative approach, 
which was simplified in \cite{Wu} for genus 2 and 3, after combined with that proposal in \cite{Gat}.
In \cite{FL} via applying localization to a degeneration of $\Pf$ to $Q$, Fan-Lee obtained a 
recursive algorithm for $F_g$, depending on some initial conditions.  
In \cite{GJR}, Guo-Janda-Ruan have proved that a conjectural localization formula via 
compactifying the moduli of stable maps with $p$-fields does give the $F_2$ of the quintic conjectured
in \cite{BCOV}. 


\subsection{BCOV's Feynman rule}
Let $N_{g,d}$ be the genus $g$ degree $d$ GW-invariants of quintics $\Quintic$.
The genus $g$ GW generating function (potential) $F_{g}$ of the $\Quintic$ takes the form:
\beq\label{Fg}
 F_g(\sQ) =  { \begin{cases}
 \quad \frac{5}{6} \log \sQ^3+ \sum_{d\geq 1} N_{g,d} \cdot  \sQ^d,  & g=0\,;
 \\  -\frac{25}{12} \log \sQ+
\sum_{d\geq 1} N_{g,d} \cdot  \sQ^d,  & g=1\,;
 \\
\qquad \sum_{d\geq 0} N_{g,d} \cdot  \sQ^d,  & g>1\,.
 \end{cases}
 } 
\eeq
Here the log term comes from
the degree zero ``unstable" contributions.

The genus zero $F_0$ can be computed by the genus zero mirror symmetry. 
Let 
\begin{align}
 I(q,z) :=\,&
z \,  \sum_{d=0}^\infty q^d
\frac{ \prod_{m=1}^{5d}(5 H +m z)}{\prod_{m=1}^d(H+ m z)^5  }
= \sum_{i=0}^3 I_i(q) H^i z^{1-i}; \nonumber 
\end{align}
be the $I$-function of the quintic threefold and let $J_i(q):=I_i(q)/I_0(q)$ for $i=0,\cdots,3$.

The mirror theorem \cite{Givental,LLY} is
\beq 
F_0(\sQ) = {\textstyle \frac{5}{6}} \big(\log \sQ^3-J_1(q)^3\big)+ {\textstyle \frac{5}{2} } \big({J_1(q)J_2(q)}- {J_3(q)\big)},\quad \text{ with } \sQ = q \exp  J_1(q).  \nonumber
\eeq

\medskip

We now state BCOV's conjecture. Let the three ``propatators" introduced in 
\cite{BCOV} be $T^{\tp\tp}, T^{\tp}$ and $T  \in \QQ[\![q]\!]$, which are essentially the genus zero invariants (the explicit formulae are given in \eqref{BCOVgauge}, see also Remark \ref{rmgauge}). 
We define 
\beq  \label{defnI11}
Y=(1-5^5q)^{-1} \and I_{11}=1+ q\frac{d}{dq} J_1.
\eeq
For $2g-2+m>0$, we introduce the ``normalized" GW potential following \cite{YY}
\begin{align}\label{Pgm}
P_{g,m}:= \frac{ (5Y)^{g-1}  (I_{11})^m}{ (I_0)^{2g-2}   }  \big( \sQ\frac{d}{d\sQ} \big)^m  F_g \Big|_{\sQ = q \exp  J_1 } \in \QQ[\![q]\!]. 
\end{align}

Let $H_\bB\!:=\!\sspan\{\bp,\tp\}$ be the state space, which is a linear span of the
formal variables $\tp$ and $\bp$. Let $G_{g}$ be the 
set of genus $g$ stable (dual) graphs. For each $\Gamma\in G_{g}$, we define a contribution $\Cont_\Gamma^\BCOV$ via the following construction:
\begin{itemize}{\sl
\item[(i)] at each edge, we place a bi-vector in $\QQ[\![q]\!]\otimes H_\bB^{\otimes  2}$:
$$T^{\tp\tp}\,  {\tp} \otimes {\tp}
 +T^{\tp} \,   ( {\tp}  \otimes \bp +   \bp \otimes {\tp} )
+ T  \,  \bp \otimes   \bp ;
$$
\item[(ii)]  at each vertex, we place a multi-linear map $H_\bB^{\otimes (m+n)} \longrightarrow \QQ[\![q]\!]$:
\begin{align}\label{Pgmn}\qquad
{\tp}^{\otimes m}\otimes \bp^{\otimes n} \longmapsto   P_{g,m,n}: =\begin{cases}
(2g\!+\!m\!+\!n\!-\!3)_n \!\cdot  P_{g,m}  & \text{ if  }\  2g-2+m>0\\
\quad (n-1)!  \big( \frac{\chi}{24}-1\big) &  \text{ if  }\  (g,m) = (1,0)
\end{cases}
\end{align}
where $\chi = -200$ and $(a)_k:=a(a-1)\cdots(a-k+1)$;
\item[(iii.)]  we  apply the map (ii) at each vertex to the placements (i) at the edges incident to that vertex; we define $\Cont_\Gamma^{\bB}$ to be the product over all vertices and edges.
}
\end{itemize}
Later, we will simply call (iii) the composition rule.

\medskip

The {BCOV's} Feynman rule Conjecture, in the case without insertions,   is:
\begin{theorem}\label{BCOV0}  
For $g>1$,   the Feynman graph sum
\beq  \label{fgsum}
f_g^\BCOV :=    \sum_{\Gamma \in G_{g}}  \frac{1}{|\Aut (\ga)|}\Cont_\Gamma^\BCOV  ,
\eeq
which a priori is a power
series in the Novikov variable $q$, is a polynomial in $X:= \frac{-5^5q}{1-5^5 q}$ of degree at most $3g-3$.
\end{theorem}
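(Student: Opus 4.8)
The plan is to prove the sharper degree statement graph by graph, reducing it to two structural estimates — an edge estimate for the propagators and a vertex estimate for the normalized potentials — which then combine through the genus identity for stable graphs. First I would record the affine relation $Y=1-X$: from $Y=(1-5^5q)\upmo$ and $5^5q=1-Y\upmo$ one gets $X=-5^5qY=1-Y$, so $\QQ[X]=\QQ[Y]$, and this ring is stable under $\theta:=q\frac{d}{dq}$ because $\theta Y=5^5qY^2=Y^2-Y$. Consequently ``polynomial in $X$ of degree $d$'' is an intrinsic, $\theta$-closed notion and the degree in $X$ equals the degree in $Y$; I will work with whichever of $X,Y$ is convenient.

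Next I would establish the edge estimate: from the explicit genus-zero formulae of \eqref{BCOVgauge}, the three propagators are polynomials in $X$ with $\deg_X T^{\tp\tp}\le 1$, $\deg_X T^{\tp}\le 2$ and $\deg_X T\le 3$. These graded degrees are precisely what the combinatorics below will require, with the propagator $T$ sitting on $\bp\otimes\bp$ carrying the top degree $3$; verifying them is a direct computation once the gauge of \eqref{BCOVgauge} is fixed.

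The heart of the matter is the vertex estimate, and this is where I expect the main obstacle: for every $(g,m)$ with $2g-2+m>0$ the normalized potential $P_{g,m}$ of \eqref{Pgm} is a polynomial in $X$ of degree at most $3g-3+m$, while the exceptional $(g,m)=(1,0)$ vertex of \eqref{Pgmn} contributes a constant. The difficulty is that $P_{g,m}$ is a priori only a power series in $q$, and the transcendental factors $(I_{11})^m/(I_0)^{2g-2}$ in \eqref{Pgm} are not individually polynomial in $X$; one must show they conspire with $(\sQ\frac{d}{d\sQ})^mF_g$ to yield a polynomial of the asserted degree. I would obtain this from the finite-generation (polynomiality) structure furnished by the $\nmsp$ theory — the same mechanism behind the $R$-matrix identification used for the Main Theorem — thereby deducing it independently of, and indeed prior to, the holomorphic anomaly equations.

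Granting the two estimates, the theorem follows by a weighted count. Fix $\Gamma\in G_{g}$ with $V$ vertices, $E$ edges and vertex genera $g_v$, and fix a term of $\Cont_\Gamma^\BCOV$ in which each half-edge is labelled $\tp$ or $\bp$; write $E_{\tp\tp},E_{\tp\bp},E_{\bp\bp}$ for the numbers of edges of each label type and $m_v$ for the number of $\tp$-labelled half-edges at $v$. Because the prefactor $(2g+m+n-3)_n$ of \eqref{Pgmn} is a constant, the $X$-degree of this term is at most $\sum_v(3g_v-3+m_v)+d_{\tp\tp}E_{\tp\tp}+d_{\tp\bp}E_{\tp\bp}+d_{\bp\bp}E_{\bp\bp}$ with $d_{\tp\tp}\le1$, $d_{\tp\bp}\le2$, $d_{\bp\bp}\le3$. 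Using $\sum_v m_v=2E_{\tp\tp}+E_{\tp\bp}$ together with the genus identity $\sum_v g_v+E-V+1=g$, the vertex part equals $3g-3-3E+2E_{\tp\tp}+E_{\tp\bp}$, so the whole term is bounded by
\[
3g-3-(1-d_{\tp\tp})E_{\tp\tp}-(2-d_{\tp\bp})E_{\tp\bp}-(3-d_{\bp\bp})E_{\bp\bp}\ \le\ 3g-3 .
\]
Since $G_{g}$ is finite and every building block is polynomial in $X$, summing over graphs and over half-edge labellings shows that $f_g^\BCOV$ is a polynomial in $X$ of degree at most $3g-3$, proving the claim. The only bookkeeping left is to confirm that the constant $(1,0)$-vertex value is consistent with its slot $3g_v-3+m_v=m_v\ge 0$, and that stability forces every genus-one vertex to have positive valence so that \eqref{Pgmn} applies.
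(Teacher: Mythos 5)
Your reduction to an edge estimate plus a vertex estimate fails because both estimates are false. The propagators of \eqref{BCOVgauge} are not polynomials in $X$: by \eqref{bcovgens}, $T^{\tp\tp}=A+2B_1+\tfrac{3}{5}$, $T^{\tp}=-B_2-\tfrac{3}{5}B_1+\tfrac{2}{25}$, and $T$ involves $B_3$, where $A=DI_{11}/I_{11}$ and $B_p=D^pI_0/I_0$ are transcendental in $q$ and lie outside $\QQ[X]$. Likewise the vertex estimate $P_{g,m}\in\QQ[X]_{3g-3+m}$ is false: the finite-generation theorem quoted in \S1 only places $P_{g,m}$ in the five-generator ring $\sR=\QQ[A,B_1,B_2,B_3,X]$, and for instance $P_{1,1}=-\tfrac12 A-\tfrac{31}{3}B-\tfrac{1}{12}X-\tfrac{25}{12}$ genuinely involves $A$ and $B$. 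If $P_{g,0}$ were itself a polynomial in $X$ of degree $3g-3$ the theorem would be vacuous and the recursion of \S\ref{introalgorithm} would collapse; the entire content of the statement is that the generators $A,B_1,B_2,B_3$ \emph{cancel} only after summing over all stable graphs, so no graph-by-graph degree count in $X$ can succeed. Your appeal to ``the finite-generation structure furnished by $\nmsp$ theory'' to supply the vertex estimate conflates $P_{g,m}\in\sR$ with $P_{g,m}\in\QQ[X]$.

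Your weighted count $\sum_v(3g_v-3+m_v)+\sum_e d_e\le 3g-3$ does echo bookkeeping that occurs in the paper, but the paper runs it on the A-model side (in the proofs of Proposition \ref{polyoffa} and Theorem \ref{thm2}), where, after normalizing by powers of $Y/\ft^\n$, the vertex and edge contributions really are polynomials in $X$ of the stated degrees. That polynomiality is itself the hard input: it is obtained by an induction comparing the $\nmsp$-$[0]$ theory against the $\nmsp$-$[0,1]$ theory (whose $q$-polynomiality is Theorem \ref{Omega01}, imported from \cite{NMSP2}) through the bipartite-graph decomposition of Theorem \ref{01bipartie}, and the conclusion is then transported to the B-model graph sum by the quantization equivalence of Theorem \ref{thm3}. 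To repair your argument you would have to replace both of your local estimates by statements about the graph sum as a whole, which is essentially the content of Theorems \ref{thm2} and \ref{thm3}.
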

 This polynomial is called the ambiguity in the physics literature. Once it is known, the $F_g$ is
determined entirely by the lower genus $F_{h<g}$. In Section~\ref{quantization}, we will represent it via the quantization of a symplectic transformation on the ``small" phase space $H_\bB$.

\begin{remark} \label{rmgauge}
{In \cite{BCOV}, there are also freedoms in choosing the propagators, which were called ``gauge". 
They conjectured that,  the Feynman rule will hold with a suitable choice of gauge. In \S \ref{BCOVrule}, we give the most general freedoms for the gauges \eqref{propgauge} and their explicit roles in propagators \eqref{bcovgens}. 
 For this reason we regard  Theorem \ref{thm1} (given in \S \ref{BCOVrule}) as the most general form of  BCOV's conjectures, with insertions, and with gauges \eqref{propgauge}.  
 } 
 \end{remark}

\subsection{The algorithm} \label{introalgorithm}

The BCOV's Feynman rule provides a recursive algorithm for determining $F_g$, up to finite ambiguity. 
The set $G_g$ contains a distinguished ``leading" graph $\Ga_g$ 
which has only a single genus $g$ vertex with contribution  $P_g$. Others $\ga\in G_g\backslash\{\Ga_g\}$
contribute to products of 
$F_{g'<g}$ and propagators $\{T^{\tp\tp}, T^{\tp} , T\}$, which are explicitly computable assuming all
$F_{g'<g}$ are known. Then \eqref{fgsum} implies that 
$$P_g=-\sum_{\ga\ne \ga_0\in G_g}  \frac{1}{|\Aut (\ga)|}\Cont_\Gamma^\BCOV
 +f^\BCOV_{g},\and \deg_X f^\BCOV_{g}\le 3g-3.
$$ 
This way, $F_g$ is determined explicitly once we have found 
$f^\BCOV_{g}$, which has $3g-3$ unknown coefficients,
as the constant term is given by the (known) degree zero GW-invariants. 

\smallskip
To illustrate this, we apply the algorithm to find the genus two potential $F_2$ (A more detailed computation can be found in  Appendix \ref{examples}). There are $6$ stable graphs in $G_2\backslash\{\Ga_2\}$:
{\small
\begin{align*}
  \xy
 (9.5,0.8); (20,0.8), **@{-}; (20,0.8)*+{\bullet};
(11,-0.8)*+{\displaystyle{{}_{g=1}}}; (20,-0.8)*+{\displaystyle{{}_{g=1}}};
(10.2,0.8)*+{\bullet};
\endxy
 \qquad 
 \xy
  (20.5,-0.2)*+{\bullet};
(24.3,-1.6)*+{\displaystyle{{}_{g=1}}};
(24.6,0)*++++[o][F-]{}
\endxy 	
  \qquad 
 \xy
 (9.5,0); (19,0), **@{-}; (19.4,-0.2)*+{\bullet};
(11,-1.6)*+{\displaystyle{{}_{g=1}}}; (23.3,-1.6)*+{\displaystyle{{}_{g=0}}};
(23.6,0)*++++[o][F-]{}  ; (10.2,-0.2)*+{\bullet};
\endxy   \qquad
  \xy
  (19.8,-0.2)*+{\bullet};
(16,-1.6)*+{\displaystyle{{}_{g=0}}};
(24,0)*++++[o][F-]{}  ;(15.8,0)*++++[o][F-]{};
\endxy    \qquad \xy
 (9.5,0); (14,0), **@{-}; (13.7,-0.2)*+{\bullet};
(18,-1.6)*+{\displaystyle{{}_{g=0}}};
(6,-1.5)*+{\displaystyle{{}_{g=0}}};
(17.8,0)*++++[o][F-]{}  ;(5.4,0)*++++[o][F-]{};(9.5,-0.2)*+{\bullet};
\endxy  
 \qquad  \xy
 (9.5,0); (18,0), **@{-}; (18.2,-0.2)*+{\bullet};
(20.8,-1.6)*+{\displaystyle{{}_{g=0}}};(7.2,-1.6)*+{\displaystyle{{}_{g=0}}};
(14,0)*++++[o][F-]{}  ; (9.9,-0.2)*+{\bullet};
\endxy 
\end{align*}} 
The BCOV's Feynman rule for $g=2$ gives us
\begin{align*}
 &- P_2  \ =\  {\small \text{$ \frac{1}{2} \Big( T^{\tp\tp} P_{1,1}^2 +2\, T^{\tp} P_{1,0,1}P_{1,1} + T P_{1,0,1}^2 \Big)+\frac{1}{2}\Big( T^{\tp\tp} P_{1,2} +T^{\tp} P_{1,1}+T P_{1,0,2} \Big) $}} \nonumber \\
 &\quad {\small \text{$ + \frac{1}{2} \Big(  (T^{\tp\tp} )^2 P_{1,1}+  T^{\tp\tp}  T^{\tp} P_{1,0,1}  \Big)+\frac{1}{8}\Big( (T^{\tp\tp} )^2 P_{0,4}+ 4\, T^{\tp} \Big)+\frac{1}{8}\, (T^{\tp\tp} )^3+\frac{1}{12}\, (T^{\tp\tp} )^3+f^\BCOV_{2}$}}. \quad
\end{align*} 
As $N_{2,1}$, $N_{2,2}$ and $N_{2,3}$ can be calculated classically (see  Appendix \ref{lowdegreeGW}), by using the definition of $P_{1,0,n}$ in \eqref{Pgmn} and the genus one mirror formula \cite{Zi,KLh,CGLZ}  \footnote{ See also Example~\ref{g1n1} for a short proof of the genus $1$ mirror formula via BCOV's rule.}
$$ {\small \text{$
 P_{1,1} = - \frac{28}{3}\cdot q\frac{d}{dq} (\log I_{0})-\frac{1}{2}  T^{\tp\tp} -\frac{1}{12}X-\frac{107}{60} $}}
$$
we prove the genus two mirror formula conjectured in \cite{BCOV}:

\begin{theorem} \label{genus2formula}
Let $B:=q\frac{d}{dq} (\log I_{0})$. The genus two  GW potential $F_2$ of quintics is
{\small
\begin{align*} 
F_2 = & \  \frac{-I_0^2}{5(1-X)} \!\left[ {\frac {350\,{T}}{9}}+ \!\Big(
{\frac {25\,X+535}{36}}\!+\!{\frac {700\,B}{9}}\!+\!{\frac {25\,
T^{\tp\tp}}{6}} \Big) T^{\tp}\!+{\frac {5}{24}{(T^{\tp\tp})}^{3}} +{\frac {25\,
B+X+4}{6}}  {(T^{\tp\tp})}^{2} \right. \\
& \left.+ \Big({\frac 
{65\, X^2+46\,X+2129}{1440}}\!+  \!
{\frac {25\,X+535}{36}}\, B\!+\!{\frac {350}{9
}\,{B}^{2}} \Big) T^{\tp\tp}\!+\!\Big({\frac {{X}^{3}}{
240}}\!-\!{\frac {113\,{X}^{2}}{7200}}
\!-\!{\frac {487\,X}{300}}\!+\!{\frac{11771}{7200}}\Big) \right].
\end{align*}}
\end{theorem}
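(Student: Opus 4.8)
The plan is to derive Theorem~\ref{genus2formula} as a direct application of the genus-two case of the algorithm of \S\ref{introalgorithm}, which rests on the Feynman-rule Theorem~\ref{BCOV0}, together with the genus-one mirror formula for $P_{1,1}$ and the classically computable low-degree invariants $N_{2,1},N_{2,2},N_{2,3}$. First I would record the master equation produced by the composition rule applied to the six graphs of $G_2\setminus\{\Ga_2\}$ displayed above, namely
\[
-P_2 \;=\; \tfrac12\big(T^{\tp\tp}P_{1,1}^2+2\,T^{\tp}P_{1,0,1}P_{1,1}+T\,P_{1,0,1}^2\big)+\cdots+f_2^\BCOV ,
\]
where Theorem~\ref{BCOV0} guarantees $f_2^\BCOV=c_0+c_1X+c_2X^2+c_3X^3$ with $\deg_X f_2^\BCOV\le 3g-3=3$. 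Since $1-X=Y$ and $P_2=\tfrac{5Y}{I_0^2}F_2$ by \eqref{Pgm}, solving for $F_2$ gives $F_2=\tfrac{-I_0^2}{5(1-X)}(-P_2)$, which already has the shape of the asserted formula. Thus the entire content reduces to evaluating the right-hand bracket explicitly in the coordinates $B,X,T^{\tp\tp},T^{\tp},T$ and to pinning down the four coefficients of $f_2^\BCOV$.

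Next I would evaluate each input to the master equation. The constant vertices follow immediately from \eqref{Pgmn}: with $\chi=-200$ one has $P_{1,0,n}=(n-1)!\,(\tfrac{\chi}{24}-1)=-\tfrac{28}{3}(n-1)!$, so $P_{1,0,1}=P_{1,0,2}=-\tfrac{28}{3}$, and the stray $4\,T^{\tp}$ term is also constant. The one-point vertex $P_{1,1}$ is the quoted genus-one mirror formula $P_{1,1}=-\tfrac{28}{3}B-\tfrac12 T^{\tp\tp}-\tfrac1{12}X-\tfrac{107}{60}$ in the variable $B:=q\tfrac{d}{dq}\log I_0$. The remaining vertices $P_{1,2}$ and $P_{0,4}$ I would produce by differentiation: writing $Q\tfrac{d}{dQ}=\tfrac1{I_{11}}q\tfrac{d}{dq}$ and using \eqref{Pgm} one gets
\[
P_{1,2}=q\tfrac{d}{dq}P_{1,1}-\frac{P_{1,1}}{I_{11}}\,q\tfrac{d}{dq}I_{11},
\]
and likewise $P_{0,4}$ arises from the genus-zero normalized Yukawa coupling (where $P_{0,3}=1$) by one application of $Q\tfrac{d}{dQ}$. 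To carry these out in closed form I would invoke the special-geometry relations for $q\tfrac{d}{dq}X=X(1-X)$, $q\tfrac{d}{dq}B$, $q\tfrac{d}{dq}\log I_{11}$, and the derivatives of the propagators $T^{\tp\tp},T^{\tp},T$ recorded in \eqref{BCOVgauge}; these convert every appearance of $q\tfrac{d}{dq}$ into a polynomial in $B,X,T^{\tp\tp},T^{\tp},T$.

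With all inputs so expressed, I would substitute into the master equation and collect terms. All dependence on the propagators and on $B$ must reassemble into exactly the polynomial coefficients of the theorem's bracket, while the surviving pure powers of $X$ are absorbed into $f_2^\BCOV$. This polynomial carries four unknowns; its constant $c_0$ is fixed by the known degree-zero contribution, and $c_1,c_2,c_3$ are determined by matching the $q^1,q^2,q^3$ coefficients of the resulting power series for $F_2$ against the classical values $N_{2,1},N_{2,2},N_{2,3}$ computed in Appendix~\ref{lowdegreeGW}. Because $\deg_X f_2^\BCOV\le 3$, these three conditions suffice, and the final parenthetical $\tfrac{X^3}{240}-\cdots+\tfrac{11771}{7200}$ is precisely $f_2^\BCOV$.

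The conceptual input---that the ambiguity is a cubic polynomial in $X$---is exactly Theorem~\ref{BCOV0}, so no new idea is needed beyond it; the argument is an assembly. I therefore expect the main obstacle to be purely computational: deriving the auxiliary derivative identities for $P_{1,2}$, $P_{0,4}$ and the propagators, and then controlling the bookkeeping of the substitution so that the non-$X$ terms collapse cleanly to the stated coefficients. Once the special-geometry relations are in hand this is a finite, mechanical calculation, and matching the three power-series coefficients closes the argument.
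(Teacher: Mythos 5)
Your proposal is correct and follows essentially the same route as the paper: the authors likewise write the $g=2$ Feynman sum over the six non-leading graphs, feed in $P_{1,0,n}=-\tfrac{28}{3}(n-1)!$, the genus-one formula for $P_{1,1}$, the divisor equation $P_{1,2}=(D-A)P_{1,1}$ (which is exactly your derivative identity), and fix the cubic ambiguity $f_2^{\BCOV}$ from $N_{2,0}$ and the classically computed $N_{2,1},N_{2,2},N_{2,3}$ of Appendix~\ref{lowdegreeGW}. No gaps; the remaining work is the mechanical bookkeeping you already identify, carried out in Appendix~\ref{examples}.
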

\medskip

\subsection{The strategy of the proof}

Our proof of {BCOV's Feynman} rule is via applying $\nmsp$ theory, which was introduced in \cite{NMSP1}. In its sequel 
\cite{NMSP2}, the property of $\nmsp$ theory was further studied, and the conjecture on the Yamaguchi-Yau ring was proved. 
In this paper, we will continue to use the results proved in \cite{NMSP1,NMSP2}.

We begin our paper with stating the generalized {BCOV's} Feynman rule (Theorem 1). 
We then introduce a 
parallel Feynman rule, derived from the $\nmsp$ theory, which we call the $\nmsp$ Feynman rule (Theorem 2). 
We then state our Theorem 3, which says that the generalized {BCOV's} Feynman rule is equivalent to the $\nmsp$ Feynman rule.

In the first half of the paper, we {will build} the mentioned $\nmsp$ Feynman rule and prove 
Theorem 2. To build the $\nmsp$ Feynman rule, we use
the $\nmsp$ theory and its $\CC\sta$ localization. As is shown in \cite{NMSP1}, the organization of the $\CC\sta$ localization of
the $\nmsp$ theory is governed by a class of graphs, whose vertices are categorized into level $0$, $1$ and $\infty$; 
and among these three vertices, level $0$ vertices are \textit{GW-invariants of the quintic $Q$}. The key 
is that \textit{the edges connecting level 0 vertices contribute (in $\nmsp$ theory) exactly the BCOV propagators}. 
This leads us to introduce the ``$\nmsp$-$[0]$ theory", given by {summing the contributions from}
graphs in $\nmsp$ theory whose vertices are of level 0.

In \cite{NMSP2}, we have identified the $\nmsp$-$[0,1]$ theory (constructed in \cite{NMSP2}) 
with the $R$ matrix action on the CohFT of the union of 
the quintic $Q$ with $\n$ points. We have proved the polynomiality of  the $\nmsp$-$[0,1]$ theory there.
Based on these results, we prove the polynomiality of ``$\nmsp$-$[0]$ theory"  in Proposition \ref{Sgl}
 via Lemma \ref{bigstar} (proved in \S\ref{polyof0}).
We also identify (via the factorization \eqref{0-X-A}) the $\nmsp$ Feynman rule  with the polynomiality of ``$\nmsp$-$[0]$ theory", with the same controlled degree bound $3g-3$. 
So the $\nmsp$ Feynman rule is proved simultaneously.

In the second half of the paper, 
we write the generalized BCOV's Feynman rule in the form of the operator  quantization of the symplectic 
transformation $R^\bB$ on the $B$-model state space $H_\bB$. Here the $R^\bB$-matrix is exactly the restriction of the $R^\bA$-matrix that appears in the $\nmsp$ Feynman rule(\S \ref{RARB}).  We then introduce the ``modified" Feynman rule via the factorization of the quantization action(\S \ref{BmodelE}). Compared with the $\nmsp$'s modified rule (\S \ref{AmodelE}),  we prove that the generalized BCOV's Feynman rule is equivalent to the $\nmsp$ Feynman rule,   hence proving Theorem 3. Theorem 2 and 3 imply Theorem 1 directly, and provide a mathematical proof of the BCOV's Feynman rule.

 {As a further remark (in \S7), we will show that 
Yamaguchi-Yau's {functional} equations \eqref{YYHAE1} and \eqref{YYHAE2} for quintic Calabi-Yau threefold \footnote{ They are called Holomorphic  Anomaly Equations in \cite{LhoP, GJR18Dec}. }, can be derived from the operator formalism of the BCOV Feynman sum rule (Theorem \ref{YYeqn12}).
Indeed, J. Zhou and the authors of this paper will give a geometric proof that for a general Calabi-Yau threefold its BCOV Feynman rule 
implies Yamaguchi-Yau functional equations \eqref{YYHAE1} and \eqref{YYHAE2} (cf. \cite{YYFE}). For the quintic Calabi-Yau threefold, we include here a direct proof. 
}

\smallskip

The paper is organized as follows. In \S 1, we make precise the statements of Theorem 1, 2 and 3. In \S2, 
we recall the notion of CohFTs and $R$ matrix actions. In \S3 and \S4, we prove the $\nmsp$ 
Feynman rule, the Theorem 2. In \S 5 and \S 6, we prove the equivalence of two 
Feynman rules, which is Theorem 3. 
 In \S7, we verify  the Yamaguchi-Yau equations, and apply our main theorems to give lower  genus $F_{g\leq 3}$.

\smallskip

We believe that this approach should provide Feynman rules for complete intersection 
CY threefolds in products of weighted projective spaces. This is our work in progress.

\vspace{1cm}

 \section{The Main Theorems}
 
 In this section we give the statement of three theorems, respectively (i) generalized BCOV's Feynman rule, (ii) the $\nmsp$ Feynman rule, and (iii) their equivalence. We will prove (i) by showing (ii) and (iii), in next sections.
 

\medskip

Following \cite{YY}, let $D: =  q\frac{d}{dq}$ and we introduce the following generators 
\footnote{ Recall $I_{11}$ was defined in \eqref{defnI11}. Here our choice of generators are slightly different from that in \cite{YY} and \cite{HKQ}, which comes out
 naturally from our approach through $A$-model theory.}
\begin{equation*}
  A_p:=\frac{D^p   I_{11}(q)}{  I_{11}(q)},\quad
 B_p:= \frac{D^p I_0(q)}{I_0(q)} ,\quad
 X:= \frac{-5^5q}{1-5^5 q} .
\end{equation*}
It is proved in \cite{YY} that the generators $A_{k\geq 4}$ and  $B_{k\geq 2}$ all lie in the {\sl ring of five generators}
$$\sR=\QQ[A_1,B_1,B_2,B_3,X].
$$
Namely, this ring is closed under the differential operator $D$.  Indeed, it is proved \footnote{ Their proof relies on a ``non-holomorphic completion" of the generators. For an algebraic proof of the first equation see \cite[Lemma 3]{ZZ}. The second equation follows directly from the Picard-Fuch equation.}
\beq \label{YYrelation}
\text{\small $A_2 = 2B^2-2AB-4B_2- X\!\cdot \!\Big(A+2 B+\frac{2}{5}\Big),\quad B_4 = -X \!\cdot \! \Big(2\,{B_3}+ \frac{7}{5}\,{B_2}+\frac{2}{5}\,B+ {\frac{24}{625}}\Big) $} .
\eeq

In \cite{NMSP2},  the {\sl finite generation property} raised in \cite{YY} is proved. We state it now.

\begin{theorem1} [Polynomial structure]
For $2g-2+m>0$, 
$P_{g,m}$ lies in the ring $\sR$.
\end{theorem1}

\subsection{BCOV's Feynman rule with insertions in general gauge} \label{BCOVrule}

We now introduce a Feynman rule generalizing that in \cite{BCOV}\footnote{ See Appendix \ref{origBCOV} for a statement of this Feynman rule in the original language, and the relations with our version.  See also \S \ref{quantization} for the Feynman grasph sum as a geometric quantization.}. First we introduce the propagators
\begin{multline} \label{bcovgens}
 E^\cG_{\bp}:= B_1+c_{1a},\quad
     E^\cG_{\tp\tp}:=A+2\,B_1 +c_{1b} ,\quad
 E^\cG_{\tp\bp}:=-B_2-c_{1b} \, B_1 -c_2, \quad \\
\     E^\cG_{\bp\bp}:= -  B_3+(B-X)\cdot B_2 -\frac{2}{5}\,B_1X  + c_{1b} \,B_1^2-2 c_2 \, B_1+ c_3,   \  \qquad
\end{multline}
which depend on the ``gauge" $\cG:=(c_{1a},c_{1b},c_2,c_3)$, where 
\beq \label{propgauge}
c_{1a}, \ c_{1b} \in \mathbb Q[X]_1,\quad c_2 \in \mathbb Q[X]_2,\and c_3 \in \mathbb Q[X]_3.
\eeq
 Here we denote by $\QQ[X]_d$ the set of polynomials of degree $\le d$. In the papers \cite{BCOV,YY}, the propagators were chosen with the following special ``gauge"   
 \beq \label{BCOVgauge} 
 (T^{\tp\tp}, T^{\tp}, T):=  (E^\cG_{\tp\tp}, E^\cG_{\bp\tp}, E^\cG_{\bp\bp})|_{
 (c_{1b},c_2,c_3)= (\frac{3}{5}, -\frac{2}{25},  - \frac{4}{125})}.
 \eeq

\medskip

Let $G_{g,n}$ be the set of stable graphs of genus $g$ and $n$ legs. Let $H_\bB \!:=\! \sspan\{{\tp},  \bp\}$ be the $B$-model state space. 
We define the $\bB$-master potential via the graph sum formula
$$
f^{\bB,\cG}_{g,m,n} =  \langle  {\tp}^{\otimes m},  \bp^{\otimes n} \rangle^{\bB,\cG}_{g,m+n}:= {\small \text{$ \sum_{\Gamma \in G_{g,m+n}}$}}  \frac{1}{|\Aut (\ga)|}\Cont_\Gamma^{\bB,\cG}( {\tp}^{\otimes m},  \bp^{\otimes n}).
$$
Here for each $\Gamma\in G_{g,n}$, the contribution $\Cont_\Gamma^{\bB,\cG}$
is defined via taking the product  through all vertices by
the composition rule by the following placements:
\begin{itemize}
\item at each of the first $m$ or last $n$ legs, we place a vector 
\begin{equation*}
\qquad\qquad
\tp- E_{\bp}^\cG\! \cdot  \bp \ \text{ or  } \  \bp 
 \,\, \quad \text{respectively}
 ; \qquad
\end{equation*}
\item at each edge, we place a bi-vector
$$
\qquad  \quad E^\cG_{\tp\tp}\,  {\tp} \otimes {\tp}
 + E^\cG_{\tp\bp} \,   ( {\tp}  \otimes \bp +   \bp \otimes {\tp} )
+ E^\cG_{\bp\bp}   \,  \bp \otimes   \bp   \,; \quad
$$
\item  at each vertex, we place a multi-linear map :$H_\bB^{\otimes (m+n)} \longrightarrow \QQ[\![q]\!]$:
\begin{align*}
\qquad {\tp}^{\otimes m}\otimes \bp^{\otimes n} & \mapsto \big\langle  {\tp}^{\otimes m}, \bp^{\otimes n}\big\rangle^{Q,\bB}_{g,m+n} \!:= P_{g,m,n},
\end{align*}
where we recall $ P_{g,m,n}$ is  defined in \eqref{Pgmn}.
\end{itemize}
\smallskip


\begin{thm}[BCOV's Feynman rule]\label{thm1}
For any gauge satisfying  \eqref{propgauge},
we have the following polynomial structure statement
$$
f^{\bB,\cG}_{g,m,n} \in \mathbb Q[X]_{3g-3+m}.
$$
\end{thm}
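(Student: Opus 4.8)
The plan is to reduce the polynomiality statement $f^{\bB,\cG}_{g,m,n} \in \mathbb Q[X]_{3g-3+m}$ to the already-established polynomial structure of the normalized potentials $P_{g,m}$ (the quoted Theorem, stating $P_{g,m}\in\sR=\QQ[A_1,B_1,B_2,B_3,X]$) together with the closure of $\sR$ under $D=q\frac{d}{dq}$ and the relations \eqref{YYrelation}. First I would observe that every ingredient of the graph sum already lives in $\sR$: the vertex contributions $P_{g,m,n}$ are $\QQ$-multiples of $P_{g,m}$ by \eqref{Pgmn} (the $(1,0)$ case being a constant), and the propagators $E^\cG_{\tp\tp}, E^\cG_{\tp\bp}, E^\cG_{\bp\bp}, E^\cG_{\bp}$ in \eqref{bcovgens} are manifestly polynomials in $A_1, B_1, B_2, B_3, X$ with coefficients the gauge polynomials $c_{1a},c_{1b},c_2,c_3\in\QQ[X]$. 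Hence the whole graph sum lies in $\sR[\![q]\!]\cap\QQ[\![q]\!]=\sR$, and the content of the theorem is entirely the \emph{degree bound} in $X$ after all the non-$X$ generators $A_1,B_1,B_2,B_3$ have been eliminated.

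The key mechanism I would exploit is the way $X$ enters: among the five generators, $A_1, B_1, B_2, B_3$ do not individually carry an intrinsic ``$X$-degree'', but their $D$-derivatives produce $X$ through the relations \eqref{YYrelation}, for instance $A_2=DA_1+A_1^2$ feeding in the $X(A_1+2B_1+\tfrac{2}{5})$ term and $B_4$ acquiring a factor of $X$. The strategy, which mirrors the $A$-model argument, is to interpret the gauge-dependent propagators precisely as the combination that linearizes the $R$-matrix action: I would write $f^{\bB,\cG}_{g,m,n}$ as the quantization $\widehat{R^\bB}$ applied to the collection $\{P_{g,m}\}$, exactly as previewed in \S\ref{BCOVrule} and \S\ref{RARB}, where $R^\bB$ is the restriction to $H_\bB$ of the $A$-model matrix $R^\bA$. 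Under this reformulation the edge bivector is the propagator $V=(R^{-1}R^{-\mathrm T}-\mathrm{Id})$ of the $R$-matrix action, the leg insertion $\tp-E^\cG_{\bp}\bp$ is the application of $R^{-1}$ to the state, and the whole graph sum becomes a single structured object whose $X$-degree can be tracked through the $R$-matrix entries.

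The main obstacle, and the crux of the whole proof, is establishing the degree bound $3g-3+m$ rather than merely polynomiality. Here I would \emph{transport the bound from the $A$-model side}: by Theorem 3 (the equivalence stated in the strategy, \S\ref{AmodelE}--\S\ref{BmodelE}), $f^{\bB,\cG}_{g,m,n}$ agrees with the $\nmsp$ Feynman rule, and by Theorem 2 (Proposition \ref{Sgl}, the polynomiality of the $\nmsp$-$[0]$ theory proved via Lemma \ref{bigstar}) the latter is a polynomial in $X$ of degree at most $3g-3+m$. Thus the honest order of operations is: (a) verify that the generalized BCOV graph sum is literally the quantization $\widehat{R^\bB}$ of $\{P_{g,m}\}$, matching propagators and vertices termwise against the $R^\bB$-action; (b) invoke the factorization \eqref{0-X-A} identifying this $B$-model quantization with the $A$-model $\nmsp$-$[0]$ theory through the common $R$-matrix; and (c) read off the degree bound $3g-3+m$ from the $A$-model polynomiality already in hand. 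The genuinely delicate step is (a): one must check that the gauge freedom \eqref{propgauge}, with $c_{1a},c_{1b}\in\QQ[X]_1$, $c_2\in\QQ[X]_2$, $c_3\in\QQ[X]_3$, corresponds exactly to the allowed ambiguity in splitting the propagator of $R^\bB$, so that \emph{every} admissible gauge is captured; any mismatch here would either break the identification with the $A$-model or spoil the sharp degree count. I expect the self-consistency of \eqref{bcovgens} under $D$ (guaranteed by \eqref{YYrelation}) to be exactly what makes this matching work for an arbitrary gauge.
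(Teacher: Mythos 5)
Your proposal is correct and takes essentially the paper's own route: Theorem \ref{thm1} is deduced by combining Theorem \ref{thm2} (which for $\ba=1^m0^n$, $\bb=0^m1^n$ gives exactly the bound $3g-3+(m+n)-n=3g-3+m$) with the equivalence of Theorem \ref{thm3}, whose correction term $\delta_{g,1}\delta_{m,0}(n-1)!$ is a constant and therefore harmless for both polynomiality and the degree count. One small mislabel in your step (b): the factorization \eqref{0-X-A} is internal to the proof of Theorem \ref{thm2} (relating the $\nmsp$-$[0]$ theory to $\Omega^{\bA,\vec 0}$), whereas the $B$-to-$A$ identification of Theorem \ref{thm3} rests instead on the factorization $R^{\bA}=\tilde R^{\bA}\cdot\E^{\bA}$ together with the string/dilaton equations and the $-\ln(1-y)$ bookkeeping.
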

By taking $g>1$, $m-n=0$ and picking the special gauge \eqref{BCOVgauge} in Theorem \ref{thm1}, one recovers Theorem~\ref{BCOV0} in the introduction.


 \begin{convention}
In this paper  $\bp$ is the psi class of $\M_{g,n}$, namely, the ancestor class.
\end{convention}

\begin{remark} \label{genus1correction}
After identification ${\tp} = I_0I_{11}H$, the correlation function $P_{g,m,n}$ matches   the normalized GW correlator of    quintic CY threefolds.
Namely  let $Y:=1-X$, then
\begin{align*} 
P_{g,m,n}= \frac{(5Y)^{g-1}}{I_0^{2g-2+m}} \!\left<\tp^{\otimes m}, \psi^{\otimes n} \right>_{g,m+n}^Q
\end{align*}
except for the ``exceptional" cases when $(g,m)=(1,0)$. Here
$$
 \!\left< \tau_1 \bp_1^{k_1},\cdots,\tau_n \bp_1^{k_n} \right>_{g,n}^Q:=  \!\sum_d  \! \sQ^d \!\!  \int_{[\M_{g,n}(Q,d)]\virt} \!\!\!\!\!\ev_1^*(\tau_1) \bp_1^{k_1} \!\cup\!\cdots\!\cup\!\ev_n^*(\tau_n) \bp_n^{k_n}.
$$
For the exceptional cases, the BCOV's  correlators
$$\textstyle  P_{1,0,n}=
(n-1)!(\frac{\chi}{24}-1)$$
  differ  from the corresponding GW correlators  $\big<\psi^{\otimes n}\big>^Q_{1,n} =(n-1)! \frac{\chi}{24}$ by a ``correction term"
$-(n-1)!$.  This term is mysterious from the $A$-model side. In the proof of Theorem \ref{thm3}, we
will see how this term comes into play.
\end{remark}

\subsection{The $\nmsp$ Feynman rule} \label{NMSPrule}
Let $H_\bA$ be the $A$-model state space:
$$
H_\bA:=\sspan\{\tp_0,\cdots,\tp_3\}[\bp] ,\quad \tp_i:=I_0 I_{11}\cdots I_{ii} H^i \quad \text{ for } i =0,\cdots,3 .
$$ 
We introduce the propagator matrix with  {gauge} $\cG$  by
\beq\label{RAG}
R^{\bA,\cG}(\psi)\upmo=
 \id-{\small \left(\!\!\!\!\!\! \!\!  \arraycolsep=1.4pt\def\arraystretch{1.1}\begin{array} {*{15}{@{}C{\mycolwddc}}}
0 &  \psi\cdot E^\cG_{1\tp_2} & \psi^2\cdot E^\cG_{\tp\bp} & \psi^3\cdot E^\cG_{1\bp^2}\\
& 0&  \psi\cdot E^\cG_{\tp\tp} & \psi^2\cdot E^\cG_{1,\tp\bp} \\
& & 0 &  \psi\cdot E^\cG_{1\tp_2}\\
& & & 0\\
\end{array} \right) }\in \End H_\bA.
\eeq  
Here besides the BCOV's propagators \eqref{bcovgens}, we introduce extra propagators
 \begin{align*}
E^\cG_{1\tp_2} := E^\cG_{\bp},\quad E^\cG_{1,\tp\bp}:=- E^\cG_{\bp}\cdot E^\cG_{\tp\tp} -  E^\cG_{\tp\bp} ,
 \quad E^\cG_{1\bp^2}:=-E^\cG_{\bp}\cdot E^\cG_{\tp\bp} -  E^\cG_{\bp\bp}.
 \end{align*}
 \smallskip
 
We define the A-model master potential via the following graph sum formula
$$
f^{\bA,\cG}_{g;\ba,\bb} =\langle \tp_{a_1} \bp^{b_1},\cdots, \tp_{a_n} \bp^{b_n}
\rangle^{\bA,\cG}_{{g,n}}:=\sum_{\Gamma \in G_{g,n}} \frac{1}{|\Aut (\ga)|} \Cont_\Gamma^{\bA,\cG} (\tp_{a_l}\bp^{b_l})
\vspace{-0.2cm}
$$
where for each stable graph $\Gamma$, the contribution $\Cont_\Gamma^{\bA,\cG}$ is defined via taking
the composition rule along the following placements\footnote{ Indeed, the graph sum defined here is the $R^{\bA,\cG}$-matrix action, see \S \ref{Raction} for more details.}:
\begin{itemize}
\item at each leg $l$ with insertion $\tp_{a_l} \bp^{b_l}$, we place the vector
\begin{align*} \textstyle
 R^{\bA,\cG}(\bp)^{-1} \tp_{a_l} \bp^{b_l} \   \in  \ H_\bA \ ;
\end{align*}
\item at each edge, we place the bi-vector\footnote{ A direct computation shows
\begin{align*}
& \qquad  V^{\bA,\cG}( \psi, \psi') =  E^\cG_{\tp\tp}\, ( \tp_1 \otimes \tp_1  )
 + E^\cG_{\tp\bp} \,   ( \tp_1  \otimes \tp_{0}\bp'  \!+   \tp_{0}\bp \otimes \tp_1 ) + E^\cG_{\bp\bp}   \, (  \tp_{0}\bp \otimes   \tp_{0}\bp')\\
&  \quad \ +E^\cG_{1,\tp\bp}\, ( \tp_0 \otimes \tp_1 \bp' \!+\! \tp_1 \bp \otimes  \tp_0 )
 \!+\! E^\cG_{1\bp^2} \,   ( \tp_0  \otimes  \tp_{0}(\bp')^2 \!+\!   \tp_{0}\bp^2 \otimes \tp_0 )
  + E^\cG_{1\tp_2}   \, (\tp_0\otimes \tp_2 \!+\!\tp_2\otimes \tp_0).
\end{align*}} in $H_\bA  \otimes H_\bA$
\begin{align*}
\qquad \qquad \! V^{\bA,\cG}( \psi, \psi'):=  {\footnotesize \text{$\frac{1 }{ \psi +\psi'}$}}{\small \text{$\sum_i \Big(\tp_i \otimes \tp_{3-i} - \!R^{\bA,\cG}\!(\psi)^{-1}\tp_i \otimes R^{\bA,\cG}\!(\psi)^{-1}\tp_{3-i}  \Big)$}}  \ ;
\end{align*}   
\item at each vertex, we place the map
\begin{align} \label{normalizedquintic}
\qquad \tau_1(\bp)\otimes \cdots \otimes \tau_n(\bp)  \mapsto  {\footnotesize \text{$\frac{(5Y)^{g-1}}{I_0^{2g-2+n}}$}}\left<  \tau_1(\bp_1),\cdots, \tau_n(\bp_n) \right>^{Q}_{g,n}.
\end{align}
\end{itemize}
In particular, when $\ba=1^m0^n$ and $\bb=0^m1^n$, we define
\beq  \label{AmodelMpotential}
f^{\bA,\cG}_{g,m,n} =\langle \tp_{1}^{\otimes m}, (\tp_{0}\bp)^{\otimes n}  \rangle^{\bA,\cG}_{g,m+n}.
\eeq

\begin{thm}[A-model $\nmsp$ Feynman rule]\label{thm2}
For  any
\beq  \label{conditionforcG}
c_{1a}, \ c_{1b} \in \mathbb Q[X]_1,\quad c_2 \in \mathbb Q[X]_2,\and c_3 \in \mathbb Q[X]_3,
\eeq
we have the following polynomial structure statement
$$
f^{\bA,\cG}_{g;\ba,\bb}\in  \mathbb Q[X]_{3g-3+n-\sum_i b_i } .
$$
\end{thm}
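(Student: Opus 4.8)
The plan is to read $f^{\bA,\cG}_{g;\ba,\bb}$ as a Givental--Teleman $R$-matrix action on the Cohomological Field Theory (CohFT) of the quintic $Q$, and then to import the polynomiality established for the $\nmsp$ theory in \cite{NMSP1,NMSP2}. First I would record the a priori statement. As the footnote in \S\ref{NMSPrule} records (see \S\ref{Raction}), the graph sum defining $f^{\bA,\cG}_{g;\ba,\bb}$---legs decorated by $R^{\bA,\cG}(\bp)\upmo\tp_{a_l}\bp^{b_l}$, edges by the bi-vector $V^{\bA,\cG}$, and vertices by the normalized quintic correlators of \eqref{normalizedquintic}---is exactly the action of $R^{\bA,\cG}$ on the quintic CohFT. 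Its $R$-matrix entries are polynomials in the propagators \eqref{bcovgens}, which lie in $\sR=\QQ[A_1,B_1,B_2,B_3,X]$, while its vertices are the correlators $P_{g,m,n}$, which lie in $\sR$ by the Polynomial structure theorem. Hence $f^{\bA,\cG}_{g;\ba,\bb}\in\sR$ a priori, and the genuine content of Theorem~\ref{thm2} is the sharper claim that all dependence on $A_1,B_1,B_2,B_3$ cancels, leaving a polynomial in $X$ alone of the asserted degree.

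To force this cancellation I would pass to the geometry of $\nmsp$. Consider the \emph{$\nmsp$-$[0]$ theory}: the part of the $\CC\sta$-localization of the $\nmsp$ theory supported on stable graphs all of whose vertices have level $0$, where the level-$0$ vertices carry quintic invariants and the level-$0$ edges contribute the BCOV propagators. The factorization \eqref{0-X-A} identifies $f^{\bA,\cG}_{g;\ba,\bb}$ with this $[0]$ theory, up to the explicitly polynomial factors appearing there, so that Theorem~\ref{thm2} becomes equivalent to the polynomiality of the $[0]$ theory, namely Proposition~\ref{Sgl}. To prove the latter I would invoke the polynomiality of the larger $\nmsp$-$[0,1]$ theory (levels $0$ and $1$) established in \cite{NMSP2}---itself a consequence of the fact that the full $\CC\sta$-localization computes a genus-$g$ $\nmsp$ invariant and is therefore independent of the localization parameter---and then apply Lemma~\ref{bigstar} to sum out the level-$1$ vertices and descend to the $[0]$ theory.

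The degree bound would be tracked through a dimension count. In the insertion-free case the controlled degree $3g-3$ is supplied by $\dim\M_{g}$ together with the degree estimates of \cite{NMSP1}; the $n$ legs raise this to $\dim\M_{g,n}=3g-3+n$, while each psi-power $\bp^{b_i}$ placed on a leg absorbs $b_i$ units of cohomological degree and so lowers the bound by $b_i$, giving $3g-3+n-\sum_i b_i$. Specializing to $\ba=1^m0^n$ and $\bb=0^m1^n$ returns $3g-3+m$, consistent with Theorem~\ref{thm1}.

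The step I expect to be the main obstacle is precisely the descent from $[0,1]$-polynomiality to $[0]$-polynomiality through \eqref{0-X-A} and Lemma~\ref{bigstar}. Neither the propagators nor the quintic correlators are individually polynomial in $X$, so the vanishing of the $A_1,B_1,B_2,B_3$ dependence is not an algebraic identity within $\sR$ but a genuinely geometric cancellation, visible only once the graph sum is recognized as a parameter-independent slice of an $\nmsp$ localization. Summing out the level-$1$ vertices of the already-polynomial $[0,1]$ theory while preserving polynomiality of the $[0]$ remnant---and simultaneously controlling the exact effect of the legs and their psi-powers on the $X$-degree---is therefore the technical heart of the argument.
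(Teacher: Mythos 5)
Your proposal follows essentially the same route as the paper: read the graph sum as the $R^{\bA,\cG}$-action on the normalized quintic CohFT, use the factorization \eqref{0-X-A} to tie it to the $\nmsp$-$[0]$ theory, derive the $[0]$-polynomiality (Proposition \ref{Sgl}) from the $[0,1]$-polynomiality of \cite{NMSP2} via Lemma \ref{bigstar}, and track the degree bound $3g-3+n-\sum_i b_i$ through the graph sum. The only nuance worth flagging is that the paper does not deduce Theorem \ref{thm2} from Proposition \ref{Sgl} "up to polynomial factors" but runs an intertwined induction on $(g,n)$ in which the $\bA$-potential appears as the leading-graph term of the $[0]$-theory (isolated via the left-inverse $C^X$ of $R^X(-z)^*$), and it treats the general gauge $\cG$ by a separate, elementary triangular-matrix step \eqref{gaugeR} after the $\vec 0$-gauge case is established.
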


\begin{remark}
Comparing with BCOV's Feynman rule, we see that in the A-model case
\begin{itemize}
\item the state space is is of higher dimension; and we have $6$ (instead of $3$) types of edge contributions (which we call
{extra propagators});
\item there is no ``correction term" in the $g=1$ vertex (see Remark \ref{genus1correction} for more details);
\item the  master potential $f^{\bA,\cG}$  is indeed the generating function of a CohFT $R^{\bA,\cG}.\bar\Omega^Q$ (c.f. \S \ref{cohft}), where $\bar\Omega^Q$ is the normalized CohFT of quintics (c.f. \S \ref{cohftNQ}).
\end{itemize}

\end{remark}

{
\begin{remark}
The $\nmsp$ Feynman rule essentially says that, 
in the orbit of the $R$-matrix group action  on the quintic CohFT, there exists a ``special" subset $\{\Omega^{\bA,\cG}: \cG\}$ which is invariant under BCOV's ``gauge" group, such that their genus $g$ potential functions are simply  degree $3g-3$ polynomials in $X$.
\end{remark}
}

\subsection{BCOV's Feynman rule versus $\nmsp$ Feynman rule }\label{NMSPeqBCOV}

We now state our final result. 
Theorem \ref{thm1} is a direct consequence of Theorem \ref{thm2} and this result.

\begin{thm}  \label{thm3}
For   $\star = A$ or $B$ we introduce the master potential function
$$
f^{\star,\cG} (\hbar,x,y) := \sum_{g,m,n} \hbar^{g-1}  f^{\star,\cG}_{g,m,n} x^m y^n.
$$
Then we have the identity 
$$  f^{\bA,\cG} (\hbar,x,y)  =  f^{\bB,\cG}  (\hbar,x,y) -\ln  (1-y).
$$
Namely, the two types of graph sums are related by
$$
f^{\bA,\cG}_{g,m,n} = f^{\bB,\cG}_{g,m,n}+ \delta_{g,1}\delta_{m,0} (n-1)!.
$$
\end{thm}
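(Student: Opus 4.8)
The plan is to exploit the fact that both master potentials are graph sums over the same index set $G_{g,m+n}$, assembled by the same composition rule, differing only in the data placed at legs, edges, and vertices. My strategy is to show that the $\bA$-side graph sum is literally the $\bB$-side graph sum \emph{after} performing the change of basis encoded by the propagator matrix $R^{\bA,\cG}(\psi)^{-1}$, with the single genuine discrepancy being the exceptional vertex correction recorded in Remark~\ref{genus1correction}. Concretely, I would set up a bookkeeping correspondence: the $B$-model state space $H_\bB=\sspan\{\tp,\bp\}$ embeds into $H_\bA=\sspan\{\tp_0,\dots,\tp_3\}[\bp]$ via $\tp\mapsto\tp_1$ and $\bp\mapsto\tp_0\bp$, and the whole point is that the extra four basis directions $\tp_0,\tp_2,\tp_3$ (and higher powers of $\bp$) appearing in $H_\bA$ are precisely the auxiliary slots that get contracted away against the three ``extra'' propagators $E^\cG_{1\tp_2}$, $E^\cG_{1,\tp\bp}$, $E^\cG_{1\bp^2}$.

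First I would verify the edge matching. The displayed formula for $V^{\bA,\cG}(\psi,\psi')$ in the footnote expands into six terms; the first three ($E^\cG_{\tp\tp}$, $E^\cG_{\tp\bp}$, $E^\cG_{\bp\bp}$) reproduce exactly the $\bB$-edge bi-vector under the embedding above, while the last three (carrying $E^\cG_{1\tp_2}$, $E^\cG_{1,\tp\bp}$, $E^\cG_{1\bp^2}$) pair the auxiliary directions $\tp_0,\tp_2$ against one another. The key algebraic identity to establish is that the definitions
$$
E^\cG_{1,\tp\bp}=-E^\cG_{\bp}E^\cG_{\tp\tp}-E^\cG_{\tp\bp},\qquad
E^\cG_{1\bp^2}=-E^\cG_{\bp}E^\cG_{\tp\bp}-E^\cG_{\bp\bp},\qquad
E^\cG_{1\tp_2}=E^\cG_{\bp}
$$
are exactly what is needed so that, when the auxiliary-direction contractions are resummed through the vertices, they either cancel or reassemble into the leg insertion $\tp-E^\cG_\bp\cdot\bp$ prescribed on the $\bB$-side. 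In other words, the matrix $R^{\bA,\cG}(\psi)^{-1}$ is upper-triangular with precisely the entries that conjugate the $A$-model edge/leg data onto the $B$-model data, and the graph sum is invariant under this conjugation because it is an $R$-matrix action (as flagged in the footnote to the $\bA$-Feynman rule). I would make this precise by checking the vertex placement \eqref{normalizedquintic} agrees with the $\bB$-vertex $P_{g,m,n}$ through the identity in Remark~\ref{genus1correction}, namely $P_{g,m,n}=(5Y)^{g-1}I_0^{-(2g-2+m)}\langle\tp^{\otimes m},\psi^{\otimes n}\rangle^Q_{g,m+n}$ for $2g-2+m>0$.

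The only place where the two rules genuinely disagree is the exceptional vertex $(g,m)=(1,0)$: the $\bA$-vertex uses the honest GW correlator $\langle\psi^{\otimes n}\rangle^Q_{1,n}=(n-1)!\,\chi/24$, whereas the $\bB$-vertex uses $P_{1,0,n}=(n-1)!(\chi/24-1)$. Their difference is exactly $(n-1)!$, independent of the chosen gauge $\cG$. I would argue that a $(g,m)=(1,0)$ vertex carrying $n$ legs, all of which must be $\bp$-type (since $m=0$), contributes only through graphs whose generating function is $\sum_n (n-1)!\,y^n/n!=\sum_n y^n/n=-\ln(1-y)$, matching the correction term $-\ln(1-y)$ in the $\hbar$-weighted identity (the factor $\hbar^{g-1}=\hbar^0$ confirms this lands only at genus one). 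Packaging this as the single-vertex genus-one graph and comparing coefficients gives the stated $\delta_{g,1}\delta_{m,0}(n-1)!$ shift.

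\textbf{Main obstacle.} The hard part will not be the exceptional-vertex accounting, which is essentially a generating-function identity, but rather establishing rigorously that the $A$-model graph sum really does collapse onto the $B$-model one under the auxiliary-direction contractions. This requires showing that the contributions from the directions $\tp_2,\tp_3$ and from powers $\bp^{\geq 2}$ at the legs, when summed over \emph{all} stable graphs with their automorphism weights $1/|\Aut(\ga)|$, reorganize exactly into the $B$-model leg vectors $\tp-E^\cG_\bp\bp$ and contribute nothing spurious. The cleanest route is to phrase everything as an $R$-matrix action on a CohFT and invoke the invariance of the total descendent potential under the change of frame $R^{\bA,\cG}$, so that the graph-sum bookkeeping becomes a formal consequence of Givental--Teleman-style $R$-matrix calculus rather than a term-by-term combinatorial match; verifying that the specific entries of $R^{\bA,\cG}$ restrict correctly to $H_\bB$ and that the vertex normalization \eqref{normalizedquintic} is the correct topological-recursion seed is where the real care is needed.
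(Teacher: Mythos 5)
Your high-level picture is right -- the extra propagators must be shown to cancel, and the $(1,0)$-vertex discrepancy $(n-1)!$ resums to $-\ln(1-y)$ -- but the proposal has a genuine gap exactly where you flag the ``main obstacle,'' and the remedy you propose does not close it. First, your claim that the two rules ``genuinely disagree'' only at the exceptional vertex, accounted for by the single-vertex genus-one graph, is not correct as stated: a $(g,m)=(1,0)$ vertex occurs \emph{inside} higher-genus graphs (attached to edges and $\bp$-legs), so its $(n-1)!$ discrepancy contaminates every $f_{g,m,n}$ a priori. The content of Theorem \ref{thm3} is precisely that these internal discrepancies cancel against the contributions of the extra propagators $E^\cG_{1\tp_2}, E^\cG_{1,\tp\bp}, E^\cG_{1\bp^2}$, leaving only the single-vertex term. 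Second, ``invariance of the total descendent potential under the change of frame $R^{\bA,\cG}$'' is not an available principle here: the $\bA$-side is an $R$-matrix action on the quintic CohFT while the $\bB$-side is a finite-dimensional quantization $\widehat R^\bB$ acting on the scalar potentials $P_{g,m,n}$ on a $4$-dimensional symplectic slice; they are not two frames of one object, and their agreement depends on specific properties of the quintic theory, not on formal $R$-calculus.

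What the paper actually does, and what your sketch is missing, is the following mechanism. One factors $R^{\bA}=\tilde R^{\bA}\cdot\E^{\bA}$ and first handles the $E^\cG_{1\tp_2}$ edges (the bivector $\tp_0\otimes\tp_2+\tp_2\otimes\tp_0$): since a $\tp_2$-insertion kills every quintic correlator except the genus-zero degree-zero three-point function, these edges can only assemble into closed loops of $(0,3)$-vertices, and the explicit loop sum gives $P_1^E(x)=\ln(1+E^\cG_{1\tp_2}x)$, which is exactly what reconciles $\tilde P^{\bA}$ with $\tilde P^{\bB}-\ln(1-y)$ (Proposition \ref{tPAB}). The remaining extra insertions $\tp_1\bp$, $\tp_0\bp^2$, $\tp_0$ are then eliminated by the string equation (Lemma \ref{stringflow}), and the cancellation of the internal exceptional-vertex discrepancies against the $E^\cG_{1,\tp\bp}$ and $E^\cG_{1\bp^2}$ edges is realized by the operator identity of Lemma \ref{keylemma1}, which shows that $e^{\hbar\tilde V^E}$ composed with the string flow produces the geometric series $\sum_k E(\partial_\bt)^k$ obtained by conjugating $(1-y)^{-1}$ by $e^{\hbar\tilde V^{\bB}}$ (using $\tilde E_{\tp\bp}+\tilde E_{1,\tp\bp}=0$ and $\tilde E_{\bp\bp}+\tilde E_{1\bp^2}=0$). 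Without some version of these three steps -- the $\varphi_2$-vanishing/loop computation, the string-equation reduction, and the explicit operator cancellation -- your proposal does not establish the theorem.
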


\begin{remark}
{ 
Indeed, we will see that the graph sum definition of $f^{\star,\cG}_{g,m,n}$ (for $\star  =\bA$ or $\bB$) is equivalent to the following quantization of $R^\star$-matrix action:
$$
 f^{\star,\cG}(h,x,y) := \log \Big(  \widehat R^{\star.\cG}  P^\star(\hbar;x,y) \Big)
$$
where the generating function $P^\star(\hbar;x,y)$ are defined via
\begin{align}\label{PBA}
P^\bB(\hbar,x,y) = \,  P^\bA(\hbar,x,y)   + \ln(1-y) :=\,&   \sum_{g,m,n} \hbar^{g-1} \frac{x^my^n}{m!n!} \cdot   P_{g,m,n}.
\end{align}
See \S \ref{quantization} for more details about the quantization of symplectic transformations.
}
\end{remark}

 \vspace{1cm}
\section{Cohomological field theory and $R$-matrix action} \label{cohft}

\def\pr{\mathrm{pr}}

In this section, we investigate the CohFTs and the $R$-matrix actions. We will follow closely the treatment developed by Pandharipande-Pixton-Zvionkone in \cite{PPZ}.

We first fix notations. Let $Q\sub\Pf$ be a smooth quintic CY threefold; let
$(\pi,\ev_{n+1}):\cC\to \M_{g,n}(Q,d)\times Q$ 
be the universal family of the moduli of stable maps to $Q$,  
and let
$$\ff_{g,n,d}: \M_{g,n}(Q,d)\to\M_{g,n} \and \pr_k:\M_{g,n+k}\to \M_{g,n}
$$
be the obvious the forgetful morphisms.
\subsection{Definition of cohomological field theory }
We recall the definition of a CohFT introduced by Kontsevich-Manin \cite{KM}.
\begin{definition}
A CohFT consists of
a triple $(V,\eta,\mathbf 1)$, where $V$ is an $F$-linear
space\footnote{ By ``a space over a domain $F$" we mean a locally free $F$ module.} for an integral domain $F$,
$\eta$ is a 
non-degenerate {(super) symmetric} bilinear form $\eta: V \times V \rightarrow F$, $\bone\in V$ is called a unit,
and $S_n$-equivariant maps
$$
\Omega_{g,n}:  V^{\otimes n} \rightarrow  H^*( \M_{g,n})\otimes \aA, \quad g\ge 0, \ 2g-2+n>0,
$$
where $\aA$ is an $F$-algebra, called the coefficient ring,
such that,  for any basis $\{e_k\}$ of $V$ and its dual basis $\{e^k\} $\footnote{ $\{e_k\}$ and $\{e^k\}$
satisfying
$\eta(e_k, e^\ell) = (-1)^{\deg e_k\deg e^\ell}\eta( e^\ell,e_k) = \delta_{k \ell}$.
},
the maps $\Omega_{g,n}$ satisfy the following properties (axioms):
\begin{itemize}
\item[(a1)] Fundamental Class Axiom:
\begin{align*}
\Omega_{0,3}(\mathbf 1, \tau_1,\tau_2)= & \ \eta( \tau_1, \tau_2), \\
\Omega_{g,n+1}( \tau_{\mathbf n},\mathbf 1) =(\pr_1)^*\Omega_{g,n}(\tau_{\mathbf n}), &\qquad \tau_{\mathbf n}:=(\tau_1,\cdots,\tau_n);
\end{align*}
\item[(a2)] Splitting Axiom and Genus reduction axiom
\begin{align*}
s^*\Omega_{g_1+g_2,n_1+n_2}(\tau_{\mathbf n_1}, \tau_{\mathbf n_2} ) =&\, {\textstyle  \sum_k} \ \Omega_{g_1,n_1+1}(\tau_{\mathbf n_1},e^k)  \cdot\Omega_{g_2,1+n_2} (e_k,\tau_{\mathbf n_2}),\\
 r^*\Omega_{g+1,n}(\tau_{\mathbf n})=&\, {\textstyle  \sum_k} \ \Omega_{g,n+2} (\tau_{\mathbf n},e^k,e_k ).
\end{align*}
\end{itemize}
\end{definition}
Here $s$ and $r$ are the obvious gluing maps.



\begin{example}
[CohFT  of the  GW theory of $X$] \label{quinticGW}
 For a projective variety $X$, and a coefficient field $F$, we introduce the triple and the maps by
\begin{align*}
V=& \ H\sta(X,F);   \quad  (x,y)=\int_X x\cup y; \quad 1\in H^0(X,F);
\\
\Omega^{X}_{g,n}(\tau_{\mathbf n}):= & \sum_{d=0}^\infty q^d
\ff_{g,n,d\ast}\Bigl(  \prod_{i=1}^n\ev_i\sta \tau_i   \cap [\M_{g,n}(X,d)]\virt \Bigr)  \in H\sta(\M_{g,n},F)[\![q]\!].
\end{align*}
\end{example}

\subsection{Shift and direct sum of CohFTs}

\begin{definition}[The shifted CohFT $\Omega^{\tau}$ of a given CohFT $\Omega$.]
 For  $\tau\in V\otimes_F \aA $, 
$$
\Omega^\tau_{g,n} (\tau_{\mathbf n}):= \sum_{k \geq 0} \frac{1}{k!} \pr_{k\ast}\Omega_{g,n+k} (\tau_{\mathbf n},\tau^k) \in H\sta(\M_{g,n},\aA),
$$
with the same triple $(V,\eta,1)$ of $\Omega$.
Here we assume that the infinite sum is well defined.
\end{definition}

\begin{definition}[The direct sum  of  CohFTs]
Let $\Omega^a$ and $\Omega^b$ be two CohFTs with identical coefficient ring $\aA$. We define their direct sum to
be the CohFT with triple
$(V^a\oplus  V^b,\eta^a\oplus  \eta^b,\mathbf 1^a \oplus  \mathbf 1^b)$, and with maps
\begin{align*}
&(\Omega^a \oplus \Omega^b )_{g,n}(\tau_{\mathbf n}) =\Omega^a_{g,n} (\tau_{\mathbf n}^a) +\Omega^b_{g,n} (\tau_{\mathbf n}^b) \in H^*(\M_{g,n},\aA),
\end{align*}
where $\tau_i=(\tau_i^a,\tau_i^b) \in V^a\oplus  V^b$.
By iterating, we get a direct sum of finite copies of CohFTs.
It is easy to check that the direct sum of CohFTs so defined satisfies all the CohFT axioms, and hence is a CohFT.
\end{definition}

\begin{example} Let $\Omega^X$ be as in Example \ref{quinticGW}.
For two smooth projective varieties $X_1$ and $X_2$, we have
$\Omega^{X_1 \sqcup X_2}=
\Omega^{X_1}\oplus \Omega^{X_2}$.
\end{example}

\subsection{$R$-matrix action on CohFT}\label{Raction}
The $R$-matrix was first introduced in \cite{Giv2, Giv3} { to compute   higher genus} equivariant GW  invariants. Its lifting to CohFTs was studied in \cite{Sh09,Te12}.
In this section, we will mostly follow 
\cite{PPZ}\footnote{ In their paper, the authors give a careful proof that $R$-matrix actions preserve CohFTs.},
with a slight generalization.

Let $\Omega$ be a CohFT with the triple $(V,\eta,\mathbf 1)$. We consider another triple $(V',\eta',\mathbf 1')$, and a formal power series
\begin{align*}
R (z)= R_0 +R_1  z+R_2  z ^2 +\cdots \in  \End(V,V')\otimes \aA[\![z]\!],
\end{align*}
which satisfies
the ``symplectic condition": \footnote{ The symplectic condition is equivalent to :
$\eta(v_1,v_2)   =  \eta'\big( R(z) v_1, R(-z)v_2 \big)$, for $v_1,v_2 \in V$.
It could not deduce
$\eta(R^*(z) v_1,R^*(-z)  v_2)   =  \eta'\big(  v_1,  v_2 \big)$
when  $\dim V \neq \dim V'$.}
\begin{align}\label{symplectic}
R^*(-z) R(z) = \id \in \End(V) .
\end{align}
  Notice that \eqref{symplectic} implies that $R(z)$ is injective and $\dim_F V\leq \dim_F V'$. 

We define the $R$-matrix action following \cite{PPZ}. Let $\ga\in G_{g,n}$ be a  genus $g$ stable graph with $n$ legs.  
For each vertex $v$ of $\ga$, we denote its genus by $g_v$ and its valence by $n_v$. For each $\Gamma$ we associate it the space
$\M_\Gamma:= \prod_{v\in V(\Gamma)}\M_{g_v,n_v}$,
and define the contribution
$$
\Cont_\Gamma :  V^{' \otimes n} \lra H^*(\M_{\Gamma}, \aA)
$$
by the following construction
\begin{enumerate}
\item at each leg $l$ of $\Gamma$, we place a map
\begin{align*}
 R^*(-\bp_l) :   V' \lra  V[{\bp_l}]  ;
 \end{align*}
\item at each edge $e=(v_1,v_2)$ of $\Gamma$, we place 
\begin{align*}
\qquad
\frac{\sum_\beta e_\beta \otimes e^\beta-\sum_\alp R^*(-\bp_{(e,v_1)}){e'_\alpha} \otimes  R^*(-\bp_{(e,v_2)}) e'^{\alpha} }{\bp_{(e,v_1)}+\bp_{(e,v_2)}}
 \in {V[\bp_{(e,v_1)}]\otimes V[\bp_{(e,v_2)}]};
\end{align*}
where $\{e_\beta\}, \{ e'_\alpha\}$ are bases for $V,V'$ respectively, and 
$\{e^\beta\}\sub V, \{ e'^\alpha\}\sub V'$ are their dual bases under $\eta,\eta'$ respectively;
\item at each vertex $v$ of $\Gamma$, we place
$$\Omega_{g_v,n_v}:  V ^{\otimes n_v}\lra  H^*(\M_{g_v,n_v},\aA).
$$
\end{enumerate}

Let $\xi_\Gamma: \M_\Gamma \rightarrow \M_{g,n}$ be the tautological morphism by gluing. We define 
$$
(R\Omega)_{g,n}=\sum_{\Gamma\in G_{g,n}} \frac{1}{|\Aut \Gamma|} {\xi_\Gamma}_*\bl\Cont_\Gamma\br.
$$


Let $\bp_i$ be the ancestor psi classes of $\M_{g,n+k}$.
For the given $R$-matrix, we associate 
\begin{align*}
T_R(z)= z  \mathbf 1 -z R(-z)\sta  \mathbf 1'   \in   z\aA[\![z]\!]   \otimes V; \end{align*}
we define its associated translation action by
\begin{align}\label{Tail-action}
T_R \Omega_{g,n} (-)= \sum_{k\geq 0}\frac{1}{k!}  (\pr_k)_* \Omega_{g,n+k}(-, T_R(\bp_{n+1}),\cdots, T_R(\bp_{n+k})),
\end{align}
assuming that the infinite sum makes sense in $ H^*( \M_{g,n})\otimes \aA$.
For example, if $\aA=F[\![q]\!]$ is  endowed with   $q$-adic topology and 
\begin{align}\label{qT}
T_R(z)\in z^2\aA[\![z]\!]\otimes V+q\, z\, \aA[\![z]\!]\otimes V, \end{align}
then \eqref{Tail-action} automatically converges. We call   \eqref{qT} the $q$-adic conditions for $T_R$.

\begin{definition}
The $R$-matrix action on a CohFT $\Omega$ is defined by
$$
R.\Omega :=  R T_R \Omega .
$$
\end{definition}

\subsection{Properties of CohFTs under $R$-matrix actions}

Following \cite{PPZ}, we have
\begin{theorem}\label{PPZThm}
Let $\Omega$  be   a CohFT with unit for the triple $(V,\eta,\mathbf 1)$. Let $\aA=F[\![q]\!]$ be endowed with $q$-adic topology. 
We have the followings.
 \begin{enumerate}
\item Let $(V',\eta',\mathbf 1') $ be another triple. Suppose  $R (z)\in  \End(V,V')[\![z]\!] $ is symplectic and 
 $T_R$ satisfies the $q$-adic condition.  
Then $T_R\Omega$ is well-defined and is a CohFT, and $R. \Omega$ is also a CohFT. 
Furthermore,  if $R^*(z)\in  \End(V',V)[\![z]\!]$ is symplectic\footnote{ This is equivalent to
$\dim_F V=\dim_F V'$.},   $R. \Omega$ is a CohFT with unit $\mathbf 1'\in V'$.
\item Suppose $(V'',\eta'' ,\mathbf 1'')$ is  another vector space with pairing and unit. Suppose
$$R_a (z)\in  \End(V,V')[\![z]\!] \and R_b (z)\in  \End(V',V'')[\![z]\!]
$$
 are symplectic, with $T_{R_a},T_{R_b}$  satisfying the $q$-adic conditions. 
Then as CohFTs on $(V'',\eta'',\mathbf 1'')$
$$ (R_aR_b). \Omega = R_a. (R_b. \Omega).
$$
\end{enumerate}
\end{theorem}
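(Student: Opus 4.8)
The plan is to follow the proof of Pandharipande--Pixton--Zvonkine \cite{PPZ} essentially verbatim for the square case $\dim_F V = \dim_F V'$, and then to isolate the finitely many places where the \emph{invertibility} of $R(z)$ (as opposed to mere injectivity) is used, checking that each such step survives under the weaker hypothesis \eqref{symplectic}. I would begin with well-definedness. The graph sum defining $(R\Omega)_{g,n}$ is a finite sum, since $G_{g,n}$ is a finite set, so the only convergence issue is the translation action \eqref{Tail-action}. The $q$-adic condition \eqref{qT} forces each tail insertion $T_R(\bp)$ to lie in $\bp^2\aA[\![z]\!]\otimes V + q\,\bp\,\aA[\![z]\!]\otimes V$; since $\bp$-monomials of high degree vanish on $\M_{g,n+k}$ for dimension reasons while the remaining terms carry strictly increasing powers of $q$, every fixed $q$-coefficient of $T_R\Omega_{g,n}(-)$ receives only finitely many nonzero contributions, and the sum converges in $H^*(\M_{g,n})\otimes\aA$. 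Thus $T_R\Omega$, and hence $R.\Omega = R\,T_R\Omega$, is well-defined.

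Next I would verify the CohFT axioms for the translation $T_R\Omega$ on the unchanged triple $(V,\eta,\mathbf 1)$. The fundamental class axiom (a1) follows from the comparison of $\bp$-classes under the forgetful maps $\pr_k$ together with $T_R(z)\in z\aA[\![z]\!]\otimes V$ (no constant term), which prevents the tail insertions from interfering with $\mathbf 1$. The splitting and genus-reduction axioms (a2) for $T_R\Omega$ reduce to those of $\Omega$ after distributing the tail marked points across the two components of the gluing, exactly as in \cite{PPZ}.

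The heart of part (1) is the splitting axiom for the graph action $R(\cdot)$. Pulling $(R\Omega)_{g,n}$ back along a separating gluing map $s$ and using the boundary-pushforward formalism, the contributing graphs in $G_{g_1+g_2,n}$ are organized as pairs of graphs for the two components joined by one distinguished edge. The edge bivector $\frac{\sum_\beta e_\beta\otimes e^\beta-\sum_\alpha R^*(-\bp_1)e'_\alpha\otimes R^*(-\bp_2)e'^\alpha}{\bp_1+\bp_2}$ (with $\bp_1,\bp_2$ the two half-edge $\bp$-classes) is designed so that its first, untwisted summand reproduces the node contribution $\sum_k\Omega(\cdots,e^k)\cdot\Omega(e_k,\cdots)$ demanded by (a2), while the subtracted $R$-twisted summand is absorbed by the two half-edge leg maps $R^*(-\bp)$ via the symplectic identity $R^*(-z)R(z)=\id_V$. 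This is precisely the step requiring care in the non-square case: I expect the main obstacle to be verifying that \emph{every} cancellation at a node invokes only $R^*(-z)R(z)=\id_V$ and never the reverse relation $R(z)R^*(-z)=\id_{V'}$, which fails when $\dim_F V<\dim_F V'$ (see the footnote to \eqref{symplectic}). The point is that each vertex map $\Omega_{g_v,n_v}$ takes values built from $V^{\otimes n_v}$ rather than $V'$, so after applying $\Omega$ the half-edge insertions are $V$-valued and all reductions take place inside $V$, where the one-sided identity suffices; this is the only genuinely new point relative to \cite{PPZ}. The genus-reduction axiom is identical with a non-separating edge. The unit statement is stated conditionally on $R^*(z)$ being symplectic, i.e. on $\dim_F V=\dim_F V'$; in that case $R$ is an isomorphism, the reverse identity also holds, and the full \cite{PPZ} argument applies unchanged to give $R.\Omega$ the unit $\mathbf 1'\in V'$.

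For part (2), I would expand the nested action as a two-level graph sum: evaluating the outer $R$-matrix places at each vertex the inner $R$-matrix CohFT, which is itself a graph sum, producing a sum over graphs refined by the inner edges. Contracting these inner edges and merging the two levels of leg and edge propagators, the multiplicativity of the matrices $R_a(z)R_b(z)$ together with the corresponding additivity of the translations (the translation cocycle relation among $T_{R_a}$, $T_{R_b}$, and $T$ of the composite) identifies the reorganized double sum with the single graph sum for the composite matrix, yielding the associativity identity of the theorem. The symplectic conditions on $R_a,R_b$ guarantee that their composite is symplectic and that the $q$-adic conditions propagate, so all intermediate CohFTs are well-defined; this combinatorial bookkeeping is routine once the single-action case of part (1) is in hand.
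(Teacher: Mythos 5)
Your overall strategy coincides with the paper's: both defer the splitting, genus-reduction, and composition axioms to the arguments of \cite{PPZ}, and your observation that every node cancellation only ever invokes the one-sided identity $R^*(-z)R(z)=\id_V$ (because the vertex maps are $V$-valued) is exactly the right reason why the non-square case goes through. The well-definedness discussion via the $q$-adic condition is also consistent with what the paper does.

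There is, however, one genuine gap, and it sits precisely at the point the paper singles out as needing a new argument. You assert that when $\dim_F V=\dim_F V'$ ``the full \cite{PPZ} argument applies unchanged'' to give the unit axiom. But \cite{PPZ} works under the additional hypotheses $V=V'$ and $R_0=\id$, in which case the translation $T_R(z)=z\mathbf 1-zR(-z)^*\mathbf 1'$ begins at order $z^2$ and contributes nothing to the genus-zero three-point value for dimension reasons. In the present theorem $R_0$ is only assumed invertible, so $T_0=\mathbf 1-R_0^{-1}\mathbf 1'$ is in general nonzero, and verifying $(R.\Omega)_{0,3}(\mathbf 1',\alpha,\beta)=\eta'(\alpha,\beta)$ requires resumming an infinite tail: one must reduce each term $\frac{1}{k!}(\pr_k)_*\Omega_{0,3+k}\bigl(R_0^{-1}\mathbf 1',R_0^{-1}\alpha,R_0^{-1}\beta,(T_0\bp)^{k}\bigr)$ to the topological part of $\Omega$, use $(\pr_k)_*(\bp_4\cdots\bp_{k+3})=k!$ and the splitting axiom to convert the extra insertions into quantum products, and then invoke the geometric-series identity $\sum_{k\ge0}R_0^{-1}\mathbf 1' * T_0^{*k}=(\mathbf 1-T_0)*\sum_{k\ge0}T_0^{*k}=\mathbf 1$ together with the symplecticity of $R_0^{-1}$. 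This is exactly the content of Lemma \ref{unit-lem} in the appendix, and it is the one piece of the proof that cannot be quoted from \cite{PPZ}; your proposal should supply this computation rather than wave it through.
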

\begin{proof} All statements can be proved by exactly the same arguments as in \cite[Prop 2.12 and 2.14]{PPZ}, except that for
the axioms on unit  in (1), the identity $(RT\Omega)_{0,3}(\mathbf 1, \tau_1,\tau_2)=\eta'( \tau_1, \tau_2)$
is shown in Lemma \ref{unit-lem}.  We leave other identities  to readers.
\end{proof}


\begin{remark}
We remark that in \cite{PPZ} the authors used $V=V'$ and $R_0=\id$.  In the next section  we will use $R$ actions in the case  $\dim_F V < \dim_F V'$. For more relations with \cite{PPZ}, see Example \ref{examplePPZ}.
\end{remark}

\subsection{Examples of CohFTs}  In this subsection,
we list some CohFTs  used in this paper.

\smallskip

We consider the following CohFTs that  arise   in the localization of
$\nmsp$ theory. As in \cite{NMSP2}, we pick a sufficiently large integer $\n$; let $G=(\CC\sta)^\n$,
and take $H\sta(BG)= \QQ[\ft_1,\cdots,\ft_\n]$
where $\ft_\alp$ is the $\alp$-th equivariant generator.
In this paper, after equivariant integration we will always specialize $\ft_\alp$ to $-\zeta_\n^{\alpha} \ft $,   where
$\zeta_\n=e^{\frac{2\pi i}{\n}}$ is the primitive $\n$-th root  of unity.  In this paper we always take $F=\QQ(\zeta_\n)(\ft)$ and $\aA=F[\![q]\!]$.

\subsubsection{CohFT $\Omega^{\ffp_\alpha,\tw}\!\!$ of twisted GW theory of a point}
The triple is
$$\sH_{\ffp_\alp}= H^0(\ffp_\alpha),\quad (\cdot,\cdot)^{\ffp_\alpha,\tw}, \quad 1_\alp:=1\in H^0(\ffp_\alpha)
$$
with the inner product
$$
\qquad (x,y)^{\ffp_\alpha,\tw}:=\frac{5}{\ft_\alpha^4 \prod_{\beta: \beta \neq \alpha} (\ft_\beta-\ft_\alpha)}xy
= \frac{-5}{\n \ft_\alp^{3} \ft^\n }xy. 
$$
Let $\EE_{g,n}$ be the  Hodge  bundle over $\M_{g,n}$; the maps are
\begin{equation*}
\quad\Omega^{\ffp_\alp,\tw}_{g,n}(\tau_{\mathbf n})=   (-1)^{1-g} \frac{e_T(\EE_{g,n}\dual\otimes (-\ft_\alp))^5}{(-\ft_\alp)^5} \frac{5\ft_\alp}{e_T(\EE_{g,n}\otimes5\ft_\alp)}
 \prod_{\substack{\beta: \beta\neq \alp}}\frac{e_T\big(\EE_{g,n}\dual\otimes (\ft_\beta-\ft_\alp) \big)}{ (\ft_\beta -\ft_\alp)}    \prod_i \tau_i .  \qquad
\end{equation*}
This gives us $\Omega^{\ffp_\alp,\tw}_{g,n}$.

We introduce a CohFT $\omega^{\ffp_\alp,\tw}$,  which is  the topological part of $\Omega^{\ffp_\alp,\tw}_{g,n}$:
the triple is $\sH_{\ffp_\alp}$ with the same inner product and unit;
the maps are defined by $\omega^{\ffp_\alp,\tw}_{g,n}(1^{\otimes n}) =  (\frac{-5}{\n \ft_\alp^{3 }\ft^\n })^{1-g}.$
By \cite{Mu83, FP, Giv2}, we have
\begin{proposition}\label{GRRpt} The following identity   between CohFTs holds
\begin{equation}\label{ptGRR}
\Omega^{\ffp_\alp,\tw} = \Delta^{\ffp_\alp}.  \omega^{\ffp_\alp,\tw} ,
\end{equation}
where the $R$-matrix $ \Delta^{\ffp_\alp}$ is given by 
\begin{equation*} \label{deltapt}
  \Delta^{\ffp_\alp}(z)= \exp \Bigl( {\sum_{k> 0 }     \frac{B_{2k}}{2k(2k-1)}   \Big(\frac{5}{(-\ft_\alpha)^{2k-1} }+\frac{1}{(5\ft_\alpha)^{2k-1}}+\sum_{\beta\neq \alpha}\frac{1}{(\ft_\alpha-\ft_\beta)^{2k-1} } \Big) z^{2k-1}}\Bigr).
\end{equation*}

\end{proposition}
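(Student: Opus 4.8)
The plan is to read Proposition~\ref{GRRpt} as the Givental--Mumford presentation of a Hodge-type class as an $R$-matrix action on a purely topological CohFT, exactly as in \cite{Mu83, FP, Giv2}. Since the state space $\sH_{\ffp_\alp}$ is one dimensional, the $R$-matrix is a scalar series $\Delta^{\ffp_\alp}(z)\in\aA[\![z]\!]$, and the symplectic condition \eqref{symplectic} forces $\Delta^{\ffp_\alp}(z)\Delta^{\ffp_\alp}(-z)=1$; hence $\log\Delta^{\ffp_\alp}$ is odd in $z$, already consistent with the claimed formula involving only the powers $z^{2k-1}$. Because the right-hand side is a CohFT by Theorem~\ref{PPZThm} and the left-hand side by construction, it suffices to match the underlying characteristic classes on each $\M_{g,n}$. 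As a first consistency check I would verify the degree-zero (topological) parts: setting $\ch_{\ge 1}(\EE_{g,n})=0$ collapses every Euler factor to a power of its weight, and a direct computation shows the product of these weights equals $\big(\tfrac{-5}{\n\ft_\alp^{3}\ft^\n}\big)^{1-g}$, i.e. exactly the value of $\omega^{\ffp_\alp,\tw}$; this also pins down $\Delta^{\ffp_\alp}_0=\id$.

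Next I would take the logarithm of the full characteristic class. Writing $\lambda_1,\dots,\lambda_g$ for the Chern roots of $\EE_{g,n}$ and using $\sum_i\lambda_i^{\ell}=\ell!\,\ch_\ell(\EE_{g,n})$, each factor expands as
\[
\log e_T(\EE_{g,n}\otimes w)=g\log w+\sum_{\ell\ge 1}(-1)^{\ell-1}(\ell-1)!\,\ch_\ell(\EE_{g,n})\,w^{-\ell},
\]
with the dual factors obtained from $\ch_\ell(\EE^\vee)=(-1)^\ell\ch_\ell(\EE)$. Collecting the three groups of Euler factors --- the fifth power at weight $-\ft_\alp$, the inverse class at weight $5\ft_\alp$, and the product over $\beta\ne\alp$ at weights $\ft_\beta-\ft_\alp$ --- yields an expression linear in the $\ch_\ell(\EE_{g,n})$ whose coefficient of $\ch_{2k-1}(\EE_{g,n})$ is $(2k-2)!$ times the weight combination $\frac{5}{(-\ft_\alp)^{2k-1}}+\frac{1}{(5\ft_\alp)^{2k-1}}+\sum_{\beta\ne\alp}\frac{1}{(\ft_\alp-\ft_\beta)^{2k-1}}$ appearing in $\Delta^{\ffp_\alp}$, the signs being fixed by the conventions of \cite{FP}.

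I would then invoke the two inputs from Mumford's theory: the vanishing $\ch_{2k}(\EE_{g,n})=0$, which kills the even terms and leaves the odd series, and the Grothendieck--Riemann--Roch formula
\[
\ch_{2k-1}(\EE_{g,n})=\frac{B_{2k}}{(2k)!}\Big(\kappa_{2k-1}-\sum_{j}\bp_j^{\,2k-1}+\tfrac{1}{2}\sum_{\mathrm{nodes}}\sum_{a+b=2k-2}(-1)^a(\bp')^{a}(\bp'')^{b}\Big),
\]
where the last sum runs over the boundary strata and $\bp',\bp''$ are the two psi classes at a node. The prefactor $(2k-2)!$ from the previous step combines with $\frac{B_{2k}}{(2k)!}$ into precisely $\frac{B_{2k}}{2k(2k-1)}$, reproducing the scalar $\log\Delta^{\ffp_\alp}$.

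Finally I would match the three terms of Mumford's formula against the three ingredients of the $R$-matrix action of \S\ref{Raction}: the $\kappa_{2k-1}$-term is absorbed by the translation (tail) action $T_{\Delta^{\ffp_\alp}}$ at each vertex, the leg terms $-\sum_j\bp_j^{2k-1}$ by the operator placed at the legs, and the node terms by the edge bivector, whose numerator $\sum e_\beta\otimes e^\beta-\Delta^{\ffp_\alp}(-\bp')\otimes\Delta^{\ffp_\alp}(-\bp'')$ expands, after division by $\bp'+\bp''$, into exactly the truncated geometric series $\sum_{a+b=2k-2}(-1)^a(\bp')^a(\bp'')^b$. I expect the main obstacle to be this last bookkeeping: one must show that the stable-graph sum defining the $R$-matrix action --- weighted by $1/|\Aut\Gamma|$ and symmetrized over the two branches of each (possibly self-)node --- reassembles the exponential of Mumford's boundary contributions with the correct factor $\tfrac12$ at self-nodes. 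This is the point-target specialization of the Givental--Teleman reconstruction, carried out for the present twist in \cite{FP, Giv2}; I would complete it by exponentiating $\log\Delta^{\ffp_\alp}$ and sorting the resulting monomials in the $\bp$-classes according to the marking or node to which each belongs.
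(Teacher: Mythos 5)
Your proposal is correct and follows exactly the route the paper intends: the paper offers no written proof of Proposition \ref{GRRpt}, only the citation to Mumford's GRR formula for the Hodge bundle and its repackaging as an $R$-matrix action in \cite{Mu83, FP, Giv2}, and your expansion of the logarithms of the twisted Euler classes in $\ch_{2k-1}(\EE_{g,n})$, followed by the matching of the $\kappa$-, $\psi$-, and node-terms of Mumford's formula with the tail, leg, and edge placements of \S\ref{Raction}, is precisely that standard argument. The only caveat is the sign bookkeeping in the coefficient of $\ch_{2k-1}(\EE_{g,n})$ (which you rightly defer to the conventions of \cite{FP} and of $R^*(-\psi)$), but this does not affect the validity of the approach.
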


\begin{remark}\label{defnoftopology}
We see  that  
the topological CohFT ${\omega^{\ffp_\alp,\tw}}$ has the same vector space as 
{that of} the CohFT $\Omega^{\ffp_\alp}$,
but with different inner  product.   In fact if we define
$$\tilde \Delta^{\ffp_\alp}(z):=
 {\sqrt{5/\n}}\cdot { \ft_\alpha^{-(3+\n)/{2}}}\,   \Delta^{\ffp_\alp}(z),
$$
then we have the CohFT identity\footnote{ The $T$-action here is well-defined with suitable topology.
We skip the argument as we don't need it.
}
$$
\Omega^{\ffp_\alp,\tw}  = \tilde \Delta^{\ffp_\alp}. \Omega^{\ffp_\alp}.
$$
\end{remark}


\noindent 
{{\bf Convention.}} {\sl For simplicity, in the following we write $\npt$ as the disjoint union of $\pt_\alpha$, $1\le \alpha\le \n$.
Accordingly, $\Omega^{\npt,\tw}=\oplus_{\alpha=1}^{\n} \Omega^{\pt_\alpha,\tw}$, $\omega^{\npt,\tw}=\oplus_{\alpha=1}^{\n} \omega^{\pt_\alpha,\tw}$, etc.}

\subsubsection{CohFT  $\Omega^{Q,\tw}$ {of the} twisted GW theory of quintic threefold and the shifted CohFT $\Omega^{Q,\tw,\tau}$} 
Let $Q$ be a smooth quintic CY threefold. The CohFT $\Omega^{Q,\tw,\tau}$ consists of the triple
$$\sH_Q=H\sta (Q),\quad  (x,y)^{Q,\tw}=\int_Q\frac{xy}{\prod_{\alp=1}^\n (H+\ft_\alp)}=\int_Q \frac{xy}{-t^\n}, \quad 1_Q:=1 \in H\sta (Q),
$$
and the  map  
$$\Omega^{Q,\tw}_{g,n}(\tau_{\mathbf n})= 
\sum_{d=0}^\infty q^d \cdot
\ff_{g,n,d\ast}\Big( \frac{\ev_1\sta \tau_1 \cdots \ev_n\sta \tau_n}{\prod_{\alp=1}^\n e\big(  R\pi\lsta \ev_{n+1}\sta\sO(1)\cdot \ft_\alp \big)}  \cap
[\M_{g,n}(Q,d)]\virt\Bigr). 
$$

\begin{remark}\label{Q-to-tw} By dimension reason one calculates
$$
\Omega^{Q,\tw}_{g,n}(\tau_{\mathbf n})=  {(-\ft^\n)^{(g-1)} } \Omega^{Q}_{g,n}(\tau_{\mathbf n}) |_{q\mapsto q':=-q/\ft^\n}.
$$
\end{remark}
\smallskip

%

By the fundamental class axiom, if $\tau$ is a scalar multiple of the unit, $\Omega^\tau=\Omega$, for any CohFT $\Omega$.  In particular
$\Omega^\pt$ and $\Omega^{\pt,\tw}$ are not affected by any shift. Also, for $\tau \in   \sH_Q\otimes_F \aA  $, we denote by $\Omega^{Q,\tw,\tau}$ the $\tau$-shifted CohFT
of $\Omega^{Q,\tw}$. 

{\noindent}{\bf Convention}.
{\sl By abuse of  notations,  we denote
$\Omega^{Q,\tw}=\Omega^{Q,\tw,\tau_Q(q')}$ from now on.
Here $\tau_Q(q):=J_1(q) = I_1(q)/I_0(q)$ is the mirror map.} 

\subsubsection{CohFT  $\bar\Omega^{ Q,\tau}$ of ``normalized" shifted GW theory of  the  quintic threefold}  \label{cohftNQ}
We consider the following ``normalized" CohFT
$$
\bar\Omega^{ Q,\tau_Q}_{g,n}(-) :=  {(5Y)^{g-1}} \,  \Omega^{ Q,\tau_Q}_{g,n}(-) ,\qquad (v_1,v_2)^{ Q,\bar{} } =  {(5Y)^{-1}}(v_1,v_2)^Q.
$$
{We can see that} the graph sum defined in \S \ref{NMSPrule} is indeed an $R^{\bA,\cG}$-action on the normalized quintic CohFT  $\bar\Omega^{ Q,\tau}$.
In  \eqref{normalizedquintic}, the factor ${(5Y)^{g-1}}$ is from the above normalization factor, while $I_0^{-(2g-2+n)}$ is obtained by applying dilaton equations to the tail contributions (c.f. \eqref{normalizedR}, see also \cite[Sect.\,3.5]{NMSP2}). 

Further, with the change of variable $q\mapsto q':=-q/\ft^\n$ and by adding the normalized factor 
$(-5Y/\ft^\n)^{(1-g)}$, we can identify {these two CohFTs:}
\beq \label{twnormalized} 
\Omega^{Q,\tw}_{g,n}(-)=   \Big[ (-5Y/\ft^\n)^{(1-g)}  \bar\Omega^{Q,  \tau_Q  }_{g,n}(-) \Big] |_{q\mapsto q':=-q/\ft^\n}. 
\eeq




\subsubsection{CohFT $\Omega^{\aleph}$  }
The following CohFT is of fundamental importance to this paper:
 \begin{definition} \label{Omegaaleph}
 For $\aleph:= Q\cup \npt$,  we define the CohFT of the local theory to be
\begin{align*} 
 \Omega^{\aleph}:=  \Omega^{Q,\tw}  \oplus \omega^{\npt,\tw},
\end{align*}
where the triple is $\sH:=H\sta(\aleph)$, with the pairing and the unit   
$$(\cdot,\cdot)^{\tw}=(\cdot |_Q,\cdot|_Q)^{Q,\tw}+(\cdot |_{\ffp},\cdot |_{\ffp})^{\ffp,\tw},\qquad \mathbf 1=  \mathbf 1_Q+   \sum_\alp \mathbf 1_{\alp} .
$$
Here $\cdot|_{Q}: \sH\to\sH_0$ and $\cdot|_{\alp}: \sH\to\sH_{\ffp_\alp}:=H\sta(\ffp_\alp)$ are the projections.

\end{definition}

\noindent{{\bf Convention.}}  {\sl
Let $G=(\CC\sta)^{\n}$  act on   $\PP^{4+\n}$ via scaling the last $\n$ homogeneous coordinates of $\PP^{4+\n}$.
Let $p$ the equivariant-hyperplane class in $H^2_G(\PP^{4+\n})$.
In this paper, we will view $p^i$ as their images in
$\sH\uev:= H\uev(\aleph, \aA)\sub \sH$.}




\medskip

Now we recall some basic facts in the setup from \cite{NMSP2}. Considering the   natural decomposition $   \sH=\sH\uev\oplus H^3(Q) ,$
we pick a basis $\{\phi_i:=  {p^i} \}_{i=0}^{\n+3}$ of $\sH\uev$ with dual basis 
 $$
 \text{\small $\{\phi^0,\cdots,\phi^{\n+3}\}= \Bigl\{  \frac{p^3}{5}(p^\n -\ft^\n) ,    \frac{p^2}{5 }(p^\n-\ft^\n) ,  \frac{p}{5 }(p^\n-\ft^\n) ,   \frac{1}{5 }(p^\n- \ft^\n), \frac{p^{\n-1}}{5}, \frac{p^{\n-2}}{5}, \cdots,  \frac{p^{0}}{5}    \Bigr\}.$}
$$
By using the above basis, let $[\n]:=\{1,\cdots,\n\}$ we have
$$
\mathbf 1_\alp = \frac{p^4}{\ft_\alp^4}\prod_{\beta\neq \alp} \frac{\ft_\beta+p}{\ft_\beta-\ft_\alp}  \quad 
\text{for $\alpha\in[\n]$} ,\and
H^j = \frac{p^j}{\ft^\n}(\ft^\n -p^\n) \quad \text{for $j=0,1,2,3$}. 
$$
The Poincare dual of $\{1,H,H^2,H^3\} \cup \{1_\alp\}_{\alp=1,\cdots,\n}$ is 
     $$  \text{ \small $
    \{\frac{-\ft^\n}{5} H^3,\frac{-\ft^\n}{5} H^2, \frac{-\ft^\n}{5} H, \frac{-\ft^\n}{5} H^0 \}\cup\{  1^{\alpha}:=  \frac{\n \ft_\alp^3 \ft^\n}{(-5)}1_\alp     \}_{\alp=1,\cdots,\n} $}.$$

\begin{remark} In \cite{NMSP2} we use the notation
$$
[-]_{g,n}^{\bullet},\quad \text{
where $\bullet$ = ``$\lloc$", ``$Q,\tw$" or ``$\ffp_\alp,\tw$" , 
 }
$$
to define certain classes. They are closely related to the CohFT notation $\Omega^\bullet$ used here, with
a minor change:  in  
$\Omega^\bullet_{g,n}(-)$ descendents are not allowed, while in $[-]^\bullet_{g,n}$ they are.
\end{remark}

\vspace{1cm}

\section{Expressing $\nmsp$-$[0,1]$ theory via CohFTs}
{We first quote the results proved in \cite{NMSP2} in terms of the CohFT and $R$ matrix actions.}
The moduli of $\nmsp$ fields and their localizations are constructed in \cite{NMSP1}. 
In this paper we concentrate on the ``$\nmsp$-$[0,1]$ theory".
For $2g+n>2, \tt_i\in \sH[\![z]\!]$,  the $[0,1]$ theory is
\beq
\bigl[\tau_{\mathbf n} \bigr]^{[0,1]}_{g,n}
=  \sum_{d\ge 0}  (-1)^{d+1-g } q^d \!\!\!
 \sum_{\locg\in G_{g,n,d}^{[0,1]}}
\bl \pr_{g,n}\br\lsta \Bigl(\prod_{i=1}^n \ev_i\sta \tt_i \cdot \Cont_\locg \Bigr) \in H^*(\M_{g,n},\aA),
\quad
\eeq
where $\Cont_\locg$ are   contributions from those $\nmsp$ localization graphs supported on $[0,1]$(c.f. \cite[Def 0.1]{NMSP2}). 
Here $\pr_{g,n}:\cW_{g,n,\bd}\to\M_{g,n}$ is the projection.

We remark that in this paper we adopt the convention that 
$\bigl[\tau_{\mathbf n}\bigr]^{[0,1]}_{g,n}=\bigl[\tau_1,\cdots,\tau_n\bigr]^{[0,1]}_{g,n}.$
\begin{definition}\label{01theory-def}  We define $\displaystyle
\Omega^{[0,1]}_{g,n}(\tau_{\mathbf n})=\bigl[\tau_{\mathbf n}]^{[0,1]}_{g,n}$,   for $\tau_i\in \sH$.
 \end{definition}

In \cite[Thm~3 (Thm~3.1)]{NMSP2} we express $\Omega^{[0,1]}$ as a graph sum involving $R(z)$. 

\begin{theorem}[{\cite[Thm~3 and Thm~4]{NMSP2}}]\label{Omega01}
We define $R(z)\in \End \sH  \otimes \aA[\![z]\!]$ via the Birkhoff factorization
\begin{equation}\label{birkR}
\ S\msp(q,z) \begin{pmatrix} \diag \{\Delta^{\ffp_\alp}(z) \}_{\alpha=1}^\n & \\ &1  \end{pmatrix}
= R(z)    \begin{pmatrix} \diag \{S^{\ffp_\alpha}(z)  \}_{\alpha=1}^\n& \\ & S^Q(z)  \end{pmatrix} \Big|_{q\mapsto q'}
\end{equation} where $q'=-\frac{q}{t^\n}$, and $S\msp,S^Q$ are the $S$-matrices of $\nmsp$-$[0,1]$ theory and quintics
respectively\footnote{ See \cite[Sect.\,1]{NMSP2} for definitions of these $S$-matrices.}.
The $\nmsp$-$[0,1]$ theory    $\Omega^{[0,1]}$ forms a CohFT satisfying
$$ \Omega^{[0,1]}  \  =  \  R. \Omega^{\aleph}.  $$
Furthermore, for $2g-2+n>0$, the $\nmsp$-$[0,1]$ correlator
$$
\left<  \phi_{m_1}\bp^{k_1} ,\cdots, \phi_{m_n}\bp^{k_n} \right> ^{[0,1]}_{g,n} =  \int_{[\M_{g,n}]}  \psi_{1}^{k_1}\cdots\psi_n^{k_n} \cdot \Omega_{g,n}^{[0,1]}( \phi_{m_1},\cdots, \phi_{m_n})
$$
is a $q$-polynomial of degree $\le g-1+\frac{3g-3+ \sum_{i=1}^n m_i}{\n}$.
 \end{theorem}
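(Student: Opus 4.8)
The theorem asserts both a structural identity, $\Omega^{[0,1]} = R.\Omega^{\aleph}$, and the polynomiality of the resulting correlators; I would treat the identity as the organizing input and concentrate on the degree bound, which is the real content. For the identity, virtual localization on the NMSP moduli presents each $\Omega^{[0,1]}_{g,n}$ as a sum over stable graphs whose vertices carry the local theories $\Omega^{Q,\tw}$ and $\omega^{\ffp_\alpha,\tw}$ and whose edges and legs carry universal factors; since the associated genus-$0$ Frobenius algebra $\sH$ is semisimple, this graph sum is forced into the $R$-action form of \S\ref{Raction}, with the symplectic $R$ pinned down by matching fundamental solutions, i.e. the Birkhoff factorization \eqref{birkR}. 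I would take this as given and reduce the polynomiality to a regularity statement in $\ft$.

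The mechanism I would exploit is that the Novikov variable enters the whole construction only through the combination $q' = -q/\ft^\n$: the point vertices $\omega^{\ffp_\alpha,\tw}$ are topological and $q$-independent, while both $\Omega^{Q,\tw}$ (Remark \ref{Q-to-tw}) and $R(z)$ (through the substitution $q\mapsto q'$ in \eqref{birkR}) see $q$ only via $q'$. A routine $\CC^\ast$-equivariant dimension count then shows that $\int_{[\M_{g,n}]}\psi_1^{k_1}\cdots\psi_n^{k_n}\,\Omega^{[0,1]}_{g,n}(\phi_{m_1},\cdots,\phi_{m_n})$ is homogeneous of total $\ft$-degree $w = \n(g-1)+3g-3+\sum_i m_i$, where each $\phi_{m_i}=p^{m_i}$ carries equivariant degree $m_i$ and $q$ is assigned weight $\n$. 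Homogeneity forces every monomial $q^d$ to appear with exactly $\ft^{\,w-\n d}$, so the claimed bound $d\le w/\n = g-1+\frac{3g-3+\sum_i m_i}{\n}$ is \emph{equivalent} to the absence of negative powers of $\ft$, i.e. to the $\ft$-regularity of the correlator (the $q$-coefficients being genuine constants in $\QQ(\zeta_\n)$ rather than Laurent tails).

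To establish that regularity I would anchor it in the geometry of \cite{NMSP1}: although each $\CC^\ast$-fixed graph contributes a rational function of $\ft$ with poles along the fixed-point weights, the NMSP moduli $\cW_{g,n,\bd}$ carries a cosection whose localized virtual cycle is supported on a proper locus, so the \emph{full} NMSP invariant is pole-free. For the $[0,1]$ sub-sum specifically, I would use the identity $\Omega^{[0,1]} = R.\Omega^{\aleph}$ and argue that the Birkhoff factor $R(z)$ of \eqref{birkR} has $\ft$-regular coefficients: the level-$\infty$ data is packaged through the $S^{\ffp_\alpha}$- and $\Delta^{\ffp_\alpha}$-matrices of Proposition \ref{GRRpt}, and the factorization separates their singular $z$-behavior from the regular $R$ part. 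Combining the homogeneity of the second paragraph with this regularity, and then specializing $\ft_\alpha=-\zeta_\n^\alpha\ft$, yields the stated $q$-degree bound.

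\textbf{The main obstacle} is precisely that the $[0,1]$ theory is a proper sub-sum of the full NMSP localization, so its $\ft$-regularity cannot be quoted verbatim from the properness of the full virtual cycle: the individual $[0,1]$-graph contributions are genuinely singular in $\ft$, and the cancellation producing regularity must be exhibited. Making this rigorous requires showing that the $R$-matrix entries inherit bounded, pole-free $\ft$-degree through the Birkhoff factorization \eqref{birkR}, and matching the virtual-dimension count exactly to the fractional term $\frac{3g-3+\sum_i m_i}{\n}$; this $\ft$-pole and degree bookkeeping is where the bulk of the work lies.
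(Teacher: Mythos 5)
There is a genuine gap, in fact two. First, the paper does not reprove this statement: Theorem \ref{Omega01} is quoted verbatim from \cite[Thm~3 and Thm~4]{NMSP2}, and the only argument supplied here is the observation that $R(z)=R^{\lloc}(z)\cdot(\id\oplus\Delta(z)^{\npt})$ is a composition of two explicit $R$-matrix actions (torus localization and Grothendieck--Riemann--Roch at the points), that both satisfy the $q$-adic condition, and that Theorem \ref{PPZThm} then yields $R.\Omega^{\aleph}=\Omega^{[0,1]}$. Your route to the identity $\Omega^{[0,1]}=R.\Omega^{\aleph}$ via ``semisimplicity of the genus-$0$ Frobenius algebra forces the graph sum into $R$-action form'' does not work: the state space $\sH$ contains $H^*(Q)$ as a summand and the level-$0$ vertices carry the full (shifted, twisted) quintic GW classes, which form a CohFT that is not semisimple, so no Givental--Teleman uniqueness is available. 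The identity has to be established by directly organizing the $\CC^*$-localization contributions into the two explicit $R$-actions above; this is a geometric matching (done in \cite{NMSP2}), not a formal reconstruction.

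Second, for the degree bound your reduction is the right mechanism --- $q$ enters only through $q'=-q/\ft^{\n}$, so homogeneity in $\ft$ of weight $w=\n(g-1)+3g-3+\sum_i m_i$ makes the coefficient of $q^d$ a constant times $\ft^{\,w-\n d}$, and the bound $d\le w/\n$ is exactly $\ft$-regularity --- but the regularity of the $[0,1]$ \emph{sub-sum} is the entire content of \cite[Thm~4]{NMSP2}, and you explicitly leave it open. Properness of the cosection-localized cycle only controls the full NMSP correlator; the individual $[0,1]$ graph contributions have genuine $\ft$-poles, and the cancellation among them (equivalently, the control of the discarded level-$\infty$ contributions) is the hard step. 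As written, the proposal identifies the correct skeleton and the correct obstruction but proves neither the CohFT identity nor the polynomiality; both would have to be imported from \cite{NMSP2} or reproved in full.
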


A few remarks on Theorem \ref{Omega01} are in order.
{The whole argument  \cite[Sect.\,3.5]{NMSP2}  is  a composition of $R$-matrix actions on CohFTs}
$$
\Omega^\aleph=\Omega^{Q,\tw} \oplus   \omega^{\npt,\tw}  \xrightarrow{\quad \id\oplus (\oplus_\alp\Delta^{\pt_\alp })\quad}
  \Omega^{Q,\tw } \oplus   \Omega^{\npt,\tw}  \xrightarrow{\quad\, R\loc \,\quad } \Omega^{[0,1]} .
$$
Here  $R^\lloc$ is the $R$ matrix for torus localizations\footnote{ $R\loc$ is defined in \cite[Defn.\,1.7]{NMSP2}.},
 and $\Delta^{\ffp_\alp}$ is from Grothendieck-Riemann-Roch(GRR) formula at $\ffp_\alp$(c.f.\eqref{ptGRR}, see \cite{Mu83, FP, Giv2}). The  $q$-adic condition for the GRR's $R$ matrix holds since $ \Delta^{\ffp_\alp} = 1+O(z)$.  The  $q$-adic condition  for   $R\loc$ holds because its
 tail $T_{R\loc}$ lies in $q\aA\otimes V$ by  \cite[(3.10), Remark 3.4]{NMSP2}. 
  Thus Theorem \ref{PPZThm} implies $R.\Omega^\aleph=\Omega^{[0,1]}$, where $R$ is   the composition of these two actions
$$
R (z):=  R \loc(z)  \cdot (\id \oplus \Delta(z)^{\npt })\in \End( \sH) \otimes \aA[\![z]\!].
$$
It satisfies the {defining identity} \eqref{birkR} by \cite[Remark~3.6]{NMSP2}. 

 We define $R_i \in \End( \sH) \otimes \aA$ via
\begin{align*}
 R (z)= R_0 +R_1  z+R_2  z ^2 +\cdots. \end{align*}

\subsection{$\Omega^{[0,1]}$-theory in terms of stable bipartite   graphs of $\Omega^{[0]}$ and $\Omega^{[1]}$-theory}
In this section we decompose $\Omega^{[0,1]}$ into two subtheories. Such decomposition holds for $R$-matrix action on a general direct sum of CohFTs.

\begin{definition}[Restriction of $R$-matrix action on small blocks]
For $\tau_i \in \sH$  ($i=1,\cdots,n$) and $\star = 0$ or $1$ , we  define
$$
\qquad \Omega^{[\star]}  := \begin{cases}
   R^{[0]} . \Omega^{Q,\tw} & \star = 0 \\
   R^{[1]} .  \omega^{\npt,\tw}  & \star = 1
   \end{cases} \  ,
\qquad
$$
where the  $R^{[\star]}$-matrices are
\begin{align*} 
R^{[0]}(z)=R(z) |_{Q} &:=R(z)|_{\sH_Q} \in  \Hom(\sH_Q, \sH)[\![z]\!], \\
 R^{[1]}(z)=R(z) |_{\npt} &:=R(z)|_{\sH_{\npt}} \in  \Hom(\sH_{\npt}, \sH)[\![z]\!]. \nonumber
\end{align*}
 Notice that here \
 $$\sH_Q=\sspan\{ \phi^i \}_{i=0}^3 \oplus \sH_Q^{odd}  \and \sH_{\npt}=\sspan\{ \phi_j\}_{j=4}^{\n+3}$$
  have dimensions strictly less than that of $\sH$.
\end{definition}

\begin{remark}
By the definition, for $\star = 0$ or $1$
$$
\Omega^{[\star]}_{g,n}(\tau_1 ,\cdots,\tau_n) \in H^*(\M_{g,n})
$$
is equal to the summation of those stable graph contributions in
$$(R.\Omega^{\aleph})_{g,n}(\tau_1 ,\cdots,\tau_n)$$
whose vertices are all labeled by $\star$.   
\end{remark}

\begin{remark}  In this paper all operators from {$\sH_Q$ to $\sH_Q$ (resp. $\sH$ to $\sH$)}
are   identity on odd classes and send even classes to even classes. Hence we only describe their action on even classes.    \end{remark}

In this paper a stable graph is a graph whose vertices are decorated by genus, such that $2g_v-2+n_v>0$, where $n_v:=|E_v|+|L_v|$ is the valence of the vertex $v$.  A stable graph is called bipartite if each vertex is further decorated by  (level) $1$ or $0$, and each edge connects vertices of different levels.
 Let $\Xi_{g,n}^\01$ be the set of stable bipartite graphs, with total genus $g$ and $n$ many legs. 
For a stable bipartite graph $\Lam$, we use $V(\Lam)$, $E(\Lam)$ to denote the set of its vertices,  edges {respectively; use $V_{0}(\Lam)$ to denote its level $0$ vertices, etc..} 

\begin{theorem}\label{01bipartie}
We have the following stable bipartite graph formula
\begin{multline} \label{bip01}
\Omega^{[0,1]}_{g,n}(\tau_1,\cdots, \tau_n) = \sum_{\bipg \in \Xi_{g,n}^\01} (\xi_\bipg)_* \Big( \sbigotimes_{v\in V_0(\bipg)}\Omega^{[0]}_{g_{v},n_{v}}\Big) \bigotimes  \Big(  \sbigotimes_{v'\in V_{1}(\bipg)} \Omega^{[1]}_{g_{v'},n_{v'}}  \Big) \\
\bigg( \sbigotimes_{v\in V_0(\bipg),\atop l\in L_v}  \!\!\! \tau_l,\quad   \sbigotimes_{v'\in V_{1}(\bipg),\atop l'\in L_{v'}} \!\!\! \tau_{l'},  \quad\sbigotimes_{e=(v,v')\atop\in E(\bipg)} V^{01} (\psi, \psi')\bigg)\qquad
\end{multline}
where we define
\begin{align}\label{V01} \textstyle
 {V^{01}}(z,w) :=  \sum_{\alpha=1}^N \frac{R^{[1]}(z)  -R^{[1]}(-w)  }{z+w} \mathbf 1_\alpha \otimes  R^{[1]}(w)\mathbf 1^\alpha.
\end{align}
\end{theorem}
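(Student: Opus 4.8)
The goal is to prove Theorem~\ref{01bipartie}, the stable bipartite graph decomposition of the $\nmsp$-$[0,1]$ theory. The plan is to start from the definition $\Omega^{[0,1]} = R.\Omega^\aleph$ (Theorem~\ref{Omega01}) and unfold the $R$-matrix action in terms of the stable graph sum from \S\ref{Raction}, then reorganize the sum according to the bipartite structure induced by the direct sum decomposition $\sH = \sH_Q \oplus \sH_{\npt}$ (with the point part further split into the $\n$ summands). First I would recall that $\Omega^\aleph = \Omega^{Q,\tw}\oplus\omega^{\npt,\tw}$ splits as a direct sum of CohFTs, so every vertex-insertion $\Omega_{g_v,n_v}$ appearing in the $R.\Omega^\aleph$ graph sum evaluates the direct-sum CohFT, which by definition is a \emph{sum} of the $Q$-part and the point-part. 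The key combinatorial observation is that because the pairing $\eta = (\cdot,\cdot)^\tw$ is block-diagonal with respect to $\sH_Q\oplus\sH_{\npt}$ and $R_0=\id$ preserves neither block in general, one must track how the edge bi-vectors mix the two blocks.

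The central step is the following reorganization. In the $R$-matrix graph sum, at each vertex one chooses which block ($Q$ or point) the vertex CohFT is evaluated in; call a vertex ``level $0$'' if it uses $\Omega^{Q,\tw}$ and ``level $1$'' if it uses $\omega^{\npt,\tw}$. This labeling converts each $\Gamma\in G_{g,n}$ together with a choice of level at each vertex into a decorated graph, and by the definition of $\Omega^{[\star]}$ (restriction of the action to a single block) the aggregate of all same-level vertex contributions, \emph{together with the edges joining two vertices of the same level}, is precisely $\Omega^{[0]}$ or $\Omega^{[1]}$. The remaining edges are exactly those joining a level-$0$ vertex to a level-$1$ vertex; contracting every monochromatic edge and keeping only the bichromatic edges is what produces the stable bipartite graph $\Lam\in\Xi^\01_{g,n}$. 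This is the heart of the argument: I would make precise that summing over all $\Gamma$ refining a fixed bipartite skeleton $\Lam$, and grouping the monochromatic sub-sums, reproduces $\Omega^{[0]}$ and $\Omega^{[1]}$ at the contracted level-$0$ and level-$1$ super-vertices respectively.

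It then remains to identify the bi-vector placed on each bichromatic edge. The generic edge term in the $R$-action (step (2) of \S\ref{Raction}) is
$$
\frac{\sum_\beta e_\beta\otimes e^\beta - \sum_\alpha R^*(-\psi)e_\alpha\otimes R^*(-\psi')e^\alpha}{\psi+\psi'},
$$
and a level-$0$--level-$1$ edge contributes only the part of this expression that pairs a $\sH_Q$-leg with a $\sH_{\npt}$-leg. Because the pairing is block-diagonal, the ``$\sum_\beta e_\beta\otimes e^\beta$'' term is purely monochromatic and drops out of the mixed edge, so the mixed contribution is governed solely by the off-diagonal part of $R^*(-\psi)\otimes R^*(-\psi')$. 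Restricting $R^*$ to the point block gives $R^{[1]}$, and after using the symplectic condition \eqref{symplectic} to rewrite $R^*(-\psi)$ in terms of $R(\psi)$, this collapses to the claimed $V^{01}(\psi,\psi')$ in \eqref{V01}, where the sum over $\alpha$ runs over the point basis $\{\mathbf 1_\alpha\}$ with dual $\{\mathbf 1^\alpha\}$. I expect the main obstacle to be bookkeeping rather than depth: one must verify that the $\Aut$-factors and the gluing maps $\xi_\bipg$ are compatible under the refinement-to-contraction passage — i.e.\ that $\tfrac{1}{|\Aut\Gamma|}$ summed over refinements $\Gamma$ of $\Lam$ correctly reassembles into $\tfrac{1}{|\Aut\Lam|}$ times the product of the $\Omega^{[\star]}$-correlators, each of which already carries its own internal automorphism weighting. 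This is the standard subtlety in any such graph-decomposition argument, and I would handle it by the usual orbit-counting comparison, treating the monochromatic edges as internal data of the super-vertex CohFTs $\Omega^{[0]},\Omega^{[1]}$ and the bichromatic edges as the genuine edges of $\Lam$.
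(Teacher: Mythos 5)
Your proposal follows essentially the same route as the paper's proof: unfold the $R$-matrix graph sum for $R.\Omega^{\aleph}$, label each vertex by the block of $\Omega^{Q,\tw}\oplus\omega^{\npt,\tw}$ it evaluates, absorb the monochromatic sub-graphs (vertices, tails and same-level edges) into $\Omega^{[0]}$ and $\Omega^{[1]}$, and identify the bichromatic edge with $V^{01}$. The one step you leave implicit is exactly the computation the paper writes out: the mixed edge equals $\bigl(R^{[0]}(-z)^{*}\otimes R^{[1]}(-w)^{*}\bigr)\cdot V^{01}(z,w)$, which uses the two block identities $R^{[0]}(-z)^{*}R^{[1]}(z)=0$ and $R^{[1]}(-z)^{*}R^{[1]}(z)=\id$ from the symplectic condition so that this prefactor is absorbed into the leg operators of the two sub-theories.
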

\begin{proof}
Just notice that for the graph sum formula of $[0,1]$-CohFT (via the $R$-matrix action on $\Omega^{\aleph}$), 
the contribution of  an edge that connects a $V_0$ vertex and a $V_1$ vertex is given by
\begin{align*} 
\Cont_{E_{01}}=  & \ \textstyle \sum_{i=0}^{\n+3} \frac{ -  R^{[0]}(-z)^*  \phi_i \otimes R^{[1]}(-w)^*  \phi^i }{z+w}  \\
 = &\textstyle \ \sum_{\alpha=1}^\n \frac{R^{[0]}(-z)^*\big(R(z)  -R(-w) \big)  \mathbf 1_\alpha \otimes  \mathbf 1^\alpha }{z+w}  =  \text{$\big( R^{[0]}(-z)^*\otimes R^{[1]}(-w)^*\big)  \cdot    V^{01}(z,w)   $}, \nonumber
\end{align*}
where we have used the symplectic condition,
$$R^{[0]}(-z)^* R^{[1]}(z)=0,\qquad    R^{[1]}(-z)^* R^{[1]}(z)=\id\in \End(\sH_{\npt}).$$
The graph sum formula then follows from the definition of the $R$-matrix action.
\end{proof}

The basis $\{ \mathbf 1_\alp  \}$ and $\{\mathbf 1^\alp\}$ in \eqref{V01} can be replaced by any basis of $\sH_{\npt}$ and   its dual.

\begin{example} The following is an example of a stable bipartite graph of total genus $9$ and two insertions $(\tau_1,\tau_2)$:
\vspace{-0.3cm}
\begin{figure}[H]
  \centering
  \begin{tikzpicture}[scale=0.8]
  \draw[dashed] (4,0)--(14.5,0) (3.5,-0.3) node[above] {$0$};
  \draw[dashed] (4,3)--(14.5,3)  (3.5,2.7) node[above] {$1$};
    \draw (5.5, 2) --  (6, 3) (5.5, 1.5)  node[above] {$\tau_1$} ;
     \draw (10.7, 1) --  (10.7, 0)  (10.7, 1)  node[above] {$\tau_2$};
    \draw (8, 0) .. controls (6.5, 1) and (6.2, 2)  .. (6, 3);
    \draw (8, 0) .. controls (7.8, 1) and  (7.5, 2)  .. (6, 3);
    \draw[fill]   (8, 0) circle(1mm) (8.2, 0) node[below] {$3$}  (6, 3) circle(1mm) node[above] {$0$}   ;
    \draw (8, 0) --  (9, 3);
    \draw[fill]  (10.7, 0) circle(1mm)   -- (12.4, 3) circle(1mm) node[above] {$2$} (10.9, 0) node[below] {$2$}  ;
    \draw[fill] (10.7, 0) -- (9, 3) circle(1mm) node[above] {$1$}   ;
  \end{tikzpicture}
\end{figure}
\vspace{-0.3cm}
\end{example}

\begin{convention}\label{conven} In the remainder of this and the next section, we
use $K  \in \{L,Y,X,I_k, A_i,B_i\}$ to mean the function $K|_{q\mapsto q'}$ of $q':= - {q}/{t^\n}$.  
For example,
  $L= (1+5^5 \frac{q}{\ft^\n})^{\frac{1}{\n}}$. 
\end{convention}

\begin{convention}From now on, we assume $\n$ is a prime.
\end{convention}

\subsection{Polynomiality of $[1]$-theory}\label{algorithmBCOV}
Let $R(z)= \sum_k R_k z^k $ and 
$$
\textstyle  V(z,w)=\sum_{k,l}  V_{kl} z^k w^l :=  \sum_j  \frac{1}{z+w} ({ \phi_j \otimes   \phi^j-  R(z)^{-1} \phi_j \otimes  R(w)^{-1} \phi^j}).
$$

 We start by a key Lemma in \cite{NMSP2}:

\begin{lemma}[{\cite[Lemma {C.1}]{NMSP2}}] \label{keylem}
Let $k,l\geq 0$, $a = 0,1,\cdots,\n\!+\!3$ and  $\alpha, \beta \in [\n]$. We consider the entries
$$ 
 (R_k)^\alpha _{a}:= \,    L^{\frac{\n+3}{2}} \cdot L_\alpha^{-a+k} (\mathbf 1^\alpha, R_k^*  \phi_a),\quad \text{with $L_\alpha := \zeta_\n^\alpha   \cdot t\cdot L $}  .
$$
\begin{itemize}
\item For the $R$-matrix, we have  that  $(R_k)_{a}^{ \alpha}$ does not depend on $\alpha$, and
\begin{align}\label{Rkform}
(R_k)_{a}:= (R_k)_{a}^{ \alpha} \in  \mathbb Q[X]_{k+\lfloor\frac{a}{\n}\rfloor}.
\end{align}
\item
 For the $V$-matrix, we have that
 $V_{kl} |_{\npt \times \npt}$ is of the following form
\begin{align*}
V_{kl} |_{\npt \times \npt}= L^{-3} t^\n
 \sum_{\alpha,\beta }  \sum_{s}  L_\alpha^{s-k} L_\beta^{2-s-l}   \cdot (  V_{kl})^{\alpha \beta; s} \,\mathbf 1_\alpha\otimes\mathbf 1_\beta, 
\end{align*}
such that the entries ${(V_{kl})}^{\alpha \beta;s}$  are independent of $\alpha,\beta$ and 
 \begin{align} \label{Vkl-poly}
  {(V_{kl})}^{\alpha \beta;s} 
\in   \mathbb Q[X]_{k+l+{1}} .
\end{align}
\end{itemize}
\end{lemma}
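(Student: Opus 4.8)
The plan is to prove this lemma by explicitly analyzing the structure of the $R$-matrix defined via the Birkhoff factorization \eqref{birkR}, and propagating the polynomiality through the definition of $V(z,w)$. Since this is quoted from \cite[Lemma C.1]{NMSP2}, the natural strategy is to reconstruct the key computation rather than treat it as a black box. The essential input is the explicit form of $R(z) = R^\lloc(z) \cdot (\id \oplus \Delta(z)^{\npt})$ together with the defining identity \eqref{birkR}, which expresses $R(z)$ in terms of the $S$-matrices $S\msp$, $S^Q$, and the $\Delta^{\ffp_\alp}$ factors from GRR. The twist by $L_\alpha := \zeta_\n^\alpha \cdot t \cdot L$ in the entries $(R_k)^\alpha_a$ is designed precisely to strip away the equivariant weights $\ft_\alpha = -\zeta_\n^\alpha \ft$ and the $L$-dependence, leaving only rational functions of $X$.

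The first step is to establish the $\alpha$-independence. I would use the cyclic symmetry of the setup: the $\n$ points $\ffp_\alpha$ are permuted by the $\ZZ/\n$-action coming from $\zeta_\n^\alpha$, and both $S\msp$ and the $\Delta^{\ffp_\alp}$ are equivariant under this action in a controlled way. The normalization factor $L_\alpha^{-a+k}$ together with $L^{(\n+3)/2}$ is tailored so that the $\alpha$-dependence cancels; concretely I would track how each $\ft_\alpha$ appears and verify that $(R_k)^\alpha_a$ depends on $\alpha$ only through the ratio $L_\alpha/(\zeta_\n^\alpha t) = L$, which is $\alpha$-independent. The second step is the degree bound \eqref{Rkform}. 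Here I would expand the Birkhoff factorization order by order in $z$: the $S$-matrices have known expansions in terms of the $I$-functions and the generators $A_p, B_p$, and by the finite-generation result \eqref{YYrelation} these all lie in $\sR = \QQ[A_1,B_1,B_2,B_3,X]$. The point is that after the $L_\alpha$-twist the genuinely transcendental generators $A_1, B_1, B_2, B_3$ drop out and only $X$ survives, with the degree in $X$ bounded by $k + \lfloor a/\n \rfloor$ — the $\lfloor a/\n\rfloor$ term accounting for the extra $p^\n$-powers in the dual basis $\{\phi^i\}$ when $a \geq \n$.

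For the $V$-matrix claim, the strategy is to substitute the expansion $R(z)^{-1} = \sum_k (R^{-1})_k z^k$ into the definition of $V(z,w)$ and restrict to $\sH_{\npt} \times \sH_{\npt}$. Writing everything in the basis $\{\mathbf 1_\alpha\}$ and using the first part of the lemma to control the entries $(R_k)_a$, the bilinear form $V_{kl}|_{\npt\times\npt}$ decomposes into the stated sum over $\alpha, \beta, s$. The factor $L^{-3} t^\n$ and the powers $L_\alpha^{s-k} L_\beta^{2-s-l}$ are exactly what one obtains from combining two twisted $R$-matrix entries with the pairing $(\mathbf 1_\alpha, \mathbf 1^\alpha)$; the shift by $1$ in the degree bound \eqref{Vkl-poly}, namely $k+l+1$, comes from the $\frac{1}{z+w}$ denominator, which after the telescoping cancellation in the numerator contributes one extra factor that raises the $X$-degree by one.

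The main obstacle I expect is the bookkeeping in the second step: verifying that the $L_\alpha$-twist precisely cancels all of $A_1, B_1, B_2, B_3$ and leaves a polynomial in $X$ alone, with the sharp degree bound. This requires knowing the explicit asymptotic/Birkhoff expansion of $S\msp$ and $S^Q$ in terms of the Yamaguchi-Yau generators and tracking how each order in $z$ contributes to the $X$-degree. Since \cite{NMSP2} presumably carries out this computation in detail, I would either invoke it directly or, if reproducing it, organize the argument by first proving the $S$-matrix entries lie in $\sR$ with a controlled $z$-graded degree, then showing the Birkhoff factorization preserves this, and finally checking that the specific twist kills all generators except $X$. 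The $V$-matrix degree bound then follows almost formally from the $R$-matrix bound once the denominator cancellation is handled.
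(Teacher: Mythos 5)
The first thing to note is that the paper does not prove this lemma at all: it is imported verbatim from the companion paper as \cite[Lemma C.1]{NMSP2}, so there is no in-paper argument to measure your proposal against. Judged on its own terms, your outline identifies the right raw materials --- the Birkhoff factorization \eqref{birkR}, the $\mu_{\n}$-symmetry permuting the points $\ffp_\alpha$ for the $\alpha$-independence (the normalization $L_\alpha^{-a+k}$ is indeed designed so that the residual $\alpha$-dependence sits in $L_\alpha^{\n}=(tL)^{\n}$, which is $\alpha$-independent), and the telescoping of the $\frac{1}{z+w}$ kernel for the shift from $k+l$ to $k+l+1$ in \eqref{Vkl-poly}. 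The reduction of the $V$-matrix statement to the $R$-matrix statement is also correct in outline.

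The genuine gap is that the entire content of the lemma, the bound $(R_k)_a\in\QQ[X]_{k+\lfloor a/\n\rfloor}$, is never actually established: you name it as ``the main obstacle'' and then either invoke the reference or describe a plan (prove the $S$-matrix entries lie in $\sR$ with controlled degree, show the Birkhoff factorization preserves this, check that the twist kills all generators except $X$) whose crucial last step is asserted rather than argued. There is no a priori reason the $L_\alpha$-twist should make the Yamaguchi--Yau generators $A_1,B_1,B_2,B_3$ cancel termwise out of a Birkhoff expansion, and the cited proof does not proceed that way: as the present paper's own later uses reveal (the QDE \eqref{QDEforR0}, the recursion \eqref{keypropR} quoted from \cite[(C.4)]{NMSP2}, and the footnote pointing to \cite[Sect.\,5 and Appendix A]{NMSP2}), the entries $(R_k)_a$ are controlled by induction on $k$ through the quantum differential equation $zD\,R(z)^{*}=R(z)^{*}A^{M}-A^{Q}R(z)^{*}$, whose connection matrix (after normalization) raises the $X$-degree by at most one per order in $z$; the $\lfloor a/\n\rfloor$ correction enters through the $q$-entries of $A^{M}$ in the rows $a\ge\n$, not through a property of the dual basis as you suggest. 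Without this recursion, or an equivalent closed mechanism, your outline does not yield the degree bound, and consequently the $V$-matrix estimate is also left unproved.
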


\begin{definition}
 Let $\star = [0], [1]$ or $[0,1]$,
we introduce the $\star$-potential  for  $\ba=(a_1,\cdots,a_n)$, $\bb=(b_1,\cdots,b_n)$
$$
{f^{[\star]}_{g,(\ba, \bb)}}:= \int_{\M_{g,n}}  \Big( \prod_{i=1}^n  \psi_i^{b_i}\Big) \cup\Omega^{[\star]}_{g,n}(\phi_{a_1},\cdots,\phi_{a_n}).
$$
Here  $r:=\frac{1}{\n} (|\ba|+|\bb|-n)$, and $|\ba|:=\sum a_i$.
\end{definition}

Our goal is to study the $[0]$ theory, using the $[1]$ and the $[0,1]$ theories. We first study the $[1]$ theory by  considering additional ``special" insertions:
\begin{multline*}  
\qquad {f^{[1]}_{g,(\ba, \bb), (\ba'\!,\bb')}} := L^{ \sum_{i=1}^m a_i'}\cdot \int_{\M_{g,n+m}}   \prod_{i=1}^n  \psi_i^{b_i}   \prod_{j=1}^m  \psi_{n+j}^{b_j'} \, \cdot
\\  \Omega^{[1]}_{g,n+m}\Big(\phi_{a_1},\cdots,\phi_{a_n}, R(\psi_{n+1})\bar\phi_{a_1'},\cdots, R(\psi_{n+m}) \bar\phi_{a_l'} \Big),\qquad
\end{multline*}
where
$\ba\in \{0,\cdots,\n\!+\!3\}^{\times n}$,  $\ba' \in $ {\small $[\n]^{\times m}$}, and
$\{\bar \phi_a:=L^{-{(\n+3)}/{2}}\sum_\alpha  (-\ft_\alp)^a   \mathbf 1_\alpha\}_{a=1}^\n$ is the ``normalized" basis\footnote{ Recall the flat basis of $\sH_{\npt}$ is given by $\{\phi_a:=p^a =\sum_\alpha  (-\ft_\alp)^a    \mathbf 1_\alpha \}_{a=4}^{\n+3}$, and notice that we can choose $\{\sum_\alpha  (-\ft_\alp)^a    \mathbf 1_\alpha\}_{a=k+1}^{k+\n}$ as a basis of $\sH_{\npt}$ for any $k$.}.
Let $\lfloor \frac{\ba}{\n} \rfloor := \sum_l \big\lfloor\frac{  a_l}{\n}\big\rfloor$

\begin{proposition}\label{polyofomega11}
Let $\n > 3g-3+n+m$ be sufficiently large.  
\begin{enumerate}
\item If {\small $r:= \frac{1}{\n}(|\ba|+|\ba'|+|\bb|+|\bb'| -n-m) \in \mathbb Z$}, then
$$
({Y}/{\ft^\n})^{   g-1+r   }   \cdot  f^{[1]}_{g,(\ba, \bb), (\ba'\!,\bb')}  \in \QQ  [X]
$$
 is a polynomial in $X$ of degree no more than
$$ 3g-3+n  +m-|\bb|- |\bb'| +  \lfloor \frac{\ba}{\n} \rfloor .$$
\item  Otherwise, $f^{[1]}_{g,(\ba, \bb), (\ba'\!,\bb')} = 0$.
\end{enumerate}
\end{proposition}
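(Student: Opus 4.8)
The plan is to expand the $[1]$-correlator directly from its defining equation $\Omega^{[1]}=R^{[1]}.\omega^{\npt,\tw}$, i.e. as the $R^{[1]}$-matrix action (graph sum, including translation tails) on the \emph{topological} CohFT $\omega^{\npt,\tw}=\oplus_{\alpha=1}^{\n}\omega^{\ffp_\alpha,\tw}$. The decisive simplification is that $\omega^{\npt,\tw}$ is topological: in any stable graph $\Gamma\in G_{g,n+m}$ each vertex $v$ carries a level-$1$ label $\alpha_v\in[\n]$ and a genus $g_v$, its vertex factor is the scalar $(\tfrac{-5}{\n\ft_{\alpha_v}^3\ft^\n})^{1-g_v}$, and the class it places on $H^*(\M_{g_v,n_v})$ is the fundamental class. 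Hence the integral over each $\M_{g_v,n_v}$ is a pure $\psi$-monomial intersection number, nonzero only when the total $\psi$-degree at $v$ equals $\dim\M_{g_v,n_v}=3g_v-3+n_v$. First I would record this reduction and expand the $\psi$-carrying data — the leg maps $R^{[1]}(-\psi)^*$ applied to $\phi_{a_i}$, the special legs $R^{[1]}(-\psi)^*R(\psi)\bar\phi_{a_j'}$, the edge bivectors $V(\psi,\psi')$, and the translation tails $T_R(\psi)$ — as power series in the $\psi$'s, writing $R(z)=\sum_k R_k z^k$ and $V(z,w)=\sum_{k,l}V_{kl}z^kw^l$.

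After this expansion the correlator becomes a finite sum, indexed by $\Gamma$, the labels $(\alpha_v)$, and the $\psi$-exponents, of products of matrix entries $(R_k)^\alpha_a$ and $(V_{kl})^{\alpha\beta;s}$ against the vertex factors and topological numbers. Here I would invoke Lemma~\ref{keylem}: after the normalizations $(R_k)_a:=L^{(\n+3)/2}L_\alpha^{-a+k}(\mathbf 1^\alpha,R_k^*\phi_a)$ and the analogous one for $V_{kl}|_{\npt\times\npt}$, both reduced entries are \emph{independent of the labels} $\alpha,\beta$ and are polynomials in $X$ of degrees $k+\lfloor a/\n\rfloor$ and $k+l+1$ respectively. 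Substituting these peels all label-dependence out into explicit powers of $L_\alpha=\zeta_\n^\alpha\ft L$, leaving $X$-polynomial coefficients that no longer see the $\alpha_v$.

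Next I would perform the sum over the labels $\alpha_v\in[\n]$. Since $\n$ is prime and the reduced entries are label-independent, the only $\alpha_v$-dependence sits in a single root of unity $\zeta_\n^{c_v\alpha_v}$ at each vertex, so $\sum_{\alpha_v=1}^{\n}\zeta_\n^{c_v\alpha_v}$ equals $\n$ when $\n\mid c_v$ and $0$ otherwise. Tracking the exponents through $(-\ft_\alpha)^a=\zeta_\n^{a\alpha}\ft^a$, the leg shifts, and the edge factors $L_\alpha^{s-k}L_\beta^{2-s-l}$, I would show the global congruence controlling non-vanishing is exactly $\n\mid(|\ba|+|\ba'|+|\bb|+|\bb'|-n-m)$, i.e. $r\in\ZZ$. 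When $r\notin\ZZ$ the character sums annihilate everything, yielding part (2). When $r\in\ZZ$, the surviving factors of $\n$ cancel those in the vertex factors, and collecting the residual powers of $\ft$ and $L$, together with $Y=L^{-\n}$ (Convention~\ref{conven}), assembles the prefactor so that $(Y/\ft^\n)^{g-1+r}f^{[1]}_{g,(\ba,\bb),(\ba',\bb')}$ is a genuine polynomial in $X$.

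The remaining and most delicate step is the degree count. Here I would combine the dimension constraints $\sum_v(\text{$\psi$-degree at }v)=\sum_v(3g_v-3+n_v)$, the genus/valence sum rules relating $\sum_v g_v$, $|E(\Gamma)|$ and the leg count to the total genus $g$, and the per-entry bounds from Lemma~\ref{keylem}: each $(R_k)_a$ contributes $k+\lfloor a/\n\rfloor$ and each $(V_{kl})$ contributes $k+l+1$ to the $X$-degree, while the forced $\psi$-powers $b_i,b_j'$ on the legs reduce the budget, producing $-|\bb|-|\bb'|$, and the $\lfloor a_i/\n\rfloor$ from the $\phi_{a_i}$-legs produce $+\lfloor\ba/\n\rfloor$ (the special legs $\bar\phi_{a_j'}$ carry no floor term). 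I expect the main obstacle to be verifying that, after maximizing over all graphs and all admissible $\psi$-distributions, these bounds telescope with no slack to exactly $3g-3+n+m-|\bb|-|\bb'|+\lfloor\ba/\n\rfloor$; this is a careful Euler-characteristic bookkeeping matching the edge/vertex degree contributions against the dimension budget, and is the pure-$[1]$ analogue of the degree estimate already established for the full $[0,1]$-theory in Theorem~\ref{Omega01}.
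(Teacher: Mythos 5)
Your proposal follows essentially the same route as the paper: expand $\Omega^{[1]}=R^{[1]}.\omega^{\npt,\tw}$ as a graph sum over legs, special legs, edges and translation tails, reduce each factor via Lemma~\ref{keylem} to label-independent $X$-polynomials times explicit powers of $L_\alpha$, kill the non-integral-$r$ terms by the character sum $\sum_\alpha \zeta_\n^{c\alpha}$ (using $\n$ prime), and then do the dimension/degree bookkeeping vertex by vertex. The one step you leave as an expectation --- that the per-vertex bounds telescope exactly to $3g-3+n+m-|\bb|-|\bb'|+\lfloor\ba/\n\rfloor$ --- is precisely the computation the paper carries out, using $\sum_t k_t+\sum_f k_f+\sum_l(k_l+b_l)+\sum_{l'}b'_{l'}=3g_v-3+n_v$ at each vertex and $s_{(e,v_1)}+s_{(e,v_2)}=0$ at each edge, so your plan is correct as stated.
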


\begin{proof}
By definition of $\Omega^{[1]}:=R^{[1]}.\omega^{\npt,\tw}$, the $[1]$-potential is given by the sum of  the  stable graph contributions. For each graph $\Gamma$, the contribution is given  via applying the composition rule 
to the following placements: 
\begin{enumerate}	
\item at each leg with insertion $\phi_{a} \bp^b$ (one of the first $n$ legs), we put
$$
 R^*(-\psi_l) \phi_{a} \psi_l^b |_{\npt}= \sum_{\alpha,k} L^{-\frac{\n+3}{2}} L_\alpha^{a-k}   ( R_k)_a^{\alpha}  (-1)^k \psi_l^{k+b}  \mathbf 1_\alpha ;$$
 \item at each special leg with insertion $R(\psi_{l'})\bar\phi_{a_{l'}'}\psi^{b_{l'}'}$ (one of the last $m$  legs), we put
  $$
\quad  R(-\psi_{l'})^* R(\psi_{l'}) \bar \phi_{a_{l'}'}\psi_{l'}^{b_{l'}'}= \sum_\alpha  L^{-\frac{\n+3}{2}}  L^{a_{l'}'}  (-\ft_\alp)^{a_{l'}'} \mathbf 1_\alpha \psi_{l'}^{b_{l'}'} ;
$$

\item at each edge, we  put a bi-vector  
\begin{align} \label{Vzwdeg}
\qquad V(z, w)|_{\npt \times \npt} = &
 \sum_{\alpha,\beta }  \sum_{k,l,s} L^{-3} t^\n
L_\alpha^{1+s-k} L_\beta^{1-s-l}  \!\!\! \cdot (  V_{kl})^{\alpha \beta;s+1} \,\mathbf 1_\alpha\otimes\mathbf 1_\beta
 ;\end{align}

\item at each vertex of genus $g$ with $n$-legs,  we put a map

$$
(-) \mapsto
\sum_{\alp}  L^{\frac{\n+3}{2}(2g_v-2+n)}\frac{1}{s!} (\pr_{g,n,s})_*\, \omega_{g,n+s}^{\ffp_\alp, \tw} \Big(-, T_\alpha(\psi)^{\otimes s} \Big) , 
$$  
where
$
T_\alpha(z)= 
L^{\frac{\n+3}{2}}\sum_{k\geq 1}   (\mathbf 1^\alpha, R_k^* \mathbf 1)    (-z)^{k+1} \mathbf 1_\alpha = \sum_{k\geq 1} L_\alpha^{-k} \, (R_k)_0^\alpha \  (-z)^{k+1} \mathbf 1_\alpha.$
\end{enumerate}
Denote $L_v$ (resp. $L'_v$) the set of ordinary legs (resp. special legs respectively) over $v$.
We estimate the degree of the legs, edges, tails contributions at each vertex of level $\pt_\alpha$.  By using Lemma \ref{keylem},   we obtain that:
\begin{itemize}
\item 	
the factor involving $L_\alpha$, $L$ and $Y$ is 
\begin{multline*} 
\quad \quad  \  (-t_\alpha)^{(\n+3)(g-1)} L^{\frac{\n+3}{2}(2g_v-2)}  \prod_{l\in L_v} \!\!L_\alpha^{a_l-k_l}\cdot   \prod_{l'\in L'_v} \!\!L_\alpha^{a_{l'} }   \cdot \prod_{f=(e,v),\atop e\in E_v} \!\!  (t\cdot L)^{\frac{\n}{2}}L_\alpha^{1+s_f-k_{f}}  \cdot \prod_{t}   L_\alp^{-k_t}
\\
  \quad   =   L_\alpha^{\sum_{l\in L_v} \!(a_l+b_l-1)+\sum_{l'\in L'_v} \!(a_{l'}+b_{l'}-1)+\sum_{e\in E_v} \!s_{(e,v)}}L^{ \n(g_v-1+\frac{|E_v|}{2})}    t^{\n (g_v-1+\frac{|E_v|}{2})} ,
    \end{multline*}  
where we have used   $n_v=|L_v|+|L'_v|+|E_v|$, and
$$ \textstyle
\qquad \qquad  \sum_{t}  k_t+\sum_f k_f+\sum_l (k_l+b_l) +\sum_{l'}b'_{l'} =  3g_v-3+n_v ;
$$
\item
the total $X$-degree of the tail, edge and leg  contributions at the vertex   is   at most
\begin{align}\label{1-vX}
 \qquad\qquad &\textstyle \sum_t k_t +\sum_{f=(e,v), e\in E_v}(k_f+\frac{1}{2})+\sum_{l\in L_v} (k_l+\lfloor \frac{a_l}{\n} \rfloor) \\
&\textstyle =3g_v-3+n_v+\frac{|E_v|}{2} +\sum_{l\in L_v}  \lfloor \frac{a_l}{\n} \rfloor  -\sum_{l\in L_v}b_l -\sum_{l'\in L'_v}b'_{l'}. \nonumber \end{align}
\end{itemize}

  For each graph we may forget the hour decoration of each vertex to obtain  an  ``hour-free graph". 
  For each vertex $v$ in  an   ``hour-free" graph, we may sum up its all possible hours $\alpha=1,\cdots,\n$ 
and extract  a multiplicative factor  $L_\alpha^{r_v}$ with 
$$\textstyle r_v:=\frac{1}{\n}\big( {\small \text{$\sum_{l\in L_v} (a_l+b_l-1)+\sum_{l'\in L'_v} (a_{l'}+b_{l'}-1)+
\sum_{e\in E_v} \!\!s_{(e,v)}$}} \big).
$$  
By fixing a choice in each summand of (1)-(4) above, such extraction can be done for all vertices  at once.  Since  
$\sum_\alpha L_\alpha^k$ vanishes unless $\n | k$, we
see that if some $r_v\notin \ZZ$, the decomposition summand of $(1)-(4)$ contributed by  an   ``hour-free graph" vanishes.

  At each edge $e=(v_1,v_2)$, by the form of \eqref{Vzwdeg},  we see $s_{(e,v_1)}+s_{(e,v_2)}=0$.  This gives
  $$ \textstyle
r :=\sum_v r_v= \frac{1}{\n}(|\ba|+|\ba'|+|\bb|+|\bb'| -n-m) .
$$
   The argument above   proves the second statement.

Now we evaluate the contributions of all the vertices together.  After multiplying them over all vertices we have 
\begin{enumerate}
\item
the factor involving $L$ and $Y$ (using $L_\alp^\n=(t\, L)^\n  =  \ft^\n \cdot Y^{-1}$) becomes
 $$
(t\, L)^{\n\sum_v  r_v} (t\, L) ^{\sum_v {\n(g_v-1+|E_v|/2)}   } = (Y /  \ft^\n )^{-(g-1+r) } ;
$$
\item the total degree of $X$ of contributions of $\Gamma$  is the sum  of \eqref{1-vX} over all vertices $v$, which equals
$$3g-3+n+m+   \lfloor \frac{\ba}{\n} \rfloor -|\bb|-|\bb'|.
$$
\end{enumerate}
 Multiply (1) with (2), and sum over all graphs. The first statement is proved.
\end{proof}


  \subsection{Vanishing properties of $[0]$-theory} \label{Vanishing0theory}
Recall that we have computed 
\begin{align}  \label{R0z}
R^{[0]}(z)^* \mathbf 1  
   =     \varphi_0 +O(z^{\n-3}),  \and
R^{[0]}(z)^* p     =     z   B  \cdot  \varphi_0 +   \varphi_1 +O(z^{\n-2})  ,
\end{align}
 in \cite[Example 5.4]{NMSP2}.  Furthermore, 
 $R^{[0]}(z)^*$ satisfies the following ``QDE"\footnote{ We recall the explicit formulas for $A^Q\in  \End{\sH_Q}$ and $A^M \in \End{\sH}$ that were proved in \cite{NMSP2}
 \begin{align*} 
A^Q = \begin{pmatrix}0 &&& \\I_{11}&0&&\\&  {I_{22}} &0&\\&& I_{11} &0  \end{pmatrix} \quad \and \quad
 (A\msp)^i_{j} =  \begin{cases} 1  & \text{ if } {i=j+1}\\
  c_{j+1}  q - \delta_{i,4}t^\n  & \text{ if }  {i=j-\n+1}\\ 0 & \text{ otherwise }  \\  \end{cases}
\end{align*}
where $(c_{j})_{j=\n,\cdots,\n+4} =  (120,770,1345,770, 120)$. The same QDE matrix also appears in the proof of Lemma \ref{keylem}, see \cite[Sect.\,5 and Appendix A]{NMSP2} for more details. }:
\beq \label{QDEforR0}
  z\, D   R^{[0]}(z)^* = R^{[0]}(z)^* \cdot A\msp - A^Q \cdot R^{[0]}(z)^*.
\eeq
We have the following general property for $R^{[0]}(z)$:
\begin{lemma} \label{Rpreservedeg}
We introduce the   mod-$\n$ degrees by letting
$$
\deg \psi=1,\quad  \deg \varphi_j=j = \deg \phi_j.\quad   $$ 
Then, $R^{[0]}$-matrix preserves the mod-$\n$ degree.
 Furthermore,  let $\bar j:= j - \n  \lfloor \frac{j}{\n} \rfloor$ and $\varphi_j:=0$ for $j>3$, we have the following key property:
\beq  \label{degcontofR0}
R^{[0]}(z)^* \phi_j = c'_j q^{ \lfloor \frac{j}{\n} \rfloor}\cdot  \varphi_{\bar j}+  O(z^{\bar j -3}) \quad \text{ for  }  j=0,\cdots,\n+3, 
\eeq
where $(c'_{j})_{j=0,\cdots,\n+3} =  (1,\cdots,1,-120,-890,-2235,-3005)$.
\begin{proof}
Recall $
R^{[0]}(z)  = S\msp(z)  S^Q(q',z)$.
Since the  local and global S-matrices preserve mod-$\n$ degrees, the $R^{[0]}$-matrix preserves the mod-$\n$ degree as well.
 Furthermore,  because $\deg \varphi_i \leq 3$,  we obtain the $O{(z^{\bar j-3})}$ in  \eqref{degcontofR0}.  The leading term is from \eqref{QDEforR0}.
\end{proof}
\end{lemma}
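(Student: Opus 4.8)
The plan is to derive both assertions from the factorization $R^{[0]}(z)=S\msp(z)\,S^Q(q',z)$ together with the quantum differential equation \eqref{QDEforR0}, handling the qualitative shape of \eqref{degcontofR0} and its leading coefficient separately. For the grading I assign $z$ and $\psi$ degree $1$ and $\phi_j,\varphi_j$ degree $j$, all read modulo $\n$. The point is that the localization $S$-matrix $S\msp$ and the quintic $S$-matrix $S^Q$ each preserve this mod-$\n$ grading, a structural reflection of the cyclic $\mu_\n$-symmetry built into the specialization $\ft_\alpha=-\zeta_\n^\alpha\,\ft$ of the $\PP^{4+\n}$ theory and recorded in \cite{NMSP2}; consequently their product $R^{[0]}(z)$, and with it the adjoint $R^{[0]}(z)^*$, preserves the grading. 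Expanding $R^{[0]}(z)^*\phi_j=\sum_{i=0}^{3}\sum_{k\ge 0}a_{i,k}\,z^k\varphi_i$, this forces $a_{i,k}=0$ unless $i+k\equiv j\pmod{\n}$.

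The shape of \eqref{degcontofR0} now follows from this support condition and the fact that the normalized quintic cohomology is concentrated in degrees $\{0,1,2,3\}$, so the only possible targets are $\varphi_0,\dots,\varphi_3$. For a fixed $i\le 3$ the admissible exponents satisfy $k\equiv \bar j-i\pmod{\n}$, with least nonnegative value $(\bar j-i)\bmod\n$. If $\bar j\le 3$, the single index contributing at $z^0$ is $i=\bar j$, so the constant-in-$z$ part of $R^{[0]}(z)^*\phi_j$ is a scalar multiple of $\varphi_{\bar j}$. If $\bar j>3$, no index contributes at $z^0$, the class $\varphi_{\bar j}$ vanishes, and the smallest admissible exponent among $i\in\{0,1,2,3\}$ is $\bar j-3$ (attained at $i=3$), so the entire series is $O(z^{\bar j-3})$. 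This is exactly the displayed form in both regimes.

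It remains to determine the leading coefficient $c'_j\,q^{\lfloor j/\n\rfloor}$, which I would extract from the $z^0$-part of \eqref{QDEforR0}. Since $z\,D\,R^{[0]}(z)^*$ has no constant term, the constant coefficient $R_0^*:=R^{[0]}(z)^*|_{z^0}$ obeys the intertwining relation $R_0^*\,A\msp=A^Q\,R_0^*$. In the normalized basis (the one in which \eqref{R0z} is written), the quintic Dubrovin connection $A^Q$ is the nilpotent shift $\varphi_i\mapsto\varphi_{i+1}$ with $A^Q\varphi_3=0$, while $A\msp$ sends $\phi_k\mapsto\phi_{k+1}+\big(c_{k+1}q-\delta_{k,\n+3}\,\ft^\n\big)\phi_{k-\n+1}$ with $(c_\n,\dots,c_{\n+4})=(120,770,1345,770,120)$. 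Applying the intertwining relation to $\phi_{j-1}$ gives, for $\n\le j\le \n+3$, the recursion $R_0^*\phi_j=A^Q\big(R_0^*\phi_{j-1}\big)-c_j\,q\,R_0^*\phi_{j-\n}$, and the pure shift $R_0^*\phi_j=A^Q(R_0^*\phi_{j-1})$ for $j<\n$. Starting from $R_0^*\phi_0=\varphi_0$ (the constant term in \eqref{R0z}) one finds $R_0^*\phi_i=\varphi_i$ for $i\le 3$ and $R_0^*\phi_i=0$ for $4\le i\le \n-1$, hence $c'_j=1$ for $j<\n$; and then, since both terms of the recursion land in $\QQ\,\varphi_{\bar j}$, one accumulates $c'_\n=-120$, $c'_{\n+1}=-120-770=-890$, $c'_{\n+2}=-890-1345=-2235$, and $c'_{\n+3}=-2235-770=-3005$. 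The factor $q^{\lfloor j/\n\rfloor}$ appears because each wraparound step of $A\msp$ carries exactly one power of $q$.

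The main obstacle is the first step: genuinely proving that $S\msp$ and $S^Q$ respect the mod-$\n$ grading. This is the structurally delicate input, resting on the $\mu_\n$-equivariance of the NMSP construction, and I would quote it from \cite{NMSP2} rather than reprove it here. By comparison, the second point — tracking the accumulation $120\to 890\to 2235\to 3005$ through the wraparound entries of $A\msp$ — is a short finite recursion once the relation $R_0^*\,A\msp=A^Q\,R_0^*$ is available.
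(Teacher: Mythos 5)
Your proof is correct and takes essentially the same route as the paper's: mod-$\n$ degree preservation via the factorization $R^{[0]}(z)=S\msp(z)\,S^Q(q',z)$ with the grading statement for the two $S$-matrices quoted from \cite{NMSP2}, the $O(z^{\bar j-3})$ bound from $\deg\varphi_i\le 3$, and the leading term from the $z^0$-part of \eqref{QDEforR0}. You have merely made explicit the intertwining recursion $R_0^*\phi_j=A^Q R_0^*\phi_{j-1}-c_j\,q\,R_0^*\phi_{j-\n}$ that the paper compresses into ``the leading term is from \eqref{QDEforR0}'', and your accumulated constants $(1,\dots,1,-120,-890,-2235,-3005)$ agree with the explicit expansion \eqref{explicitR0f}.
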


 The shape of $R^{[0]}$ gives us  control  on $f^{[0]}_{g,(\ba,\bb)}$. The followings are the most direct ones.
\begin{lemma}\label{modNdeg}
If $r \notin \mathbb Z$ then
$f^{[0]}_{g,(\ba, \bb)} = 0$.
\end{lemma}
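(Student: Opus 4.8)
The plan is to deduce the vanishing from conservation of the mod-$\n$ degree introduced in Lemma \ref{Rpreservedeg}. Since $\Omega^{[0]}=R^{[0]}.\Omega^{Q,\tw}$, the graph-sum definition of the $R$-matrix action writes $f^{[0]}_{g,(\ba,\bb)}$ as a sum over stable graphs $\Gamma\in G_{g,n}$: at the $l$-th leg one places $R^{[0]}(-\psi_l)^*\phi_{a_l}$, at each edge the level-$0$ bivector, and at each vertex $v$ the shifted twisted quintic class $\Omega^{Q,\tw}_{g_v,n_v}$, after which $\prod_i\psi_i^{b_i}$ is integrated against the glued class over $\M_\Gamma$. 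I would assign the mod-$\n$ degree ($\deg\psi=1$, $\deg\varphi_j=\deg\phi_j=j$) to every ingredient and show that a nonzero contribution forces $r\in\ZZ$.

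First I would tabulate three local degree counts. At the $l$-th leg, Lemma \ref{Rpreservedeg} shows that $R^{[0]}(-\psi_l)^*\phi_{a_l}$ is a combination of monomials $\psi_l^k\varphi_m$ with $k+m\equiv a_l\pmod{\n}$; together with the external factor $\psi_l^{b_l}$ this supplies its vertex an input of mod-$\n$ degree $\equiv a_l+b_l$. At an edge the bivector is $\tfrac{1}{z+w}\big(\sum_\beta\varphi_\beta\otimes\varphi^\beta-\sum_\alpha R^{[0]}(-z)^*\phi_\alpha\otimes R^{[0]}(-w)^*\phi^\alpha\big)$; each dual pair has degree sum $\equiv 3\pmod{\n}$ (namely $3$ in $\sH_Q$ and $\n+3$ in $\sH$), and $R^{[0]}$ preserves the degree, so each numerator term has degree $\equiv 3$ and, after dividing by $z+w$, the two half-edge inputs together carry mod-$\n$ degree $\equiv 2$. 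At a vertex, the class $\Omega^{Q,\tw}_{g_v,n_v}$ paired with the ambient $\psi$-powers is nonzero only when its inputs have total degree $\equiv n_v\pmod{\n}$: by Remark \ref{Q-to-tw} this class equals $\Omega^Q_{g_v,n_v}$ up to a scalar in $\ft$ and the substitution $q\mapsto q'$, so the Calabi-Yau dimension axiom (virtual dimension $n_v$) pins the cohomological, hence mod-$\n$, degree to $n_v$.

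Summing the vertex constraints over all vertices of $\Gamma$ gives total degree $\sum_v n_v=n+2|E(\Gamma)|$. On the other hand this total is supplied by the legs and the half-edges: the legs contribute $\sum_l(a_l+b_l)$ and the edges contribute $2|E(\Gamma)|$, all modulo $\n$. Equating the two expressions gives $\sum_l(a_l+b_l)+2|E(\Gamma)|\equiv n+2|E(\Gamma)|\pmod{\n}$, and cancelling the edge terms yields $\sum_l(a_l+b_l)\equiv n\pmod{\n}$, that is $\n\mid(|\ba|+|\bb|-n)$, i.e. $r\in\ZZ$. Hence if $r\notin\ZZ$ no graph contributes and $f^{[0]}_{g,(\ba,\bb)}=0$.

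The main obstacle is the vertex step: verifying rigorously that the shifted twisted quintic correlator satisfies the degree congruence $\equiv n_v$. This is exactly where the Calabi-Yau property of $Q$ enters, through its virtual dimension; one must confirm that neither the equivariant twist nor the mirror-map shift $\tau_Q$ disturbs the congruence. The twist is controlled by Remark \ref{Q-to-tw}, while the shift inserts only degree-one classes balanced by forgetful pushforwards, so it preserves the count. The leg and edge computations, by contrast, are formal consequences of Lemma \ref{Rpreservedeg} and the explicit dual bases, and are therefore routine once the bookkeeping is fixed.
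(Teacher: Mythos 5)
Your proof is correct and follows essentially the same route as the paper's: the paper likewise assigns mod-$\n$ degrees, notes that each edge contributes degree $2$ and that the (twisted, shifted) quintic vertex vanishes unless its input degree equals $n_v$, and sums over the graph. You merely spell out the bookkeeping (and the harmless effect of the mirror-map shift) more explicitly than the paper's three-sentence version.
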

\begin{proof}  First by Lemma \ref{Rpreservedeg}, each edge(in the $R^{[0]}$ action on $\Omega^{Q,\tw}$) contributes the mod-$\n$ degree $2$.    Secondly,  observe that, for quintic CohFT, 
$ \int_{\M_{g,n}}  \Omega^{Q}_{g,n}( \otimes_{i=1}^n \phi_{a_i} \psi^{b_i}) =0$
unless $\sum_i (a_i +b_i) = n$.  The same statement holds when $ \Omega^{Q}_{g,n}$ is substituted by  $\Omega^{Q,\tw}_{g,n}$, by Remark  \ref{Q-to-tw}. The lemma follows by summing up the mod-$\n$ degrees over vertices and edges in arbitrary graph defining $\Omega^{[\star]}=R^{[0]} . \Omega^{Q,\tw}$.  
\end{proof}
We will assmue $r$ is an integer in the remainder of this paper.
\begin{lemma} \label{conditionforr}
Suppose $\n  >  3g-3+3n $. Let   $\ba:=(\bar a_1,\cdots,\bar a_n)$ with
 $$ \textstyle
\bar a_j := a_j - \n  \lfloor \frac{a_j}{\n} \rfloor, \and  r^{\sim}:=r- \lfloor \frac{\ba}{\n} \rfloor= \frac{|\bar \ba|+|\bb|-n}{\n}.
$$
We have $r^{\sim} \in\mathbb Z_{\geq 0}$; and if $r^{\sim} \neq 0$ then
$f^{[0]}_{g,(\ba, \bb)} = 0$. Namely,
$$|\bar \ba |+|\bb| \neq n \quad \Longrightarrow \quad  f^{[0]}_{g,(\ba, \bb)} = 0.$$
\end{lemma}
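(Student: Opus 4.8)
The plan is to deduce both assertions from a single cohomological degree count on the graph sum defining $\Omega^{[0]}=R^{[0]}.\Omega^{Q,\tw}$, using the mod-$\n$ degree preservation of $R^{[0]}$ (Lemma \ref{Rpreservedeg}). For the first assertion, recall that we already assume $r\in\ZZ$ (Lemma \ref{modNdeg}); since $\lfloor\frac{\ba}{\n}\rfloor\in\ZZ$ as well, the identity $r^{\sim}=r-\lfloor\frac{\ba}{\n}\rfloor=\frac{|\bar\ba|+|\bb|-n}{\n}$ exhibits $r^{\sim}$ as an integer. As every $\bar a_j$ and $b_j$ is non-negative we get $r^{\sim}\geq -n/\n>-1$ (here $\n>n$ follows from $\n>3g-3+3n$ and stability), whence $r^{\sim}\in\ZZ_{\geq 0}$.

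For the vanishing when $r^{\sim}\neq 0$, fix a stable graph $\Gamma$ and one monomial in its graph-sum expansion. Each leg $l$ contributes a term $\varphi_{c_l}\psi_l^{k_l}$ of $R^{[0]}(-\psi_l)^*\phi_{a_l}$; each edge contributes $\varphi_c\psi^k\otimes\varphi_{c'}\psi^{k'}$; at each vertex we apply $\Omega^{Q,\tw}_{g_v,n_v}$ together with the tail insertions coming from $T_{R^{[0]}}$, cup with the accumulated $\psi$-powers, and push forward to $\M_{g,n}$ before integrating. By Remark \ref{Q-to-tw}, $\Omega^{Q,\tw}_{g_v,n_v}$ evaluated on $\varphi$-classes is a scalar multiple of the untwisted quintic class, which has pure complex cohomological degree $3g_v-3+\sum_j c_j$; hence the integral over $\M_{g,n}$ survives only if the total complex degree equals $\dim\M_{g,n}=3g-3+n$. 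Writing $C_\bullet$, $K_\bullet$ for the sums of the $\varphi$-degrees and $\psi$-powers over legs, edges, tails, and using $\dim\M_\Gamma=3g-3+n-|E|$ together with $\sum_v(3g_v-3)=3g-3-3|E|$ and the degree drop by $1$ per tail under $\pr_*$, the count reduces to the single identity
$$
|\bb|+(C_{\mathrm{leg}}+K_{\mathrm{leg}})+(C_{\mathrm{edge}}+K_{\mathrm{edge}})+(C_{\mathrm{tail}}+K_{\mathrm{tail}}-\#\mathrm{tails})=n+2|E|.
$$

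Next I bound each group below. Since $R^{[0]}$ preserves the mod-$\n$ degree (Lemma \ref{Rpreservedeg}) one has $c_l+k_l\equiv \bar a_l\pmod \n$; with $c_l,k_l\geq 0$ and $0\leq\bar a_l<\n$ this forces $c_l+k_l\geq \bar a_l$, so $C_{\mathrm{leg}}+K_{\mathrm{leg}}\geq|\bar\ba|$. Each edge carries mod-$\n$ degree $2$ (as in the proof of Lemma \ref{modNdeg}), giving $c+c'+k+k'\geq 2$, so $C_{\mathrm{edge}}+K_{\mathrm{edge}}\geq 2|E|$; the odd classes in $H^3(Q)$ occur only in matched pairs and contribute complex degree $3>2$ per edge, so they do not weaken this bound. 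Finally, $R^{[0]}(z)^*\mathbf 1=\varphi_0+O(z^{\n-3})$ from \eqref{R0z} gives $T_{R^{[0]}}(z)=z\mathbf 1-zR^{[0]}(-z)^*\mathbf 1'=O(z^{\n-2})$, so every tail carries $\psi$-power $\geq \n-2$ and thus contributes at least $\n-3$ to $C_{\mathrm{tail}}+K_{\mathrm{tail}}-\#\mathrm{tails}$. Substituting these three bounds into the identity yields $n\geq |\bar\ba|+|\bb|+(\n-3)\cdot\#\mathrm{tails}$. As $\n$ is large this forces $\#\mathrm{tails}=0$ and $|\bar\ba|+|\bb|\leq n$; combined with $r^{\sim}\geq 0$ (equivalently $|\bar\ba|+|\bb|\geq n$), every contributing graph has $|\bar\ba|+|\bb|=n$. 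Therefore $r^{\sim}\neq 0$ kills every term and $f^{[0]}_{g,(\ba,\bb)}=0$.

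The step I expect to be most delicate is the correct accounting of the translation (tail) insertions: one must check that $T_{R^{[0]}}$ genuinely starts at order $z^{\n-2}$ and that each forgetful pushforward $\pr_*$ drops the cohomological degree by exactly one, so that a single tail already overflows the degree budget $3g-3+n<\n$; one must also confirm that the uniform edge bound survives the presence of odd Hodge classes. Once these points are secured, the degree inequality above is immediate and the lemma follows.
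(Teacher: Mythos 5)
Your proof of the first assertion is the same as the paper's. For the vanishing statement your overall strategy --- an exact cohomological degree count on each contributing graph, using that $R^{[0]}$ preserves the mod-$\n$ degree --- is sound in outline, but the tail bound that you yourself single out as the delicate step is false. From \eqref{R0z} one has $R^{[0]}(-z)^*\mathbf 1'=\varphi_0+O(z^{\n-3})$ with $\varphi_0=I_0\cdot 1_Q$, so
$$T_{R^{[0]}}(z)\;=\;z\,1_Q-z\,R^{[0]}(-z)^*\mathbf 1'\;=\;z\,(1-I_0)\,1_Q+O(z^{\n-2}),$$
and since $I_0=1+120q+\cdots\neq 1$ the tail series begins at order $z^{1}$ (with a $q$-divisible coefficient --- which is exactly what the $q$-adic condition \eqref{qT} requires for convergence), not at order $z^{\n-2}$. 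A tail may therefore insert $\varphi_0\psi$ times an element of $q\aA$, contributing $0$ rather than $\n-3$ to $C_{\mathrm{tail}}+K_{\mathrm{tail}}-\#\mathrm{tails}$, and your conclusion that $\#\mathrm{tails}=0$ is unjustified. The argument is nonetheless repairable with the correct, weaker bound: every term of $T_{R^{[0]}}(\psi)$ has the form $\varphi_c\psi^k$ with $k\geq 1$, hence each tail contributes $c+k-1\geq 0$; substituting this into your degree identity still gives $n\geq|\bar\ba|+|\bb|$ for every contributing graph, which combined with $r^{\sim}\geq 0$ forces $r^{\sim}=0$.

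For comparison, the paper's proof sidesteps the edge and tail bookkeeping entirely: by \eqref{degcontofR0} each leg insertion $R^{[0]}(-\psi_l)^*\phi_{a_l}\,\psi_l^{b_l}$ carries $\psi$-degree at least $\bar a_l-3+b_l$, so when $r^{\sim}>0$ the total $\psi$-degree over all vertices is at least $|\bar\ba|-3n+|\bb|\geq\n-2n$, which exceeds $\dim\M_\Gamma\le 3g-3+n-|E|$ precisely because $\n>3g-3+3n$; the contribution then vanishes for dimension reasons alone. That route is shorter but uses the largeness of $\n$ essentially in the vanishing step, whereas your (repaired) count yields the sharper conclusion that every nonvanishing graph satisfies $|\bar\ba|+|\bb|=n$ exactly, with the large-$\n$ hypothesis needed only for $r^{\sim}\geq 0$.
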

\begin{proof}
By $\n>3g-3+3n$ and the stability condition $3g-3+n\geq 0$, we have $-\n<-2n\leq -n$. Since $r^{\sim}=r- \lfloor \frac{\ba}{\n} \rfloor$ is an integer  we must
have $ r'=\frac{|\bar \ba| +|\bb| -n}{\n} \geq  \lfloor \frac{-n}{\n}  \rfloor = 0$.  
 This proves  the first statement.

Next we prove the vanishing result.
By definition,   if $r^{\sim}>0$
$$
 |\bar \ba| =r ^{\sim} \cdot\n  - (|\bb|-n) \geq  \n  - (|\bb|-n).
$$
By definition of $R$-matrix action, we write $f^{[0]}_{g,(\ba, \bb)}$ as a sum of stable graph contributions.
At each vertex $v$ the contribution is of the form
$$
\int_{\M_{g_v,n_v}} \Omega_{g_v,n_v}^Q\Big(\sbigotimes_{l\in L_v}R^{[0]}(-\psi_l)^* \phi_{a_l} \psi^{b_l}  \sbigotimes_{f=(e,v),e\in E_v} C_f(\psi_f) \Big),
$$
where $C_f$  is  from edge contributions.
By using \eqref{degcontofR0}, we see that,   if $r^{\sim}>0$,   the total degree of psi-classes of all vertices is at least
\beq  \label{degcontv}
 |\bar \ba| -3n + |\bb|  \geq \n -2n.
\eeq
On the other hand, the graph contribution vanishes if for any vertex $v$,
$$\textstyle
\sum_{l\in L_v}  (\bar a_l-3+ b_l) >3g_v-3+n_v.
$$
 Hence it vanishes if
$$ \textstyle
 |\bar \ba|-3n + |\bb|  > \sum_v (3g_v-3+n_v) = 3g-3+n -|E| .
 $$
 By the condition $\n>3g-3+3n$ and \eqref{degcontv} we finish the proof.
\end{proof}
\begin{corollary} \label{vanishcondition}
If $f^{[0]}_{g,(\ba, \bb)} $ is nonzero,  we have
\begin{align*}
r:=\frac{1}{\n}(|\ba|+|\bb|-n)=\big\lfloor \frac{\ba}{\n} \big\rfloor:=\sum_l \big\lfloor\frac{  a_l}{\n}\big\rfloor=\#\big\{ i :  a_i \ge \n\} .
\end{align*}
\end{corollary}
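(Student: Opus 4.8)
The plan is to read off the corollary directly from the two preceding lemmas, supplemented by one elementary observation about the range of the indices $a_i$. First I would invoke Lemma~\ref{modNdeg}: since $f^{[0]}_{g,(\ba,\bb)}\neq 0$, we must have $r\in\mathbb Z$. This is exactly the standing hypothesis under which Lemma~\ref{conditionforr} is stated, so that lemma now applies.

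Next, applying Lemma~\ref{conditionforr} (under the standing assumption $\n>3g-3+3n$), the nonvanishing of $f^{[0]}_{g,(\ba,\bb)}$ forces $r^{\sim}=0$. By the very definition $r^{\sim}=r-\lfloor\frac{\ba}{\n}\rfloor$, this is precisely the first asserted equality
$$
r=\big\lfloor\tfrac{\ba}{\n}\big\rfloor .
$$
Thus the first identity in the corollary is nothing but a restatement of the vanishing result of Lemma~\ref{conditionforr} in the contrapositive form.

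It remains to identify $\lfloor\frac{\ba}{\n}\rfloor$ with $\#\{i:a_i\ge\n\}$, and here I would use the geometry of the state space rather than any analytic input. The even part $\sH\uev$ carries the basis $\{p^i\}_{i=0}^{\n+3}$, so every index satisfies $0\le a_i\le \n+3$. Since $\n$ is a large prime (in particular $\n>3$), we have $\n+3<2\n$, whence $\lfloor a_i/\n\rfloor\in\{0,1\}$, with $\lfloor a_i/\n\rfloor=1$ exactly when $\n\le a_i\le\n+3$, that is, when $a_i\ge\n$. Summing this indicator over $i=1,\dots,n$ gives
$$
\big\lfloor\tfrac{\ba}{\n}\big\rfloor=\sum_{i}\big\lfloor\tfrac{a_i}{\n}\big\rfloor=\#\{\,i:a_i\ge\n\,\},
$$
which closes the chain of equalities.

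I expect essentially no obstacle here: the substantive content is entirely absorbed into Lemmas~\ref{modNdeg} and~\ref{conditionforr}, and the corollary is a bookkeeping consequence. The only point one must not overlook is that the step $\lfloor a_i/\n\rfloor\in\{0,1\}$ depends on the bound $a_i\le\n+3$ coming from the chosen basis of $\sH\uev$; this is why the largeness of $\n$ is invoked once more, even though the nontrivial degree estimate has already been spent in establishing the lemmas.
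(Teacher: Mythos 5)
Your proposal is correct and is exactly the argument the paper intends: Lemma~\ref{modNdeg} gives $r\in\mathbb Z$, Lemma~\ref{conditionforr} then forces $r^{\sim}=0$, i.e. $r=\lfloor\frac{\ba}{\n}\rfloor$, and the final equality is the elementary observation that $0\le a_i\le\n+3<2\n$ makes $\lfloor a_i/\n\rfloor$ the indicator of $a_i\ge\n$. Nothing is missing.
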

\begin{corollary}
If $f^{[0]}_{g,(\ba, \bb)} $ is nonzero,  we have
\beq  \label{01degbd}
g-1+r \leq 3g-3+r+n-|\bb|.
\eeq
\end{corollary}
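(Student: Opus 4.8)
The plan is to reduce the displayed inequality to a statement purely about $|\bb|$ and then invoke the vanishing already established. Cancelling the common $r$ from both sides of \eqref{01degbd}, the asserted bound is equivalent to
$$
|\bb| \le 2g-2+n .
$$
So I would only need to bound the total psi-exponent $|\bb|$ by $2g-2+n$ whenever $f^{[0]}_{g,(\ba,\bb)}\neq 0$.

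First I would record the constraint coming from the previous results. By Lemma \ref{conditionforr} (equivalently Corollary \ref{vanishcondition}), if $f^{[0]}_{g,(\ba,\bb)}\neq 0$ then $|\bar\ba|+|\bb|=n$, where $\bar a_j=a_j-\n\lfloor a_j/\n\rfloor\ge 0$. Since $|\bar\ba|\ge 0$, this immediately yields the crude bound $|\bb|\le n$.

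For $g\ge 1$ the conclusion is then immediate: $|\bb|\le n\le 2g-2+n$ because $2g-2\ge 0$, which is exactly the required inequality. For the remaining case $g=0$ (where $2g-2+n=n-2$) the bound instead follows from the top-degree/dimension count for the integral defining $f^{[0]}_{0,(\ba,\bb)}=\int_{\M_{0,n}}\prod_i\psi_i^{b_i}\cup\Omega^{[0]}_{0,n}(\phi_{\ba})$: since $\dim_\CC\M_{0,n}=n-3$ and $\Omega^{[0]}_{0,n}(\phi_{\ba})$ has nonnegative cohomological degree, nonvanishing forces $|\bb|\le n-3\le n-2$. Reassembling, $|\bb|\le 2g-2+n$ in all cases, and adding $g-1+r$ to the inequality $g-1\le 3g-3+n-|\bb|$ returns \eqref{01degbd}.

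There is essentially no serious obstacle here: the entire analytic content, namely the identity $|\bar\ba|+|\bb|=n$ on the support of $f^{[0]}$, has already been extracted in Lemma \ref{conditionforr} from the mod-$\n$ degree preservation of $R^{[0]}$ (Lemma \ref{Rpreservedeg}) together with the dimension constraint for the quintic CohFT. The only point worth flagging is that the clean chain $|\bb|\le n\le 2g-2+n$ fails at $g=0$, so that case must be disposed of separately by the direct dimension count above; once this is noted, the corollary is a one-line rearrangement of the preceding vanishing result.
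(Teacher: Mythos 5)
Your proof is correct and follows essentially the same route as the paper's: both reduce the inequality to $|\bb|\le 2g-2+n$, handle $g\ge 1$ via the nonvanishing constraint $|\bb|\le n$ coming from Lemma \ref{conditionforr}, and handle $g=0$ separately via the dimension bound $|\bb|\le 3g-3+n=n-3$ on $\M_{0,n}$.
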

\begin{proof}
If $g\geq 1$, \eqref{01degbd} follows from the non-vanishing condition $|\bb| \leq n$. If $g=0$, we have the non-vanishing condition
$|\bb| \leq 3g-3+n = n-3$. Hence
$
g-1+r = -1+r <r  -3+ n- |\bb|$. We finish the proof.
\end{proof}

  \subsection{Polynomiality of $[0]$-theory}
In the last subsection, we want to give the similar degree estimate for $[0]$-theory as what we have done for $[1]$-theory in Proposition \ref{polyofomega11}.

 {
 We introduce the $[0]$-potential with special insertions:
\begin{align*}
\quad {f^{[0]}_{g,(\ba, \bb),(\ba', \bb')}}  :=  \int_{\M_{g,n+m}}  \Omega^{[0]}_{g,n+m}
\Big(\sbigotimes_{l=1}^n \phi_{a_l}\psi_i^{b_l}, \sbigotimes_{l'=1}^m E_{a_{l'}',b_{l'}'}(\psi_{n+l'}) \Big), \quad
\end{align*}  
where  the indices
$\ba\in \{0,\cdots,\n\!+\!3\}^{\times n}$,\        $\ba' \in  [\n]^{\times m}$,    $\bb\in\ZZ_{\geq 0}^{\times n}$,  $\bb'\in \ZZ_{\geq 0}^{\times m}$  and
\beq \label{Eabdefn}
\textstyle E_{a'\!,b'}(\psi):=L^{-a'}\cdot  \Coeff_{z^{b'}} \frac{1}{\psi+z} \big(R(\psi)-R(-z)\big)\bar\phi^{a'} 
\eeq with 
  the dual basis  $\{\bar \phi^{a'}\!\!:=\! {\small\text{$L^{ \frac{(\n+3)}{2}}$}}\! \sum_\alpha  (-\ft_\alp)^{-a'}  \mathbf 1^\alpha\}_{a'=1}^\n\!$
 of the ``normalized"  basis $\{\bar\phi_i\}_{a=1}^{\n}$.

 Using
 $\textstyle
{V^{01}}(z,w) =\sum_{a=1}^{\n} \frac{R(z)  -R(-w)  }{z+w}  \bar\phi^a \otimes   R(w)\bar\phi_a
$,  
one has

 \beq \label{V01deg} \textstyle
  {V^{01}}(z,w) =  \sum_{a=1}^{\n} \sum_{b\geq 0}E_{a,b}(z)w^b \otimes   L^a\,R(w)\bar\phi_a.
\eeq

\begin{lemma}
  We have  $L^{-a+k}(\phi_a, R_k \bar\phi^b) \in \mathbb Q(\ft^\n)[X]_{k+\lfloor  \frac{a}{\n} \rfloor}$ and
\begin{align}\label{Rphimod}\textstyle
(\phi_a, R_k \bar\phi^b) =0\quad \text{if} \quad  a-k\neq b \mod \n.
\end{align}
\end{lemma}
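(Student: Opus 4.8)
The plan is to reduce the bilinear quantity $(\phi_a, R_k\bar\phi^b)$ to the scalar entries $(R_k)_a$ already controlled by Lemma \ref{keylem}, and then to extract both assertions from a single root-of-unity summation. First I would move $R_k$ onto the first slot by adjointness: since all classes involved are even and the pairing is symmetric on even classes,
\[
(\phi_a, R_k\bar\phi^b) = (R_k^*\phi_a, \bar\phi^b).
\]
Next I would expand the normalized dual vector in the torus-fixed basis, $\bar\phi^b = L^{(\n+3)/2}\sum_{\beta}(-\ft_\beta)^{-b}\mathbf 1^\beta$, and use symmetry once more to get
\[
(\phi_a, R_k\bar\phi^b) = L^{(\n+3)/2}\sum_{\beta=1}^{\n}(-\ft_\beta)^{-b}(\mathbf 1^\beta, R_k^*\phi_a).
\]
Lemma \ref{keylem} then rewrites each term as $(\mathbf 1^\beta, R_k^*\phi_a) = L^{-(\n+3)/2}L_\beta^{a-k}(R_k)_a$, and the powers of $L$ cancel, leaving $(\phi_a, R_k\bar\phi^b) = (R_k)_a\sum_\beta(-\ft_\beta)^{-b}L_\beta^{a-k}$.

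The computational heart is the remaining sum. Substituting the specializations $-\ft_\beta = \zeta_\n^\beta\ft$ and $L_\beta = \zeta_\n^\beta\ft L$, each summand factors as $\zeta_\n^{\beta(a-k-b)}\ft^{a-k-b}L^{a-k}$, so that
\[
\sum_{\beta=1}^{\n}(-\ft_\beta)^{-b}L_\beta^{a-k} = \ft^{a-k-b}L^{a-k}\sum_{\beta=1}^{\n}\zeta_\n^{\beta(a-k-b)}.
\]
Since $\sum_{\beta=1}^{\n}\zeta_\n^{\beta m}$ equals $\n$ when $\n\mid m$ and $0$ otherwise, the whole expression vanishes unless $a-k\equiv b\pmod{\n}$, which is exactly \eqref{Rphimod}.

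In the surviving case $a-k\equiv b\pmod{\n}$, multiplying by $L^{-a+k}$ cancels the last power of $L$, and I would conclude
\[
L^{-a+k}(\phi_a, R_k\bar\phi^b) = \n\,(R_k)_a\,\ft^{a-k-b} = \n\,(R_k)_a\,(\ft^\n)^{(a-k-b)/\n}.
\]
Because $(a-k-b)/\n\in\ZZ$, the scalar $\n(\ft^\n)^{(a-k-b)/\n}$ lies in $\QQ(\ft^\n)$, while Lemma \ref{keylem} supplies $(R_k)_a\in\QQ[X]_{k+\lfloor a/\n\rfloor}$; hence the product lies in $\QQ(\ft^\n)[X]_{k+\lfloor a/\n\rfloor}$, as claimed.

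I do not expect a serious obstacle: once Lemma \ref{keylem} is available, the argument is essentially bookkeeping. The one structural input I would emphasize is the $\beta$-independence of $(R_k)_a$ asserted in that lemma, since it is precisely what lets the $\beta$-sum collapse to the clean root-of-unity sum above. Were the entries to depend on $\beta$, neither the mod-$\n$ vanishing nor the cancellation of the $L$-powers (and hence the degree bound) would go through, so this is the point to flag as doing the real work.
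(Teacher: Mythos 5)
Your proposal is correct and follows essentially the same route as the paper's proof: expand $\bar\phi^b$ in the fixed-point basis, invoke the $\alpha$-independence of $(R_k)^\alpha_a$ from Lemma \ref{keylem} to collapse the sum to $\sum_\alpha (\zeta_\n^\alpha \ft)^{-b}(\zeta_\n^\alpha \ft L)^{a-k}(R_k)_a$, and conclude both claims from the root-of-unity summation. The paper's version is just a compressed form of the same bookkeeping, so there is nothing to add.
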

\begin{proof} It follows from Lemma \ref{keylem} and
\begin{align*} 
(\phi_a, R_k \bar\phi^b) 
=&\textstyle \sum_{\alpha}  (-\ft_\alp)^{-b}   L_\alp^{a-k} (R_k)^\alp_a= \sum_{\alpha}   (\zeta_\n^\alp t)^{-b}   (\zeta_\n^\alp t\,L)^{a-k} (R_k)^\alp_a.
\end{align*}
Here we have used $\sum_{\alpha}   (\zeta_\n^\alp)^m =0$ unless $\n | m$  because $\n$ is a prime.
\end{proof}

\begin{lemma}
We have
$f^{[0]}_{g,(\ba, \bb),(\ba', \bb')} =0$,
unless
$$\textstyle
r:= \frac{1}{\n} {\small \text{$(|\ba|+|\bb|-|\ba'|-|\bb'|-n+s)$}} \in \mathbb Z.$$
\end{lemma}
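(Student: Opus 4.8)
The plan is to adapt the mod-$\n$ degree bookkeeping of Lemma \ref{modNdeg} to the presence of the special insertions. First I would expand $f^{[0]}_{g,(\ba,\bb),(\ba',\bb')}$ as a sum over stable graphs $\Gamma$ using $\Omega^{[0]}=R^{[0]}.\Omega^{Q,\tw}$: at each of the first $n$ legs place $R^{[0]}(-\psi)^*\phi_{a_l}\psi^{b_l}$, at each edge the $V$-matrix bi-vector, at each of the last $m$ legs the vector $R^{[0]}(-\psi)^* E_{a'_{l'},b'_{l'}}(\psi)$, and at each vertex the quintic correlator $\Omega^Q_{g_v,n_v}$. The grading I would track is the mod-$\n$ degree $\deg\psi=1$, $\deg\phi_j=\deg\varphi_j=j$, which $R^{[0]}$ preserves (Lemma \ref{Rpreservedeg}), together with the elementary vanishing $\int_{\M_{g_v,n_v}}\Omega^Q_{g_v,n_v}(\otimes_i\varphi_{c_i}\psi^{d_i})=0$ unless $\sum_i(c_i+d_i)=n_v$. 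Reducing the latter mod $\n$ says precisely that at every vertex the total mod-$\n$ degree of the incident insertions is $\equiv n_v$.

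As in Lemma \ref{modNdeg}, an ordinary leg contributes mod-$\n$ degree $a_l+b_l$ and each edge contributes $2$ (the pairing is concentrated in total degree $3$ and the propagator's $\tfrac{1}{\psi+\psi'}$ lowers degree by $1$). The one genuinely new computation is the mod-$\n$ degree of a special leg. Expanding \eqref{Eabdefn} as $E_{a',b'}(\psi)=L^{-a'}(-1)^{b'}\sum_{k\ge b'+1}R_k\bar\phi^{a'}\psi^{k-1-b'}$ and using \eqref{Rphimod} — which places $R_k\bar\phi^{a'}$ in mod-$\n$ degree $3-a'-k$ — every surviving monomial has the uniform mod-$\n$ degree $(3-a'-k)+(k-1-b')=2-a'-b'$. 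Since $R^{[0]}(-\psi)^*$ preserves the grading, each special leg contributes mod-$\n$ degree $2-a'_{l'}-b'_{l'}$; I would also record that $R^{[0]}(-\psi)^*R^{[1]}(\psi)=0$ (the symplectic identity used in Theorem \ref{01bipartie}) kills the $R(\psi)$ piece, so a special leg is literally a cut $[0]$–$[1]$ edge, consistent with \eqref{V01deg}.

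Summing the vertex congruences over all $v$, the edge contributions $2|E|$ and the total valence $\sum_v n_v=(n+m)+2|E|$ both appear and the $2|E|$ cancels, leaving $|\ba|+|\bb|+\sum_{l'}(2-a'_{l'}-b'_{l'})\equiv n+m \pmod\n$, i.e. $|\ba|+|\bb|-|\ba'|-|\bb'|-n+m\equiv 0\pmod\n$. Hence the graph contribution vanishes unless $r\in\ZZ$, the shift in the statement being the number $m$ of special legs.

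The step I expect to be the main obstacle is making the mod-$\n$ degree of the special insertion airtight: checking that neither the $\Coeff_{z^{b'}}$ nor the expansion of $\tfrac{1}{\psi+z}$ disturbs the uniform grading, and fixing the normalization of $\bar\phi^{a'}$ (through $\mathbf 1^\alpha=\tfrac{\n\ft_\alpha^3\ft^\n}{-5}\mathbf 1_\alpha$) so that its degree is $3-a'$ rather than $-a'$. The primality of $\n$ enters only through \eqref{Rphimod}, which encodes the vanishing of the relevant character sums $\sum_\alpha\zeta_\n^{\alpha(\cdots)}$; I would invoke it rather than reprove it.
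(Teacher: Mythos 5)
Your argument is correct and is essentially the paper's own proof: the paper likewise derives from \eqref{Rphimod} that $R_k\bar\phi^{a'}$ has mod-$\n$ degree $3-(k+a')$, concludes that $E_{a',b'}$ has uniform mod-$\n$ degree $2-a'-b'$, and then repeats the bookkeeping of Lemma \ref{modNdeg}. Your expanded verification that the $z$-coefficient extraction does not disturb the grading, and your identification of $s$ with the number $m$ of special legs, only make explicit what the paper leaves implicit.
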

\begin{proof}
 Recall  the mod-$\n$ degree introduced in Lemma \ref{Rpreservedeg}.
 Apply \eqref{Rphimod} to $R_k\bar\phi^b=\sum_{s=1}^{\n+3} (R_k\bar\phi^b,\phi_s)\phi^s$ one sees the mod-$\n$ degree of $ R_k\bar\phi^b$ is $3-(k+a)$. One then calculates
 the mod-$\n$ degree of $ E_{a,b}$ is  $2-a-b$. The same reasoning as proof of Lemma \ref{modNdeg} applies.
\end{proof}

 When $s=0$, we have $f^{[0]}_{g,(\ba, \bb),(\ba', \bb')} = f^{[0]}_{g,(\ba, \bb)}$ and $r= \frac{1}{\n} (|\ba|+|\bb| -n)$.

\begin{definition}
For any $(g,n)$, we introduce a statement 
\begin{align*} 
\fS_{g,n}  \  = \  {\small \text{ $ `` \  \forall \ba \in \{0,1,2,3\}^{\times n},  \forall \bb \in \ZZ_{\geq 0}^n  \quad
  (Y/\ft^\n)^{g-1} \!\cdot \! f^{[0]}_{g,(\ba, \bb)} \in  \mathbb Q[X]_{ 3g-3+n    -|\bb|} \  "$}}.
\end{align*}
We also introduce stronger statements
\begin{align}  \label{poly2}
  \fS_{g,n}'
  \ = \  
``&{\small \text{ $\forall k,s\geq 0$ with $\ell+s=n$,  $ \forall \ba \in  \{0,1,2,3,\n,\cdots,\n\!+\!3\}^{\times \ell} ,    \ba' \in  [\n]^{\times s}, (\bb,\bb)'\in  \ZZ_{\geq 0}^n $ }}\   \nonumber\\
 & \quad (Y/\ft^\n)^{g-1+r+s} \cdot  {f^{[0]}_{g,(\ba, \bb),(\ba', \bb')}}\in \mathbb Q[X]_{3g-3+\ell+ 2s  +\lfloor \frac{\ba}{\n} \rfloor -|\bb|+|\bb'|}  \  ".
\end{align}
 \end{definition}

One of the main result in the next section \footnote{ See \S \ref{polyof0} for the proof}, is the following lemma.

\begin{lemma}\label{bigstar}
 Suppose $(g,n)$ satisfies $2g-2+n>0$. Let $\n> {3g +n}$.   If    the statement $\fS_{h,m}$ holds for any $(h,m)<(g,n)$ and  $3h+m\leq 3g+n$. 
Then for any $(h,m)$ such that $(h,m)<(g,n)$ and  $3h+m\leq 3g+n$,  the statement  $\fS_{h,m}'  $ holds.
\end{lemma}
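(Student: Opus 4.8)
The plan is to reduce every correlator occurring in $\fS'_{h,m}$ to the basic correlators controlled by $\fS_{h,m}$, without re-opening the graph sum defining $\Omega^{[0]}=R^{[0]}.\Omega^{Q,\tw}$. In that graph sum each leg contributes a \emph{leg factor} $R^{[0]}(-\psi)^*(\cdot)$, valued in the four-dimensional even part of $\sH_Q$ with coefficients in $\QQ(\ft^\n)[X][[\psi]]$, which is then inserted into a quintic vertex $\Omega^{Q,\tw}$; since vertices and edges are multilinear, $f^{[0]}$ is separately linear in each leg factor. The two features distinguishing $\fS'$ from $\fS$ -- the shifted indices $\phi_{\n+\bar c}$ and the special insertions $E_{a',b'}(\psi)$ -- change only one leg factor at a time and leave it in that same module. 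Crucially, although $\phi_{\n+\bar c}$ is not a combination of $\phi_0,\dots,\phi_3$ in $\sH$, by \eqref{R0z}--\eqref{degcontofR0} the four basic leg factors $R^{[0]}(-\psi)^*\phi_j$ have leading terms $c'_j\varphi_j$ and hence form a basis of the module, so the altered leg factor \emph{is} a combination of theirs; I would expand in this basis and use linearity to land on basic correlators.

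\textbf{Execution.} For a shifted index write $R^{[0]}(-\psi)^*\phi_{\n+\bar c}=\sum_{j=0}^{3}\mu_j(\psi)\,R^{[0]}(-\psi)^*\phi_j$ with $\mu_j(\psi)=\sum_p(\mu_j)_p\psi^p$; since the inverse basis matrix involves only indices $\le 3$, Lemma \ref{keylem} gives $(\mu_j)_p\in\QQ(\ft^\n)[X]$ of degree $\le p+\lfloor(\n+\bar c)/\n\rfloor$. Linearity then writes $f^{[0]}_{h,(\ba,\bb)}$ as a finite sum (by the cut-off $b+p\le 3h-3+m$) of basic correlators with the psi-power at that leg raised from $b$ to $b+p$; applying $\fS_{h,m}$, the bound $3h-3+m-(b+p)$ adds to $\deg (\mu_j)_p$ to give $3h-3+m-b+\lfloor a/\n\rfloor$, while the factor $q^{\lfloor a/\n\rfloor}$ from \eqref{degcontofR0} is absorbed into the prefactor through $q'Y=-X/5^5\in\QQ[X]_1$, raising the $Y$-exponent by $r=\lfloor\ba/\n\rfloor$. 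This is exactly $\fS'$ with $s=0$. For a special leg, expand \eqref{Eabdefn} as $E_{a',b'}(\psi)=L^{-a'}(-1)^{b'}\sum_{k\ge b'+1}(R_k\bar\phi^{a'})\,\psi^{k-1-b'}$ and decompose each $R_k\bar\phi^{a'}$ in the flat basis via \eqref{Rphimod}, reducing every special leg to the shifted-index case. The constraint $k\ge b'+1$ forces the coefficient degree $k+\lfloor c/\n\rfloor$, and writing $k=(k-1-b')+1+b'$ shows that, once the psi-power $k-1-b'$ is cancelled against the $\fS$ bound, each special leg contributes a net $1+b'+\lfloor c/\n\rfloor$; with the one unit from counting the leg this produces precisely the $+2s$ and $+|\bb'|$ of the $\fS'$ degree bound, the rest feeding $\lfloor\ba/\n\rfloor$.

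\textbf{Main obstacle.} The substance is the degree and prefactor bookkeeping, not any isolated deep input. One must check that the $\ft^\n$-powers carried by the point-class normalizations $\bar\phi_a,\bar\phi^a$, together with the $L$-powers of Lemma \ref{keylem} (recall $L_\alpha^\n=\ft^\n Y^{-1}$ from its proof), assemble exactly into $(Y/\ft^\n)^{h-1+r+s}$, and that the degree-raising $q$-factors, each traded for $X$ through $q'Y\sim X$, never overshoot the bound $3h-3+\ell+2s+\lfloor\ba/\n\rfloor-|\bb|+|\bb'|$. Throughout, the vanishing Lemmas \ref{modNdeg}--\ref{conditionforr} and Corollary \ref{vanishcondition} discard the basis-expansion terms whose reduced indices violate $|\bar\ba|+|\bb|=m$, which keeps all sums finite and pins $r=\lfloor\ba/\n\rfloor$.
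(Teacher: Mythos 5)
Your reduction-by-linearity architecture is genuinely different from the paper's proof, and it is an attractive idea: the paper instead factors $R^{[0]}=R^X\cdot R^{\bA,\vec 0}$, first proves polynomiality of the intermediate potential $f^{\bA,\vec 0}$ (Proposition \ref{polyoffa}), and then runs the $R^X$-graph sum, whereas you bypass both the intermediate CohFT and the graph sum by expanding each altered leg factor in the four basic ones. Unfortunately the step that makes this work is exactly the one you do not prove. Your coefficients $(\mu_j)_p$ are the entries of $R^{[0]}(-\psi)^*\phi_{\n+\bar c}$ expanded in $\bigl\{R^{[0]}(-\psi)^*\phi_j\bigr\}_{j\le 3}$, so they are built from the $\sH_Q$-components $(\varphi^i,R^{[0]}(z)^*\phi_a)$. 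Lemma \ref{keylem} says nothing about these: it controls only the point-class components $(\mathbf 1^\alpha, R_k^*\phi_a)$. The actual $\sH_Q$-components (see \eqref{explicitR0f}) involve $A,B,B_2,B_3$, so the assertion that the ratio matrix has entries in $\QQ(\ft^\n)[X]$ of degree $\le p+\lfloor a/\n\rfloor$ is precisely the nontrivial content of Lemma \ref{RXprop}, which the paper extracts from the QDE \eqref{QDEforRX}; the leading-term formulas \eqref{R0z}--\eqref{degcontofR0} cannot substitute for it. Without this input your expansion coefficients a priori live in $\QQ[A,B_1,B_2,B_3,X]$, and $\fS_{h,m}$ then gives you nothing about the recombined correlator.

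Second, even granting the $X$-polynomiality, your degree count for the special legs does not close. Decomposing $R_k\bar\phi^{a'}$ in the flat basis, the dual vectors $\phi^a=\tfrac15(\phi_{\n+3-a}-\ft^\n\phi_{3-a})$ for $a\le 3$ produce two families of terms (indices in $\{0,\dots,3\}$ and in $\{\n,\dots,\n+3\}$) whose naive individual degree bound is $b'+b''+2$, one more than the $b'+b''+1$ needed to reach the exponent $3h-3+\ell+2s+\lfloor\frac{\ba}{\n}\rfloor-|\bb|+|\bb'|$ in \eqref{poly2}. The paper's Lemma \ref{key01} recovers the correct bound only through the cancellation $1+\frac{c'_{\n+a}}{5^5}+\frac{c'_{\n+3-a}}{5^5}=0$ of the top-degree terms, which rests on \eqref{keypropR}. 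Your bookkeeping (``each special leg contributes a net $1+b'+\lfloor c/\n\rfloor$'') shows no analogue of this cancellation; without it $\fS'_{h,m}$ comes out with an $X$-degree too large by $s$, and the induction in Proposition \ref{Sgl} no longer closes. Both issues are repairable, but only by importing the substance of Lemmas \ref{RXprop} and \ref{key01}, at which point your argument essentially converges with the paper's.
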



By using the above two lemmas, we prove

\begin{proposition} \label{Sgl}
Let $\n> 3g+n$. Then
  \beq  \label{poly1a}
 \  \  \forall \ba,\bb \in [\n+3]^{\times n}, \qquad
 (Y/\ft^\n)^{g-1+r} \!\cdot \! f^{[0]}_{g,(\ba, \bb)} \in  \mathbb Q[X]_{ 3g-3+n +\lfloor \frac{\ba}{\n} \rfloor   -|\bb|}  .
\eeq

\end{proposition}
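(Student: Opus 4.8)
\emph{Proof strategy.} The plan is to reduce Proposition \ref{Sgl} to the statement $\fS_{g,n}$ (the case $\ba\in\{0,1,2,3\}^{\times n}$, where $\lfloor\ba/\n\rfloor=0$ and, by Corollary \ref{vanishcondition}, $r=0$), to prove $\fS_{g,n}$ for all $(g,n)$ with $2g-2+n>0$ by strong induction in a well-ordering refining the complexity $3g+n$, and finally to pass back to the full range $\ba\in[\n+3]^{\times n}$ using the shape \eqref{degcontofR0} of $R^{[0]}$. So first I would fix $(g,n)$ and assume $\fS_{h,m}$ for every $(h,m)<(g,n)$; Lemma \ref{bigstar} then upgrades those with $3h+m\le 3g+n$ to the stronger statements $\fS'_{h,m}$, which allow special insertions and indices $a\in\{0,1,2,3,\n,\dots,\n+3\}$.

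The engine of the induction is the bipartite graph formula \eqref{bip01}, which I would apply to the $[0,1]$-correlator $f^{[0,1]}_{g,(\ba,\bb)}$ with $\ba\in\{0,1,2,3\}^{\times n}$; by Theorem \ref{Omega01} this is polynomial of the controlled degree matching $\mathbb Q[X]_{3g-3+n-|\bb|}$ after the $(Y/\ft^\n)^{g-1}$ normalization. In this sum the graph consisting of a single level-$0$ vertex of genus $g$ with all $n$ legs and no edges contributes exactly $f^{[0]}_{g,(\ba,\bb)}$. Every other term either has no level-$0$ vertex, and is then controlled by Proposition \ref{polyofomega11}, or has at least two vertices and hence at least one edge. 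For the latter the decisive input is the identity
\[
\sum_{v\in V(\bipg)}(3g_v+n_v)=3g+n+3\,(|V(\bipg)|-1)-|E(\bipg)|,
\]
valid for a connected stable graph, together with $3g_v+n_v\ge 3$ at each stable vertex: for any vertex $v_0$ of a graph with $|E(\bipg)|\ge 1$ these give $3g_{v_0}+n_{v_0}\le 3g+n-|E(\bipg)|<3g+n$. Hence every level-$0$ vertex of such a graph has complexity strictly below $(g,n)$ and is controlled by $\fS'$, while its level-$1$ vertices are controlled by Proposition \ref{polyofomega11}; in particular a lone genus-$g$ level-$0$ vertex carrying edges cannot occur, as the ambient tree would force unstable level-$1$ leaves. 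The edge bi-vector $V^{01}$ of \eqref{V01deg} distributes the special insertions $E_{a,b}$ onto the level-$0$ side and $R(w)\bar\phi_a$ onto the level-$1$ side, matching the special-insertion data recorded in $\fS'$ and in Proposition \ref{polyofomega11}.

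Granting these inputs I would perform the degree accounting: summing the $\fS'$-bounds over level-$0$ vertices, the bounds of Proposition \ref{polyofomega11} over level-$1$ vertices, and the per-edge contributions of $V^{01}$, and then collapsing the resulting powers of $L_\alpha$, $\ft^\n$ and $Y$ by the genus-and-valence identity above, the total $X$-degree of each non-leading graph is again at most $3g-3+n-|\bb|$ with prefactor $(Y/\ft^\n)^{g-1}$. Subtracting these polynomial contributions from the polynomial $[0,1]$-correlator isolates $f^{[0]}_{g,(\ba,\bb)}$ and establishes $\fS_{g,n}$. To obtain Proposition \ref{Sgl} I would then treat a leg insertion $\phi_{a_i}$ with $a_i\ge\n$ through \eqref{degcontofR0}, replacing it by $c'_{a_i}\,(q')^{\lfloor a_i/\n\rfloor}\varphi_{\bar a_i}$ up to the $O(z^{\bar a_i-3})$ descendent corrections; since $q'=-X/(5^5Y)$, each such factor absorbs one power of $Y$ and raises the $X$-degree by one, so by Corollary \ref{vanishcondition} (with $r=\lfloor\ba/\n\rfloor$) both the exponent and the degree shift by $\lfloor\ba/\n\rfloor$, yielding the asserted bound.

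The hard part will be the bookkeeping of the last two paragraphs: checking that the per-vertex bounds from $\fS'$ and Proposition \ref{polyofomega11} together with the per-edge contributions of $V^{01}$ add up to \emph{exactly} $3g-3+n-|\bb|$ for every bipartite graph, and that the accumulated powers of $Y$, $\ft^\n$ and $L_\alpha$ recombine precisely into the single prefactor $(Y/\ft^\n)^{g-1}$. This is where the strengthened statement $\fS'$, with its $+2s$ and $+|\bb'|$ allowances for special insertions, is indispensable, and where one must verify that $V^{01}$ neither overshoots the degree nor violates the mod-$\n$ selection rule behind Lemmas \ref{modNdeg} and \ref{conditionforr}.
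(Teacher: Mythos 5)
The core of your argument — lexicographic induction, upgrading the inductive hypothesis $\fS_{h,m}$ to $\fS'_{h,m}$ via Lemma \ref{bigstar}, feeding the polynomiality of the $[0,1]$-theory (Theorem \ref{Omega01}) into the bipartite decomposition \eqref{bip01}, isolating the leading single-vertex graph as $f^{[0]}_{g,(\ba,\bb)}$, and bounding every non-leading graph by combining $\fS'$ at level-$0$ vertices with Proposition \ref{polyofomega11} at level-$1$ vertices — is exactly the paper's proof, and your genus/valence identity correctly guarantees that every level-$0$ vertex of a non-leading graph falls under the inductive hypothesis. (Do note that a genus-$g$ level-$0$ vertex with edges \emph{can} occur; it is only excluded from causing trouble because it then carries strictly fewer than $n$ half-edges, and also that the base case $(g,n)=(0,3)$ must be computed explicitly from $\Coeff_{z^0}R^{[0]}$, which your sketch omits.)

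The genuine gap is your final step: deducing the case $\ba\in[\n+3]^{\times n}$ from $\fS_{g,n}$ by replacing a leg $\phi_{a_i}$ with $a_i\ge\n$ by $c'_{a_i}(q')^{\lfloor a_i/\n\rfloor}\varphi_{\bar a_i}$ ``up to descendent corrections.'' Those corrections are not harmless: by the explicit formula \eqref{explicitR0f}, the higher $z$-coefficients of $R^{[0]}(z)^*\phi_{a}$ involve $B$, $A+2B$, $B_2$, $B_3$, i.e.\ the Yamaguchi--Yau generators rather than polynomials in $X$, so the claimed ``absorb one power of $Y$ and raise the $X$-degree by one'' bookkeeping fails for them. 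Moreover the corrections produce bare classes $\varphi_i\psi^k$ at a leg, whereas $\fS_{g,n}$ controls correlators whose legs carry the $R^{[0]}$-dressed insertions $R^{[0]}(-\psi)^*\phi_{a'}$ with $a'\le 3$; since $\phi_a$ for $a\ge 4$ is not in the span of $\phi_0,\dots,\phi_3$, multilinearity gives you no way back into the class of correlators you have inducted on, and inverting the triangular relation between bare and dressed legs reintroduces the non-$X$ generators. The paper avoids this entirely: it runs the same bipartite-graph argument on $f^{[0,1]}_{g,(\ba,\bb)}$ with the \emph{general} $\ba$, so the leading graph directly produces $f^{[0]}_{g,(\ba,\bb)}$, and the extra factor $(Y/\ft^\n)^{r}$ together with the degree shift $\lfloor\ba/\n\rfloor$ is supplied by the statement $\fS'$ itself (which already allows $a\in\{0,1,2,3,\n,\dots,\n+3\}$) and by Lemma \ref{keylem}/Proposition \ref{polyofomega11} at the level-$1$ vertices. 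You should fold the general $\ba$ into the induction in the same way rather than post-processing with \eqref{degcontofR0}.
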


\begin{proof}
By definition of $R^{[0]}$-matrix action, $f^{[0]}_{0,(\vec{a}, \vec{b})}$ is equal to a graph sum.  In case $(g,n)=(0,3)$, there is only one graph with a single vertex and with no psi-classes insertions. In this case $r=\sum_l \lfloor \frac{a_l}{\n}  \rfloor$.   
By the property of $\Coeff_{z^0} R^{[0]}(z)$ (c.f. \eqref{R0z} and \eqref{degcontofR0}), one calculates (for any $a_1,a_2,a_3$)
$$
(Y/\ft^\n)^{0-1+r} \!\cdot \! f^{[0]}_{0,(\ba, 0)} =    (Y/\ft^\n)^r \cdot  (Y/\ft^\n)^{-1} \cdot I_0^{2} I_{11}^2I_{22} \cdot C\,q^r  =C\cdot X^r
$$
for a $C\in\QQ$ (which is a product of $c_j$'s defined as in \ref{keypropR}).  This is a polynomial in $X$ of degree $0-3+3+r$. Here we have used $\left<H,H,H \right>^Q=I_{22}/I_{11}$ and $\left<1,H,H^2 \right>^Q=1$ (c.f. \cite[Appendix A]{NMSP2}).

\medskip

We now prove the proposition by     induction on $(g,n)$ under the lexicographical order. We will use  Proposition \ref{polyofomega11}.
Fix $g, n$ such that $2g-2+n>0$, and from induction hypothesis assume \eqref{poly1a} holds for any $(h,m)<(g,n)$. Then    $\fS_{h,m}$ holds  for any $(h,m)<(g,n)$ and $3g+m\leq 3g+n$.
 By Lemma \ref{bigstar}, $\fS_{h,m}'   $ holds for any $(h,m)<(g,n)$   and also $3h+m\leq3g+n$.  

 Now for any $\ba,\bb \in [\n+3]^{\times n}$
we consider the $[0,1]$-potential $f^{[0,1]}_{g,(\ba, \bb)}$. Suppose $f^{[0,1]}_{g,(\ba, \bb)}$ vanishes, \eqref{poly1a} holds.  
Suppose $f^{[0,1]}_{g,(\ba, \bb)}\neq 0$. By Corollary \ref{vanishcondition}, $ g-1+r\in \mathbb Z$.
Since $\n> 3g-3+n \geq \sum_i b_i\geq 0$, we have $\frac{3g-3+n-\sum b_i}{\n}<1$,  which implies
 $$
 \textstyle
  \lfloor g-1+\frac{3g-3+\sum_{i=1}^n a_i}{\n} \rfloor =g-1+r .$$
By Theorem \ref{Omega01}, $f^{[0,1]}_{g,(\ba, \bb)}\neq 0$ is a polynomial in $X$ of  degree 
$\deg_X f^{[0,1]}_{g,(\ba, \bb)} \leq g-1+r \leq 3g-3+n-|\bb|+r$ (by \eqref{01degbd}). 

On the other hand we apply \eqref{bip01} to this $[0,1]$-potential. There is a bipartite graph with only a single genus $g$ level $0$ vertex, which we call the leading graph.  It suffices to prove that for any non-leading graph $\Gamma$, 
the contribution 
\begin{align*} 
 \Cont_{\Gamma} \in  \mathbb Q[X]_{3g-3+n-|\bb|+r}.  \end{align*}
Indeed, every $[0]$ vertex of any non-leading graph is applicable for the statement $\fS'_{h,m}$.
 Apply \eqref{V01deg} first, and  Proposition \ref{polyofomega11} at $V_1$, and  \eqref{poly2} at $V_2$,
we obtain
\begin{itemize}
 \item   the degree of the total contribution of $\Gamma$ in $X$ is given by  (with $n_v:=|E_v|+|L_v|$) 
\begin{align*}
\qquad &\sum_{v \in V_0}  \Big(3g_v-3+n_v +|E_v| +\!\sum_{l\in L_v}\big(\lfloor \frac{a_l}{\n} \rfloor-b_l \big) + \!\sum_{e\in E_v} b'_{(e,v)}\Big)+
\\
&\quad+\!
\sum_{v \in V_1}  \Big(3g_v\!-3+n_v  + \!\!\sum_{l\in L_v} \big(\lfloor \frac{a_l}{\n} \rfloor-b_l \big) -\!\! \sum_{e\in E_v} b'_{(e,v)}   \Big) \leq  3g-3+n +\! \lfloor \frac{\ba}{\n} \rfloor \!- |\bb| ;
\end{align*}
\item
 the total factor involving $(Y/\ft^\n)$ is given by
$$\qquad\qquad
(Y/\ft^\n)^{g-1+r} \cdot \prod_{v \in V_0} (Y/\ft^\n)^{-(g_v-1+r_v+s)} \prod_{v \in V_1} (Y/\ft^\n)^{-(g_v-1+r_v)} = 1.
$$
\end{itemize}
 This finishes the induction.
\end{proof}

\vspace{1cm}

\section{From $\nmsp$ $[0]$-theory to the CohFT $\Omega^{\bA}$ via $R^X$-action} \label{NMSPruleproof}

We introduce the $\Omega^{\bA,\vec{0}}$-theory via the following $R$-matrix action
\begin{align*}
\Omega^{\bA,\vec{0}} := R^{\bA,\vec{0}}. \Omega^{Q,\tw}
,\qquad \text{ where} \ \ \ \ R^{\bA,\vec{0}}(z) := R^{\bA,\cG}|_{c_{1a}=c_{1b}=c_2=c_3=0}.
\end{align*} 
We also introduce the generating function:  for $a_i=0,1,2,3$ ($i=1,\cdots,n$)
\begin{align*}
f^{\bA,\vec{0}}_{h;\ba,\bb}:=
(-5Y/t^\n)^{h-1} \,\int_{\M_{h,n}}  \psi_{1}^{b_1}\cdots\psi_{n}^{b_n}\cup \Omega^{\bA,\vec{0}}_{h,n}(  \tp_{a_1},\cdots,  \tp_{a_n}).
\end{align*}
By the relation \eqref{twnormalized} (and the explanation in \S \ref{cohftNQ}), we see that
this is equivalent to the definition in \S \ref{NMSPrule} (with gauge  ${c_{1a}=c_{1b}=c_2=c_3=0}$ and with 
$q\mapsto q'$).

In this section  we will prove the polynomiality of  $\Omega^{\bA,\vec{0}}$-theory,
which is closely related to  $f^{\bA,\vec{0}}_{h,n}$ defined in the introduction, via the polynomiality of the $[0]$-theory. 
In the end, we will prove   Theorem \ref{thm2}.



To extract information from the $\nmsp$-$[0]$ theory, we consider the following matrix factorization which defines $R^X(z)$ :
\footnote{ Since $R^{\bA,\vec{0}}$ is invertable, such matrix $R^X(z)$ exists and can be calculated.} 
\begin{align}\label{0-X-A}
R^{[0]}(z) = R^X(z) \cdot R^{\bA,\vec{0}}(z),
\end{align}
where the matrix  is   under the following basis
$$
\{ \varphi_i \}_{i=0}^3  \xrightarrow{\quad R^{\bA,\vec{0}} \quad}  \{  \varphi_i \}_{i=0}^3  \xrightarrow{\quad R^X \quad} \{\phi_a\}_{a=0}^{\n+3}.
$$
(Recall $ \tp_i:=I_0 I_{11}\cdots I_{ii} H^i$ for $i=0,1,2,3$.)
By definition,
we see
\begin{align*}
\Omega^{[0]} = R^X.\Omega^{\bA,\vec{0}}.
\end{align*}

%

\subsection{Properties of $R^X$}  \label{formulaforRX}
  The advantage of the factorization \eqref{0-X-A} is: 
\begin{lemma} \label{RXprop}
The following properties hold for $k<\n-3$.
\begin{itemize}
\item[1.] if $j\neq k+a \mod \n$, then $(\phi_j, R_k^X \varphi^a) = 0$;
\item[2.] if $j < \n$, we have $(\phi_j, R_k^X \varphi^a) \in X \, \mathbb Q[X]_{k-1}$;
\item[3.] if $j  \geq  \n$, we have $ (\phi_j, R_k^X \varphi^a) \in q\, \mathbb Q[X]_{k}$;
\item[4.]  the $C^X(z) \in \End(\sH_Q, \sH)[z]$ defined below satisfies $R^X(-z)^* C^X(z) =\id_Q \in \End \sH_Q$,
\begin{align*} &C^X(z) :=
   \left( \!\!\!\!\!\!\!\!\!\!\!\arraycolsep=1.2pt\def\arraystretch{1}
   \begin{array} {*{3}{@{}C{\mycolwdd}}c}
   1&-z\cdot{\frac {24\,X}{625}}&{z}^
{2}\cdot{\frac {24\,X}{625}}&{z}^{3} \cdot\left( -{\frac {576\,{X}^{2}}{390625}}-{\frac {24
\,X}{625}} \right) \\ \noalign{\medskip}0&1&-{z}\cdot{\frac {202\,X}{625}}&{z
}^{2}\cdot \left( {\frac {4848\,{X}^{2}}{390625}}+{\frac {226\,X}{625}}
 \right) \\ \noalign{\medskip}0&0&1&-{z}\cdot{\frac {649\,X}{625}}\\ \noalign{\medskip}0&0&0&1
   \\
   \cdots &\cdots &\cdots &\cdots  \end{array} \ \right)
\end{align*}
where the dots represent zeros.
\end{itemize}
\end{lemma}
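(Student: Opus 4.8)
The factorization \eqref{0-X-A} defines $R^X(z)=R^{[0]}(z)\,R^{\bA,\vec{0}}(z)^{-1}$, and the plan is to transport the known structure of $R^{[0]}$ across this factorization. The decisive simplification is that $R^{\bA,\vec{0}}(z)^{-1}$ is given by the explicit formula \eqref{RAG} with all gauge parameters set to zero: it is a polynomial in $z$ of degree $\le 3$, equal to $\id$ plus a strictly upper triangular nilpotent part whose entries are the propagators \eqref{bcovgens}, hence lie in the five-generator ring $\sR=\mathbb Q[A_1,B_1,B_2,B_3,X]$. So each assertion about $R^X$ becomes a statement about pairing $R^{[0]}(z)^*\phi_j$ against $R^{\bA,\vec{0}}(z)^{-1}\varphi^a$.

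For part (1) I would first check that $R^{\bA,\vec{0}}$ preserves the mod-$\n$ degree of Lemma \ref{Rpreservedeg}. Indeed, in \eqref{RAG} the $(i,j)$ entry carries the factor $\psi^{\,j-i}$, so $R^{\bA,\vec{0}}(z)^{-1}$ sends $\varphi_j$ to $\varphi_j$ minus terms $z^{\,j-i}\varphi_i$ of the same degree $j$; the inverse $R^{\bA,\vec{0}}(z)$ behaves likewise. Since $R^{[0]}$ preserves the mod-$\n$ degree by Lemma \ref{Rpreservedeg}, so does the product $R^X$, and extracting the coefficient of $z^k$ gives $(\phi_j,R^X_k\varphi^a)=0$ unless $j\equiv a+k\pmod \n$, which is part (1).

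For parts (2)--(3) I would expand
\[
(\phi_j,R^X(z)\varphi^a)=\bigl(R^{[0]}(z)^*\phi_j,\;R^{\bA,\vec{0}}(z)^{-1}\varphi^a\bigr),
\]
and insert the leading behaviour $R^{[0]}(z)^*\phi_j=c'_j\,q^{\lfloor j/\n\rfloor}\varphi_{\bar j}+O(z^{\bar j-3})$ from \eqref{degcontofR0}, together with the degree bounds of Lemma \ref{keylem} and the fact that $R^{\bA,\vec{0}}(z)^{-1}\varphi^a$ has entries in $\sR$. For $j\ge\n$ the factor $q^{\lfloor j/\n\rfloor}$ is divisible by $q$, producing the $q$-divisibility of part (3); for $j<\n$ one checks instead that the whole expression vanishes at $q=0$, where $X$, all the $B_p$ and $A_1$ vanish so that $R^{\bA,\vec{0}}|_{q=0}=\id$ and the factorization degenerates to the identity on $\sspan\{\phi_j\}_{j<\n}$, forcing divisibility by $X$ and dropping the $X$-degree by one (part (2)). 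The degree counts $k-1$ and $k$ are then bookkeeping through Lemma \ref{keylem}, organized as an induction on the $z$-order $k$ driven by the QDE \eqref{QDEforR0}. I expect the main obstacle to live here: the individual entries of $R^{[0]}$ and of $R^{\bA,\vec{0}}$ involve the full generator set $A_1,B_1,B_2,B_3$, whereas the conclusion asserts membership in $\mathbb Q[X]$ alone. The heart of the matter is that the gauge-zero propagators entering $R^{\bA,\vec{0}}$ are tuned to cancel exactly the $A$- and $B$-dependence carried by $R^{[0]}$; making this cancellation structural rather than a brute-force expansion is where the Yamaguchi-Yau relations \eqref{YYrelation} and the QDE \eqref{QDEforR0} must be used together.

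For part (4) I would avoid computing $R^X$ in full. The symplectic condition \eqref{symplectic}, restricted to the orthogonal summand $\sH_Q\subset\sH$, gives $R^{[0]}(-z)^*R^{[0]}(z)=\id_Q$; combining this with $R^{[0]}=R^XR^{\bA,\vec{0}}$ shows that $R^X(-z)^*C^X(z)=\id_Q$ is equivalent to
\[
R^{[0]}(-z)^*\,C^X(z)=R^{\bA,\vec{0}}(-z)^*.
\]
Since the displayed $C^X$ is supported on $\phi_0,\dots,\phi_3$, the left-hand side only involves $R^{[0]}(-z)^*\phi_j$ for $j\le 3$, which I can compute to the required order from the initial data \eqref{R0z} and the QDE \eqref{QDEforR0}; this yields a unipotent triangular system. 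Solving it against the explicit $R^{\bA,\vec{0}}(-z)^*$ for the columns of $C^X$ produces the stated matrix, the $A_1,B_1,B_2,B_3$ again cancelling to leave the pure-$X$ coefficients $\tfrac{24X}{625},\tfrac{202X}{625},\tfrac{649X}{625},\dots$. Because $C^X$ has $z$-degree $3$ while $k<\n-3$, the higher-order errors $O(z^{\bar j-3})$ never enter the computation.
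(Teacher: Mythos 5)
Your reduction for part (4) is correct and is a slightly cleaner packaging of what the paper calls a ``direct computation'' (via the symplectic identity $R^{[0]}(-z)^*R^{[0]}(z)=\id_Q$ one does get the equivalence $R^X(-z)^*C^X(z)=\id_Q \Leftrightarrow R^{[0]}(-z)^*C^X(z)=R^{\bA,\vec 0}(-z)^*$), and your part (1) argument by composing two mod-$\n$-degree-preserving operators is fine. The genuine gap is in parts (2)--(3), and it is exactly at the spot you flag as ``the heart of the matter'': you assert that the $A$- and $B$-dependence of $R^{[0]}$ and $R^{\bA,\vec 0}$ must cancel in the product and that the Yamaguchi--Yau relations and the QDE ``must be used together,'' but you never produce the mechanism. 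Worse, the tool you propose to do the bookkeeping with, Lemma \ref{keylem}, controls the wrong block of the matrix: it bounds the entries $(\mathbf 1^\alpha, R_k^*\phi_a)$, i.e.\ the components of $R^*\phi_a$ along the point classes, whereas $R^X=R^{[0]}(R^{\bA,\vec 0})^{-1}$ is built from the components along $H^i$, which are genuinely $A,B_1,B_2,B_3$-dependent (see Lemma \ref{explicitR0}). Your $q=0$ specialization argument for divisibility by $X$ also presupposes that the entries already lie in $\QQ[X]$, which is the thing to be proved.

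The missing idea, which is the whole content of the paper's proof, is to conjugate the QDE \eqref{QDEforR0} through the factorization: writing $R^{[0]}(z)^*=R^{\bA,\vec 0}(z)^*R^X(z)^*$ turns \eqref{QDEforR0} into
$$
R^X(z)^*\cdot A\msp \;=\; zD\,R^X(z)^* \;+\; R^{\bA,\vec 0}(-z)\Bigl[(zD+A^Q)R^{\bA,\vec 0}(z)^*\Bigr]\cdot R^X(z)^*,
$$
and the decisive computation \eqref{EQURA} is that the bracketed connection matrix has entries in $\QQ[X]$ alone --- this single identity is where the cancellation of $A$ and the $B_p$'s is made ``structural.'' Since $A\msp\phi_j=\phi_{j+1}+(\text{terms involving } c_{j+1}q)$, this is a recursion computing $R^X(z)^*\phi_{j+1}$ from $R^X(z)^*\phi_j$, starting from the pure initial column $R^X(z)^*\phi_0=1+O(z^{\n-3})$; all entries of $R^X$ are therefore generated by a pure-$X$ recursion, which yields the degree bounds of (2), the $q$-divisibility of (3) (from the $c_{j+1}q$ entries of $A\msp$), and, since the connection matrix raises the mod-$\n$ degree by one, also (1). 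Without writing down this conjugated QDE your outline does not close.
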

\begin{proof}
Recall the QDE \eqref{QDEforR0} for $R(z) |_Q$ is
$$
zD R^{[0]}(z)^*  =  R^{[0]}(z)^* \cdot A\msp -A^Q \cdot R^{[0]}(z)^*
$$
together with the definition $R^{[0]}(z)^* =R^{\bA,\vec{0}}(z)^* R^X(z)^*$, we obtain
$$
zD (R^{\bA,\vec{0}}(z)^* R^X(z)^*   ) =  (R^{\bA,\vec{0}}(z)^* R^X(z)^*   ) \cdot A\msp -A^Q \cdot (R^{\bA,\vec{0}}(z)^* R^X(z)^*   ).
$$
Then we have
\beq \label{QDEforRX}
 R^X(z)^*   \cdot A\msp=   zD \Big(  R^X(z)^* \Big)   +R^{\bA,\vec{0}}(-z)\Big[  (zD+A^Q)  R^{\bA,\vec{0}}(z)^* \Big] \cdot   R^X(z)^*.
\eeq
A direct computation shows that
\beq  \label{EQURA}
R^{\bA,\vec{0}}(-z)\Big[  (zD+A^Q)  R^{\bA,\vec{0}}(z)^* \Big]  = {\footnotesize  \left(\!\!\!\!\begin{array} {*{4}{@{}C{\mycolwdb}}} 0&0&0&-{\frac {24\,X}{625}{z}^{4}}
\\ \noalign{\medskip}1&0&- \frac{2\,X}{5}{z}^{2}&0\\ \noalign{\medskip}0&1&-Xz&0
\\ \noalign{\medskip}0&0&1&-Xz\end {array} \!\!\right)}.
\eeq
Hence we have an algorithm which recursively compute
$R^X(z)^* \phi_i$ ($i>0$)
 from $R^X(z)^* \phi_0 = 1+O(z^{\n-3})$ (which follows from the definition of $R^X(z)$ \eqref{0-X-A} and the formula \eqref{R0z}).  
 Furthermore,  since  the matrix in the algorithm always  increases   the mod $\n$ degree (see definition in Lemma \ref{Rpreservedeg}) by $1$ simultaneously, we see the first three statements hold. The last one is obtained by direct computation of the leading term of $R^X$ (see Appendix \ref{explicitformulaR}) and using the vanishing properties of $R^X$ in the first three statements.
\end{proof}

\begin{remark} 
By the first statement of the above lemma,
we see
$$R^X(-z)\sta 1=1_Q+O(z^{\n-3}) \and  T_{R^X}(z)=O(z^{\n-2}) .$$
\end{remark}

\subsection{Polynomiality of $\Omega^{\bA,\vec{0}}$-theory}
\begin{lemma}\label{fAvan}
We have $f^{\bA,\vec{0}}_{h;\ba,\bb} =0$ when $ \sum_i (a_i+b_i) \ne n$.
\end{lemma}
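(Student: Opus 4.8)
The plan is to prove the vanishing by a cohomological degree (grading) argument. I would assign to the $A$-model basis vector $\tp_i = I_0I_{11}\cdots I_{ii}H^i$ the complex degree $i$ and to each ancestor class $\psi$ the degree $1$; since the coefficients $I_0,I_{11},\dots$ are degree $0$, this is just the usual complex cohomological grading on $H^*(Q)$ transported to $\sH_Q$. The input from geometry is that $Q$ is a Calabi--Yau threefold, so the virtual dimension of $\M_{g,n}(Q,d)$ equals $n$; consequently $\Omega^Q_{g,n}(H^{c_1},\dots,H^{c_n})$ is a homogeneous class in $H^*(\M_{g,n})$ of complex degree $3g-3+\sum_j c_j$, and by Remark \ref{Q-to-tw} the twisted class $\Omega^{Q,\tw}_{g,n}$ has the same degree, the scalar prefactor and the substitution $q\mapsto q'$ only altering the coefficient ring $\aA$. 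This is the graded refinement of the vanishing already invoked in the proof of Lemma \ref{modNdeg}.

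First I would record that $R^{\bA,\vec 0}(z)$ is graded of degree $0$ with $\deg z = 1$. Indeed, by \eqref{RAG} the matrix $R^{\bA,\vec 0}(z)^{-1}=\id - N$ is unipotent upper triangular with its $(i,j)$ entry ($i<j$) a scalar multiple of $z^{j-i}$, so $N$, and hence $R^{\bA,\vec 0}(z)$ and its inverse, sends $\tp_j$ to a combination of $z^{j-i}\tp_i$, each summand of degree $j$. Because the Poincar\'e pairing $\eta$ is homogeneous of degree $3$ (i.e.\ $\eta(\tp_a,\tp_b)\neq 0$ forces $a+b=3$), the adjoint $R^{\bA,\vec 0}(z)^*$ is graded of degree $0$ as well. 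Two consequences I would then extract: $R^{\bA,\vec 0}$ fixes the unit $\tp_0$, so $T_{R^{\bA,\vec 0}}(z)=z\tp_0 - zR^{\bA,\vec 0}(-z)^*\tp_0=0$ and there are no tail insertions; and via the symplectic condition \eqref{symplectic} one has $R^{\bA,\vec 0}(\psi)^{-1}=R^{\bA,\vec 0}(-\psi)^*$, so the edge bivector is built from the graded operator $R^{\bA,\vec 0}(\cdot)^*$.

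Next I would verify that every ingredient of the graph sum for $\Omega^{\bA,\vec 0}=R^{\bA,\vec 0}.\Omega^{Q,\tw}$ (as in \S\ref{Raction}) is homogeneous. The leg operator $R^{\bA,\vec 0}(-\psi)^*\tp_{a_l}$ is homogeneous of degree $a_l$, so each monomial $\psi^{k}H^{c}$ it produces satisfies $c+k=a_l$. The edge numerator $\sum_i \tp_i\otimes\tp_{3-i} - R^{\bA,\vec 0}(\cdot)^{-1}\tp_i\otimes R^{\bA,\vec 0}(\cdot)^{-1}\tp_{3-i}$ is homogeneous of degree $3=\dim_\CC Q$; after dividing by $\psi+\psi'$ it is homogeneous of degree $2$, so its two half-edge insertions $\psi^{k}H^{c}\otimes\psi^{k'}H^{c'}$ satisfy $(c+k)+(c'+k')=2$. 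Finally the gluing map $\xi_\Gamma\colon \M_\Gamma\to\M_{g,n}$ has complex codimension $|E(\Gamma)|$, so $(\xi_\Gamma)_*$ raises complex degree by $|E(\Gamma)|$. Adding the vertex degrees $3g_v-3+\sum_{j\in v}(c_j+k_j)$ over all vertices, using $\sum_v(3g_v-3)=3g-3-3|E(\Gamma)|$ together with the leg and edge totals $\sum_l a_l + 2|E(\Gamma)|$, the class $\Cont_\Gamma$ has degree $3g-3+\sum_l a_l-|E(\Gamma)|$ on $\M_\Gamma$; the pushforward then makes it exactly $3g-3+\sum_l a_l$ on $\M_{g,n}$, independently of $\Gamma$. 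Hence $\Omega^{\bA,\vec 0}_{g,n}(\tp_{a_1},\dots,\tp_{a_n})$ is homogeneous of complex degree $3g-3+\sum_i a_i$.

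With this the lemma is immediate: the integrand $\psi_1^{b_1}\cdots\psi_n^{b_n}\cup\Omega^{\bA,\vec 0}_{g,n}(\tp_{a_1},\dots,\tp_{a_n})$ has complex degree $\sum_i b_i + 3g-3+\sum_i a_i$, while $\dim_\CC\M_{g,n}=3g-3+n$, so the integral vanishes unless $\sum_i(a_i+b_i)=n$; since the prefactor $(-5Y/t^\n)^{g-1}$ is a cohomological scalar, this yields $f^{\bA,\vec 0}_{h;\ba,\bb}=0$ whenever $\sum_i(a_i+b_i)\neq n$. I expect the only step needing genuine care to be the homogeneity of the adjoint $R^{\bA,\vec 0}(z)^*$ and of the edge propagator assembled from it, for which the homogeneity of $\eta$ and the symplectic condition \eqref{symplectic} are the essential inputs; the rest is degree bookkeeping driven by the Calabi--Yau condition $\vdim\M_{g,n}(Q,d)=n$.
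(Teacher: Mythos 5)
Your argument is correct and is essentially the paper's own proof written out in full: the paper's one‑line justification (``the $R^{\bA,\vec 0}$ action preserves degrees mod $\n$,'' combined with the quintic vanishing $\int_{\M_{g,n}}\Omega^{Q}_{g,n}(\otimes\phi_{a_i}\psi^{b_i})=0$ unless $\sum_i(a_i+b_i)=n$ already recorded in the proof of Lemma \ref{modNdeg}) is exactly your grading bookkeeping, carried out with the honest integer cohomological degree rather than the mod‑$\n$ one. Two harmless imprecisions worth noting: the source CohFT is the $\tau_Q$‑shifted twisted theory, so you should also observe that shifting by the degree‑one class $J_1H$ preserves homogeneity (the $k$ extra insertions are compensated by the $k$‑dimensional fibers of $\pr_k$), and the translation $T_{R^{\bA,\vec 0}}$ need not literally vanish once the unit normalizations $H^0$ versus $\tp_0=I_0H^0$ are tracked --- but since any tail is a multiple of $z\cdot\mathbf 1$, it is degree‑neutral after pushforward and your count is unaffected.
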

\begin{proof}
Just notice that the $R^{\bA,\vec{0}}$ action preserve degrees mod $\n$.
\end{proof}

\begin{proposition} \label{polyoffa} Assume $\fS_{h,m}\!$
holds for any $(h,m)<(g,n)$  and $3h+m\leq 3g+n$.  Then 
\begin{align} \label{polynomialityfA0}
 \  \forall  (h,m)<(g,n),  3h+m\leq 3g+n,     \forall \ba \in  \{0,1,2,3\}^{\times m},  f^{\bA,\vec{0}}_{h;\ba,\bb} \in    \mathbb Q[X]_{3h-3+m-\sum_i b_i }.
\end{align}
\end{proposition}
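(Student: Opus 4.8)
The plan is to invert the factorization \eqref{0-X-A} at the level of correlators, using the one-sided inverse $C^X$ of Lemma \ref{RXprop}(4) to peel off the $R^X$-dressing on the legs. Recall that \eqref{0-X-A} gives $\Omega^{[0]}=R^X.\Omega^{\bA,\vec{0}}$. I would argue by induction on $(h,m)$ in the lexicographic order (over the range $(h,m)<(g,n)$, $3h+m\le 3g+n$), with $\fS_{h,m}$ available throughout by hypothesis and Lemma \ref{fAvan} discarding the terms with $\sum_i(a_i+b_i)\ne m$. For $\ba\in\{0,1,2,3\}^{\times m}$, the central object is the $C^X$-dressed $[0]$-correlator
$$
G:=\int_{\M_{h,m}}\psi_1^{b_1}\cdots\psi_m^{b_m}\,\Omega^{[0]}_{h,m}\big(C^X(\psi_1)\varphi_{a_1},\ldots,C^X(\psi_m)\varphi_{a_m}\big).
$$
Expanding $\Omega^{[0]}=R^X.\Omega^{\bA,\vec{0}}$ by the $R$-matrix graph sum of \S\ref{Raction}, the leg operator $R^X(-\psi_l)^*$ hits the input $C^X(\psi_l)\varphi_{a_l}$ and, by the identity $R^X(-z)^*C^X(z)=\id_Q$ of Lemma \ref{RXprop}(4), collapses it to the bare vector $\varphi_{a_l}$, leaving no residual $\psi_l$. (The tail terms $T_{R^X}=O(z^{\n-2})$ do not contribute, as $\n$ is large and the corresponding classes vanish by dimension on $\M_{h,m+k}$.) Hence the one-vertex graph $\Gamma_0$ reproduces exactly $(-5Y/\ft^\n)^{-(h-1)}f^{\bA,\vec{0}}_{h;\ba,\bb}$, and I would solve
$$
(-5Y/\ft^\n)^{-(h-1)}f^{\bA,\vec{0}}_{h;\ba,\bb}=G-\sum_{\Gamma\ne\Gamma_0}\frac{1}{|\Aut\Gamma|}\Cont_\Gamma,
$$
where every non-leading $\Gamma$ has all its vertices of type $(g_v,n_v)<(h,m)$ with $3g_v+n_v\le 3g+n$.

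Next I would verify that both sides are polynomials in $X$ of degree $\le 3h-3+m-|\bb|$. Since $C^X$ maps $\sH_Q$ into $\sspan\{\phi_0,\ldots,\phi_3\}[z]$ with $z^k$-coefficients in $\QQ[X]_{k}$ (divisible by $X$ for $k\ge1$, and equal to $\delta$ at $k=0$), multilinearity rewrites $G$ as a finite $\QQ[X]$-combination $G=\sum_{\mathbf{j},\mathbf{k}}\big(\prod_l c^{(k_l)}_{j_l,a_l}\big)f^{[0]}_{h,(\mathbf{j},\,\bb+\mathbf{k})}$, with $\mathbf{j}\in\{0,1,2,3\}^{\times m}$. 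Each term is controlled by $\fS_{h,m}$: $(Y/\ft^\n)^{h-1}f^{[0]}_{h,(\mathbf{j},\bb+\mathbf{k})}\in\QQ[X]_{3h-3+m-|\bb|-\sum_l k_l}$, and the accompanying $C^X$-coefficient contributes $X$-degree $\le\sum_l k_l$, so each term has $X$-degree $\le 3h-3+m-|\bb|$ while the $(Y/\ft^\n)$-powers cancel against the prefactor up to the constant $(-5)^{h-1}$. For the non-leading graphs I would run exactly the bookkeeping of Propositions \ref{polyofomega11} and \ref{Sgl}: each vertex supplies a lower $\bA$-correlator, polynomial with bound $3g_v-3+n_v-|\bb_v|$ by the inner induction; each edge supplies the $R^X$-bivector whose entries are polynomial in $X$ by Lemma \ref{RXprop}(1--3) and carry one inverse power of $(Y/\ft^\n)$ through the pairing; and the additivities $\sum_v(g_v-1)+|E(\Gamma)|=h-1$ and $\sum_v n_v=m+2|E(\Gamma)|$ make the normalization powers cancel while bounding the total $X$-degree by $3h-3+m-|\bb|$.

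The main obstacle is the simultaneous degree bookkeeping: one must check that the index shift produced by $C^X$ (which lowers $a_l$ while raising the psi-power $b_l$ by the same $k_l$) is exactly compensated by the degree drop $3h-3+m-|\bb|-\sum_l k_l$ coming from $\fS$, so that no term overshoots. The precise inputs that make this balance are the mod-$\n$ degree preservation and the divisibility/degree estimates of Lemma \ref{RXprop}(1--3), which pin each added psi-power $k$ to $X$-degree at most $k$; the analogous estimate on the edge bivector must be coupled with the genus/valence additivity so that no edge inflates the degree. A secondary point to keep straight is that the whole manipulation stays within the index range $\{0,1,2,3\}$ for which $\fS_{h,m}$ is stated --- guaranteed because the image of $C^X$ lies in $\sspan\{\phi_0,\ldots,\phi_3\}$ --- and that the tail terms are genuinely negligible. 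Granting these, \eqref{polynomialityfA0} follows for all $(h,m)$ in the prescribed range.
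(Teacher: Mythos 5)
Your proposal is correct and follows essentially the same route as the paper: the paper likewise introduces the $C^X$-dressed correlator $\tilde f^{[0]}_{h;\ba,\bb}$ (your $G$), uses $R^X(-z)^*C^X(z)=\id_Q$ to identify the leading one-vertex graph with $(-5Y/\ft^\n)^{-(h-1)}f^{\bA,\vec 0}_{h;\ba,\bb}$, bounds $G$ via $\fS_{h,m}$ together with the degree properties of $C^X_k$, and handles the non-leading graphs by induction on $(h,m)$ using the estimate \eqref{degreeVX} on $V_X$. The degree and $(Y/\ft^\n)$-power bookkeeping you describe matches the paper's computation.
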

\begin{proof}
For $a_i\in\{0,1,2,3\}$  we define
\begin{align*} 
\tilde f^{[0]}_{h;\ba,\bb}  :=\,\int_{\M_{h,m}}   (R^X\!.\Omega^{\bA,\vec{0}})_{h,m}(  C^X(\bp_1) \tp_{a_1} \bp_{1}^{b_1},\cdots,   C^X(\bp_m) \tp_{a_m} \bp_{m}^{b_m}),
\end{align*}
where recall that $C^X(z):= \sum_{k=0}^3 C_k^X z^k$ is defined in \S \ref{formulaforRX} such that
$$R^X(-z)^* C^X(z) =\id_Q.
$$ 
 Furthermore,  $C_k^X$ satisfies the following property
$$
(\phi^j, C_k^X \varphi_a)     \  \text{vanishes,  if  } j>3 \text{ or  } j\neq k+a  \quad \and \quad 
(\phi^j, C_k^X \varphi_a)   \in  \mathbb Q[X]_k   .
$$
By the above property of $C^X_k$ and the condition $\fS_{h,m}$,  
we see
\beq  \label{f0ab}
 \tilde f^{[0]}_{h;\ba,\bb}  \in  (Y/t^\n)^{-(h-1)}  \mathbb Q[X]_{3h-3+m-\sum_i b_i}. 
\eeq  

We note that in the stable graph summation formula of $\tilde f^{[0]}_{h;\ba,\bb}$ via the $R^X$-matrix action on 
$\Omega^{\bA,\vec{0}}$, there is this ``leading" graph that is a single genus $h$ vertex with $m$-insertions $\tp_{a_1} \bp_1^{b_1},\cdots,  \tp_{a_m} \bp_1^{b_m}$.
This graph contributes to $f^{\bA,\vec{0}}_{h;\ba,\bb}$. The contribution of any other non-leading graph $\Gamma$ will be of the form
\beq\label{star}
\qquad \qquad \Big(  \sbigotimes_{v\in V(\Gamma)}  f^{\bA,\vec{0}}_{g_v,n_v} \Big)  \Big( \sbigotimes_{i=1}^m \tp_{a_i} \bp_i^{b_i}  \,\,  \sbigotimes \sbigotimes_{e\in E(\Gamma)} V_X(e)  \Big),
\eeq
where $ f^{\bA,\vec{0}}_{g_v,n_v}: H_\bA^{\otimes n_v} \rightarrow Q[\![q]\!]$ are the linear maps $f^{\bA,\vec{0}}_{g_v,n_v}(-) :=\left<- \right>^{\bA,\vec{0}}_{g_v,n_v}$ and \begin{align*}
V_X(e) := & \ \frac{\sum_{i=0}^3\tp_i\otimes \tp^i-\sum_{j=0}^{\n+3} R^X\!(-\bp_{v_1})^* \phi_j \otimes R^X\!(-\bp_{v_2})^* \phi^j}{\bp_{v_1}+\bp_{v_2}} 
\end{align*}
is a bivector\footnote{ We have used $(\tp_i, \tp_{3-i})= 5I_0^2I_{11}^2 I_{22}=5Y$.} in $H_\bA\otimes H_\bA$, for $v_1$ and $v_2$ 
incident to the edge $e$.
  Furthermore,  by Lemma \ref{RXprop} and  by using $\varphi^i = (5Y/t^\n )^{-1} \varphi_{3-i}$,    we have the following degree estimate:
\beq \label{degreeVX}
\forall k_1,k_2\qquad  (Y/t^\n)\cdot {\Coeff_{\psi_{v_1}^{k_1}\psi_{v_2}^{k_2}} V_X(e)} \quad  \in  \quad H_\bA^{\otimes 2} [X]_{k_1+k_2+1}.
\eeq


We now prove the polynomiality  by induction.
First we see for $(h,m)=(0,3)$, the ``leading" graph is the only graph. The theorem for this case follows directly.

Next we assume the polynomiality \eqref{polynomialityfA0} for all genus $h'$ with $m'$ insertions with $(h',m')<(h,m)$  and $3h'+m'\leq 3g+n$. 
Recall \eqref{f0ab} is equal to the graph sum of \eqref{star}. For a ``non-leading" graph $\Gamma$,
\begin{enumerate}
\item the factor of \eqref{star} associated to $\Gamma$  involving $Y$ is in total
$$
 \prod_v  (Y/\ft^\n)^{-(g_v-1)}  \prod  (Y/\ft^\n)^{-E} =   (Y/\ft^\n)^{-(h-1)};
$$
\item the $X$-degree of \eqref{star} associated to $\Gamma$ is in total 
\begin{align*} 
&\sum_v (3 g_v-3+n_v-\sum_{e\in E_v} b_{(e,v)} -\sum_{i\in L_v}b_i )      +\sum_{e=(v_1,v_2)} ( b_{(e,v_1)}+ b_{(e,v_2)}+1)\nonumber\\
= & (\sum_v 3 g_v)  -  3|V|+ 3 |E| +m  -\sum_i b_i = 3h-3+m -\sum_i b_i ;
\end{align*}
\end{enumerate}
as desired.
 This finishes the induction and proves $f^{\bA,\vec{0}}_{h;\ba,\bb} \in \mathbb Q[X]_{3h-3+m-\sum_i b_i }$.
\end{proof}


\subsection{Proof of Lemma \ref{bigstar}}  \label{polyof0}
We state the lemma we will use to prove Lemma \ref{bigstar}.

\begin{lemma} \label{key01}
We have the following degree estimate:  whenever $b''<\n-3$ 
\beq \label{RXEprop}
 (Y/\ft^\n)^{-r_E} \cdot   \Coeff_{z^{b''}} \Big(\varphi^{a''}, \!R^X\!(-z)^* E_{a'\!\!, b'}\!(z)\!  \Big) \in   \mathbb Q [X]_{b+b'+1},  
\eeq
where $r_E:={\frac1{\n}({a'\!+b'\!+a''+b''-\n-2})} $ and the LHS of \eqref{RXEprop} vanishes unless $r_E \in \mathbb Z$.
\end{lemma}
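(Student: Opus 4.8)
The plan is to use the orthogonality hidden in the factorization \eqref{0-X-A} to strip off the ``diagonal'' piece of $E_{a',b'}$, and then to read the bound off the entry-wise estimates already available. Spelling out \eqref{Eabdefn} (and renaming its internal dummy variable to $w$),
$$
R^X(-z)^* E_{a',b'}(z) = L^{-a'}\,\Coeff_{w^{b'}}\frac{1}{z+w}\,R^X(-z)^*\bigl(R(z)-R(-w)\bigr)\bar\phi^{a'}.
$$
Since $\bar\phi^{a'}\in\sH_{\npt}$ we have $R(z)\bar\phi^{a'}=R^{[1]}(z)\bar\phi^{a'}$, and taking adjoints in \eqref{0-X-A} gives $R^{[0]}(-z)^*=R^{\bA,\vec{0}}(-z)^*R^X(-z)^*$. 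Combined with the symplectic orthogonality $R^{[0]}(-z)^*R^{[1]}(z)=0$ (used already in the proof of Theorem \ref{01bipartie}) and the invertibility of $R^{\bA,\vec{0}}(-z)^*$, this yields the key identity
$$
R^X(-z)^* R^{[1]}(z)=0 .
$$
Hence the $R(z)$-term drops out; replacing $z$ by $-z$ shows in addition that the surviving numerator $R^X(-z)^* R^{[1]}(-w)\bar\phi^{a'}$ vanishes at $z=-w$, so it is divisible by $z+w$ and $R^X(-z)^* E_{a',b'}(z)$ is a genuine power series in $z$. This legitimises the extraction $\Coeff_{z^{b''}}$, and the single division by $z+w$ is exactly where the ``$+1$'' in the target degree $b+b'+1$ (with $b=b''$) comes from.

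Next I would pair with $\varphi^{a''}$ and pass through an intermediate basis. Using the adjoint property and $(\phi_i,\phi^j)=\delta_i^j$,
$$
\Bigl(\varphi^{a''},R^X(-z)^* R^{[1]}(-w)\bar\phi^{a'}\Bigr)=\sum_{j,k,l}(-z)^k(-w)^l\,(\phi_j,R_k^X\varphi^{a''})\,(\phi^j,R_l\bar\phi^{a'}),
$$
so the whole quantity is assembled from the entries of $R^X$ and of $R=R^{[1]}$. For the first factor I invoke Lemma \ref{RXprop}: it forces $j\equiv k+a''\pmod{\n}$, bounds the $X$-degree by $k$, and records a power of $q$ precisely when $j\ge\n$. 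For the second factor I invoke \eqref{Rphimod} together with Lemma \ref{keylem}: the entry vanishes unless $j-l\equiv a'\pmod{\n}$, has $X$-degree at most $l+\lfloor j/\n\rfloor$, and carries an explicit power of $L_\alpha$. (A routine passage between $\{\phi_j\}$ and its dual $\{\phi^j\}$ is needed to put each factor in the exact shape of the quoted lemmas; I suppress it here.)

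With these estimates in hand the three assertions follow by bookkeeping. The vanishing unless $r_E\in\ZZ$ is the mod-$\n$ degree argument of Lemma \ref{modNdeg}: $R^X$ preserves the mod-$\n$ degree, $E_{a',b'}$ has mod-$\n$ degree $2-a'-b'$ (as computed just before the definition of $\fS_{g,n}$), each power of $z$ contributes degree $1$, and the quintic pairing is nonzero only in total degree $\equiv 3\pmod{\n}$; together these give $a'+b'+a''+b''\equiv 2\pmod{\n}$, i.e. $r_E\in\ZZ$. For the degree bound, the division by $z+w$ and the extraction $\Coeff_{z^{b''}}\Coeff_{w^{b'}}$ tie the surviving indices to $k+l=b''+b'+1$, and the congruence $k-l\equiv a'-a''\pmod{\n}$ coming from the two vanishing conditions fixes the intermediate $j$; the resulting total $X$-degree $k+l+\lfloor j/\n\rfloor=(b''+b'+1)+\lfloor j/\n\rfloor$ is reduced to $b+b'+1$ once the $q$-powers (i.e. the $\lfloor j/\n\rfloor$-part) are cleared. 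Clearing them is exactly what the prefactor $(Y/\ft^\n)^{-r_E}$ does: collecting the $L_\alpha$- and $L$-powers via $L_\alpha^{\n}=(\ft L)^{\n}=\ft^\n Y^{-1}$, as in Proposition \ref{polyofomega11}, produces the normalising factor $(Y/\ft^\n)^{r_E}$, after which the remainder lies in $\QQ[X]_{b+b'+1}$.

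The conceptual content is cheap: the identity $R^X(-z)^* R^{[1]}(z)=0$ does all the structural work, killing the diagonal and producing the resolvent responsible for the single extra unit of degree. The real obstacle is the last paragraph's accounting — threading the two independent estimates (Lemma \ref{RXprop} for $R^X$, and \eqref{Rphimod} with Lemma \ref{keylem} for $R^{[1]}$) through the sum over the intermediate index $j$, while simultaneously extracting the exact power $(Y/\ft^\n)^{r_E}$, the mod-$\n$ vanishing, and the sharp degree $b+b'+1$. The hypothesis $b''<\n-3$ enters precisely to keep the coefficient extraction $\Coeff_{z^{b''}}$ inside the range $k<\n-3$ where the $R^X$-estimates of Lemma \ref{RXprop} are valid.
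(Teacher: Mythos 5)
Your structural setup is sound and largely parallel to the paper's: the identity $R^X(-z)^*R^{[1]}(z)=0$ does follow from $R^{[0]}(-z)^*R^{[1]}(z)=0$, the factorization \eqref{0-X-A} and the invertibility of $R^{\bA,\vec{0}}$; the insertion of the complete basis $\sum_a\phi_a\otimes\phi^a$, the constraint $k+l=b'+b''+1$ coming from the division by $z+w$, the mod-$\n$ vanishing, and the $(Y/\ft^\n)^{r_E}$ normalization all match what the paper does (the paper reaches the single-coefficient formula \eqref{EabtoR} directly from the expansion of $\frac{R(\psi)-R(-z)}{\psi+z}$ rather than via your orthogonality identity, but that difference is cosmetic).

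The gap is in the final degree count. For intermediate indices $a\in\{0,\dots,3\}$ the dual vector is $\phi^a=\frac15(\phi_{\n+3-a}-\ft^\n\phi_{3-a})$, so the factor $(\phi^a,E_{a',b'})$ involves $(R_{j})_{\n+3-a}$ with $\n+3-a\ge\n$, whose $X$-degree bound from \eqref{Rkform} is $j+1$, not $j$; symmetrically, for $a\in\{\n,\dots,\n+3\}$ the factor $(R^X_k)_a$ lies in $q\,\QQ[X]_k$, and since $5^5Yq=\ft^\n X$ this is $\frac{\ft^\n}{5^5}\frac{1}{Y}\QQ[X]_{k+1}$. In both cases the product has $X$-degree $b'+b''+2$. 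Multiplying by $(Y/\ft^\n)^{-r_E}$ absorbs the stray powers of $Y$ and $\ft^\n$ but does not lower the $X$-degree --- rewriting $q$ as $\ft^\n X/(5^5Y)$ is precisely what raises it --- so your claim that ``clearing the $q$-powers'' reduces the bound to $b'+b''+1$ is unjustified, and your argument only yields membership in $\QQ[X]_{b'+b''+2}$. The paper closes exactly this gap by a cancellation of the top coefficients between the two offending families of terms: by \eqref{keypropR} one has $\Coeff_{X^{k+1}}(R_k)_a=\frac{c'_a}{5^5}\Coeff_{X^{k}}(R_k)_{a-\n}$ (with the analogous relation for $R^X$), and the combination arising in the sum over $a$ carries the prefactor $1+\frac{c'_{\n+a}}{5^5}+\frac{c'_{\n+3-a}}{5^5}=0$, since $c'_{\n}+c'_{\n+3}=c'_{\n+1}+c'_{\n+2}=-5^5$. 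This cancellation, which depends on the explicit constants $(-120,-890,-2235,-3005)$ coming from the quintic QDE, is the real content of the lemma; without it the degree bound in Lemma \ref{bigstar}, and hence the induction in Proposition \ref{Sgl}, would fail by one unit at every special leg.
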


\begin{proof}[{Proof of Lemma \ref{bigstar}}]
Assume $\fS_{h,m}$ holds for all $(h,m)< (g,n)$  and $3h+m\leq 3g+n$.   By Proposition  \ref{polyoffa}
$$
\qquad  f^{\bA,\vec{0}}_{h;\ba,\bb} \in \mathbb Q[X]_{3h-3+m-\sum_i b_i }  \quad  \forall (h,m) \!<\! (g,n),  3h+m\leq 3g+n,  \ a_i=0,1,2,3. \
$$  We now look at the statement of $\fS'_{h,m}$, under the assumption $(h,m)<(g,n)$ and $3h+m\leq3g+n$. 
Let   $\ba \in \{0,1,2,3,\n,\cdots,\n+3\}^{\times \ell} $ and $m=\ell+s$.  By $\Omega^{[0]} = R^X.\Omega^{\bA,\vec{0}}$, we obtain 
\beq  \label{f0abab}
{f^{[0]}_{h,(\ba, \bb),(\ba', \bb')}}  =
   \int_{\M_{h,l+s}}
(R^X.\Omega^{\bA,\vec{0}})_{h,\ell+s}\Big(  \phi_{\ba} \psi^{\bb},
E_{\ba'\!,\bb'} (\psi)\Big).  \
\eeq
By applying the $R^X$-action, for each  stable graph $\Gamma \in G_{h,m}$, 
the contribution to \eqref{f0abab} consists of (using Lemma \ref{RXprop})\footnote{ 
We will denote the set of first $m$ (last $s$) legs by $L$ ($L'$ respectively).}
\begin{itemize}
\item at each leg $l\in L$, we have an insertion \footnote{\label{knomorethan3}
  Here we have used $k_l\leq k_l+b_l\leq 3h-3+m$, otherwise the contribution vanishes by dimension reason. Hence we have $k_l\leq   3g-3+n <\n-3$, and by this condition we  see that the only integer $a \in [0,3]$ making  $a_l - k_l\equiv a (\text{mod}\,\n)$
 is $a=\bar a_l -k_l$ (i.e. we must have $k_l\leq \bar a_l\leq 3$). 
 }
$$\qquad \quad
 \Big(  \varphi^{  \bar a_l -k_l   }, \psi_l^{b_l+k_l}  (-1)^{k_l}(R^X_{k_l})^*\phi_{a_l}  \Big) \in  \psi_l^{b_l+k_l}  (Y/\ft^\n)^{-\lfloor \frac{a_l}{\n} \rfloor} \mathbb Q[\![X]\!]_{k_l+\lfloor \frac{a_l}{\n} \rfloor} ;
$$  
\item at each leg $l'\in L'$, we have an insertion \footnote{  Here we have used $b''_{l'}\leq 3h-3+m<\n-3$ for the same reason as above.
}
$$\qquad \quad
  {\psi_{l'}}^{b''_{l'}}  \Coeff_{z^{b''_{l'}} } \Big(\varphi^{a''_{l'}}, \!R^X\!(-z)^* E_{a'_{l'} , b'_{l'} }(z) \Big) \in  {\psi_{l'}}^{b''_{l'}}  (Y/\ft^\n)^{r_{E_{l'}}} \cdot \mathbb Q[\![X]\!]_{b''_{l'}+b'_{l'}+1}  ;
$$ where $r_{E_{l'}}=\frac{1}{\n} (a'_{l'}+b'_{l'}+a''_{l'} + b''_{l'}  -\n -2)$;  \item at each edge $e=(v_1,v_2)$, we have a bivector $V_X(e)$.
\end{itemize}
Here we have used the degree estimate in Lemma \ref{key01}.  Further  we see 
\begin{enumerate}
\item 
the total factor of $(Y/\ft^\n)$ in the contribution of  graph $\Gamma$ to \eqref{f0abab}
is given by 
  $$
\qquad \prod_l \Big(\frac{Y}{\ft^\n}\Big)^{-\lfloor \frac{a_l}{\n} \rfloor}  \prod_{l'\in L'}  \Big(\frac{Y}{\ft^\n}\Big)^{r_{E_{l'}}}  \prod_v \Big(\frac{Y}{\ft^\n}\Big)^{ 1-h_v}\prod_e \Big(\frac{Y}{\ft^\n}\Big)^{ -1 } = \Big(\frac{Y}{\ft^\n}\Big)^{- (r +s+h-1)},
$$
where we have used  $\sum_{l'} (a''_{l'}+b''_{l'})+\sum_l (\bar a_l +b_l)  =  \ell + s$\footnote{ This identity follows from Lemma \ref{fAvan} and the fact that $V_X$  
has cohomology degree two  (by Lemma \ref{RXprop}, see also for \eqref{VXz} the explicit formula).}  and
\begin{align*}\textstyle
\sum_{l'\in L'}  r_{E_{l'}}  +r +s- \sum_l & \textstyle{\lfloor \frac{a_l}{\n} \rfloor} =\, {\frac1{\n}\sum_{l'} ({a_{l'}'\!+b_{l'}'\!+a''_{l'}+b''_{l'}-\n-2})}  \\
& \textstyle  + \frac1{\n}({|\bar\ba|+|\bb|-|\ba'|-|\bb'|-\ell+s})  +s = 0;
\end{align*}
\item
the total degree of the contribution of  graph $\Gamma$ to \eqref{f0abab} in $X$ is no more than
\begin{align*}
&\sum_v {\small \text{$\Big(3h_v-3+n_v   -  \sum_{l\in L_v}(b_l+k_l) -\sum_{l'\in L'_v}b''_{l'} -\sum_{e\in E_v} k_{(e,v)}  $}}\\
&\quad {\small \text{$+\sum_{l\in L_v}(k_l+ \lfloor \frac{a_l}{\n} \rfloor ) +\sum_{l'\in L'_v} (b''_{l'}+b'_{l'}+ 1  )\Big)  +\sum_{e}(k_{(e,v_1)}+k_{(e,v_2)}+1) $}}\\
&  =   3h-3+ \ell + 2s +\lfloor \frac{\ba}{\n} \rfloor   -|\bb|+|\bb'|.
\end{align*} 
Here we have used the degree estimate \eqref{degreeVX}.
\end{enumerate}
  To summarize we obtain
 $$
Y^{h-1+r+s} \cdot  {f^{[0]}_{h,(\ba, \bb),(\ba', \bb')}}\in \mathbb Q[X]_{3h-3+\ell+2s+\lfloor \frac{\ba}{\n} \rfloor -|\bb|+|\bb'|}.
 $$
  This finishes the proof of Lemma \ref{bigstar}, and therefore \eqref{poly1a} is correct
 by Proposition \ref{Sgl}.  
\end{proof}

\begin{lemma}\label{Eab} We have   $\big(\phi^a,  \Coeff_{z^b}E_{a',b'}(z)  \big)=0$ unless $a' +a+b'\!+b=2 \mod \n$.
\end{lemma}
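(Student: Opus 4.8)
The plan is to prove the vanishing purely by a mod-$\n$ degree count, reusing the grading introduced in Lemma~\ref{Rpreservedeg} (namely $\deg\psi=1$ and $\deg\phi_j=\deg\varphi_j=j$) together with the fact, recorded in \eqref{Rphimod}, that the full $R$-matrix preserves this mod-$\n$ degree. First I would record the two pieces of bookkeeping that drive everything. On the one hand, the pairing $\eta$ on $\sH\uev$ is of ``Calabi--Yau type'' for the mod-$\n$ grading: a direct check on the $Q$-block and on the point block (using $\ft_\alpha=-\zeta_\n^\alpha\ft$ and $\sum_\alpha\zeta_\n^{\alpha m}=0$ unless $\n\mid m$) shows that $\eta(x,y)\neq 0$ forces $\deg x+\deg y\equiv 3\pmod{\n}$; in particular the dual basis element $\phi^a$ is homogeneous of mod-$\n$ degree $3-a$, as one also reads off directly from the explicit list of the $\phi^a$. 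On the other hand, unwinding $\mathbf 1^\alpha=\frac{\n\ft_\alpha^3\ft^\n}{-5}\mathbf 1_\alpha$ gives $\bar\phi^{a'}\propto\sum_\alpha(-\ft_\alpha)^{3-a'}\mathbf 1_\alpha$, which is homogeneous of mod-$\n$ degree $3-a'$ (for $c\geq 4$, $\sum_\alpha(-\ft_\alpha)^c\mathbf 1_\alpha=p^c$, and in general it differs from $p^c$ only by the degree-$c$ class $H^c\in\sH_Q$).

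Next I would track the mod-$\n$ degree through the definition \eqref{Eabdefn} of $E_{a',b'}$. Writing $E_{a',b'}(\psi)=L^{-a'}\,\Coeff_{w^{b'}}\frac{1}{\psi+w}\big(R(\psi)-R(-w)\big)\bar\phi^{a'}$ with $w$ the internal variable, degree preservation of $R$ gives that $\big(R(\psi)-R(-w)\big)\bar\phi^{a'}$ is homogeneous of mod-$\n$ degree $3-a'$ in $(\psi,w)$ (with $\deg\psi=\deg w=1$); since it vanishes at $\psi=-w$, division by $\psi+w$ is a genuine polynomial and lowers the degree by $1$, after which $\Coeff_{w^{b'}}$ removes a further $b'$. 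Hence $E_{a',b'}(\psi)$ is mod-$\n$ homogeneous of degree $2-a'-b'$ (counting $\deg\psi=1$); this is precisely the computation already noted in the proof that $f^{[0]}_{g,(\ba, \bb),(\ba', \bb')}$ vanishes unless $r\in\ZZ$.

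Finally I would substitute $\psi=z$ and extract $\Coeff_{z^b}$, which drops the degree by $b$, so that $\Coeff_{z^b}E_{a',b'}(z)$ is homogeneous of mod-$\n$ degree $2-a'-b'-b$. Pairing against $\phi^a$ (of mod-$\n$ degree $3-a$) and invoking the Calabi--Yau condition $\deg\phi^a+\deg\big(\Coeff_{z^b}E_{a',b'}(z)\big)\equiv 3\pmod{\n}$, I obtain that the pairing can be nonzero only when $(3-a)+(2-a'-b'-b)\equiv 3$, i.e. $a+a'+b+b'\equiv 2\pmod{\n}$, which is exactly the assertion. The only point requiring genuine care is the homogeneity of the dual vectors $\phi^a,\bar\phi^{a'}$ together with the top-degree-$3$ property of $\eta$ in the mod-$\n$ grading; once these are in place the statement reduces to a one-line degree balance, so I expect no real obstacle beyond this bookkeeping.
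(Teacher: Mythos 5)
Your proof is correct, and it takes a genuinely different (though closely related) route from the paper's. The paper proves Lemma \ref{Eab} by expanding the definition \eqref{Eabdefn} explicitly into the single identity \eqref{EabtoR}, $\big(\phi_a,\Coeff_{z^b}E_{a',b'}(z)\big)=L^{(\n+3)/2}\sum_\alpha L_\alpha^{-a'}(-1)^{b'}\big(\phi_a,R_{b'+b+1}\mathbf 1^\alpha\big)$, and then invoking the $\alpha$-independence of the entries from Lemma \ref{keylem} so that the sum over $\alpha$ kills everything except the residue class $a'\equiv a-(b+b'+1)\bmod\n$; translating $\phi_a$ into $\phi^a$ gives the stated congruence. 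You instead run a pure mod-$\n$ degree count: the pairing is of ``top degree $3$'' for the mod-$\n$ grading, $\phi^a$ and $\bar\phi^{a'}$ are homogeneous of degrees $3-a$ and $3-a'$, the full $R$-matrix preserves the grading (which on the $\npt$-block is exactly \eqref{Rphimod}), and the operations $\frac{1}{\psi+z}(\cdot)$, $\Coeff_{z^{b'}}$, $\Coeff_{z^b}$ lower the degree by $1$, $b'$, $b$ respectively, so $\Coeff_{z^b}E_{a',b'}(z)$ sits in degree $2-a'-b'-b$ and the pairing forces $a+a'+b+b'\equiv 2$. All the inputs you use are already established in the paper — indeed the paper itself computes ``the mod-$\n$ degree of $E_{a,b}$ is $2-a-b$'' in the vanishing lemma immediately preceding Lemma \ref{Eab} — so your argument is essentially a repackaging of the same mechanism ($\sum_\alpha\zeta_\n^{\alpha m}=0$ unless $\n\mid m$) at the level of the grading rather than of explicit matrix entries. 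What the structural version buys is transparency and reusability of the degree bookkeeping; what the paper's computational version buys is the explicit formula \eqref{EabtoR}, which is needed again in the degree estimates of the proof of Lemma \ref{key01}, so the paper gets that identity for free by proving the lemma this way. The only points in your write-up that genuinely required care — homogeneity of the dual vectors and the Calabi--Yau property of the twisted pairing on the point block — you identified and checked correctly.
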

\begin{proof}
By the definition \eqref{Eabdefn} of $E_{a',b'}$, 
\begin{align} \label{EabtoR} \textstyle
\big(\phi_a,  \Coeff_{z^b}E_{a',b'}(z)  \big) = L^{(\n+3)/2} \sum_\alpha L_\alpha^{-a'} (-1)^{b'} \big(\phi_a, R_{b'\!+b+1} \mathbf 1^\alpha\big) .
\end{align}
By  Lemma  \ref{keylem} it vanishes unless $a' = a-(b'\!+b+1) \mod \n$. 
\end{proof}

\begin{proof}[{Proof of Lemma \ref{key01}}]
The vanishing result  follows from Lemma \ref{RXprop} and \ref{Eab}.

 For the degree estimate,  we consider three cases:
 \begin{enumerate}
\item For $a=4,\cdots,\n-1$:  by  Lemma \ref{RXprop} for any $k'<\n-3$, (note $a''=0,1,2,3$ \textsuperscript{\ref{knomorethan3}}) 
\beq \label{RXpropa}
 \quad  (-1)^{k'}(R_{k'}^X)_a=  (\varphi^{a''},  \Coeff_{z^{k'}}R^X(-z)^* \phi_a) \neq 0 \quad \text{ only if }   a=a''+k' .
\eeq 
When it is nonzero, it is a degree $k'$ polynomial in $X\!$. This implies that \footnote{ 
By using \eqref{EabtoR}, $\phi^a\!=\!\phi_{\n+3-a}/5$  and the property \eqref{Rkform} of $(R_k)_a$, we have
\begin{align*} \textstyle
\Coeff_{z^b} \big(\phi^a,  E_{a',b'}(z) \big) =\frac{\n}{5}   (Y/t^\n)^{r_E} (-1)^{b'} 
 (R_{b+b'+1})_{\n+3-a} \in  (Y/t^\n)^{r_E}  \mathbb Q[X]_{b+b'+1}.
\end{align*}
Then each contribution to LHS of \eqref{Xbound1} has degree $\leq (b+b'+1)+k =b'+b''+1$ (here $b''=b+k'$).  
}
\begin{align}\label{Xbound1}
\qquad\qquad(Y/\ft^\n)^{-r_E}\cdot \Coeff_{z^{b''}}(\varphi^{a''}, R^X(-z)^* \phi_a)\big(\phi^a,  E_{a',b'}(z)  \big) \in \mathbb Q[X]_{b'+b''+1}.
\end{align}


%

\item For $a=0,\cdots,3$:   by Lemma \ref{RXprop} for any $k'<\n-3$ we still have \eqref{RXpropa}.    
Further,  when it is nonzero, it
is a degree $k'$ polynomial in $X$.  This implies that \footnote{ 
 By applying $\phi^a\!=\! (\phi_{\n+3-a}\!- {\ft^\n} \!\phi_{3-a})/5$ in \eqref{Xbound2}, the term $\phi_{\n+3-a}$ contributes   the same formula as \eqref{Xbound1}, except that by \eqref{Rkform} the $X$ degree bound is increased by $1= \lfloor \frac{\n+3-a}{\n} \rfloor $. The second term $\ft^\n\phi_{3-a}$ contributes
$ \Coeff_{z^b}  \big( t^\n  \phi_{3-a} ,   E_{a',b'}(z)  \big)   =   \!\sum_\alpha   (\frac{Y}{t^\n})^{r_E} (-1)^{b'} 
 (R_{b+b'+1})_{3-a} \! \cdot Y.$
 With  \eqref{RXpropa} we obtain \eqref{Xbound2}.
 }
\begin{align}\label{Xbound2}
\qquad\qquad (Y/\ft^\n)^{-r_E}\cdot \Coeff_{z^{b''}}(\varphi^{a''}, R^X(-z)^* \phi_a)\big(\phi^a,  E_{a',b'}(z)  \big) \in \mathbb Q[X]_{b'+b''+2}.
\end{align}
%

 

\item For $a=\n,\cdots,\n+3$:   by Lemma \ref{RXprop}, for any $k'<\n-3$, 
$$
\  \   (\varphi^{a''},  \Coeff_{z^{k'}}  R^X(-z)^* \phi_a) \neq 0 \quad \text{ only if }   a-\n=a''+k' .
$$
When it is nonzero, it is a degree $k'$ polynomial in $X$  multiplied by $q$. This implies (by argument similar to (2))
$$
\qquad\qquad (Y/\ft^\n)^{-r_E}\cdot \Coeff_{z^{b''}}(\varphi^{a''}, R^X(-z)^* \phi_a)\big(\phi^a,  E_{a',b'}(z)  \big) \in q\, \mathbb Q[X]_{b'+b''+1}.
$$
\end{enumerate}
 Sum up the process deducing (1),(2),(3). One obtains that, the LHS of \eqref{RXEprop} equals 
\begin{align*}
 \small{\text{$\Contr(1)+\!\!\!\!\!\!\!\! \sum_{ j+k=b'+b''+1,\atop  0\leq k\leq b'', 0\leq a\leq 3} \!\!\!\!\!\!   (-1)^{b'+k}  \frac{\n}{5}\Big(
    (R^X_k)_a             (R_j)_{\n+3-a}  -  Y (R^X_k)_a            (R_j)_{3-a} + ({Y}/{\ft^\n})  (R^X_k)_{\n+a}  (R_j)_{3-a}  \Big)
    $}}.
\end{align*}
where we denote by $(R^X_k)_i  :=  \big(\varphi^{a''}, (R^X_k)\sta  \phi_i \big)$, and $\Contr(1)$ is a sum of terms of form \eqref{Xbound1} in case (1) (thus lies in $Q[X]_{b'+b''+1})$.
By (2) and (3) the rest terms lie in $\mathbb Q[X]_{b'+b''\!+2}$.
Now we want to prove the top degree term indeed vanishes.
 The argument is similar to the one in the proof of  \cite[Appendix C]{NMSP2}. Recall \cite[(C.4)]{NMSP2}, for $a=\n,\cdots,\n+3$ we have  
\beq \label{keypropR}
 \Coeff_{X^{k+1}}  (R_k)_{a} = \frac{c_a'}{5^5}   \Coeff_{X^{k}} (R_k)_{a-\n},\quad
 (c'_a)_{a=\n}^{\n+3} =  (-120,-890,-2235,-3005).
\eeq  
Similar property holds for $R^X$ by using the explicit formula \eqref{RXz}  :  \footnote{ Note that $k<b''<\n-3$ implies $k\leq 3$, for the same reason as stated in footnote \ref{knomorethan3}.}
$$\textstyle \quad
\text{ for $a=\n,\cdots,\n+3$}\qquad  \Coeff_{X^{k}} \Big( q^{-1} \!\cdot (R^X_k)_{a} \Big) = {c_a'}  \cdot  \Coeff_{X^{k}}(R^X_k)_{a-\n}.
$$ 
Then we obtain for $a=0,1,2,3$ and for $j+k=b'+b''+1$ \begin{align*}
& \,\Coeff_{X^{j+k+1}} \Big( - Y\,(R^X_k)_{a}  (R_j)_{3-a} +(R^X_k)_{a}  (R_j)_{\n+3-a} +Y/\ft^\n (R^X_k)_{\n+a}  (R_j)_{3-a}  \Big)  \\
& \,=   \Big( 1+\frac{c'_{\n+a}}{5^5}+\frac{c'_{\n+3-a}}{5^5}\Big)\cdot \Coeff_{X^{b'+b''-1}} \Big(  (R^X_k)_{a}  (R_j)_{3-a}\Big)=0. \nonumber
\end{align*}
where we have used $Y=1-X$ and $5^5 Y q = t^\n X$. Hence the true degree in $X$ is decreased by $1$ and then we finish the proof.
\end{proof}

\subsection{Choice of gauge and finish the proof of Theorem \ref{thm2}} \label{gauge}
We consider the following symplectic transformation:
$$ \cG(z)^{-1} =I-
\begin{pmatrix}
0 & z\cdot c_{1a} &z^2\cdot c_2 &z^3\cdot c_3'\\
& 0& z\cdot c_{1b} & z^2\cdot c_2'\\
& & 0 & z\cdot c_{1a}\\
& & & 0\\
\end{pmatrix},
$$
where $c'_2 = -c_{1a}c_{1b} -c_2$ and $c'_3 = -c_{1a} c_2-c_3$. Then we are able to recover the family of $R$-matrices $R^{\bA,\cG}(z)$ defined in \eqref{RAG}  via 
\beq  \label{gaugeR}
R^{\bA,\cG}(z)^{-1}= R^{\bA,\vec{0}}(z)^{-1}   \cdot \cG(z)^{-1}
\eeq
where the family of propagators $E_{**}^\cG$ in  $R^{\bA,\cG}(z)$  is   related with the propagators  $E^{\vec 0}_{**}:=E_{**}^{\cG=\mathbf 0}$ in $R^{\bA,\vec{0}}(z)$ by the following
\begin{align*}
&E_{\bp}^\cG=  E^{\vec 0}_{\bp}+c_{1a},\qquad
E_{\tp\tp}^\cG =  E^{\vec 0}_{\tp\tp}+c_{1b},\qquad 
 E_{\tp\bp}^\cG =  E^{\vec 0}_{\tp\bp} - c_{1b} \,E^{\vec 0}_{\bp} -c_2  ,\\
&E_{\bp\bp}^\cG = E^{\vec 0}_{\bp\bp} + c_{1b}\,(E^{\vec 0}_{\bp})^2 -2 \, c_2  \,E_{\bp}   +c_3  .
\end{align*}


\begin{proof}[Proof of Theorem \ref{thm2}]
Recall we have proved (Proposition \ref{polyoffa})\footnote{ The assumption in the statement of Proposition \ref{polyoffa} is no longer needed after finishing the induction.}
\begin{align*} 
f^{\bA, \vec{0}}_{g;\ba,\bb} \in \mathbb Q[X]_{3g-3+n-\sum_i b_i }.
\end{align*}
Via \eqref{gaugeR}, we define the CohFT
$$
\Omega^{\bA,\cG}:=\cG.\Omega^{\bA,\vec{0}} = R^{\bA,\cG}.  \Omega^{Q, \tw} .
$$
Then we see the $A$-model master potential \eqref{AmodelMpotential} is indeed its generating function\footnote{ See more explanations at the end of this proof.}
\begin{align} \label{fAOmegaA}
f^{\bA,\cG}_{g;\ba,\bb}= (-5Y/t^\n)^{g-1} \,\int_{\M_{h,n}}  \psi_{1}^{b_1}\cdots\psi_{n}^{b_n}\cup \Omega_{g,n}^{\bA,\cG}(  \tp_{a_1},\cdots,  \tp_{a_n}).
\end{align}  
We claim that with the condition \eqref{conditionforcG}, the $\cG$-action will not change the polynomiality.
We can write down the graph sum formula for $\Omega^{\bA,\cG}:=\cG.\Omega^{\bA,\vec{0}}$ via the $\cG$-action. For each graph $\Gamma$, the contribution $\Cont_\Gamma$ to \eqref{fAOmegaA}  is given by the following construction
\begin{itemize}
\item at each leg $l$ with insertion $\tp_{a_l}\!\bp_l^{b_l}$, we put $
\sum_k{\cG}^*_k (-\psi_l)^k \tp_{a_l}\!\bp_l^{b_l}$;
\item at each edge $e=(v_1,v_2)$, we put
$$\textstyle
V^{\cG}(\psi_{v_1},\psi_{v_2}):=\frac{1}{\psi_{v_1}+\psi_{v_2}}(\id-{\cG}(\psi_{v_1})^{-1}{\cG}(-\psi_{v_2})) =Y^{-1}\cdot \sum_{k,l} V^{\cG}_{kl} \psi^k_{v_1}\psi^l_{v_2},
$$
where $\deg_X G^*_{k}=k$, $\deg_X V_{kl}=k+l+1$ and we have used $\varphi^i = (5Y/t^\n)^{-1} \varphi_{3-i}$ in the last equility.
\end{itemize}
The total factor involving $(-5Y/t^\n)$ in $\Cont_\Gamma$ is
$$ \textstyle
(-5Y/t^\n)^{-E}\prod_{v}(-5Y/t^\n)^{-(g_v-1)} = (-5Y/t^\n)^{-(g-1)},
$$
and the $X$-degree of total contribution of $\Cont_\Gamma$ is
\begin{align*}
& \textstyle \sum_v \big(3g_v-3+n_v-\sum_{l\in L_v} (k_l+b_l)-\sum_{e\in E_v} k_{(e,v)}\big)+\sum_{l\in L} k_l\\
&\qquad \qquad \textstyle +\sum_{e=(v_1,v_2)\in E} (k_{(e,v_1)}+k_{(e,v_2)}+1)= 3g-3+n -\sum_{l\in L} b_l.
\end{align*}
This proves
\begin{align}\label{BCOV-with-q'} f^{\bA,\cG}_{g;\ba,\bb} \in    \mathbb Q[X]_{3g-3+n-\sum_i b_i}. \end{align}
 
 
Pick $t$ such that $t^\n =-1$ and substitute it into \eqref{BCOV-with-q'}, then $q'=q$ and   
$\Omega^{Q,\tw,\tau_Q(q')}_{g,n} = \Omega^{Q,\tau_Q(q)}_{g,n}$ by Remark \ref{Q-to-tw}. By using the identification \eqref{twnormalized}, the definition \eqref{fAOmegaA} matches \eqref{AmodelMpotential}, and    
\eqref{BCOV-with-q'} becomes the statement of Theorem \ref{thm2}.
\end{proof}

\vspace{1cm}

\section{BCOV's Feynman graph sum via geometric quantization} \label{quantization}

In this section,  we view BCOV's Feynman graph sum as the  quantization of a symplectic transformation $R^\bB$, 
which is a restriction of our $A$-model propagator matrix $R^{\bA}$ in the smaller phase space.
 
\begin{convention}    \label{omitcG}  
In this and the next section, we will  omit the supscript $\cG$   in $\Omega^{\star,\cG}$, $R^{\star, \cG}$,$f^{\star,\cG}$, $E_{**}^{\cG}$, etc..
\end{convention}

\subsection{Quantization of the symplectic transformation in the small phase space } \label{RARB}
Let $\{ v_i \}_{i=0,1,2,3}  \!= \! \{ \tp_3 z^{-2} , -\tp_2 z^{-1} , \tp_1  , z \}$, with inner product given by
$$
 v_i \cdot  v_j :=  \frac{I_0^2}{5Y}\cdot\Res_{z=0} (v_i|_{z\mapsto -z} ,   v_j)   = {\footnotesize
 \left(\!\! \arraycolsep=1pt\def\arraystretch{0.4} \begin{array} {*{4}{@{}C{\mycolwda}}}
  &&& 1\\ && 1 \\&-1 \\ \  -1
  \end{array} \! \right) }  .
  $$
We consider the $4$-dimensional symplectic 
subspace
$$
H_{S}:= \sspan\{ v_i\}   \subset  \sH_Q[z,z^{-1}]\otimes \aA.
$$
By the explicit formula of the propagator matrix  $R^\bA$, we see
$$
R^\bA(z)  H_{S}  \subset H_{S}.
$$
Hence we can restrict the symplectic transformation $R^\bA(z)$ to subspace $H_S$, which is denoted by $R^\bB$. Under the symplectic basis $\{v_i \}_{i=0,1,2,3}$, we have
$$
R^\bB =  \begin{pmatrix} A & B \\C&D\end{pmatrix} := {\small  \left(\arraycolsep=1pt\def\arraystretch{0.8} \begin{array} {*{4}{@{}C{\mycolwdb}}}
1 &&  &\\
-E_{\bp} & 1&   & \\
-E_{\tp\bp}& -E_{\tp\tp}& 1 & \\
\ \  E_{1\bp^2} & \ \ E_{1,\tp\bp} &E_{\bp}  & 1\\
\end{array}  \!\!\!\!  \right) }.
$$

For a vector in $H_{S}$ under the symplectic basis $\{v_i\}$, we write it in the form
$$\vec v  = (\mathbf p, \mathbf x) = p_y v_0+ p_x v_1+x\, v_2+ y\, v_3 \in H_S$$
We define the quantization of the symplectic transformation $R^\bB$ as follows:
\begin{definition}
We introduce the following quadratic form over $H_S$:
\begin{align*}
\mathbf Q(\mathbf x,{\mathbf p}) = &\, (D^{-1} \mathbf x) \cdot {\mathbf p}'  - \frac{1}{2} (D^{-1}C {\mathbf p}') \cdot {\mathbf p}'
= \,   ({\mathbf p}')^t \begin{pmatrix} 1 & 0\\
 E_{\bp} &  1 \end{pmatrix}  \mathbf x   + \frac{1}{2} ({\mathbf p}')^t \begin{pmatrix}  E_{\tp\bp} & E_{\tp\tp}\\
 E_{\bp\bp} &  E_{\tp\bp} \end{pmatrix}  {\mathbf p}' .
\end{align*}
The quantization $\widehat R^\bB$ is defined via the following Feynman integral\footnote{ This is a finite dimensional Gaussian integral, hence it is well-defined.}
\begin{align}
(\widehat R^\bB F) (\hbar,\mathbf x) := \,&\ln \int_{\RR^2\times \RR^2} e^{ \frac{1}{{\hbar}} ( {\mathbf Q(\mathbf x,{\mathbf p}')  - \mathbf x' \cdot {\mathbf p}' }) +F(\hbar,\mathbf x') }  d\mathbf x'  d{\mathbf p}' .
\end{align}
 \end{definition}
The standard argument of Fourier transform  deduces  the following (we refer the reader to \cite[Sect.\,1.4]{CPS} for  detailed   discussion of the geometric quantization).
\begin{lemma}
We have the following operator form for $\widehat R^{\bB}$
\begin{align} \label{quantizRB}
(\widehat R^\bB F) (\hbar,\mathbf x)
= \,& \ln  \Big(  e^{{\hbar}\,V^B(\partial_{\mathbf x}, \partial_{\mathbf x}) } e^{F(\hbar, D^{-1} \mathbf x )}  \Big),
\end{align}
where the differential operator is defined by
\beq \label{VB} V^\bB(\partial_{\mathbf x}, \partial_{\mathbf x}) := -\frac{1}{2} \begin{pmatrix} \partial_x, \partial_{y}\end{pmatrix}\!\big( D^{-1} C  \big)\!\begin{pmatrix} \partial_x\\
 \partial_{y}\end{pmatrix} = \frac{1}{2} \begin{pmatrix} \partial_x , \partial_{y}\end{pmatrix} \begin{pmatrix}  E_{\tp\bp} &  E_{\bp\bp}\\
 E_{\tp\tp}&  E_{\tp\bp} \end{pmatrix}  \begin{pmatrix} \partial_x\\
 \partial_{y}\end{pmatrix}.
 \eeq
\end{lemma}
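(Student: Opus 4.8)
This is the standard evaluation of the finite–dimensional Gaussian (Fourier) integral that defines $\widehat R^\bB$, specialized to the quadratic generating function $\mathbf Q$; the plan is to carry out the two auxiliary integrations and identify the outcome with the second–order operator in \eqref{quantizRB}. Writing $g(\mathbf w):=e^{F(\hbar,\mathbf w)}$, it suffices to show that the integral defining $(\widehat R^\bB F)(\hbar,\mathbf x)$ equals $\big(e^{\hbar V^\bB(\partial,\partial)} g\big)(D^{-1}\mathbf x)$ up to a multiplicative constant independent of $\mathbf x$; the constant drops out after taking $\ln$ (it is pinned down by $\widehat R^\bB 0=0$). Throughout I would read the Gaussian as its perturbative expansion in $\hbar$, i.e.\ via Wick's theorem, so that no inversion of the possibly degenerate propagator matrix $D^{-1}C$ is required; the footnote's observation that the integral is genuinely finite dimensional justifies this reading.

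First I would insert $\mathbf Q(\mathbf x,\mathbf p')=(D^{-1}\mathbf x)\cdot\mathbf p'-\tfrac12(D^{-1}C\,\mathbf p')\cdot\mathbf p'$ and eliminate the external variable from the coupling by the substitution $\mathbf x'\mapsto D^{-1}\mathbf x-\mathbf u$, legitimate because $\det D=1$. Writing $e^{F(\hbar,\mathbf x')}=g(\mathbf x')$ through its (formal) Fourier representation and integrating out $\mathbf p'$ leaves a Gaussian convolution in the remaining variable whose covariance is governed by $D^{-1}C$; the standard identity relating such a convolution to the exponential of a constant–coefficient Laplacian, as recorded in \cite[Sect.\,1.4]{CPS}, rewrites the integral as $e^{-\frac{\hbar}{2}\partial^{t}(D^{-1}C)\partial}$ applied to $g$ and then evaluated along $D^{-1}\mathbf x$. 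Since $\partial_x$ and $\partial_y$ commute, only the symmetric part of $D^{-1}C$ enters, and a direct multiplication using $D=\left(\begin{smallmatrix}1&0\\ E_\bp&1\end{smallmatrix}\right)$ together with the defining relations $E_{1,\tp\bp}=-E_\bp E_{\tp\tp}-E_{\tp\bp}$ and $E_{1\bp^2}=-E_\bp E_{\tp\bp}-E_{\bp\bp}$ of the extra propagators collapses the block to $D^{-1}C=\left(\begin{smallmatrix}-E_{\tp\bp}&-E_{\tp\tp}\\ -E_{\bp\bp}&-E_{\tp\bp}\end{smallmatrix}\right)$. Hence $-\tfrac12\partial^{t}(D^{-1}C)\partial=V^\bB(\partial,\partial)$ is exactly the operator in \eqref{VB}, and the integral equals $\big(e^{\hbar V^\bB(\partial,\partial)}g\big)(D^{-1}\mathbf x)$; taking $\ln$ gives \eqref{quantizRB}.

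The main difficulty I expect is not analysis but the careful bookkeeping of conventions. The symplectic structure in play is the anti–diagonal residue pairing $v_i\cdot v_j$, so the block decomposition $R^\bB=\left(\begin{smallmatrix}A&B\\ C&D\end{smallmatrix}\right)$ and the status of $\mathbf Q$ as the lower–triangular generating function of $R^\bB$ must be fixed consistently with that pairing before the Gaussian is evaluated. The most error–prone point is the interplay between the differential operator and the linear substitution $\mathbf x\mapsto D^{-1}\mathbf x$: one must verify that the derivatives in \eqref{quantizRB} act on the argument of $F$ in the manner dictated by the chain rule (with $\det D=1$ keeping the Jacobian trivial), so that the quadratic form genuinely reduces to the matrix displayed in \eqref{VB} rather than to a conjugate of it. Once these placements and the residue normalization are pinned down, the evaluation is the routine Gaussian computation of \cite[Sect.\,1.4]{CPS}.
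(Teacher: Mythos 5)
Your proposal is correct and follows essentially the same route as the paper, which simply invokes ``the standard argument of Fourier transform'' with a pointer to \cite[Sect.\,1.4]{CPS}; you have merely written out that standard Gaussian evaluation in full, and your block computation $D^{-1}C=\left(\begin{smallmatrix}-E_{\tp\bp}&-E_{\tp\tp}\\ -E_{\bp\bp}&-E_{\tp\bp}\end{smallmatrix}\right)$ via the relations $E_{1,\tp\bp}=-E_\bp E_{\tp\tp}-E_{\tp\bp}$ and $E_{1\bp^2}=-E_\bp E_{\tp\bp}-E_{\bp\bp}$ checks out, matching \eqref{VB} up to the transpose that is immaterial since the derivatives commute.
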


 Now our Theorem \ref{thm1} has an equivalent statement
 \begin{theorem}[BCOV's Feynman rule]  \label{quantizationR} 
Recall  that  $P^\bB(\hbar,x,y)$ is defined in \eqref{PBA}.
The quantization of $R^\bB$ acting on $P^{\bB}$ defines the $B$-model master potential function, which has the form
\beq  \label{fBeqRBPB}
f^\bB(\hbar,x,y) := \widehat R^\bB  P^\bB(\hbar,x,y)  =  \sum_{g,m,n} \hbar^{g-1} x^m y^n  \cdot f^\bB_{g,m,n},
\eeq
such that for each $(g,m,n)$, $f^\bB_{g,m,n}$ is a degree $3g-3+m$ polynomials in $X$.
\end{theorem}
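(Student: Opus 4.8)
The plan is to separate the assertion into two parts. The only content of Theorem~\ref{quantizationR} that goes beyond Theorem~\ref{thm1} is the \emph{identity} $f^\bB(\hbar,x,y)=\widehat R^\bB P^\bB(\hbar,x,y)$, which says that the quantization of the symplectic transformation $R^\bB$ reproduces, coefficient by coefficient, the Feynman graph sum $f^\bB_{g,m,n}$ of \S\ref{BCOVrule}. Once this identity is available, the polynomiality clause ``$f^\bB_{g,m,n}$ is a degree $3g-3+m$ polynomial in $X$'' is literally the conclusion of Theorem~\ref{thm1}, so no new geometric input is required at this stage. Accordingly I would first establish the identity as a purely formal (combinatorial) expansion, and then quote Theorem~\ref{thm1}.

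For the identity I would start from the operator form \eqref{quantizRB}, expand $e^{F}$ with $F=P^\bB(\hbar,D^{-1}\mathbf x)$ into monomials, and apply Wick's theorem to the second order operator $e^{\hbar V^\bB(\partial_\mathbf x,\partial_\mathbf x)}$. Each contraction of a pair of derivatives produces one edge weighted by an entry of the symmetric matrix in \eqref{VB}; taking $\ln$ restricts the sum to connected diagrams, and the usual counting of labelings of a stable graph $\Gamma\in G_{g,n}$ yields the prefactor $1/|\Aut(\Gamma)|$. Matching this expansion against the composition rule of \S\ref{BCOVrule} then reduces to three checks: (i) the Taylor coefficients of $P^\bB$ in \eqref{PBA} are exactly the vertex maps $P_{g,m,n}$, the factors $1/m!\,n!$ being absorbed into the symmetry factors; (ii) the matrix $-\tfrac12 D^{-1}C$ entering $V^\bB$ has as its entries precisely the three propagators $E_{\tp\tp},E_{\tp\bp},E_{\bp\bp}$, which follows by substituting the definitions $E_{1,\tp\bp}=-E_\bp E_{\tp\tp}-E_{\tp\bp}$ and $E_{1\bp^2}=-E_\bp E_{\tp\bp}-E_{\bp\bp}$ into the lower block of $R^\bB$; and (iii) the change of variables $\mathbf x\mapsto D^{-1}\mathbf x$, with $D^{-1}=\left(\begin{smallmatrix}1&0\\ -E_\bp&1\end{smallmatrix}\right)$, attaches to the legs exactly the vectors $\tp-E_\bp\,\bp$ and $\bp$ recorded in \S\ref{BCOVrule}.

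With the identity in hand, polynomiality is immediate from Theorem~\ref{thm1} (proved via Theorems~\ref{thm2} and \ref{thm3}), giving $f^\bB_{g,m,n}\in\mathbb Q[X]_{3g-3+m}$. I expect the main obstacle to lie in step (iii): whereas the vertex and edge matchings (i)--(ii) are local, the leg contributions require disentangling the polarization $H_S=\sspan\{v_i\}$ into its position and momentum parts dual to $\tp$ and $\bp$ and tracking the triangular block $D$, so as to confirm that the $m$ and $n$ legs receive exactly $\tp-E_\bp\,\bp$ and $\bp$ respectively. A careful accounting of the symmetry factors $1/|\Aut(\Gamma)|$ against the $1/m!\,n!$ normalization of $P^\bB$ is the other place where sign or combinatorial errors could enter; both are settled by comparing the expansion monomial by monomial with the composition rule, together with the consistency of the exceptional $(g,m)=(1,0)$ vertex, whose value $P_{1,0,n}$ is already built into $P^\bB$ through \eqref{PBA}.
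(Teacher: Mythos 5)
Your proposal is correct and follows essentially the same route as the paper: the paper presents Theorem~\ref{quantizationR} as an equivalent restatement of Theorem~\ref{thm1}, with the graph-sum/quantization dictionary (your checks (i)--(iii), including $D^{-1}C=-\bigl(\begin{smallmatrix} E_{\tp\bp} & E_{\tp\tp}\\ E_{\bp\bp} & E_{\tp\bp}\end{smallmatrix}\bigr)$ and the leg substitution $y\mapsto y-E_\bp x$ matching \eqref{tiPB}) treated as the standard Fourier-transform/Wick expansion deferred to \cite{CPS}, and the polynomiality imported from Theorem~\ref{thm1}. You simply spell out the Wick-contraction bookkeeping more explicitly than the paper does, which is consistent with its Remark following Theorem~\ref{thm3}.
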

\subsection{
Modified Feynman rule} \label{BmodelE}

We introduce the following modified $B$-model correlators
\begin{align}\label{tiPgmn}
\tilde P_{g,m,n}  =&\,  \big\langle (\tp -E_{\bp}\bp)^{\otimes m}, \bp^{\otimes n}\big\rangle^{Q,\bB}_{g,m+n}, 
\end{align}
and their generating function
\begin{align} \label{tiPB}
\tilde P^\bB(\hbar,x,y) :=&\, {\small \text{$ \sum_{g,m,n}$}} \hbar^{g-1} {\small \text{$\frac{ x^m y^n}{m!n!} $}}\cdot  \tilde P_{g,m,n}
 =\,   P^\bB(\hbar,x, y-E_{\bp}\,x).
\end{align}
It is not hard to see, if we replace $P_{g,m,n}$ in the BCOV's Feynman rule  by $\tilde P_{g,m,n}$, then the Feynman rule Theorem \ref{thm1} will still hold if we replace $\{E_{\tp\tp},E_{\tp\bp},E_{\bp\bp},E_\psi\}$ by \footnote{ To generalize Yamaguchi-Yau equations, similar modified propagators were defined in \cite{AL07} . }
\begin{align} \label{mpropagators}
&\tilde E_{\tp\tp} = \,  E_{\tp\tp},\quad
\tilde E_{\tp\bp} = \,  E_{\bp} E_{\tp\tp}+ E_{\tp\bp} , \\
&\tilde E_{\bp\bp} = \, E_{\bp} ^2 E_{\tp\tp} + 2  E_{\bp}  E_{\tp\bp} + E_{\bp\bp},\quad \tilde E_\psi = 0. \nonumber
\end{align}  
More precisely, the Feynman graph sum is given by the following quantization
\beq \label{modifiedR}
f^\bB(\hbar,x,y) = \widehat R^\bB|_{E_{**}\mapsto \tilde E_{**}}   \tilde P^\bB(\hbar,x,y).
\eeq
Indeed, the change of variables \eqref{tiPB} can be written as a quantization
 $$
 \tilde P^\bB(\hbar,x,y) = \widehat \E^\bB P^\bB(\hbar,x, y)
 $$
of the symplectic transformation $\E^\bB$ defined by\footnote{ 
We can see for this case $C=0$ and  by \eqref{VB} there is no edge contribution.}
\beq \label{EEB}
\E^\bB = {\small  \left(\arraycolsep=1pt\def\arraystretch{0.8} \begin{array} {*{4}{@{}C{\mycolwd}}}
1 &&  &\\
 -E_{\bp}& 1&   & \\
 & 0 & 1 & \\
\ \   & \ \   &E_{\bp}  & 1\\
\end{array}  \!\!\!\!  \right) }.
\eeq
Then the modified $B$-model propagator matrix
$R^\bB|_{E_{**}\mapsto \tilde E_{**}}$ is given by
$$
R^\bB|_{E_{**}\mapsto \tilde E_{**}} =
\tilde R^\bB:=  R^\bB \cdot (\E^\bB)^{-1},
$$
which matches \eqref{mpropagators}.

\vspace{1cm}

 \section{From $\nmsp$ Feynman rule to BCOV's Feynman rule}
  We have proved Theorem \ref{thm2} in \S \ref{NMSPruleproof} and established the $\nmsp$ Feynman rule.  In this section, we will prove the equivalence of $\nmsp$ Feynman rule and BCOV's Feynman rule (Theorem \ref{thm3}). This will finish the proof of the BCOV's Feynman rule.

Notice that the $A$-model state space $H_\bA$ has a higher dimension, with the $B$-model one $H_\bB$ as its subspace. In particular,   we have $3$ more extra propagators as edge contributions. We first deal with the edge that  contributes  a bivector $1\otimes \varphi_2$ (with propagator $E_{1\varphi_2}=E_\psi$).  The  idea is to consider the similar factorization of the symplectic transformation as in \S \ref{BmodelE}.

\subsection{Decomposition of $R^\bA$-matrix and modified quintic theory} \label{AmodelE}
We consider the following matrix factorization of $R^\bA$-matrix:
\beq \label{RAdecom}
R^{\bA}(z) =  \tilde R^{\bA}(z) \cdot  \E^{\bA}(z),
\eeq
where (recall $E_{1\tp_2} :=E_{\bp}$)
\beq \label{mOmegaQ}
 \E^{\bA}(z):= \id+    z
{\footnotesize  \left( \arraycolsep=1.4pt\def\arraystretch{0.7}\begin{array} {*{15}{@{}C{\mycolwdc}}}
 0&E_{1\tp_2} &  \\&0& 0 \\
 && 0& E_{1\tp_2}   \\&&&0
  \end{array}\right) } \in \End H_\bA.
\eeq  
The modified quintic CohFT is defined via
$$
\tilde \Omega^{Q}:= \E^{\bA}. \Omega^Q.
$$ Notice that here   $\tilde \Omega^Q$ theory depends on the choice of the gauge $\cG$. (Recall by Convention \ref{omitcG}, we always omit the supscript $\cG$ in this section.)

\begin{convention}
In this section, we will not distinguish the $\Omega^Q$ and the twisted theory $\Omega^{Q,\tw}$. We identify them by setting $t=1$ in this section.
\end{convention}

\begin{definition}
For  the following  coordinate
$$\bt =  x \,\tp_1	+y \, \tp_0\bp+a\, \tp_1\bp+b \,  \tp_0 \bp^2 +c \, \tp_0  \in H_\bA ,$$
we introduce modified normalized $A$-model  potential for  the  quintic $3$-fold
\begin{align}
& \tilde P^\bA(\hbar; \bt)=\tilde P^\bA(\hbar, x,y,a,b,c):= \sum_{g,n} \frac{\hbar^{g-1}}{n!} \frac{(5Y)^{g-1}}{I_0^{2g-2+n}} \int_{\M_{g,n}} \tilde\Omega^Q_{g,n}( \bt ^n). 
\end{align}
 In particular,   we define
$$
 \tilde P^\bA(\hbar; x,y):= \tilde P^\bA(\hbar;  x \,\tp_1	+y \, \tp_0\bp).
$$
\end{definition}


\begin{lemma} \label{SEDEfortQ}
String and dilaton equations hold for the theory $\tilde \Omega^Q$.
\end{lemma}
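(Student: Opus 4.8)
The plan is to deduce both equations from the single fact that $\tilde\Omega^Q = \E^\bA.\Omega^Q$ is again a cohomological field theory \emph{with unit}, and then invoke the standard derivation of the string and dilaton equations valid for any unital CohFT. Write $\E^\bA(z) = \id + zN$, where $N\in\End H_\bA$ is the nilpotent operator determined by $N\tp_1 = E_\bp\tp_0$, $N\tp_3 = E_\bp\tp_2$ and $N\tp_0 = N\tp_2 = 0$. Using the pairing $(\tp_i,\tp_{3-i}) = 5Y$, a direct check gives $N = N^*$ and $N^2 = 0$, whence
\[
\E^\bA(-z)^*\E^\bA(z) = (\id - zN^*)(\id + zN) = \id + z(N - N^*) - z^2 N^*N = \id ;
\]
so $\E^\bA$ is symplectic, and since $\dim H_\bA = 4$ on both sides, $(\E^\bA)^*$ is symplectic as well.

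First I would record the crucial structural point, namely that $\E^\bA$ fixes the unit. Indeed $\tp_0$ is a scalar multiple of the unit $\mathbf 1 = 1\in H^*(Q)$, and the formulas above give $N\mathbf 1 = N^*\mathbf 1 = 0$. Consequently the translation attached to this action vanishes,
\[
T_{\E^\bA}(z) = z\mathbf 1 - z\,\E^\bA(-z)^*\mathbf 1 = z\mathbf 1 - z(\mathbf 1 - zN^*\mathbf 1) = 0 ,
\]
so that $\tilde\Omega^Q = \E^\bA\,T_{\E^\bA}\Omega^Q$ carries no tail insertions. Theorem \ref{PPZThm}(1) then applies verbatim (the $q$-adic condition is trivial since $T_{\E^\bA}=0$, and $(\E^\bA)^*$ is symplectic) and shows that $\tilde\Omega^Q$ is a CohFT with unit $\mathbf 1$, realized by the insertion $\tp_0$. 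It then remains to prove string and dilaton for an arbitrary unital CohFT, which is the classical argument through the forgetful morphism $\mathrm{pr}\colon\M_{g,n+1}\to\M_{g,n}$: for the dilaton equation one combines the fundamental class axiom $\tilde\Omega^Q_{g,n+1}(\tau_1,\dots,\tau_n,\mathbf 1) = \mathrm{pr}^*\tilde\Omega^Q_{g,n}(\tau_1,\dots,\tau_n)$ with $\psi_i = \mathrm{pr}^*\psi_i + D_{i,n+1}$ and $\psi_{n+1}\cdot D_{i,n+1} = 0$, so that $\psi_{n+1}\prod_i\psi_i^{k_i} = \psi_{n+1}\,\mathrm{pr}^*(\prod_i\psi_i^{k_i})$, and then pushes forward using $\mathrm{pr}_*\psi_{n+1} = 2g-2+n$; for the string equation the same unit axiom reduces the computation to $\mathrm{pr}_*\prod_i\psi_i^{k_i} = \sum_i\psi_i^{k_i-1}\prod_{j\neq i}\psi_j^{k_j}$ on $\M_{g,n}$.

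The only genuinely non-formal step — and the one I expect to be the main obstacle — is the verification that the modification by $\E^\bA$ does not destroy the unit axiom, i.e.\ that no tail insertions are created at the forgotten point. This is exactly what $N\mathbf 1 = N^*\mathbf 1 = 0$ (equivalently $T_{\E^\bA}=0$) secures: because $\E^\bA$ fixes the unit direction and is symplectic with $N^2=0$, the leg decoration $\E^\bA(-\psi)^*$ sends the unit to the unit, so a string or dilaton insertion passes through the $\E^\bA$-action unchanged and reaches an underlying $\Omega^Q$-vertex as a genuine unit (resp.\ $\mathbf 1\,\psi$), where the classical string and dilaton equations of the quintic GW theory apply. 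Everything else in the argument is standard intersection theory on $\M_{g,n+1}$, so once this point is in place the Lemma follows.
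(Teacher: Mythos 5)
Your proof is correct, but it takes a genuinely different route from the paper's. The paper disposes of this lemma in one line by citing \cite{Lee}: the $R$-matrix (Givental group) action preserves tautological equations, so the string and dilaton equations pass from $\Omega^Q$ to $\tilde\Omega^Q=\E^{\bA}.\Omega^Q$. You instead verify by hand that $\E^{\bA}(z)=\id+zN$ is symplectic ($N=N^*$, $N^2=0$), fixes the unit ($N\mathbf 1=N^*\mathbf 1=0$, hence $T_{\E^{\bA}}=0$), and therefore yields a CohFT with unit by Theorem \ref{PPZThm}(1); string and dilaton then follow from the fundamental class axiom by the classical pushforward computation through $\pr\colon\M_{g,n+1}\to\M_{g,n}$. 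The trade-off: the paper's citation is shorter and applies to arbitrary elements of the Givental group (where the equations survive only in dilaton-shifted form), while your argument stays entirely inside the paper's own CohFT formalism and isolates the precise structural reason --- unit-fixing and vanishing translation --- why the equations hold for $\tilde\Omega^Q$ in unmodified form. One small caveat: the full fundamental class axiom for $\E^{\bA}.\Omega^Q$, i.e. $\tilde\Omega^Q_{g,n+1}(\tau_{\mathbf n},\mathbf 1)=\pr^*\tilde\Omega^Q_{g,n}(\tau_{\mathbf n})$ and not only the $(0,3)$ identity of Lemma \ref{unit-lem}, is asserted in Theorem \ref{PPZThm}(1) with the remaining identities ``left to readers'', so your proof implicitly leans on that step; since $T_{\E^{\bA}}=0$ and $\E^{\bA}(-\psi)^*\mathbf 1=\mathbf 1$, it is in any case an easy graph-by-graph check, but it would be worth saying so explicitly.
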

\begin{proof}
By the result of  \cite{Lee}, the $R$-matrix action preserve tautological equations. Hence the $\tilde \Omega^Q$ theory  satisfies   string and dilation equation as well.
\end{proof}

\begin{proposition}  \label{tPAB}
We have the following relation
\begin{align} \label{tPAeqtPB}
\tilde P^\bA(\hbar,x,y) 
 = \tilde P^\bB(\hbar,x,y)   -\ln(1-y).
\end{align}
\end{proposition}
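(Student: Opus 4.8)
The plan is to treat the $y$--insertions as dilaton insertions and to isolate the discrepancy at the single unstable genus--one stratum. Writing the vertex argument as $\bt=x\tp_1+y\,\tp_0\bp$, the key observation is that $\tp_0\bp=I_0\,\mathbf 1\,\bp$ is $I_0$ times the dilaton class. First I would expand $\tilde P^\bA$ multinomially into correlators with $m$ copies of $\tp_1$ and $n$ copies of $\tp_0\bp$, and apply the dilaton equation of Lemma~\ref{SEDEfortQ} to peel off the $n$ dilaton insertions one at a time. Each removal produces a factor $I_0\,(2g-2+\#\text{insertions})$; the powers of $I_0$ cancel against the normalization $I_0^{-(2g-2+N)}$, and the surviving product is $(2g+m+n-3)_n$ --- precisely the combinatorial factor in $P_{g,m,n}=(2g+m+n-3)_n\,P_{g,m}$ of \eqref{Pgmn}. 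I would package this reduction as the first order linear equation
\[
\partial_y\tilde P^\bA=\bigl(2\hbar\partial_\hbar+x\partial_x+y\partial_y\bigr)\tilde P^\bA+C^\bA,
\]
where $C^\bA$ collects the genuine values at which the recursion meets an unstable moduli space.

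Next I would evaluate $C^\bA$. The only unstable termini are $\M_{0,3}\to\M_{0,2}$ and $\M_{1,1}\to\M_{1,0}$; the genus--zero one contributes nothing since $\bp_1=0$ on $\M_{0,3}$, so $C^\bA=\int_{\M_{1,1}}\bp_1\,\tilde\Omega^Q_{1,1}(\mathbf 1)$. Computing this through the graph sum defining $\tilde\Omega^Q=\E^\bA.\Omega^Q$, the genus--one vertex gives $\chi/24$ while the only other graph (a genus--zero vertex with a self--edge carrying $\tp_0\otimes\tp_2$) again vanishes because $\bp_1=0$ on $\M_{0,3}$, so $C^\bA=\chi/24$. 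Running the same bookkeeping on the $B$--model side, the dilaton structure is already built into $P_{g,m,n}=(2g+m+n-3)_n P_{g,m}$, so that $\tilde P_{g,m,n}=(2g+m+n-3)\,\tilde P_{g,m,n-1}$ holds except at the single step $P_{1,0,1}\leftarrow P_{1,0,0}$, where the anomalous value $P_{1,0,1}=\chi/24-1$ of \eqref{Pgmn} replaces the formal $0$. This gives the same equation for $\tilde P^\bB$ with constant $C^\bB=\chi/24-1$.

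Subtracting, $G:=\tilde P^\bA-\tilde P^\bB$ obeys $\partial_yG=(2\hbar\partial_\hbar+x\partial_x+y\partial_y)G+1$, and one checks directly that $-\ln(1-y)$ is a solution. Decomposing $G=\sum_{g,m}\hbar^{g-1}x^m G_{g,m}(y)$ turns this into the ODEs $(1-y)\partial_yG_{g,m}=(2g-2+m)G_{g,m}+\delta_{(g,m),(1,0)}$, for which the unique solution with prescribed $G_{g,m}(0)$ is $(1-y)^{-(2g-2+m)}G_{g,m}(0)$ off the anomalous mode and $-\ln(1-y)+G_{1,0}(0)$ on it. Hence it suffices to prove the initial condition $G(\hbar,x,0)=0$, after which $G=-\ln(1-y)$ follows.

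The main obstacle is exactly this initial condition $\tilde P^\bA(\hbar,x,0)=\tilde P^\bB(\hbar,x,0)$, the matching of the modified primary potentials carrying only $\tp_1$--insertions. Under $\tp_1=\tp$, the $\E^\bA$ leg action rewrites each leg as $\tp_1-E_\bp\,\tp_0\bp$, reproducing the $B$--model modification $\tp\mapsto\tp-E_\bp\bp$ of \eqref{tiPgmn}, and with $\tp_0=I_0\mathbf 1$ the normalizations match so that the single--vertex terms reproduce $\tilde P_{g,m,0}$ for every \emph{stable} vertex; the genus--one, fully modified pieces instead carry the $\chi/24$ versus $\chi/24-1$ mismatch. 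The delicate point is that the edge contributions of $\E^\bA$ (the bi--vector $\tfrac{E_\bp}{5Y}(\tp_0\otimes\tp_2+\tp_2\otimes\tp_0)$), although they drop out of the special correlator above, are nonzero at genus one and must be summed --- using the fundamental class axiom at the $\tp_0=I_0\mathbf 1$ end together with the string equation of Lemma~\ref{SEDEfortQ} --- and shown to cancel precisely this genus--one anomaly of the single--vertex terms, forcing $G(\hbar,x,0)=0$. Establishing this genus--one edge cancellation is the crux; the dilaton propagation above is then routine.
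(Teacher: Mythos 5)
Your overall route coincides with the paper's: reduce to the initial condition at $y=0$ by showing both sides satisfy the same dilaton flow (the paper phrases this as the single PDE $\partial_y f=(2\hbar\partial_\hbar+x\partial_x+y\partial_y)f+\chi_Q/24$ satisfied by both sides of \eqref{tPAeqtPB}, which is equivalent to your inhomogeneous equation for $G$ with constant $1$ solved by $-\ln(1-y)$), and then analyze the $\E^\bA$-graph sum at $y=0$. Your identification of the relevant edge bivector $\frac{E_{\bp}}{5Y}(\tp_0\otimes\tp_2+\tp_2\otimes\tp_0)$ and of the vanishing of the unstable self-loop term via $\bp_1=0$ on $\M_{0,3}$ is also correct.

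The genuine gap is that the step you yourself label ``the crux'' --- the evaluation of the sum over graphs carrying $\E^\bA$-edges at $y=0$ --- is never carried out, and the proposition is false without it. The paper does this explicitly: a dimension count on the $\tp_2$/$\tp_0$ edge-ends forces every contributing non-leading graph to be a cycle of $l$ genus-zero vertices, each carrying exactly one $\tp_1$-leg and some number of $-E_{1\tp_2}\tp_0\bp$ insertions coming from the leg placement $\E^\bA(-\bp)^*\tp_1=\tp_1-E_{\bp}\tp_0\bp$; applying the dilaton equation (not the string equation, as you suggest) at each vertex turns those insertions into a geometric series, and with the cyclic symmetry factor $(l-1)!/l$ one gets
$$P_1^E(x)=\sum_{l>0}\frac{(l-1)!}{l}(E_{\bp}x)^l\prod_{i=1}^{l}\sum_{n_i\ge 0}(-E_{\bp}x)^{n_i}=-\ln\Bigl(1-\frac{E_{\bp}x}{1+E_{\bp}x}\Bigr)=\ln(1+E_{\bp}x).$$
Since the leading graphs give $\tilde P^\bA(\hbar,x,0)=P^\bA(\hbar,x,-E_{\bp}x)+P_1^E(x)$ while $\tilde P^\bB(\hbar,x,0)=P^\bB(\hbar,x,-E_{\bp}x)$ by definition, and $P^\bA=P^\bB-\ln(1-y)$ contributes $-\ln(1+E_{\bp}x)$ at $y=-E_{\bp}x$, it is exactly this $\ln(1+E_{\bp}x)$ that closes the argument. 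Until you produce this computation (or an equivalent one), the initial condition $G(\hbar,x,0)=0$ is an unproven assertion and the proof is incomplete.
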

\begin{proof}
By Lemma \ref{SEDEfortQ}, we can use dilaton equations to remove the $\varphi_0\psi$ insertions. 
Namely, both sides of \eqref{tPAeqtPB}  satisfy\footnote{ one can check that the $B$-model correlators satisfy dilaton equations directly.}
$$ \textstyle
\frac{\partial}{\partial y} f = \Big( 2 \hbar \frac{\partial}{\partial \hbar} +x\frac{\partial}{\partial x}+y\frac{\partial}{\partial y}\Big) f+\frac{\chi_Q}{24},\qquad  \chi_Q=-200.
$$
It suffices to prove
\beq  \label{tPAeqtPB0}
\tilde P^\bA(\hbar,x,0)
 = \tilde P^\bB(\hbar,x,0).
\eeq
Now we apply the graph sum formula to $\tilde \Omega^Q:= \E^\bA \Omega^Q$. Notice that when there is an insertion $\varphi_2={\small  \text{ $I_{0}I_{11}I_{22}H^2$}}$, the quintic correlators are zero unless $g=0$ (which is from degree $0$ contribution). It is not hard to see that in our case ({the leg insertions are all $\varphi_1$'s}),
the stable graph will contribute zero unless it is a loop with $l$-vertices:  at each vertex there is exactly one $\tp_1$ leg insertion and several $-E_{1\varphi_2} \varphi_0\psi$ insertions, at each edge the bivector is $E_{1\varphi_2} \varphi_0\otimes\varphi_2$. This only  contributes  to $g=1$ potential. Denoted by $P_1^E$ the generating function of such ``loop type" contribution, we have
$$
\tilde P^\bA(\hbar,x,0) = P^\bA(\hbar,x,-E_{1\varphi_2}x ) + P_1^E(x).
$$
By using the dilaton equation for each vertex\footnote{ Suppose there are $n_i$  $\psi$-insertions at the $i$-th vertices  ($i=1,\cdots,l$), by forgotting all  the $\psi$-insertions we get a factor $n_i!$.
}
of the ``loop type" graph, we obtain
 \beq 
 \begin{aligned}
P_1^E(x) = & \!\!\!\!\!\!\!\sum_{\Gamma  \ \text{is a loop with $l$  } \atop \text{  vertices and $n+l$ legs}}\!\!\!\!\! \frac{x^{l+n}}{l!n!} \frac{\Cont_\Gamma}{|\Aut \Gamma|} = \sum_{l>0}   \frac{(l-1)!}{l}( E_{1\varphi_2} x) ^l \prod_{i=1}^l \sum_{n_i\geq 0} (-E_{1\varphi_2} x)^{n_i}\\
 = & 
 - \ln\Big(1- \frac{E_{1\varphi_2} x}{1+E_{1\varphi_2} x}\Big) =  \ln(1+E_{1\varphi_2} x).
 \end{aligned}
\eeq
 In the second equality above we used that there are  $(l-1)!$ choices when we put $l$ different vertices in a loop.
Together with the following relations
\begin{align*}
\tilde P^\bB(\hbar,x,y)= & \  P^\bB(\hbar,x, y-E_{1\varphi_2}\,x), \quad \text{and}\quad
 P^\bA(\hbar,x,y)=  P^\bB(\hbar,x, y)-\ln(1-y),
\end{align*}
We obtain \eqref{tPAeqtPB0}, and hence finish the proof of this proposition.
\end{proof}

\begin{remark}
We can see the symplectic transformation \eqref{EEB} in \S \ref{BmodelE} is exactly the restriction of the $\E^\bA$-action to the B-model state space.
\end{remark}

Next, we will use  string equations proved in Lemma \ref{SEDEfortQ}, to write down any $\tilde \Omega^Q$-theory invariants in terms of $\tilde \Omega^Q$-theory invariants with only insertions $\tp$ and $\bp$. In this way, we deal with the  remaining   two ``extra" propagators.

\subsection{Modified propagators and  operator  formalism for the quantization action} \label{AmodelE}
By the definition of $\tilde R$-matrix and the $\tilde \Omega^Q$ (c.f. \eqref{RAdecom} and \eqref{mOmegaQ}), we see 
that the CohFT $\Omega^\bA$ is equal to the  $\tilde R^Q(z)$-action on the CohFT $\tilde \Omega^Q$:
\beq\label{tildeRactingonQ}
\Omega^A = \tilde R^Q . \tilde \Omega^Q.
\eeq
 Extending \S \ref{BmodelE},  \black for the edge contribution of $\tilde R^Q$-action,   we have the modified propagators
\begin{align} \label{mpropagatorsA}
& \tilde E_{\tp\tp} = \,  E_{\tp\tp},\qquad
\tilde E_{\tp\bp} = \,  E_{\bp} E_{\tp\tp}+ E_{\tp\bp} ,\qquad \nonumber\\
& \tilde E_{1,\tp\bp}  =  E_{1,\tp\bp} ,\qquad
\tilde E_{1\bp^2} = \, E_{1\bp^2} +   E_{\bp}  E_{1,\tp\bp},  \\
&  \tilde E_{\bp\bp} =  \, E_{\bp} ^2 E_{\tp\tp} + 2  E_{\bp}  E_{\tp\bp} + E_{\bp\bp} ,\qquad \tilde E_{\bp} = 0. \nonumber
\end{align}
  (Note $\ti E_{**}$'s are  $\ti E^{\cG}_{**}$'s defined via the same formulas.) \black
Using \eqref{mpropagatorsA}, we  write down the differential operator form of  $\nmsp$ $A$-model potential and BCOV's $B$-model potential.

\begin{proposition} \label{operatorform}
For $\star = \bA$ or $\bB$ and $u = x \tp_1+y\tp_0 \bp$, we have
\begin{align*}
\exp\big({ f^{\star}(\hbar,x,y)} \big)=\,& \exp \big({\hbar \cdot\tilde V^\star{(\partial_\bt,\partial_\bt)}} \big) \exp\big({ \tilde P^{\star}(\hbar; \E\bt)}\big) |_{\bt=R^\star(\bp)^{-1}u(\bp)}
\end{align*}
where the  $\tilde V$-operator is defined by
\begin{align*}{\small
\begin{aligned}
\quad \tilde V^\bB(\partial_\bt,\partial_\bt):=\, & \frac{1}{2}\tilde E_{\tp\tp} \frac{\partial^2}{\partial x^2}+{\tilde E_{\tp\bp}}\frac{\partial^2}{\partial x\partial y}+\frac{1}{2}\tilde E_{\psi\!\psi} \frac{\partial^2}{\partial y^2},\quad
\\
\tilde V^\bA(\partial_\bt,\partial_\bt) := \, &  \tilde V^\bB(\partial_\bt,\partial_\bt)+ \tilde V^E(\partial_\bt,\partial_\bt),
\quad  \,  \tilde V^E(\partial_\bt,\partial_\bt):={\tilde E_{1\!, \tp\bp}}\frac{\partial^2}{\partial a\partial c}+{\tilde E_{1 \psi^2}}\frac{\partial^2}{\partial b\partial c}.\quad
\end{aligned}}
\end{align*}
Here the operator $\tilde V^E$  corresponds   to edge contributions with extra propagators.
\end{proposition}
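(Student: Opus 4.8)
The plan is to read the asserted identity as the standard operator (Wick-contraction) repackaging of the $R$-matrix graph sum, carried out for the \emph{modified} data $(\tilde R^\star,\tilde\Omega^Q)$ rather than $(R^\star,\Omega^Q)$. I would start from the quantization form $\exp f^\star=\widehat R^\star\exp P^\star$ recorded in the remark after Theorem \ref{thm3}, and insert the factorization $R^\star=\tilde R^\star\cdot\E^\star$ of \eqref{RAdecom} (with its $\bB$-analogue \eqref{EEB}). Since the $R$-matrix action is multiplicative by Theorem \ref{PPZThm}(2), and correspondingly the quantization satisfies $\widehat R^\star=\widehat{\tilde R}^\star\,\widehat\E^\star$ with no anomalous cocycle in this triangular ``dilaton-type'' situation, one gets $\exp f^\star=\widehat{\tilde R}^\star\big(\widehat\E^\star\exp P^\star\big)$. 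The first task is to identify $\widehat\E^\star\exp P^\star=\exp\tilde P^\star$: for $\star=\bB$ this is exactly the change of variables \eqref{tiPB}, and for $\star=\bA$ it is the content of Proposition \ref{tPAB} (equivalently Lemma \ref{SEDEfortQ} together with the loop computation there), the discrepancy $-\ln(1-y)$ between $P^\bA$ and $P^\bB$ being absorbed along the way.

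It then remains to expand $\widehat{\tilde R}^\star$ acting on $\exp\tilde P^\star$ as a Feynman graph sum and to read off its operator form. By \eqref{tildeRactingonQ} this graph sum is the $R$-matrix action of $\tilde R^\star$ on the modified quintic CohFT, so by the dictionary of \S\ref{Raction} each leg carries $\tilde R^\star(\bp)^{-1}$, each vertex carries a $\tilde P^\star$-correlator, and each edge carries the bivector whose coefficients are the modified propagators $\tilde E_{**}$ of \eqref{mpropagatorsA}. From $R^\star=\tilde R^\star\E^\star$ one has $\E^\star(\bp)R^\star(\bp)^{-1}=\tilde R^\star(\bp)^{-1}$, so the leg data $\tilde R^\star(\bp)^{-1}u$ of the $\tilde R^\star$-action is precisely repackaged as the substitution $\bt=R^\star(\bp)^{-1}u(\bp)$ fed into the $\E^\star$-shifted argument $\E\bt$ of $\tilde P^\star$; this explains the shape of the right-hand side.

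The core step is the identification of the edge bivector with the second-order operator $\tilde V^\star(\partial_\bt,\partial_\bt)$. For this I would use the explicit polynomial form of $V^{\bA,\cG}(\bp,\bp')$ (the footnote in \S\ref{NMSPrule}) and its modified version, together with the dictionary between coordinate derivatives and ancestor insertions, $\partial_x\leftrightarrow\tp_1$, $\partial_y\leftrightarrow\tp_0\bp$, $\partial_a\leftrightarrow\tp_1\bp$, $\partial_b\leftrightarrow\tp_0\bp^2$, $\partial_c\leftrightarrow\tp_0$. Each monomial $\tp_i\bp^{\,k}\otimes\tp_j\bp^{\,l}$ of the bivector then matches the corresponding $\partial$-monomial, with the symmetry factor $\tfrac12$ appearing exactly on the diagonal terms $\partial_x^2$ and $\partial_y^2$, and the vanishing $\tilde E_\bp=\tilde E_{1\tp_2}=0$ killing the $\tp_2$-edge; this leaves the six (respectively three) terms recorded in $\tilde V^\bB$ and $\tilde V^E$. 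Summing over all stable graphs that connect the vertices is then, by the usual Wick theorem, the same as applying $\exp\big(\hbar\,\tilde V^\star(\partial_\bt,\partial_\bt)\big)$ to $\exp\tilde P^\star(\hbar;\E\bt)$, one factor of $\hbar$ per edge matching the genus bookkeeping $\sum_v(g_v-1)+\#E=g-1$ for a connected graph; substituting $\bt=R^\star(\bp)^{-1}u(\bp)$ at the end yields the formula.

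The main obstacle I expect is precisely the edge-to-operator identification: one must verify that the \emph{full} $\bp$-dependence of the edge bivector is reproduced by a \emph{constant-coefficient} differential operator. This works only because the correlators $\tilde P^\star$ are ancestor correlators, so the coordinates $y,a,b$ already carry the ancestor powers $\bp,\bp,\bp^2$; combined with the fact that $R^{\bA,\cG}(\bp)^{-1}$ is polynomial of low degree in $\bp$, so that the bivector is a genuine low-degree polynomial with propagator coefficients, every $\bp$-monomial matches a coordinate derivative. Tracking the symmetry factors correctly and checking that the quantization respects the factorization with no extra cocycle are the points needing the most care.
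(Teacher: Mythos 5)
Your argument is correct and is essentially the paper's own proof: the paper disposes of the proposition in two lines by citing \eqref{fAOmegaA}, \eqref{tildeRactingonQ} and Givental's quantization formula for $\star=\bA$, and \eqref{quantizRB}, \eqref{fBeqRBPB} for $\star=\bB$, which is exactly the factorization-plus-Wick-expansion route you spell out. Your identification of the leg data via $\E^\star(\bp)R^\star(\bp)^{-1}=\tilde R^\star(\bp)^{-1}$ and of the modified edge bivector with the constant-coefficient operator $\tilde V^\star(\partial_\bt,\partial_\bt)$ under the dictionary $\partial_x\leftrightarrow\tp_1$, $\partial_y\leftrightarrow\tp_0\bp$, $\partial_a\leftrightarrow\tp_1\bp$, $\partial_b\leftrightarrow\tp_0\bp^2$, $\partial_c\leftrightarrow\tp_0$ is precisely the content the paper leaves implicit.
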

\begin{proof}
For the case $\star = \bA$, the formula follows from \eqref{fAOmegaA}, \eqref{tildeRactingonQ} and 
Givental's quantization formula \cite{Giv2}. For the case $\star=B$, the formula follows from the operator form of the $B$-model quantization formuma \eqref{quantizRB} and \eqref{fBeqRBPB}.
\end{proof}

\begin{lemma} \label{stringflow}
We have
$$
e^{\tilde P^\bA(\hbar, x,y,a,b,c)}  = e^{ \frac{c}{1-y} (a\frac{\partial }{\partial x}+b\frac{\partial }{\partial y}  )}  e^{\tilde P^\bA(\hbar, x,y) }.
$$
\end{lemma}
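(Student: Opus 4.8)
The lemma asserts that the modified $A$-model potential with the extra coordinates $a,b,c$ is obtained from the two-variable potential $\tilde P^\bA(\hbar,x,y)$ by applying the exponential of a first-order differential operator built from $a,b,c$ and $\frac{1}{1-y}$. The coordinates $a,b,c$ are the coefficients of $\tp_1\bp$, $\tp_0\bp^2$, and $\tp_0$ respectively in $\bt$. The plan is to use the string equation for the modified quintic theory $\tilde\Omega^Q$, established in Lemma \ref{SEDEfortQ}, to remove the $\tp_0 = \varphi_0$ insertions one at a time and track precisely how each removal acts.

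Let me think about the structure. The coordinate $c$ multiplies $\tp_0$, which is the unit (up to the normalization $\tp_0 = I_0 H^0$). The string equation governs how correlators with a unit insertion (the fundamental class, with no $\psi$) reduce. So the $c$-derivative of $\tilde P^\bA$ should be expressible via the string equation applied to the insertion $\tp_0$. The key point is that inserting $\tp_0$ (the unit, weighted by $c$) and applying the string equation converts a descendant insertion $\tau\bp^k$ into $\tau\bp^{k-1}$ at the other marked points, which at the generating-function level is a first-order differential operator. The factor $\frac{1}{1-y}$ is exactly the geometric series coming from the repeated application to the $y\,\tp_0\bp$ insertions (since each $\tp_0\bp$ is the unit with one $\psi$, the string equation lowers its $\psi$-power and produces $\tp_0$, feeding back).

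**Plan of proof.** First I would write down the string equation for $\tilde\Omega^Q$ explicitly at the level of correlators: inserting one $\tp_0$ (no $\psi$) at a correlator $\langle \bt^{\otimes n}\rangle$ yields $\sum \langle \cdots (\partial_\psi \bt) \cdots\rangle$ over the remaining insertions, plus the usual unstable/boundary corrections. In generating-function form, this becomes a statement that $\frac{\partial}{\partial c}$ acting on $e^{\tilde P^\bA(\hbar;x,y,a,b,c)}$ equals a first-order differential operator $\mathcal{D}$ applied to $e^{\tilde P^\bA}$, where $\mathcal{D}$ lowers the $\psi$-degree. Since $a$ multiplies $\tp_1\bp$ (one $\psi$ on $\tp_1$) and $b$ multiplies $\tp_0\bp^2$, while $y$ multiplies $\tp_0\bp$, the string equation converts $a\,\tp_1\bp \mapsto \tp_1$ (feeding $\partial_x$), $b\,\tp_0\bp^2 \mapsto \tp_0\bp$ (feeding $\partial_y$), and crucially $y\,\tp_0\bp \mapsto \tp_0$ (feeding $\partial_c$). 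This self-referential term involving $y$ is what produces the $\frac{1}{1-y}$ factor. Concretely, I expect the string equation to give
$$
\frac{\partial}{\partial c} e^{\tilde P^\bA} = \Big( a\frac{\partial}{\partial x} + b\frac{\partial}{\partial y} + y\frac{\partial}{\partial c}\Big) e^{\tilde P^\bA},
$$
so that $(1-y)\frac{\partial}{\partial c} e^{\tilde P^\bA} = (a\partial_x + b\partial_y)e^{\tilde P^\bA}$. This is a first-order linear PDE in $c$; integrating from $c=0$ (where $\tilde P^\bA(\hbar;x,y,a,b,0)$ still has $a,b$ present, but the $c$-free part is what we solve for) gives the claimed flow.

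**The main obstacle.** The delicate point will be the precise bookkeeping of the unstable terms and the exceptional $(g,n)$ contributions in the string equation, together with verifying that the modified theory $\tilde\Omega^Q$ genuinely satisfies the string equation in the form needed. Lemma \ref{SEDEfortQ} gives us string and dilaton equations for $\tilde\Omega^Q$ by appeal to \cite{Lee} (that $R$-matrix actions preserve tautological relations), so I would lean on that; the care is in translating the geometric string equation into the generating-function identity with the correct coefficients, especially the coefficient of the $y\partial_c$ term that yields $(1-y)^{-1}$. I would verify the operator identity $(1-y)\partial_c = a\partial_x + b\partial_y$ on $e^{\tilde P^\bA}$, then observe that the operator $\frac{c}{1-y}(a\partial_x + b\partial_y)$ exponentiates this flow: since $a\partial_x + b\partial_y$ commutes with multiplication by $\frac{c}{1-y}$ up to the controlled $y$-dependence, the solution to the linear flow equation is exactly $e^{\frac{c}{1-y}(a\partial_x+b\partial_y)}$ applied to the initial datum $e^{\tilde P^\bA(\hbar;x,y)}$ at $a=b=c=0$. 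The subtlety to check is that $a,b$ appear only linearly (so $\partial_x,\partial_y$ need not be iterated against $a,b$-dependence), making the exponential well-defined and matching the stated closed form.
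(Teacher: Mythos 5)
Your proposal is correct and follows essentially the same route as the paper: the paper's proof consists precisely of the string-equation identity $\frac{\partial}{\partial c} e^{\tilde P^\bA} = \bigl( a\frac{\partial}{\partial x}+b\frac{\partial}{\partial y}+y\frac{\partial}{\partial c}\bigr) e^{\tilde P^\bA}$ (noting that no unstable corrections arise because there are no $\varphi_2$-insertions), followed by integrating this linear flow from the initial condition at $c=0$. Your identification of the $y\,\partial_c$ feedback term as the source of the $(1-y)^{-1}$ factor is exactly the mechanism the paper uses.
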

\begin{proof}
By string equations, we have\footnote{ Here since there is no $\varphi_2$-insertions, the unstable contribution does not appear in the equation.}
$$\textstyle
\frac{\partial }{\partial c} e^{\tilde P^\bA(\hbar, x,y,a,b,c)} =  \Big( a\frac{\partial }{\partial x}+b\frac{\partial }{\partial y} +y\frac{\partial }{\partial c} \Big) e^{\tilde P^\bA(\hbar, x,y,a,b,c) }.
$$
Then the  Lemma   follows from the initial condition
$$
\tilde P^\bA(\hbar, x,y,a,b,c) |_{c=0} =\tilde P^\bA(\hbar, x,y).
$$
This proves the  Lemma. 
\end{proof}
\subsection{Finish the proof of Theorem \ref{thm3}}\label{Fin-thm3} 
We first prove two identities.
\begin{lemma}
For any $f(x,y)$, the following identities hold:
\begin{align}
 \qquad e^{\hbar\, \tilde V^E(\partial_\bt,\partial_\bt) }  e^{\frac{c}{1-y}(a\frac{\partial}{\partial x}+b\frac{\partial}{\partial y})}   f(x,y)  \Big|_{a,b,c=0}
=\,& \  \sum_{k\geq 0} E(\partial_\bt)^k f(x,y),    \quad \label{identity1}
\\
e^{ - \hbar\,  \tilde V^\bB(\partial_\bt,\partial_\bt) }  (1-y)^{-1} e^{\hbar\,  \tilde V^\bB(\partial_\bt,\partial_\bt) }   (1-y) =\,& \  \sum_{k\geq 0} E(\partial_\bt)^k f(x,y),
\label{identity2}
\end{align}
where $E(\partial_\bt):={\small \begin{matrix} {\frac{ \hbar}{1-y}({\tilde E_{1\!, \tp\bp}} \frac{\partial}{\partial x}+{\tilde E_{1\psi^2}}\frac{\partial}{\partial y} )}  \end{matrix}}$.
\end{lemma}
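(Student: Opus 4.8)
The plan is to read both identities as \emph{operator} identities acting on functions $f(x,y)$, in which every propagator $\tilde E_{\ast\ast}$ (a function of $q$, hence of $X$, but not of $x,y,a,b,c$) is treated as a scalar. Throughout set $V:=\hbar\tilde V^\bB$, let $m$ denote multiplication by $1-y$, and abbreviate $\alpha:=\hbar\tilde E_{1,\tp\bp}$, $\beta:=\hbar\tilde E_{1\psi^2}$, so that $E(\partial_\bt)=m^{-1}(\alpha\partial_x+\beta\partial_y)$. The one structural input I would record first is the pair of relations
\[
\tilde E_{1,\tp\bp}=-\tilde E_{\tp\bp},\qquad \tilde E_{1\psi^2}=-\tilde E_{\psi\psi},
\]
which follow by inserting the definitions $E_{1,\tp\bp}=-E_{\bp}E_{\tp\tp}-E_{\tp\bp}$ and $E_{1\bp^2}=-E_{\bp}E_{\tp\bp}-E_{\bp\bp}$ of the extra propagators into the modified propagators \eqref{mpropagatorsA}.

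For \eqref{identity1}, since $c$ commutes with everything I would expand $e^{\frac{c}{1-y}(a\partial_x+b\partial_y)}f=\sum_{n\ge0}\frac{c^n}{n!}M^nf$, where $M:=m^{-1}(a\partial_x+b\partial_y)$, and observe that $M^nf$ is homogeneous of degree $n$ in $(a,b)$: each factor of $M$ carries exactly one $a$ or one $b$, and $a,b$ are inert under $\partial_x,\partial_y,m^{-1}$. Thus $\frac{c^n}{n!}M^nf$ has total degree $2n$ in $(a,b,c)$, and since $\hbar\tilde V^E=\alpha\partial_a\partial_c+\beta\partial_b\partial_c$ lowers total degree by $2$, only its $n$-th power survives after setting $a=b=c=0$. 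Using $\partial_c^nc^n=n!$ and $(\alpha\partial_a\partial_c+\beta\partial_b\partial_c)^n=\sum_j\binom{n}{j}\alpha^j\beta^{n-j}\partial_a^j\partial_b^{n-j}\partial_c^n$ one gets
\[
e^{\hbar\tilde V^E}\tfrac{c^n}{n!}M^nf\Big|_{a=b=c=0}=\tfrac1{n!}(\alpha\partial_a+\beta\partial_b)^nM^nf\big|_{a=b=0}.
\]
Finally the elementary fact that $(\alpha\partial_a+\beta\partial_b)^nP|_{a=b=0}=n!\,P(\alpha,\beta)$ for $P$ homogeneous of degree $n$ converts the right side into $(M^nf)|_{(a,b)=(\alpha,\beta)}=E^nf$; the substitution is legitimate because $a,b,\alpha,\beta$ are all scalars in $x,y$, so substituting after forming $M^n$ reproduces $E^n$ exactly. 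Summing over $n$ yields $\sum_{k\ge0}E^kf$.

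For \eqref{identity2}, I would read the left side as the operator $e^{-V}m^{-1}e^{V}m$ and prove it equals $(1-E)^{-1}$. The crux is the single commutator
\[
[V,m]=-[V,y]=-\hbar\big(\tilde E_{\tp\bp}\partial_x+\tilde E_{\psi\psi}\partial_y\big)=\alpha\partial_x+\beta\partial_y=mE=:W,
\]
where the third equality uses the relations above; note $W$ is a \emph{constant-coefficient} first-order operator. Since constant-coefficient differential operators all commute, $[V,W]=0$, so the Hadamard expansion truncates:
\[
e^{-V}m\,e^{V}=m-[V,m]=m-W=m(1-E).
\]
Inverting yields $e^{-V}m^{-1}e^{V}=(1-E)^{-1}m^{-1}$, hence $e^{-V}m^{-1}e^{V}m=(1-E)^{-1}=\sum_{k\ge0}E^k$, which is \eqref{identity2}.

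The step I expect to require the most care is the degree bookkeeping in \eqref{identity1}, specifically the verification that the $\partial_y$'s buried in $M$ (which act on the coefficients $\frac1{1-y}$) are reproduced faithfully when one substitutes $(a,b)\to(\alpha,\beta)$ only \emph{after} forming $M^n$; this is exactly what makes $(M^nf)|_{(a,b)=(\alpha,\beta)}=E^nf$ and relies on $a,b,\alpha,\beta$ being scalars in $x,y$. The remaining manipulations are formal, with the genuine content of \eqref{identity2} concentrated in the observation that the propagator relations $\tilde E_{1,\tp\bp}=-\tilde E_{\tp\bp}$ and $\tilde E_{1\psi^2}=-\tilde E_{\psi\psi}$ render $W=[V,m]$ constant-coefficient, forcing the Hadamard series to stop after one term.
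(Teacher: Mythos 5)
Your proof is correct and follows essentially the same route as the paper: identity \eqref{identity1} by matching powers of $\tilde V^E$ against the $(a,b,c)$-degree of the expanded exponential, and identity \eqref{identity2} by conjugating $1-y$ with $e^{\hbar\tilde V^\bB}$, observing that the adjoint series truncates after the first commutator, and invoking the relations $\tilde E_{\tp\bp}+\tilde E_{1,\tp\bp}=0$ and $\tilde E_{\psi\psi}+\tilde E_{1\psi^2}=0$. Your write-up is somewhat more explicit than the paper's (deriving those propagator relations from \eqref{mpropagatorsA} and justifying the truncation via the constant-coefficient nature of $[V,m]$), but the underlying argument is the same.
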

\begin{proof}
For the first identity, we have that the LHS of \eqref{identity1}
\begin{align*}
&{\footnotesize
\begin{aligned}
 = &\, \sum_{n}  \frac{\hbar^n}{(n!)^2} \Big({\tilde E_{1\!, \tp\bp}}\frac{\partial^2}{\partial a\partial c}+{\tilde E_{1 \psi^2}}\frac{\partial^2}{\partial b\partial c}\Big)^n \Big(\frac{c}{1-y}(a\frac{\partial}{\partial x}+b\frac{\partial}{\partial y})\Big)^n  f(x,y)  \Big|_{a,b,c=0}
 \\
 = &\, \sum_{n}  \frac{\hbar^n}{ n! } \Big({\tilde E_{1\!, \tp\bp}}\frac{\partial}{\partial a }+{\tilde E_{1 \psi^2}}\frac{\partial}{\partial b }\Big)^n \Big(\frac{1}{1-y}(a\frac{\partial}{\partial x}+b\frac{\partial}{\partial y})\Big)^n  f(x,y)  \Big|_{a,b,c=0}
 \\
 = &\, \sum_{n}    \Big(\frac{\hbar^n}{1-y}(\tilde E_{1\!, \tp\bp}\frac{\partial}{\partial x}+\tilde E_{1 \psi^2}\frac{\partial}{\partial y})\Big)^n  f(x,y) .
 \end{aligned}}
\end{align*}
Here in the second equality we have used the following: when expanding the differential operators as power series, 
the contribution is non-zero only if $\tilde V^E(\partial_\bt,\partial_\bt)$ and ${\frac{c}{1-y}(a\frac{\partial}{\partial x}+b\frac{\partial}{\partial y})} $ appear in the form of the same powers.

For the second identity, by using $\tilde E_{\tp\bp}+\tilde E_{1\!, \tp\bp} = 0$,
${\tilde E_{\psi \psi}+\tilde E_{1 \psi^2}}= 0 $, and 
\begin{multline*}
e^{ -\hbar\, \tilde V^\bB(\partial_\bt,\partial_\bt) }  (1-y) e^{ \hbar\, \tilde V^\bB(\partial_\bt,\partial_\bt) }
=\, e^{\mathrm{ad}_{\hbar\, \tilde V^\bB(\partial_\bt,\partial_\bt)}} (1-y) \\
=\, (1-y) -[\hbar\, \tilde V^\bB(\partial_\bt,\partial_\bt),(1-y)] = (1-y) + \hbar\,{\small \begin{matrix} { ({\tilde E_{\tp\bp}} \frac{\partial}{\partial x}+{\tilde E_{\bp\bp}}\frac{\partial}{\partial y} )}  \end{matrix}},
\end{multline*}
we obtain $(1-y)^{-1} e^{ -\hbar\, \tilde V^\bB(\partial_\bt,\partial_\bt) }  (1-y) e^{ \hbar\, \tilde V^\bB(\partial_\bt,\partial_\bt) }  =\, (1- E(\partial_\bt)) f(x,y)$, which is equivalent to \eqref{identity2}.
\end{proof}

By the above two identities, we obtain the following key Lemma.
\begin{lemma}\label{keylemma1} For any $f(x,y)$ we have
\beq \label{keyidentity}
 (1-y) e^{ \hbar\, \tilde V^\bB(\partial_\bt,\partial_\bt)+\hbar\, \tilde V^E(\partial_\bt,\partial_\bt)}   e^{\frac{c}{1-y}(a\frac{\partial}{\partial x}+b\frac{\partial}{\partial y})}  {\small\text{$\frac{ f(x,y)}{1-y}$}}  \Big|_{a,b,c=0}\!\!
   =   e^{ \hbar\, \tilde V^\bB(\partial_\bt,\partial_\bt)} f(x,y).  \eeq
\end{lemma}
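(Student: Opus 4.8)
The plan is to prove this purely as an operator identity acting on formal power series in $x,y$ (with coefficients formal in $\hbar$), feeding the two identities \eqref{identity1} and \eqref{identity2} into one another so that the outer exponentials telescope. The first thing I would record is that the operators $\tilde V^\bB(\partial_\bt,\partial_\bt)$ and $\tilde V^E(\partial_\bt,\partial_\bt)$ act on disjoint sets of variables: $\tilde V^\bB$ involves only $\partial_x,\partial_y$, whereas $\tilde V^E$ involves only the pairs $\partial_a\partial_c$ and $\partial_b\partial_c$. Hence they commute and
$$
e^{\hbar\,\tilde V^\bB(\partial_\bt,\partial_\bt)+\hbar\,\tilde V^E(\partial_\bt,\partial_\bt)} = e^{\hbar\,\tilde V^\bB(\partial_\bt,\partial_\bt)}\,e^{\hbar\,\tilde V^E(\partial_\bt,\partial_\bt)},
$$
which lets me peel off the $\tilde V^E$ factor and treat it separately.

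Next I would apply \eqref{identity1}, but with the input function $f(x,y)$ replaced by $f(x,y)/(1-y)$. This converts the combination $e^{\hbar\,\tilde V^E}e^{\frac{c}{1-y}(a\partial_x+b\partial_y)}$, evaluated at $a=b=c=0$, into the geometric-type series $\sum_{k\ge 0}E(\partial_\bt)^k$ applied to $f/(1-y)$. Thus the left-hand side of \eqref{keyidentity} becomes
$$
(1-y)\,e^{\hbar\,\tilde V^\bB(\partial_\bt,\partial_\bt)}\,\sum_{k\ge 0}E(\partial_\bt)^k\,\frac{f(x,y)}{1-y}.
$$
Then I would invoke \eqref{identity2}, read as the operator equality $\sum_{k\ge0}E(\partial_\bt)^k = e^{-\hbar\,\tilde V^\bB}(1-y)^{-1}e^{\hbar\,\tilde V^\bB}(1-y)$, to rewrite $\sum_k E(\partial_\bt)^k\,\frac{f}{1-y}$ as $e^{-\hbar\,\tilde V^\bB}(1-y)^{-1}e^{\hbar\,\tilde V^\bB}f$, using that the rightmost factor $(1-y)$ multiplied against $\frac{f}{1-y}$ simply returns $f$.

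The final step is the telescoping cancellation: substituting this back, the two middle exponentials $e^{\hbar\,\tilde V^\bB}e^{-\hbar\,\tilde V^\bB}$ collapse to the identity and the outer multiplication factors $(1-y)$ and $(1-y)^{-1}$ cancel, leaving exactly $e^{\hbar\,\tilde V^\bB(\partial_\bt,\partial_\bt)}f(x,y)$, the right-hand side of \eqref{keyidentity}. The only point requiring genuine care — which I regard as the main (though modest) obstacle — is the bookkeeping of operator orderings: the factor $(1-y)$ is a multiplication operator that commutes neither with $\tilde V^\bB$ nor with the derivatives inside $E(\partial_\bt)$, so the conjugation structure in \eqref{identity2} (with $(1-y)$ placed on the far right) must be aligned precisely with the $(1-y)^{-1}$ produced by substituting $f/(1-y)$ into \eqref{identity1}. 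Once the two identities are matched in this order, the remainder is a formal manipulation of power series in $\hbar$ with no convergence issue, so no further analytic input is needed.
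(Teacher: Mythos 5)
Your proof is correct and follows exactly the paper's route: commute $\tilde V^\bB$ with $\tilde V^E$, apply \eqref{identity1} to $f/(1-y)$, then use \eqref{identity2} to telescope the exponentials — the paper's own proof is just a compressed version of this same computation. No gaps; the ordering of the $(1-y)$ factors is handled correctly.
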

\begin{proof}
Since $\tilde V^\bB$ commutes with $\tilde V^E$, \eqref{identity1} and  \eqref{identity2} imply
\begin{align*}
\text{LHS} = \,&  (1-y)\, e^{\hbar\, \tilde V^\bB(\partial_\bt,\partial_\bt) } {\textstyle \sum_{k\geq 0}} E(\partial_\bt)^k   (1-y)^{-1} f(x,y)=\text{RHS}.
\end{align*}   
This proves the lemma.
\end{proof}

Now we finish the last step of the proof of Theorem \ref{thm3}.  By setting  $f(x,y) =  e^{\tilde P^\bB(\hbar, x,y)}$
in \eqref{keyidentity} and  by  using   Proposition \ref{tPAB},   we have
$$
 e^{\hbar\, \tilde V^\bB(\partial_\bt,\partial_\bt)+\hbar\, \tilde V^E(\partial_\bt,\partial_\bt)}   e^{\frac{c}{1-y}(a\frac{\partial}{\partial x}+b\frac{\partial}{\partial y})} e^{  \tilde P^\bA(\hbar,x,y)}  \Big|_{a,b,c=0}
   =   (1-y)^{-1} \,  e^{ \hbar\, \tilde V^\bB(\partial_\bt,\partial_\bt)} e^{\tilde P^\bB(\hbar,x,y)}.
$$
Then by Lemma \ref{stringflow}, the identity becomes
$$
 e^{\hbar\, \tilde V^\bB(\partial_\bt,\partial_\bt)+\hbar\, \tilde V^E(\partial_\bt,\partial_\bt)}   e^{\tilde P^\bA(\hbar, x,y,a,b,c) }  \big|_{a,b,c=0}
   =   (1-y)^{-1} \,  e^{ \hbar\, \tilde V^\bB(\partial_\bt,\partial_\bt)} e^{\tilde P^\bB(\hbar,x,y)}.
$$
Together with Proposition \ref{operatorform} we complete the proof.


\section{Reduction of generators, Yamaguchi-Yau's equations and examples}

 The modified propagators \eqref{mpropagators} and \eqref{mpropagatorsA} 
were introduced to remove the $(1,\varphi_2)$ edges in the $\nmsp$ rule  in order to prove Theorem \ref{thm3}. As a by-product,  we find that four specific modified propagators  give exactly Yamaguchi-Yau's generators, which   generate a subring  containing the normalized quintic potentials $P_{g>1}$.  

\begin{theorem}\label{YYeqn2}
We consider the following modified propagators as generators   \footnote{
Our generator $\VV_k$  is   related with the $v_i$ defined in  \cite{YY} as follows: 
$
v_1 = 
 - \VV_1, \ v_2 = 
- \VV_2,\ 
 v_3 =
 \VV_3 - X \,\VV_2.
$
In a sense, we give a geometric explanation
for Yamaguchi-Yau's generators $v_i$: they are edge contributions (propagators) of the modified Feynmann rule introduced in \S \ref{BmodelE}.
}
\beq \label{V123} \small
\begin{aligned}
& \VV_1 := \tilde E^{\vec 0}_{\tp\tp}=A+2B,\quad \VV_2 =  
 \tilde E^{\vec 0}_{\tp\bp}= -B_2+ B ( A+2 B) ,\quad\\
& \VV_3 =
  \tilde E^{\vec 0}_{\bp\bp}  = -B_3-(B+X)\, B_2  +(A+2B) \,B^2-\frac{2}{5} X\, B ,
\end{aligned} \eeq
and we introduce the subring which  is closed under the differential operator $D$:
$$
\tilde  \sR:= \mathbb Q[\VV_1,\VV_2,\VV_3,X] \subset \sR. 
$$
Then for $2g-2+m+n>0$ , the $\tilde P_{g,m,n} $ defined in \eqref{tiPgmn} lie in
$\sR$.
In particular, we have the reduction of generators which was originally conjectured in \cite{YY}: 
\beq  \label{YYHAE2a}
P_{g}  \in \tilde \sR  \quad  \text{ for  }  \  g>1
\eeq
 \end{theorem}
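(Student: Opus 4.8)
The plan is to derive both statements from the modified Feynman rule of \S\ref{BmodelE}, using decisively that at the gauge $\cG=\vec 0$ the modified propagators are exactly the generators $\VV_i$. As a preliminary I would invert the three relations in \eqref{V123}: they give $A=\VV_1-2B$, $B_2=B\,\VV_1-\VV_2$ and $B_3=-\VV_3-(B+X)B_2+\VV_1 B^2-\tfrac25 X B$, so that
\[
\sR=\QQ[A,B,B_2,B_3,X]=\tilde\sR[B],
\]
a polynomial ring in the single transcendental generator $B=B_1$ over $\tilde\sR$. Thus $P_g\in\tilde\sR$ is precisely the assertion that $P_g$, which a priori lies in $\sR$, is independent of $B$.

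For the first claim I would expand \eqref{tiPgmn} multilinearly as
\[
\tilde P_{g,m,n}=\sum_{k=0}^{m}\binom{m}{k}(-E_{\bp})^{\,m-k}\,P_{g,k,n+m-k},
\]
with coefficients $(-E_{\bp})^{m-k}\in\sR$ since $E_{\bp}=B+c_{1a}\in\sR$. By \eqref{Pgmn} each $P_{g,k,n+m-k}=(2g+m+n-3)_{\,n+m-k}\,P_{g,k}$ lies in $\sR$ whenever $2g-2+k>0$, by the polynomial structure property $P_{g,m}\in\sR$ of \cite{NMSP2}; the exceptional index $(g,k)=(1,0)$ contributes a constant by \eqref{Pgmn}, while the finitely many genus-zero terms with $k\le 2$ reduce to $\sR$ via the explicit genus-zero mirror formula and the string and dilaton equations. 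Hence $\tilde P_{g,m,n}\in\sR$ for every $2g-2+m+n>0$.

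For the reduction of generators I would invert the modified rule \eqref{modifiedR}. Writing $\tilde R^{\bB}=R^{\bB}(\E^{\bB})^{-1}$, the transformation $\E^{\bB}$ in \eqref{EEB} is constructed to cancel precisely the two $E_{\bp}$-entries of $R^{\bB}$; consequently $\tilde R^{\bB}$ has edge-propagators $\tilde E^{\vec 0}_{\tp\tp}=\VV_1,\ \tilde E^{\vec 0}_{\tp\bp}=\VV_2,\ \tilde E^{\vec 0}_{\bp\bp}=\VV_3$ (by \eqref{mpropagators} and \eqref{V123}) and $2\times 2$ lower-right block equal to the identity. Therefore the quantization \eqref{quantizRB} attached to $\tilde R^{\bB}$ involves no change of variables and collapses to the single second-order operator $\tilde V^{\bB}$ of Proposition \ref{operatorform}, whose coefficients are the $\VV_i$. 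Inverting gives
\[
e^{\tilde P^{\bB}(\hbar,x,y)}=e^{-\hbar\,\tilde V^{\bB}}\,e^{f^{\bB}(\hbar,x,y)} .
\]
Since $f^{\bB}_{g,m,n}\in\QQ[X]_{3g-3+m}\subset\tilde\sR$ by Theorem \ref{thm1}, and $\tilde V^{\bB}$ has coefficients in $\tilde\sR$, every coefficient $\tilde P_{g,m,n}$ of the right-hand side lies in $\tilde\sR$; specializing $m=n=0$ yields $P_g=\tilde P_{g,0,0}\in\tilde\sR$ for $g>1$.

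The crux, and the conceptual content of the Yamaguchi--Yau reduction, is the verification that the modified quantization is completely free of $B$: that the lower-right block of $\tilde R^{\bB}$ is the identity and that $\tilde E^{\vec 0}_{\tp\tp},\tilde E^{\vec 0}_{\tp\bp},\tilde E^{\vec 0}_{\bp\bp}$ are genuinely polynomials in $\VV_1,\VV_2,\VV_3,X$ rather than merely elements of $\sR$. I would settle both by the explicit identities recorded in \eqref{mpropagators} together with a direct block multiplication $R^{\bB}(\E^{\bB})^{-1}$ exhibiting the cancellation of the $E_{\bp}$-entries. The mechanism is transparent: the single extra generator $B$ enters the \emph{un}modified rule only through the $E_{\bp}$ leg-correction and the $E_{\bp}$ change-of-variables entry, and the modification removes exactly these, and this disappearance is what forces $P_g$ into the smaller ring $\tilde\sR$.
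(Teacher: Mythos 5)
Your proposal is correct in substance and rests on the same two pillars as the paper's proof — the modified Feynman rule of \S\ref{BmodelE} with edge weights exactly $\VV_1,\VV_2,\VV_3$, and the polynomiality $f^{\bB,\vec 0}_{g,m,n}\in\QQ[X]_{3g-3+m}$ from Theorem \ref{thm1} — but it packages the key step differently. The paper argues by induction on $(g,m)$: it isolates the leading one-vertex graph in the modified graph sum (whose contribution is $\tilde P_{g,m}$), notes that all other graphs have vertices with $(g_v,n_v)<(g,m)$ and hence contribute in $\tilde\sR$ by the induction hypothesis plus the dilaton equation $\tilde P_{g,m,n}=(2g-3+m+n)\tilde P_{g,m,n-1}$, and solves for $\tilde P_{g,m}$. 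You instead invert the quantization wholesale, $e^{\tilde P^{\bB}}=e^{-\hbar\tilde V^{\bB}}e^{f^{\bB}}$, and read off that every coefficient of the logarithm is a finite polynomial in the $f^{\bB}_{g',m',n'}$ and the $\VV_i$, hence lies in $\tilde\sR$. These are the same argument in two guises (the operator inversion is exactly how the paper proceeds in the proof of Theorem \ref{YYeqn12} via \eqref{Ronf}), but your version is arguably cleaner: it avoids the explicit induction, delivers $\tilde P_{g,m,n}\in\tilde\sR$ for all $(g,m,n)$ at once, and thereby makes your separate first paragraph (membership in $\sR$ via the binomial expansion in $E_{\bp}$, whose handling of the genus-zero, low-valence terms is the vaguest part of your write-up) redundant. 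Your observation that $\sR=\tilde\sR[B]$, so that the reduction of generators is literally $B$-independence, is a nice gloss the paper does not state. For completeness you should also say why only finitely many graphs contribute to a fixed coefficient (stability $m'+n'\ge 3$ of the genus-zero vertices bounds the number of vertices in terms of $g,m,n$).

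One piece of the theorem you do not address: the assertion that $\tilde\sR$ is closed under $D$. The paper verifies this first, by the explicit formulas $D\VV_1=-X(\VV_1-\tfrac{2}{5})-\VV_1^2+2\VV_2$, $D\VV_2=-X\VV_2-\VV_1\VV_2+\VV_3$, $D\VV_3=\tfrac{24}{625}X-X\VV_3-\VV_2^2$, which follow from \eqref{YYrelation}. It is not needed for the membership statements themselves, but it is part of the theorem and is used downstream (e.g.\ the $D^kP_2$ terms in the genus-three formula), so it should be included; it is a routine computation.
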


\begin{remark}
Notice that
\begin{align*}
& \tilde E_{\tp\tp}^\cG = \VV_1+ c_{1},\quad \tilde E_{\tp\bp}^\cG = \VV_2+	c_2,\quad
 \tilde E_{\bp\bp}^\cG =\VV_3+	c_3 .
\end{align*}
Hence the subring $\tilde \sR$ is also independent of the choice of gauge.
\end{remark}

\begin{proof}[Proof of Theorem \ref{YYeqn2}]
First, we prove $\tilde\sR$ is closed under $D$ \footnote{ This follows from a direct computation by using the relations \eqref{YYrelation}, which is proved in \cite{YY}.
See also \eqref{EQURA} which gives equivalent relations.}
$$
\text{ \small $D \VV_1 =-X\, \big(\VV_1-\frac{2}{5}\big)-\VV_1^2+2 \VV_2,\quad
D \VV_2 = -X \, \VV_2-\VV_1 \VV_2+\VV_3,\quad
D \VV_3 = \frac{24}{625} X-X \,\VV_3-\VV_2^2 $}.
$$
Next by using the dilaton equation,   $\tilde P_{g,m,n} = (2g-3+m+n)\tilde P_{g,m,n-1} $, we see
\beq\label{mdilaton}
\tilde P_{g,m} \in \tilde \sR\quad  \Rightarrow  \quad \tilde P_{g,m,n} \in \tilde \sR.\eeq
Now we prove $\tilde P_{g,m} \in \tilde  \sR$ by induction. Initially we have 
$$ \textstyle
\tilde P_{1,0,1} =  {\chi}/{24} -1  \and
\tilde P_{0,3} = 1 \quad \in  \quad \tilde \sR .
$$Assume $\tilde P_{h,l} \in \tilde  \sR$ for $(h,l)<(g,m)$. By using the modified Feynman rule (see \S \ref{BmodelE}), for $2g-2+m>0$,   we have $f_{g,m}^{B,\vec{0}} \in \mathbb Q[X]_{3g-3+m}$ is equal to the sum over contributions of  stable graphs  $\Gamma \in G_{g,m}$.

Except for the ``leading graph" (which has a single genus $g$ vertex with $m$-legs), the vertices in the other graphs all satisfy $(g_v,n_v)<(g,m)$. By induction assumptions and \eqref{mdilaton}, these vertices contributions $\tilde P_{g_v,m_v,n_v} $ all lie in the ring $\tilde\sR$. Together with that the edge contributions $\VV_k \in \tilde \sR$ for $k=1,2,3$, we deduce  $\tilde P_{g,m} \in \sR$ and  finish the induction.
\end{proof}
 
\begin{theorem}\label{YYeqn12}
The Yamaguchi-Yau equations hold:
\begin{align}  \label{YYHAE1}
&\qquad\qquad\qquad-\partial_{A} P_{g} = {\small \text{$  \frac{1}{2}$}} P_{g-1,2}  + {\small \text{$ \frac{1}{2} \sum_{g_1+g_2=g}$}} P_{g_1,1}P_{g_2,1}  ,\\
& \Big(-2 \partial_{A}+\partial_{B} + (A+2B)\partial_{B_2} + \big( (B-X)(A+2B) -B_2-{\textstyle\frac{2}{5}}X\big)\partial_{B_3}  \Big) P_{g} = 0    \label{YYHAE2}.
\end{align}
 Indeed, 
the second equation \eqref{YYHAE2}
 is equivalent to the reduction of generators \eqref{YYHAE2a}. 
\end{theorem}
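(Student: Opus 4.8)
The plan is to handle the two equations by quite different means: \eqref{YYHAE2} is a purely algebraic restatement of the reduction of generators (Theorem~\ref{YYeqn2}), whereas \eqref{YYHAE1} is the genuine holomorphic anomaly equation and must be extracted from the quantized form of the BCOV Feynman sum. For the second equation I would introduce the first-order operator
$$
\mathcal D := -2\,\partial_{A}+\partial_{B}+(A+2B)\,\partial_{B_2}+\big((B-X)(A+2B)-B_2-{\textstyle\frac25}X\big)\partial_{B_3}
$$
which is precisely the operator acting on $P_g$ in \eqref{YYHAE2}, and verify by direct substitution into \eqref{V123} that $\mathcal D\VV_1=\mathcal D\VV_2=\mathcal D\VV_3=\mathcal D X=0$ while $\mathcal D B=1$. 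Since $(A,B,B_2,B_3,X)\mapsto(\VV_1,\VV_2,\VV_3,X,B)$ is a triangular, hence invertible, polynomial substitution, the set $\{\VV_1,\VV_2,\VV_3,X,B\}$ is an alternative coordinate system on $\sR$, and the computation shows that in these coordinates $\mathcal D=\partial/\partial B$. Therefore $\mathcal D P_g=0$ holds exactly when $P_g$ is independent of $B$, i.e. when $P_g\in\QQ[\VV_1,\VV_2,\VV_3,X]=\tilde\sR$; this is \eqref{YYHAE2a}, valid for $g>1$ by Theorem~\ref{YYeqn2}. This proves \eqref{YYHAE2} and exhibits its claimed equivalence with the reduction of generators.

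For the first equation I would use the polynomiality $f^{\bB}_{g,m,n}\in\QQ[X]$ of Theorem~\ref{thm1}, which, once the five generators are treated as independent variables, gives $\partial_A f^{\bB}=0$. The structural point is that among the propagators \eqref{bcovgens} the variable $A$ occurs only in $E_{\tp\tp}$, with $\partial_A E_{\tp\tp}=1$, so that $\partial_A$ acts on the graph sum solely by marking one $\tp\otimes\tp$ edge. Writing the sum in quantized form $e^{f^{\bB}}=e^{\hbar V^{\bB}}\Psi$ with $\Psi=\exp P^{\bB}(\hbar,D^{-1}\mathbf x)$ as in \eqref{quantizRB}, differentiating in $A$, using $\partial_A f^{\bB}=0$, and exploiting that $\partial_A V^{\bB}$ commutes with $V^{\bB}$, I obtain the clean relation $\partial_A\Psi=-\hbar\,(\partial_A V^{\bB})\,\Psi$. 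Passing to $\ln\Psi$ converts the right-hand side, by the standard connected-graph identity, into a genus-reduction term and a splitting term; reading off the coefficient of $\hbar^{g-1}$ at $\mathbf x=0$ then yields $-\partial_A P_g=\tfrac12 P_{g-1,2}+\tfrac12\sum_{g_1+g_2=g}P_{g_1,1}P_{g_2,1}$, which is \eqref{YYHAE1}.

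The hard part will be the final correlator bookkeeping in this last step. The dilaton shift $D^{-1}\mathbf x$ turns $P^{\bB}$ into the modified potential $\tilde P^{\bB}$ of \eqref{tiPB}, so the relation produced by the quantization is naturally expressed through the modified correlators $\tilde P_{g,m,n}$; one must then invoke the string and dilaton equations of Lemmas~\ref{SEDEfortQ} and \ref{stringflow}, together with the explicit shape of $\partial_A V^{\bB}$ read from \eqref{VB}, to reduce these to the unmodified GW correlators $P_{g-1,2}$ and $P_{g_i,1}$ with exactly the coefficient $\tfrac12$. Confirming that the marked edge deposits its two half-edges onto $\tp$-insertions (rather than onto a mixed $\tp,\bp$ pair) and that the exceptional $(g,m)=(1,0)$ and unstable contributions cancel is the delicate point; by contrast, once $\mathcal D$ has been identified with $\partial/\partial B$, the second equation is immediate.
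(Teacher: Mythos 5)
Your proposal is correct in substance, and its two halves relate to the paper's proof differently. For \eqref{YYHAE2} you take a genuinely different route: the paper derives it from the quantization identity by asserting that the operator $\mathcal D$ annihilates $V^\bB$, whereas you change coordinates to $(\VV_1,\VV_2,\VV_3,X,B)$ and identify $\mathcal D$ with $\partial/\partial B$. Your computations $\mathcal D\VV_i=\mathcal D X=0$ and $\mathcal D B=1$ do check out against \eqref{V123}, and since the substitution is a triangular automorphism of $\sR$, this makes $\mathcal D P_g=0$ literally synonymous with $P_g\in\tilde\sR$, i.e.\ with \eqref{YYHAE2a}. Your route buys two things: it is the argument that actually delivers the ``equivalence'' clause of the theorem statement, which the paper's proof does not address explicitly, and it avoids an imprecision in the paper, namely that $\mathcal D$ does \emph{not} kill the unmodified $V^\bB$ (one computes $\mathcal D E_{\tp\bp}=-E_{\tp\tp}\neq 0$ from \eqref{bcovgens}); it only kills the modified $\tilde V^\bB$ built from \eqref{mpropagators}, which is exactly the content of your identities $\mathcal D\VV_i=0$.

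For \eqref{YYHAE1} you follow the same route as the paper (differentiate the inverted quantization identity in $A$, use $\partial_A f^\bB=0$ from Theorem \ref{thm1} and $[\partial_A V^\bB,V^\bB]=0$), and the only open point is the bookkeeping you flag at the end. It does close, and the cleanest way is to run the argument in the modified picture of \S\ref{BmodelE}, where $\tilde E_\bp=0$ removes the $D^{-1}$ shift entirely: from \eqref{mpropagators} one gets $\partial_A\tilde E_{\tp\tp}=1$, $\partial_A\tilde E_{\tp\bp}=E_\bp$, $\partial_A\tilde E_{\bp\bp}=E_\bp^2$, hence $\partial_A\tilde V^\bB=\tfrac12(\partial_x+E_\bp\partial_y)^2$, and the resulting combinations telescope back to the unmodified correlators purely from the multilinearity in \eqref{tiPgmn}: $\tilde P_{g,1,0}+E_\bp\tilde P_{g,0,1}=P_{g,1}$ and $\tilde P_{g,2,0}+2E_\bp\tilde P_{g,1,1}+E_\bp^2\tilde P_{g,0,2}=P_{g,2}$. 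So your worry about the marked edge landing on a mixed $\tp,\bp$ pair resolves itself: it does, but those mixed contributions are exactly what the telescoping absorbs, with no further use of string or dilaton equations. The exceptional $(g,m)=(1,0)$ vertex causes no trouble because it carries no $\tp$-leg to modify, and the $g=0$ cross terms are absent since $P_{0,1},P_{0,2}$ do not occur in \eqref{PBA}. The paper's own write-up works directly with \eqref{Ronf}, which elides the shift; your version, completed as above, is the more careful one and lands on \eqref{YYHAE1} with the stated coefficients.
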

\begin{proof}
In the end, we prove \eqref{YYHAE1}.
By using Theorem \ref{quantizationR}, the definition of $V^\bB$ \eqref{VB} and the definition of  quantization action \eqref{quantizRB}  we have
\beq \label{Ronf}
\exp {  P^\bB(\hbar,x,y)}= e^{-{\hbar}\,  V^\bB(\partial_{\mathbf x}, \partial_{\mathbf x}) }  \exp f^\bB(\hbar,x,y) .
\eeq
Note both sides lie in the ring $ \sR[[\hbar, \hbar^{-1}, x,y]]$. By applying  the partial derivative $\partial \in \sspan\{ \partial_A, \partial_B, \partial_{B_2}, \partial_{B_3}\}$ on both sides of \eqref{Ronf}, we see
\beq  \label{PDEforM}
- \partial  P^\bB(\hbar,x,y) \exp { P^\bB(\hbar,x,y)}={\hbar}\,\partial   V^B(\partial_{\mathbf x}, \partial_{\mathbf x})  e^{-{\hbar}\, V^B(\partial_{\mathbf x}, \partial_{\mathbf x}) }  \exp f^\bB(\hbar,x,y) ,
\eeq
where we have used $[\partial  V^B, V^B]=0$,  $\partial f^\bB=0$, and we recall
$$ \textstyle
V^\bB(\partial_\bt,\partial_\bt):=\,  \frac{1}{2} E_{\tp\tp} \frac{\partial^2}{\partial x^2}+{ E_{\tp\bp}}\frac{\partial^2}{\partial x\partial y}+\frac{1}{2}  E_{\psi\!\psi} \frac{\partial^2}{\partial y^2}$$
with $E_{**}$ defined in \eqref{bcovgens}.
We claim  \eqref{PDEforM}  will give us PDEs for $P_{g,m,m}$:  Let $\partial=\partial_A$ we have $\partial_{A}   V^B = \frac{1}{2} \partial_x^2$. Then \eqref{PDEforM} becomes the following PDE
\beq
-\partial_{A} P^\bB(\hbar,x,y)  = \frac{1}{2} \partial_x^2 P^\bB(\hbar,x,y)+ \frac{1}{2}  \Big(\partial_xP^\bB(\hbar,x,y) \Big)^2 .
\eeq
In particular by setting $x=y=0$, for $g>2$ the coefficient of $\hbar^{g-1}$ gives exactly \eqref{YYHAE1}. Let $\partial$ be the differential operator on the LHS of \eqref{YYHAE2}, we see it kills $V^\bB$. By using similar argument, 
we deduce \eqref{YYHAE2}. 
\end{proof}

The proof of Theorem \ref{YYeqn2} indeed gives another algorithm which computes the genus $g$ potential $P_g$ recursively from the lower genus potentials, by using the modified Feynman rule \eqref{modifiedR}.  The advantage of this algorithm is that only four generators/propagators (instead of five) are involved, expressing  $P_g$ in simpler terms. 

For any $g>1$, suppose the  master potential is given by 
$$ \textstyle
f^{\bA,\vec 0}_g =f^{\bB,\vec 0}_g =f_g(X) := \sum_{k=0}^{3g-3} f_{g,k} X^k,
$$
then one can solve the genus $g$ ``normalized" GW potential $P_g$ from the low genus by using (either $\nmsp$ or BCOV's, modified or original) graph sum formulae. 
\begin{example}
In terms of the generators \eqref{V123}, a maple program gives
\begin{align}
\ \text{\small $P_2$}= & \ \   \text{\footnotesize  ${\frac {
350\,{\VV_3}}{9}}+{\frac {25\,\VV_1\,\VV_2}{6}}+{\frac {5\,\VV_1^{3}}{24}}+{\frac {625\,\VV_2}{36}}+{\frac {25\,\VV_1^{2}}{24}}
+{\frac {25\,X\,
\VV_2}{36}}+\frac{X\,\VV_1^{2}}{6}+{\frac {13\,{X}^{2}\VV_1}{
288}}+{\frac {167\,X\,\VV_1}{720}}+{\frac {625\,\VV_1}{
288}}$}  \nonumber
\\
&\text{\footnotesize $
\quad +f_2(X), $} \quad \text{  with   \footnotesize  $f_2(X)=-\frac{1}{240}X^3-\frac{41}{3600}X^2+\frac{5759}{3600}X-\frac{25}{144}$;} \label{P2formula}
\\
\ \text{\small $P_3$}= & \ \  \text{\footnotesize $ {\frac {8225\VV_3^{2}}{27}}+{\frac {275\VV_1 \!\VV_2{
\VV_3}}{3}}+{\frac {29375\VV_2\VV_3}{108}}+{\frac {185 
\VV_1^{3}\VV_3}{24}}+{\frac {575\VV_1^{2}\VV_3}{24}}+{
\frac {29375\VV_1\VV_3}{864}}-{\frac {10450\VV_2^{3}
}{81}}-{\frac {3595\VV_1^{2}\VV_2^{2}}{72}}$} \nonumber\\
& \text{\footnotesize $ -{\frac {3575
\VV_1\VV_2^{2}}{54}}
+{\frac {14375\VV_2^{2}}{288}}-{
\frac {35\VV_1^{4}\VV_2}{3}}-{\frac {4075\VV_1^{3}{
\VV_2}}{144}}-{\frac {8125\VV_1^{2}\VV_2}{432}}+{\frac {
15625\VV_1 \!\VV_2}{1728}}-\frac{5 \VV_1^{6}}{4}-{\frac {25 
\VV_1^{5}}{6}}-{\frac {3125\VV_1^{4}}{576}} $}\nonumber\\
&\text{\footnotesize $-{\frac {15625
\VV_1^{3}}{5184}}+X \!\!\cdot \!\!\Big( {\frac {1175\VV_2\!{\VV_3}}{108}}+{\frac {39\VV_1^{2}{
\VV_3}}{8}}+{\frac {7849\VV_1\!{\VV_3}}{2160}}-{\frac {1397{
\VV_1}\!\VV_2^{2}}{54}}+{\frac {2773\VV_2^{2}}{2160}}-{
\frac {1687\VV_1^{3}\VV_2}{144}}-{\frac {16163\VV_1^{
2}\VV_2}{1080}} $}\nonumber\\
&\text{\footnotesize $-{\frac {21433\VV_1\VV_2}{8640}}-{\frac {
23\VV_1^{5}}{12}}-{\frac {3107\VV_1^{4}}{720}}-{\frac {
5893\VV_1^{3}}{1728}}-{\frac {82091\VV_1^{2}}{86400}} \Big)+
X^2 \!\! \cdot \! \Big(
{\frac {611\VV_1{\VV_3}}{864}}-{\frac {1603\VV_2^{2}}{
864}}-{\frac {1897\VV_1^{2}\VV_2}{432}} $}\nonumber\\
&\text{\footnotesize $-{\frac {4363\VV_1\VV_2}{2880}}-{\frac {731\VV_1^{4}}{576}}-{\frac {14609
\VV_1^{3}}{8640}}-{\frac {51473\VV_1^{2}}{86400}}\Big)-X^3 \!\! \cdot  \!\Big({\frac {325\VV_1{\VV_2}}{576}}+{\frac {2305{\VV_1}^{3}}{
5184}}+{\frac {4337{\VV_1}^{2}}{17280}}
\Big)
-D^2 P_2 \! \cdot  \! \frac{\VV_1}{2} $}\nonumber\\
&\text{\footnotesize $+ DP_2 \! \cdot \! \Big({\frac {19{\VV_2}}{3}}+\frac{\VV_1^{2}}{2}+{\frac {25\VV_1}{12
}}-{\frac {11X\,\VV_1}{12}} \Big) +
{\frac {47{\VV_3}}{3}}+\VV_1{\VV_2}+{\frac {25{\VV_2}}{6}}
+\frac{{X}^{2}\VV_1}{12}+X\! \left(\! \frac{13 {\VV_2}}{2}+\frac{\VV_1^{2}}{2}+{
\frac {19{\VV_1}}{12}} \!\right) $}\nonumber\\
&\text{\footnotesize $\quad+f_3(X), $} \quad \text{  with   \footnotesize  $f_3(X)={\textstyle \sum_{i=1}^6} f_{{3,i}}{X}^{i} +\frac{125}{36288} $}.  \label{P3formula}
\end{align}
Here  $f_{g,0} = (5)^{g-1} N_{g,0}$ is computed by using \eqref{fg0} and the ambiguity polynomial $f_2(X)$ is deduced from the lower degree GW invariants computed in Appendix \ref{lowdegreeGW}.

These formulae \eqref{P2formula}, \eqref{P3formula} 
match  the physicists' predictions  \cite{BCOV,YY} for the potential functions of the quintic $3$-folds up to the ``ambiguity" $\{f_{g=3,k}\}$.
\end{example}

\vspace{1cm}
\begin{appendix}

\addtocontents{toc}{\protect\setcounter{tocdepth}{1}}

\section{Low degree GW-invariants}  \label{lowdegreeGW}
 
Recall $N_{g,d}$ are the genus $g$ and degree $d$ GW-invariants of quintic threefolds. 
The degree zero invariants are computed in \cite{FP} :
\beq  \label{fg0}
  N_{g,0} = \frac{(-1)^{g} \cdot \chi \cdot |B_{2g}|\cdot |B_{2g-2}|}{ 2\cdot2g\cdot (2g-2)\cdot (2g-2)!}
\eeq

In this appendix, we will show

\begin{proposition} \label{GW}The low degree genus two GW-invariants are given by
$$N_{1,1}=\frac{2875}{12},\quad N_{2,1}=\frac{575}{48},\quad N_{2,2}=\frac{5125}{2},\and N_{2,3}=\frac{7930375}{6}.
$$
\end{proposition}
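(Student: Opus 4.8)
The plan is to derive all four invariants from the Gopakumar--Vafa (BPS) structure of the quintic, feeding in only the genus zero data produced by the mirror theorem. Writing $n^h_d$ for the genus $h$ BPS invariants, these are governed by the multiple cover formula
\[
\sum_{g\ge 0}\sum_{d\ge 1} N_{g,d}\,\lambda^{2g-2}\sQ^d \;=\; \sum_{h\ge 0}\sum_{d\ge 1} n^h_d \sum_{k\ge 1}\frac{1}{k}\Bigl(2\sin\tfrac{k\lambda}{2}\Bigr)^{2h-2}\sQ^{kd}.
\]
First I would recover the genus zero BPS numbers: using $N_{0,d}=\sum_{k\mid d}n^0_{d/k}/k^3$ together with the mirror values $N_{0,1}=2875$, $N_{0,2}=\tfrac{4876875}{8}$, $N_{0,3}=\tfrac{8564575000}{27}$, one solves recursively for
\[
n^0_1=2875,\qquad n^0_2=609250,\qquad n^0_3=317206375.
\]

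The second, geometric, step is to show that the only BPS states relevant in this range are the genus zero ones: I would prove $n^1_1=0$ and $n^h_d=0$ for all $h\ge 2$, $d\le 3$. This is a Castelnuovo-type statement --- an irreducible curve on $Q\subset\Pf$ of geometric genus $\ge 2$ must have degree $\ge 4$ --- so every stable map contributing to $N_{1,1}$ or to $N_{2,d}$ with $d\le 3$ factors (up to contracted tails) through one of the rigid rational curves, each carrying the local $(-1,-1)$ theory. Granting this, in the genus one slice only $h=0$ survives: expanding $\bigl(2\sin\tfrac{k\lambda}{2}\bigr)^{-2}=\tfrac{1}{k^2\lambda^2}+\tfrac{1}{12}+\tfrac{k^2\lambda^2}{240}+\cdots$ and reading off $[\lambda^0]$ gives $N_{1,1}=\tfrac1{12}n^0_1=\tfrac{2875}{12}$, in agreement with the Faber--Pandharipande Hodge integral $\tfrac{|B_2|}{2\cdot 0!}=\tfrac1{12}$ for the degree one contribution of a single line (each of the $2875$ lines of $Q$ being rigid with normal bundle $\sO(-1)\oplus\sO(-1)$).

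For genus two the same expansion, now read off at $[\lambda^2]$, kills the $h=1$ term (which contributes only to $\lambda^0$) and the $h\ge 3$ terms (lowest power $\lambda^{2h-2}$), while the $h=2$ term vanishes by the previous step; this leaves the master formula $N_{2,d}=\tfrac{1}{240}\sum_{k\mid d}k\,n^0_{d/k}$. Substituting the numbers above I would then obtain $N_{2,1}=\tfrac{2875}{240}=\tfrac{575}{48}$, $N_{2,2}=\tfrac{609250+2\cdot2875}{240}=\tfrac{5125}{2}$ and $N_{2,3}=\tfrac{317206375+3\cdot2875}{240}=\tfrac{7930375}{6}$, as claimed. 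The hard part will be making the second step rigorous without invoking the full (conjectural) Gopakumar--Vafa integrality: concretely, I would stratify $\M_{2,0}(Q,d)$ by the stable image curve, match each stratum's virtual contribution to the Bryan--Pandharipande and Faber--Pandharipande local $(-1,-1)$ Hodge integrals, and verify by an excess-intersection (or dimension) count that the strata whose image has geometric genus $\ge 2$ are empty for $d\le 3$.
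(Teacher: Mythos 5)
Your route is, at bottom, the same as the paper's: both reduce the computation to (i) the classification of low--degree curves on a general quintic (Katz, Johnsen--Kleiman: the rational curves of degree $\le 3$ are the $n^0_1=2875$, $n^0_2=609250$, $n^0_3=317206375$ smooth, rigid, mutually disjoint ones, extracted from $N_{0,d}$ by the genus zero multiple cover formula) and (ii) the Faber--Pandharipande/Pandharipande local $(-1,-1)$ multiple cover contributions, which at genus $2$ give exactly your $N_{2,d}=\frac{1}{240}\sum_{k\mid d}k\,n^0_{d/k}$ (the paper writes the same thing as $C_0(2,d)=d/240$). All four numerical values check out.

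The one step that would fail as written is the assertion that every stable map contributing to $N_{2,d}$, $d\le 3$, factors through one of the rigid rational curves. Your Castelnuovo-type bound only excludes image curves of geometric genus $\ge 2$; it says nothing about elliptic images, and those do occur: for each of the $609250$ conics $E'\subset Q$, the residual curve of the intersection of $Q$ with the plane spanned by $E'$ is a plane cubic, so a general quintic carries infinitely many candidate degree--$3$ classes supported on elliptic curves, and the corresponding strata of $\M_{2,0}(Q,3)$ (degree one onto the cubic, with a contracted genus one tail attached) are nonempty. In the BPS packaging this issue is invisible, since an $h=1$ state contributes only at order $\lambda^0$; but once you descend to your proposed stratification of the moduli space you must actually prove that these elliptic strata contribute zero to $N_{2,3}$, and the Castelnuovo statement does not do that. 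This is precisely the extra input the paper supplies: its lemma shows the elliptic image is rigid (because the residual conic is rigid and determines the plane), and it then invokes Pandharipande's computation that a rigid elliptic curve contributes nothing to the genus $\ge 2$ invariants (the statement $C_1(h,1)=0$). With that vanishing added to your second step, the argument closes and agrees with the paper's.
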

 
{We let $Q\sub \PP^4$ be a general} quintic threefold. For a smooth curve $E\sub Q$, we denote by $N_{E/Q}$
the normal bundle of $E$ in $Q$, and call $E$ rigid if $h^0(N_{E/Q})=0$. 

We let $f: C\to Q$ be a stable map from a genus
2 curve $C$ to $Q$ of degree $d\le 3$. We let $E=f(C)$ be the image curve.

\begin{lemma}
Let the notation be {as stated}. Then $E$ either is a smooth rigid rational curve or a smooth rigid elliptic curve.
\end{lemma}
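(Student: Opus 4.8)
The plan is to read off the image $E$ from the combinatorics of the stable map, reduce to an irreducible curve of degree at most $3$, and then feed in the general position of low-degree curves on a generic quintic. First I would write $C=\bigcup_i C_i$ as a union of irreducible components and separate the components contracted by $f$ from those on which $f$ is finite. For a non-contracted component set $d_i:=\deg\big(f^*\sO_Q(1)|_{C_i}\big)=a_i\cdot\deg f(C_i)$, where $a_i$ is the degree of the map $C_i\to f(C_i)$; then $\sum_i d_i=d\le 3$, so $E=\bigcup_i f(C_i)$ is a union of irreducible curves, each of degree $\le 3$ in $\Pf$. Since an irreducible curve of degree $1$, $2$, or $3$ in projective space has geometric genus $0$ (a line, a smooth conic, a twisted or singular cubic) or $1$ (a smooth plane cubic), every component of $E$ is rational or elliptic; in particular a genus-$2$ image is impossible, as a smooth genus-$2$ curve requires a linear system of degree $\ge 5$ to map birationally into $\Pf$.

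Second, I would prove that $E$ is irreducible. If two non-contracted components had distinct images, $E$ would be reducible, which forces two of these low-degree rational or elliptic curves on $Q$ to meet: connectedness of $C$ cannot be transmitted across contracted components, as those map to a single point. The key genericity input, drawn from the Clemens-type enumerative theory of the general quintic, is that its finitely many lines, conics, and rational cubics are pairwise disjoint, and that its sporadic degree-$3$ elliptic curves are disjoint from them as well. This rules out reducible images, so all non-contracted components carry the same image and $E$ is irreducible of some degree $d'\le 3$; moreover $\big(\sum_i a_i\big)\cdot d'=d\le 3$ leaves only the cases $d'=1$ with $\sum_i a_i\le 3$, or $d'\in\{2,3\}$ with a single component mapping birationally onto $E$.

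Third come smoothness and rigidity. The only irreducible singular curves of degree $\le 3$ are the nodal and cuspidal plane cubics, which are rational; using that on a general quintic every rational cubic is a smooth twisted cubic while the plane cubics are, generically, smooth elliptic curves, I would exclude singular images, so $E$ is a smooth rational or smooth elliptic curve. Rigidity then follows from adjunction on the Calabi--Yau threefold: the sequence $0\to T_E\to T_Q|_E\to N_{E/Q}\to 0$ together with $K_Q=\sO_Q$ yields $\deg N_{E/Q}=2g(E)-2$. For $g(E)=0$ this is $-2$, and the Clemens-conjecture computation that every line, conic, and twisted cubic on a general quintic has balanced normal bundle $\sO(-1)\oplus\sO(-1)$ gives $h^0(N_{E/Q})=0$. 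For $g(E)=1$ one has $\deg N_{E/Q}=0$, and I would show $h^0(N_{E/Q})=0$ by exhibiting the degree-$3$ elliptic curves on a general $Q$ as isolated points of the Hilbert scheme, matching the virtual dimension $0$ of genus-$1$ maps on a Calabi--Yau threefold.

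The hardest part will be the elliptic case, both the smoothness and the rigidity. Unlike the rational case, where the balanced normal bundle is supplied by the well-documented low-degree Clemens-conjecture results, controlling the plane cubics requires a parameter count over $\mathrm{Gr}(3,5)$: I would show that for a general quintic the planes $\PP^2$ with $Q|_{\PP^2}$ factoring as (cubic)$\cdot$(conic) are isolated, that the generic such cubic is smooth (a Bertini argument ruling out the more degenerate nodal factorizations), and that the resulting elliptic curve admits no deformation inside $Q$, so that $h^0(N_{E/Q})=0$. This genericity bookkeeping, ensuring that no singular or moving degree-$3$ curve can occur as the image of a genus-$2$ map, is the technical crux.
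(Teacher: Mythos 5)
Your overall skeleton matches the paper's: bound the degree of the image by $3$, reduce to an irreducible curve, quote the Katz and Johnsen--Kleiman genericity results for the rational case, and treat the degree-$3$ plane cubic separately. Two points of divergence are worth recording. First, your ``key genericity input'' that the degree-$3$ elliptic curves on a general quintic are disjoint from its lines, conics and rational cubics is false: a smooth plane cubic $E\subset Q$ arises exactly when $Q\cap L=E\cup E'$ for a plane $L$ with residual conic $E'$, and $E$ meets $E'$ in six points of $L$ by B\'ezout. Fortunately this claim is not needed — if the image were reducible its components would have total degree at most $3$, so an elliptic component (of degree $\ge 3$) can never coexist with another component, and only the pairwise disjointness of the rational curves of degree $\le 3$ enters. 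Second, and more substantively, you defer the crux (rigidity of the elliptic cubic) to a parameter count over $\mathrm{Gr}(3,5)$ plus a Bertini argument, which is feasible but unexecuted; the paper instead disposes of it in two lines by observing that the residual conic $E'$ is already known to be rigid by the cited results, that $E'$ spans and hence determines the plane $L$, and that an infinitesimal deformation of $E$ inside $Q$ would move $L$ and therefore $E'$. Your route buys nothing extra here and costs a nontrivial incidence computation; if you keep your framework, you should at least replace the ``matching the virtual dimension $0$'' remark, which is not an argument for actual rigidity, by the residual-curve trick or by completing the isolatedness count for the planes with a (cubic)$\cdot$(conic) factorization.
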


\begin{proof}
Because $\deg f\le 3$, the image curve $E$ has degree at most three. In case $E$ is a union of rational curves, by
\cite{Ka,JK}, $E$ is irreducible, smooth and rigid. 

Now suppose $E$ contains an elliptic curve. As elliptic curves in $\PP^4$ have
degree at least 3, {$E$ is irreducible and has degree three}. Thus $E$ must be an irreducible component of $Q\cap L$, the intersection of $Q$
with a plane $L\sub\PP^4$. This way, $Q\cap L=E\cup E'$, where $E'$ is rational and of degree 2. 
By the rigidity proved in
\cite{Ka,JK}, {there is no infinitesimal deformation of $E'$ in $Q$. As $E'$ determines $L$, 
there is no infinitesimal deformation of $E$ in $Q$, thus $E\sub Q$ is rigid.}
\end{proof}

We recall the following results from \cite{FP, Pand3}. We let $C_0(h,d)$ be the contribution to $N_{h,de}$ from a rigid degree $e$
smooth rational curve $E\sub Q$. Then for any $d\ge 1$,
$$
\sum_{h=0}^\infty  C_0(h,1)\, t^{2h}=\Bigl(\frac{\sin(t/2)}{t/2}\Bigr)^{-2}
\and C_0(h,d)=d^{2h-3}C_0(h,1) .
$$
In particular, for any $d\geq 1$,
$
C_0(1,1)=\frac{1}{12 d}$ and $C_0(2,d)=\frac{d}{240} $.

We let $C_1(h,1)$ be the contribution to {$N_{1+h,e}$} from a rigid degree $e$
smooth elliptic curve $E\sub Q$. Then 
$$C_1(h,1)=0.
$$

\begin{proof}[Proof of Proposition \ref{GW}] By multiple cover formula of $N_{0,d}$, and the
known $N_{0,d\le 3}$, we see that the general quintic $Q$ has exactly $n_1=2,875$, $n_2=609,250$
and $n_3=317,206,375$ many degree one, two and three rational curves, all rigid, smooth, and mutually
disjoint. Applying the proceeding arguments, we get
$$N_{2,1}=n_1C_0(2,1),\quad N_{2,2}=n_1C_0(2,2)+n_2 C_0(2,1),\quad
N_{2,3}=n_1 C_0(2,3)+n_3 C_0(2,1).
$$
 Plugging the numbers,
we get $N_{2,1}=\frac{575}{48}$,
$N_{2,2}=\frac{5125}{2}$, and $N_{2,3}=\frac{7930375}{6}$.
We obtain $N_{1,1}= \frac{2875}{12}$ for the same reason.
\end{proof}

\vspace{0.5cm}

\section{Original forms of Feynman rules in the paper of BCOV}
The original form of Feynman graphs in \cite{BCOV} took a slightly different shape of edges, with certain freedom of gauges. We present BCOV's original form, and the generalization with insertions in the original style  in this section, for the readers who are more familiar with the B-model theory. We also give $g=1$
and $2$ examples in the original forms. 

\subsection{Original statement of BCOV's Feynman rule}  \label{origBCOV}
In \cite{BCOV}  the authors considered all $g$ B-model topological partition function
$\mathcal F^Z_g(q, \bar q)$ 
for an arbitrary compact Calabi-Yau threefold $Z$.
Its definition uses path integral, and it is a non-holomorphic extension of the GW potential 
$F^{Y}_g(q)$ of the mirror Calabi-Yau threefold $Y$ of $Z$:
\begin{align}\label{limitBA} \lim_{\bar q \rightarrow 0} \mathcal F^Z_g(q, \bar q) =F^{Y}_g(q).
\end{align}

One of the primary result in \cite{BCOV} is that $\mathcal F^Z_g$ satisfies ``holomorphic anomaly equation"(HAE). For the case  of  the quintic threefold $Z$,  it is
\begin{align*} 
\partial_{\bar q}\mathcal F^Z_g(q, \bar q) = \frac{1}{2} C^{qq}_{\bar q}\Big( D_{q}^2 \mathcal F^Z_{g-1}(q, \bar q) +\sum_{g_1+g_2=g}D_{q}\mathcal F^Z_{g_1}(q, \bar q) D_{q}\mathcal F^Z_{g_2}(q, \bar q)  \Big),
\end{align*}
where $D_q$ is certain covariant derivative and $C^{qq}_{\bar q}$ is certain three point function 
{ (Yukawa coupling)} that can be calcuated by B side special geometry. 
 Using   integrations by parts, \cite[Sect.\,6]{BCOV} solves HAE and express its solutions $\cal F_g^Z$ via  
 Feynman rules. We state here the BCOV's Feynman rules for $F^{Y}_g$ in \eqref{limitBA}, with $Y$ being the quintic $3$-fold.

\medskip
\noindent {\bf BCOV's Feynman graph:} 
{\sl For any $g>1$, we consider the set $G_{g}^{\BCOV}$ of genus $g$ stable graphs with three types of edges: solid lines; half dotted half solid lines,
and dotted lines. For each graph $\Gamma$, we do the following:

{\bf Edge}: at each edge drawn as solid lines, half dotted  lines and dotted lines, we place one of the  opagators $(T^{\tp\tp},T^{\tp},T)$ defined in \eqref{BCOVgauge}  respectively;

{\bf Vertex}: at each vertex of genus $g$, with $m$ solid  half edges and $n$ dotted half edges,
we place $P_{g,m,n}$ (defined in \eqref{Pgmn}).

\smallskip
We define $\Cont_\Gamma$ to be the product of the edge and the vertex placements; and define
$$
f_g^{\BCOV}:= \sum_{\Gamma \in G_{g}^{\BCOV}}    \frac{1}{|\Aut (\Gamma)|}   \Cont_\Gamma.
$$

 \begin{conjecture}\label{bcov1}
For $g>1$, $f_g^\BCOV$ is a degree $3g-3$ polynomial in $X$.
\end{conjecture}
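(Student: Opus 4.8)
The plan is to recognize the original BCOV graph sum $f_g^\BCOV$ of Appendix \ref{origBCOV} as a special instance of the generalized $B$-model potential $f^{\bB,\cG}_{g,m,n}$ already controlled by Theorem \ref{thm1}, and then to invoke that theorem. First I would set up the dictionary between the two graph formalisms. In the original formalism each edge is one of three types (solid, half-dotted, dotted) carrying the propagators $T^{\tp\tp}$, $T^{\tp}$, $T$, and each genus $g_v$ vertex meeting $m_v$ solid and $n_v$ dotted half-edges carries $P_{g_v,m_v,n_v}$. By the special gauge \eqref{BCOVgauge}, these three propagators are exactly the coefficients $E^\cG_{\tp\tp}$, $E^\cG_{\tp\bp}$, $E^\cG_{\bp\bp}$ appearing in the edge bi-vector of $\Cont_\Gamma^{\bB,\cG}$. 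Thus a BCOV graph with a chosen edge-type assignment corresponds to a stable graph in $G_{g,0}$ together with a selection, at each edge, of one of the three tensor summands $\tp\otimes\tp$, $\tp\otimes\bp+\bp\otimes\tp$, $\bp\otimes\bp$; summing over all edge-type assignments on a fixed underlying stable graph reassembles the full edge bi-vector, and the composition rule then evaluates each vertex via $P_{g_v,m_v,n_v}$.

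Second, I would check that this identification is compatible with the automorphism weights $1/|\Aut(\Gamma)|$. The three drawn edge types are merely a visual encoding of which summand of the edge bi-vector is contracted, so the underlying stable graphs, and hence their automorphism groups, coincide. The only delicate point is the half-dotted-half-solid edge, whose symmetric form $\tp\otimes\bp+\bp\otimes\tp$ carries an intrinsic factor of two; this is matched by the two ways of labelling the two half-edges of a mixed edge, precisely the orbit-counting absorbed by the BCOV factor $1/|\Aut(\Gamma)|$. Granting this bookkeeping, one obtains the identity $f_g^\BCOV = f^{\bB,\cG}_{g,0,0}$ with $\cG$ the special gauge \eqref{BCOVgauge}.

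Finally, I would apply Theorem \ref{thm1} with $g>1$, $m=n=0$ and the special gauge, which gives $f^{\bB,\cG}_{g,0,0}\in\QQ[X]_{3g-3}$; equivalently this is exactly Theorem \ref{BCOV0}. Hence $f_g^\BCOV$ is a polynomial in $X$ of degree at most $3g-3$, which is the content of the conjecture.

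The genuine difficulty lies not in this last reduction, which is essentially a change of notation, but in Theorem \ref{thm1} itself, where the polynomiality and the sharp degree bound $3g-3$ are established. I expect the main obstacle to be the passage through the $A$-model: the $\nmsp$ Feynman rule (Theorem \ref{thm2}) produces polynomiality on the larger state space $H_\bA$ via the $[0]$-theory degree estimates, and the equivalence (Theorem \ref{thm3}) transfers this back to $H_\bB$ up to the genus-one correction $\delta_{g,1}\delta_{m,0}(n-1)!$. Since the $B$-model carries no direct geometric polynomiality of its own, the crux is this $A$-to-$B$ comparison, and in particular the matching of the extra $A$-model propagators with the $(1,\tp_2)$-edges, which is handled by the modified Feynman rule of \S\ref{BmodelE}.
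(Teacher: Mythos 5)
Your reduction is exactly the paper's own: Conjecture \ref{bcov1} is obtained by specializing Theorem \ref{thm1} to $m=n=0$ with the gauge \eqref{BCOVgauge}, the identification of the three drawn edge types with the three summands of the edge bi-vector (and the matching of automorphism factors) being the same implicit bookkeeping the paper performs. Your assessment of where the real content lies --- Theorems \ref{thm2} and \ref{thm3} --- is also accurate.
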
}

 This original BCOV's rule can be generalized  to allow legs: 
\smallskip

{\sl \noindent {\bf   BCOV's   Feynman graph with legs:} We consider the set $G_{g,n}^{\BCOV}$ of genus $g$, $n$-leg stable graphs  with three types of edges (as above) and two types of legs:  solid half lines and dotted half lines. Besides what we do for edges and vertices as above,  furthermore

{\bf Leg}: at each leg, we place one of the following $2$-types of propogators
\begin{align}  \label{BCOVpropagators2}
\begin{aligned}
\qquad \quad & {    E_{\tp} :=1 ,\quad \text{and}\quad 
 -E_{\bp}^{c_{1a}}:=- B - c_{1a}}\end{aligned} \quad
 \end{align}
according to the types of the edge: $\varphi$ goes with solid half line and $\psi$ goes with half dotted line. Here $c_{1a}$
can be any polynomial of $X$ with degree no more than $1$.

\smallskip
We define $\Cont_\Gamma$ to be the product of the legs, edges and vertices placements, and define
$$ f_{g,n}^{\BCOV}:= \sum_{\Gamma \in G^\BCOV_{g,n}}  \frac{1}{|\Aut (\Gamma)|} \Cont_\Gamma. 
$$
\begin{conjecture}\label{bcov2}
For $2g-2+n>0$, $f_{g,n}^\BCOV$ is a degree $3g-3+n$ polynomial in $X$. 
\end{conjecture}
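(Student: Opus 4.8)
The plan is to deduce Conjecture~\ref{bcov2} directly from Theorem~\ref{thm1} by setting up a precise dictionary between BCOV's original graph conventions (solid/dotted edges and legs) and the graph sum $f^{\bB,\cG}_{g,m,n}$ defined in \S\ref{BCOVrule}. Since Theorem~\ref{thm1} has already been established via Theorems~\ref{thm2} and~\ref{thm3}, no new geometry is needed: the whole content of the argument is a bookkeeping identification of the two graph sums, after which polynomiality and the degree bound are inherited. The correct gauge to use is $\cG=(c_{1a},\tfrac{3}{5},-\tfrac{2}{25},-\tfrac{4}{125})$, with $c_{1a}\in\QQ[X]_1$ matching the free parameter in BCOV's leg propagator.

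First I would match the vertex and edge placements. The vertex rule is literally identical: a genus $g$ vertex carrying $m$ solid and $n$ dotted half-edges is assigned $P_{g,m,n}$ in both formulations. For the edges, BCOV's three line types (solid, half-solid-half-dotted, dotted) are assigned the special-gauge propagators $(T^{\tp\tp},T^{\tp},T)$ of \eqref{BCOVgauge}, and these are exactly the three coefficients $E^\cG_{\tp\tp}$, $E^\cG_{\tp\bp}$, $E^\cG_{\bp\bp}$ of the edge bi-vector in \S\ref{BCOVrule} at the gauge $(c_{1b},c_2,c_3)=(\tfrac{3}{5},-\tfrac{2}{25},-\tfrac{4}{125})$. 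Thus summing over BCOV's three edge types, with coefficients $(T^{\tp\tp},T^{\tp},T)$, reproduces the $H_\bB^{\otimes 2}$ edge bi-vector of Theorem~\ref{thm1}, the two orderings in the $E^\cG_{\tp\bp}$ term accounting for the two attaching orientations of a half-solid-half-dotted edge.

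The key step is the leg dictionary. In BCOV's rule each labelled leg is independently declared solid or dotted, carrying the scalar $E_{\tp}=1$ (routed into a $\tp$-slot of its vertex) or $-E_{\bp}^{c_{1a}}=-B-c_{1a}$ (routed into a $\bp$-slot), and one sums over these choices as part of the graph set $G^{\BCOV}_{g,n}$. Because $B=B_1$, one has $-E_{\bp}^{c_{1a}}=-(B_1+c_{1a})=-E^\cG_{\bp}$ by \eqref{bcovgens}. Hence summing a single leg over its two types, with these coefficients, is exactly the expansion of the one vector $\tp-E^\cG_{\bp}\,\bp$ fed into the multilinear vertex map, which is precisely the placement Theorem~\ref{thm1} assigns to a $\tp$-leg. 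Since the legs are labelled marked points, every automorphism fixes them and preserves their type, so the weights $1/|\Aut(\Gamma)|$ agree on both sides. Consequently each of the $n$ BCOV legs becomes a $\tp$-leg, yielding the identity $f^{\BCOV}_{g,n}=f^{\bB,\cG}_{g,n,0}$.

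Finally I would invoke Theorem~\ref{thm1} with $m=n$ and zero $\bp$-legs, giving $f^{\bB,\cG}_{g,n,0}\in\QQ[X]_{3g-3+n}$, which is exactly the assertion of Conjecture~\ref{bcov2} (and Conjecture~\ref{bcov1} is the special case $n=0$). I expect the main obstacle to be the careful verification of the leg dictionary: one must check that the free parameter $c_{1a}\in\QQ[X]_1$ surviving in BCOV's leg propagator is precisely the $c_{1a}$ entering $E^\cG_{\bp}$, while the edge coefficients $E^\cG_{\tp\tp}$, $E^\cG_{\tp\bp}$, $E^\cG_{\bp\bp}$ remain $c_{1a}$-independent by \eqref{bcovgens}, so that the identification is consistent across the entire one-parameter gauge family and the automorphism and leg-labelling conventions line up exactly.
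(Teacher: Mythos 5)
Your proposal is correct and follows the same route as the paper, which disposes of Conjecture~\ref{bcov2} in one line by identifying $f^{\BCOV}_{g,n}$ with $f^{\bB,\cG}_{g,n,0}$ (all $n$ legs of $\tp$-type, no $\bp$-legs) at the gauge $(c_{1b},c_2,c_3)=(\tfrac{3}{5},-\tfrac{2}{25},-\tfrac{4}{125})$ and invoking Theorem~\ref{thm1}; your spelled-out leg dictionary, summing each leg over its solid/dotted types to reassemble the vector $\tp-E^\cG_{\bp}\bp$, is exactly the intended identification. Note that your reading gives the correct degree bound $3g-3+n$, whereas the paper's phrase ``setting $m=0$'' in Appendix~B appears to be a slip for setting the number of $\bp$-legs to zero.
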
}

By setting $m=0$ and picking
the gauge $(c_{1b} , c_2, c_3) = (\frac{3}{5},-\frac{2}{25}, - \frac{4}{125})$ 
in Theorem \ref{thm1},  we recover the statement in Conjecture \ref{bcov2};  furthermore  by setting $n=0$ we recover the statement in Conjecture \ref{bcov1}.


\subsection{Example of BCOV's  original Feynman rule} \label{examples}
 We illustrate how   BCOV's Feynman rules  compute  genus $g$ GW potential from lowers genus GW potentials. 
\begin{example}[$g=1,n=1$] \label{g1n1} In this case, the BCOV's Feynman graphs are
$$
 \xy
   (11,0)*+{\displaystyle{{}_{E_\varphi}}};
  (15,0); (20,0), **@{-}; (19.8,-0.2)*+{\bullet};
  (21,-2)*+{\displaystyle{{}_{g=1}}};
\endxy \quad;    \qquad\qquad
 \xy
   (10,0)*+{\displaystyle{{}_{-E_\psi}}};
  (15,0); (20,0), **@{.}; (19.8,-0.2)*+{\bullet};
  (21,-2)*+{\displaystyle{{}_{g=1}}};
\endxy \quad;   \qquad  \qquad
 \xy
   (10,0)*+{\displaystyle{{}_{E_\varphi}}};
 (14,0); (19,0), **@{-}; (19.2,-0.2)*+{\bullet};
  (24,-2)*+{\displaystyle{{}_{g=0}}};
(23.6,0)*++++[o][F-]{}  ;
\endxy\quad .
$$
BCOV's rule gives us (note by definition \eqref{Pgmn},  
$P_{1,0,1} = \frac{\chi}{24}-1$.)
\beq
E_\varphi \cdot P_{1,1}+ (-E_\psi)\cdot P_{1,0,1} + \frac{1}{2} E_\varphi \cdot T^{\tp\tp}\cdot P_{0,3} = f_{1,1}^\BCOV(X) \  \in \  \mathbb Q[X]_1.
\eeq
By using the initial data $N_{1,0}, N_{1,1}$ (see Appendix \ref{lowdegreeGW}), and setting $c_{1a}=0$ we obtain
$$ \textstyle
f_{1,1}^\BCOV(X) = -\frac{1}{12}X-\frac{107}{60} = P_{1,1} - \frac{28}{3}\cdot (-B)+\frac{1}{2} \Big( A+ 2B+\frac{3}{5} \Big).
$$
Hence we solve $P_{1,1}$ that matches Zinger's formula \cite{Zi}  (also c.f. \cite{KLh}, \cite{CGLZ}) 
\beq \textstyle \label{P11formula}
P_{1,1} = -\frac{1}{2}A-\frac{31}{3}B-\frac{1}{12}X-\frac{25}{12}.
\eeq
\end{example}

\begin{example}[$g=2,n=0$]\label{g2n2}
In this case, the BCOV's Feynman rule becomes  {\small
\begin{align}
 &P_2+\frac{1}{2}\, T^{\tp\tp} P_{1,1}^2+\frac{1}{2}\, T^{\tp\tp} P_{1,2} +\frac{1}{2}\, (T^{\tp\tp})^2 P_{1,1}+\frac{1}{8}\, (T^{\tp\tp})^2 P_{0,4}+\frac{1}{8}\, (T^{\tp\tp})^3+\frac{1}{12}\, (T^{\tp\tp})^3  \quad
 \nonumber \\
&\qquad  \qquad+\frac{\chi  }{24} \,T^{\tp} P_{1,1} + \frac{1}{2}\,\frac{\chi}{24}\,  T^{\tp} T^{\tp\tp}+\frac{1}{2}\,\frac{ \chi }{24}\,  (\frac{\chi}{24} -1) T    \,= \, f^\BCOV_2(X) \ \in \  \mathbb Q[X]_3  , \label{FeynmanG2} 
\end{align} }
accroding to the BCOV's Feynman graphs  listed below: 
\vskip-20pt
{\footnotesize
\begin{align*}\\
  \xy
 (19.8,-0.2)*+{\bullet};
 (21,-2)*+{\displaystyle{{}_{g=2}}} \endxy
 &   \qquad F_2 \,,  \\[3mm]
  \xy
 (9.5,0); (20,0), **@{-}; (20,-0.2)*+{\bullet};
(11,-2)*+{\displaystyle{{}_{g=1}}}; (21,-2)*+{\displaystyle{{}_{g=1}}};
(10.2,-0.2)*+{\bullet};
\endxy&
  \qquad  \frac{1}{2}\, F_{1,1}^2\cdot T^{\tp\tp} \,,
  & \xy
  (21,-0.2)*+{\bullet};
(16,-2)*+{\displaystyle{{}_{g=1}}};
(24.6,0)*++++[o][F-]{}
\endxy&
  \qquad \frac{1}{2}\,F_{1,2}\cdot T^{\tp\tp} \,,
\\[3.5mm]
 \xy
 (9.5,0); (15,0), **@{-};  (15,0); (20,0), **@{.}; (19.8,-0.2)*+{\bullet};
(11,-2)*+{\displaystyle{{}_{g=1}}}; (21,-2)*+{\displaystyle{{}_{g=1}}};
(10.2,-0.2)*+{\bullet};
\endxy   & \qquad F_{1,1}\cdot T^{\tp} \cdot  \Big(\frac{\chi}{24}-1\Big)\,,
& \xy
 (9.5,0); (20,0), **@{.}; (19.8,-0.2)*+{\bullet};
(11,-2)*+{\displaystyle{{}_{g=1}}}; (21,-2)*+{\displaystyle{{}_{g=1}}};
(10.2,-0.2)*+{\bullet};
\endxy&
\qquad  \frac{1}{2}\, \Big(\frac{\chi}{24}-1\Big)^2  \cdot T\,,
\\[3.5mm]
\,\,\quad
 \xy
  (20.2,-0.2)*+{\bullet};
   (20,-0.2)  ;   (27.5,-0.2)  ,{\ellipse_{}},{\ellipse^{.}};
(16,-2)*+{\displaystyle{{}_{g=1}}}
\endxy &  \qquad F_{1,1}\cdot T^{\tp} \,,\qquad\qquad
& \xy
  (20.8,-0.2)*+{\bullet};
(16,-2)*+{\displaystyle{{}_{g=1}}};
(24.6,0)*++++[o][F.]{}
\endxy&
\qquad   \frac{1}{2}\,\Big(\frac{\chi}{24}-1\Big)  \cdot T\,,
\\[3.5mm]
 \xy
 (9.5,0); (20,0), **@{-}; (20,-0.2)*+{\bullet};
(11,-2)*+{\displaystyle{{}_{g=1}}}; (24,-2)*+{\displaystyle{{}_{g=0}}};
(23.6,0)*++++[o][F-]{}  ; (10.2,-0.2)*+{\bullet};
\endxy   & \qquad \frac{1}{2}\,F_{1,1}\cdot (T^{\tp\tp})^2\cdot F_{0,3}\,,
 \quad
& \xy
 (9.5,0); (14.5,0), **@{.};  (14.5,0); (19,0), **@{-};  (19.8,-0.2)*+{\bullet};
(11,-2)*+{\displaystyle{{}_{g=1}}}; (24,-2)*+{\displaystyle{{}_{g=0}}};
(23.5,0)*++++[o][F-]{}  ; (10.2,-0.2)*+{\bullet};
\endxy &  \qquad\frac{1}{2}\, \Big(\frac{\chi}{24}-1\Big)\cdot T^{\tp}\cdot F_{0,3}\cdot T^{\tp\tp}\,,
\\[3.5mm]
  \xy
  (19.5,-0.2)*+{\bullet};
(16,-2)*+{\displaystyle{{}_{g=0}}};
(23,0)*++++[o][F-]{}  ;(15.8,0)*++++[o][F-]{};
\endxy &\qquad
\frac{1}{8}\,F_{0,4}\cdot (T^{\tp\tp})^2\,,
& \xy
(16.5,-2)*+{\displaystyle{{}_{g=0}}};
 (20,-0.2)  ;   (27.5,-0.2)  ,{\ellipse_{}},{\ellipse^{.}};
  (20,-0.2)*+{\bullet};
  (16.3,0)*++++[o][F-]{};
\endxy &\qquad
\frac{1}{2}\,F_{0,3}\cdot \! T^{\tp\tp}\cdot T^{\tp}\,,
\\[3.5mm]
 \xy
 (9.5,0); (14,0), **@{-}; (14.2,-0.2)*+{\bullet};
(18,-2)*+{\displaystyle{{}_{g=0}}};
(7,-2)*+{\displaystyle{{}_{g=0}}};
(17.8,0)*++++[o][F-]{}  ;(6.4,0)*++++[o][F-]{};(10,-0.2)*+{\bullet};
\endxy &
\qquad  \frac{1}{8}\, F_{0,3}^2 \cdot (T^{\tp\tp})^3
& \xy
 (10,0); (18,0), **@{-}; (17.4,-0.2)*+{\bullet};
(20,-2)*+{\displaystyle{{}_{g=0}}};(8.2,-2)*+{\displaystyle{{}_{g=0}}};
(14,0)*++++[o][F-]{}  ; (10.6,-0.2)*+{\bullet};
\endxy& \qquad \frac{1}{12}\, F_{0,3}^2 \cdot (T^{\tp\tp})^3
\end{align*}} 
\vspace{0.1cm}
\begin{center} The list of stable $g=2$ decorated graphs, thirteen of them.
\end{center}
\medskip

By using the genus $1$ formula \eqref{P11formula}, the divisor equation $P_{1,2}= (D -A) P_{1,1}$,  together with the initial data $N_{2,0},N_{2,1},N_{2,2},N_{2,3}$\footnote{ These are originally conjectured by physicists by using some ``boundary" behavior of $F_g$. A mathematical computation of them is put in Appendix \ref{lowdegreeGW}.}, 
one  obtains  
$$ \textstyle
f_2^\BCOV(X) =-{\frac {1}{240}\,{X}^{3}}+{\frac {113}{
7200}\,{X}^{2}}+ {\frac {487}{300}\,X}-{\frac{11771}{7200}}.
$$ 
Hence one  solves  from \eqref{FeynmanG2}
 {\footnotesize
\begin{multline}\nonumber
-P_2 =-{\frac {350\,{B_3}}{9}}-
 \Big( {\frac {25\,A}{6}}+{\frac {425\,B}{9}}+{\frac{625}{36}}
 \Big) {B_2}
 +{\frac {5\,{A}^{3}}{24}}+{\frac {65\,{A}^{2}B}{12}}+{\frac {1045\,A{B}^{2}}{18}}+{\frac {865\,{B}^{3}}{9}}\\
+{\frac{25}{144}}+\Big(\frac{{A}^{2}}{6}+{\frac {49\,AB}{36}}+{\frac {167\,A}{720}}+{\frac {37\,{B
}^{2}}{18}}-{\frac {1811\,B}{120}}-{\frac {475\,{B_2}}{12}}-{\frac{
5759}{3600}} \Big) X
 \\
 +{\frac {25\,{A}^{
2}}{24}}+{\frac {775\,AB}{36}}+{\frac {350\,{B}^{2}}{9}}+{
\frac {625}{288}\,(A+2B)}+ \left( {\frac {13\,A}{288}}+{\frac {13\,B}{144}}+{\frac{41}{3600}}
 \right) {X}^{2}+{\frac {{X}^{3}}{240}}.
\end{multline}}
This is exactly the formula in Theorem~\ref{genus2formula}. Here we just rewrite the propagators in terms of Yamaguchi-Yau's generators via \eqref{BCOVgauge}.
\end{example}

\vspace{0.5cm}

\section{Remarks on the $R$-matrix actions on CohFTs}
\subsection{Unit axiom}
 We prove that the $R$-matrix action  preserves  the unit axiom if it is invertible, as stated in Theorem \ref{PPZThm}.  
\begin{lemma}\label{unit-lem} Let $\Omega$ be a CohFT with the triple $(V,\eta,\mathbf 1)$. We consider another triple $(V',\eta',\mathbf 1')$ with $\dim_F V'=\dim_F V$, and a symplectic transformation $R(z)\in \End (V,V')\otimes \aA[\![z]\!]$ acting on $\Omega$. We have
$$
R. \Omega_{0,3}(\mathbf 1', \alpha,\beta) = \eta'(\alpha,  \beta).
$$
\end{lemma}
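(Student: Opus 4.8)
The plan is to treat $(g,n)=(0,3)$ as the base case of the graph-sum formula, where everything collapses to a single term. First I would observe that $G_{0,3}$ contains only one stable graph, the single genus-$0$ trivalent vertex: any edge would split the three legs between two genus-$0$ vertices, each of which would need valence $\geq 3$, which is impossible with only three legs. Since $R.\Omega = R T_R\Omega$ and $\psi_l|_{\M_{0,3}}=0$, the leg operators $R^*(-\psi_l)$ reduce to $R_0^*=R^*(0)$, so that $R.\Omega_{0,3}(\mathbf 1',\alpha,\beta)=(T_R\Omega)_{0,3}(R_0^*\mathbf 1',R_0^*\alpha,R_0^*\beta)$.

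Next I would pin down the target value. Setting $z=0$ in the symplectic condition \eqref{symplectic} (equivalently in $\eta(v_1,v_2)=\eta'(R(z)v_1,R(-z)v_2)$) gives $\eta'(R_0v_1,R_0v_2)=\eta(v_1,v_2)$; together with $R_0^*R_0=\id$ and $\dim_FV=\dim_FV'$ this makes $R_0$ invertible with $R_0^{-1}=R_0^*$, whence $\eta'(\alpha,\beta)=\eta(R_0^*\alpha,R_0^*\beta)$. Writing $a=R_0^*\alpha$ and $b=R_0^*\beta$, it therefore suffices to prove $(T_R\Omega)_{0,3}(R_0^*\mathbf 1',a,b)=\eta(a,b)$.

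The heart of the argument uses the precise shape of the translation. Since $T_R(z)=z\mathbf 1-zR(-z)^*\mathbf 1'$, its linear coefficient is $t_1=\mathbf 1-R_0^*\mathbf 1'$, so $R_0^*\mathbf 1'=\mathbf 1-t_1$. I would expand $(T_R\Omega)_{0,3}$ by its defining tail sum \eqref{Tail-action} and split the first insertion as $\mathbf 1-t_1$. On the genuine-unit piece the genus-$0$ string equation removes the $\mathbf 1$-marked point: the $k=0$ term contributes $\Omega_{0,3}(\mathbf 1,a,b)=\eta(a,b)$ by the fundamental class axiom, while each $k\geq 1$ term lowers one tail $T_R(\psi)$ to $T_R(\psi)/\psi=t_1+t_2\psi+\cdots$. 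Reorganizing, the $\psi^0$-part $t_1$ reassembles into $(T_R\Omega)_{0,3}(t_1,a,b)$, which exactly cancels the $-t_1$ piece, leaving the remainder $\mathcal E=\sum_{m\geq 0}\frac{1}{m!}\int_{\M_{0,3+m}}\Omega_{0,3+m}\bigl(a,b,\,T_R(\psi)/\psi-t_1,\,T_R(\psi)^{m}\bigr)$.

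Finally I would kill $\mathcal E$ by a dimension count, which is the crux of the proof. The insertion $T_R(\psi)/\psi-t_1=t_2\psi+t_3\psi^2+\cdots$ has cohomological degree $\geq 1$, and each of the $m$ tails $T_R(\psi)=t_1\psi+\cdots$ likewise has degree $\geq 1$; hence the integrand has complex degree $\geq m+1$, exceeding $\dim_{\mathbb C}\M_{0,3+m}=m$, so every term of $\mathcal E$ vanishes. This yields $(T_R\Omega)_{0,3}(R_0^*\mathbf 1',a,b)=\eta(a,b)=\eta'(\alpha,\beta)$, as desired. The main obstacle is the string-equation bookkeeping: making sure no boundary corrections appear when the tails carry $\psi$-classes, and that the degree bound is genuinely strict. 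Both rest on $T_R(z)=O(z)$ (indeed $t_1\in q\,\aA\otimes V$ by the $q$-adic condition \eqref{qT}), the very dilaton-shift feature that also makes the tail sum converge.
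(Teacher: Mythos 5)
Your argument is correct, and it reaches the same conclusion as the paper's proof of Lemma \ref{unit-lem} by a genuinely different middle step. Both proofs share the same setup: the unique graph in $G_{0,3}$, the vanishing of $\psi$ on $\M_{0,3}$ reducing the leg operators to $R_0^*$, the identity $\eta'(\alpha,\beta)=\eta(R_0^*\alpha,R_0^*\beta)$ from the symplectic condition at $z=0$, and the observation that $R_0^*\mathbf 1'=\mathbf 1-t_1$ with $t_1$ the linear coefficient of $T_R$. The paper then uses a dimension count to force each tail to contribute exactly $t_1\psi$ and $\Omega$ to contribute only its topological part, converts the resulting tail insertions into quantum products via the splitting axiom, evaluates $\int_{\M_{0,3+k}}\psi_4\cdots\psi_{3+k}=k!$, and finishes with the telescoping series $\sum_{k\geq 0}(\mathbf 1-t_1)*t_1^{*k}=\mathbf 1$. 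You instead apply the genus-zero string equation to peel the unit off the tails one at a time, cancel the reassembled $(T_R\Omega)_{0,3}(t_1,a,b)$ against the $-t_1$ piece of the first insertion, and kill the remainder by the degree bound $m+1>\dim_{\CC}\M_{0,3+m}$. Your route avoids introducing the topological field theory and the quantum product entirely (the geometric series is absorbed into the string-equation telescoping), at the cost of having to check that the string equation produces exactly the lowered-exponent terms and nothing else --- which holds here because $a$ and $b$ carry no $\psi$-classes and the fundamental class axiom makes $\Omega_{0,3+k}(\mathbf 1,\dots)$ a pullback under the forgetful map. Both arguments rest on the same convergence hypothesis ($t_1\in q\,\aA\otimes V$) to justify rearranging the tail sum, and both use the invertibility of $R_0$ guaranteed by $\dim_F V=\dim_F V'$.
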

\begin{proof}
By definition of the $R$-matrix action, we have
\begin{align*}
R. \Omega_{0,3}(\mathbf 1', \alpha,\beta)  
= &\, \sum_{k\geq 0}  \frac{1}{k!}  (\pr_k)_* \Omega_{0,3+k}\Big(  R_0^{-1}\mathbf 1',   R_0^{-1}\alpha,   R_0^{-1}\beta,  (T_0 \bp )^k \Big), 
\end{align*}
where $T(z):=z\, \mathbf 1 -R^{-1}(z)\mathbf 1' = T_0 z+O(z^2)$, and hence the second equality holds simply for dimensional reason.

Let $\omega$  be the topological part of $\Omega$ (i.e. the part of degree zero classes, c.f. \cite{PPZ}).   By   the axiom of CohFT,   it  is uniquely determined by the quantum product.  In particular,   
$$
\omega_{0,n +2}(\tau_{\mathbf n}, \beta_1,\beta_2) =  {\textstyle  \sum_{\alpha} } \omega_{0,n+1}(\tau_{\mathbf n}, e_\alpha)  \cdot\omega_{0,3}(e^\alpha, \beta_1,\beta_2) =    \omega_{0,n+1}(\tau_{\mathbf n}, \beta_1*\beta_2), 
$$
where we have used the spliting axiom in the first equality and the definition of the quantum product in the second equality. Hence
\begin{align*}
R. \Omega_{0,3}(\mathbf 1', \alpha,\beta)  = &\,  { \textstyle \sum_{k\geq 0}  \frac{1}{k!} }  \omega_{0,3+k}\Big(\, R_0^{-1}\mathbf 1', \, R_0^{-1}\alpha, \, R_0^{-1}\beta, (\, T_0   )^k \Big)  (\pr_k)_*  (\psi_{4}\cdots \psi_{k+3}) \\
 = &\,   \omega_{0,3}\Big( { \textstyle \sum_k} \, R_0^{-1}\mathbf 1' * (\, T_0)^{*k}, R_0^{-1}\alpha, R_0^{-1}\beta \Big) \\
  =   &\,   \omega_{0,3}\Big(  \mathbf 1  ,   R_0^{-1}\alpha,  R_0^{-1}\beta \Big)   = \Big( R_0^{-1}\alpha,  R_0^{-1}\beta \Big), 
\end{align*}
where we have used $\int_{\M_{0,3+k}}{ \psi_1\cdots \psi_k} =k!$ and
$
{\textstyle \sum_{k\geq 0} } R_0^{-1}\mathbf 1' * (  T_0)^{*k} = (\mathbf 1 -   T_0)* {\textstyle \sum_{k\geq 0} }  ( T_0)^{*k} = \mathbf 1
$
in the third equality; and the fundamental class axiom in the last equality.
 Furthermore,  since $R_0$ is invertible, $R_0^{-1}$ is symplectic as well, hence we finish the proof.
\end{proof}

\subsection{Dilaton flow} \label{normalizedR}

 Let $\Omega$ be an arbitrary CohFT with triple $(V,\eta ,\mathbf 1)$ and the coefficient adic ring  $\aA=F[\![q]\!]$.  Let $R (z) \in  \End(V,V')\otimes \aA[\![z]\!]  $ be symplectic.
 
We consider an arbitrary nonzero ``scaling constant" $c \in 1+q\aA$,  and we let 
\begin{align}\label{tiRT} 
\tilde R^{-1}(z)=c^{-1} R^{-1}(z) \and \tilde {T}(z)=z( \mathbf 1-  {\tilde  R^{-1} (z)}  \mathbf 1' ). 
\end{align}

 For any $2g-2+n>0$, using $\pr_{1\ast} \psi_{n+1} = 2g-2+n$, we have 
 \begin{align}
 T_R \Omega_{g,n}(-)  & \textstyle =  \sum_{k\geq 0}\frac{1}{k!}  \pr_{k\ast} \Omega_{g,n+k}(-, T(\bp)^k) \nonumber \\
  & \textstyle   =  \sum_{\ell,m\geq 0}\frac{1}{\ell!m!}  \pr_{\ell+m\ast} \Omega_{g,n+\ell+m}(-,   [(1-c)\bp \mathbf 1]^\ell, [c \ti T(\bp)]^m)   \nonumber \\
&= \textstyle \sum_{m\geq 0 }\frac{1}{m!} c^{-(2g-2+n) }\pr_{m\ast} \Omega_{g,n+m}\Big(   (-),  \tilde T(\bp) ^m \Big). \label{TailD} \end{align}   
We see that, usually if \eqref{Tail-action} converges , then \eqref{TailD} converges as well.
For example, if \eqref{qT} holds , then $\tilde {T}(z)$ also lies in $z^2\aA[\![z]\!]\otimes V+q\, z\, \aA[\![z]\!]\otimes V$.
Then $c-1\in q\aA$ makes the infinite sum converges  in the  $q$-adic topology. 

\medskip

In the end,
we give an example that how the Dilaton flow relates the $R$-matrix actions with general $R_0$ to the one defined in \cite{PPZ} for the semi-simple cases.

\begin{example} \label{examplePPZ}
For a semi-simple CohFT $\Omega_{g,n}$, we can state Givental-Teleman's reconstruction theorem in a slightly different form: there exists an $R$-matrix such that
$$
\Omega  = R. (\Omega_{\pt}^{\oplus n})   ,\qquad  R=R_0+R_1z+\cdots \in \End F^{n} \otimes \aA[\![z]\!]
$$
where the state space of $\Omega$ is still $F^{n}$ as a linear space; the unit of $\Omega$ is also the same one:
$$ \textstyle
\mathbf 1:= \sum_{\alpha=1}^n  e_\alpha ,\qquad  e_\alpha  \text{ is the unit of each copy of } I_{\pt};
$$
and the pairing is different in general, which we will denote by $(\cdot , \cdot )^\tw$.

Indeed, since we require $R$ to be symplectic, the pairing of $\Omega$ is indeed determined by $R_0$ (note the pairing in $\Omega_{\pt}^{\oplus n}$ is the standard pairing of $F^n$) .
Let $c_\alpha :=   (e_{\alpha}, R_0^{-1}  \mathbf 1)$,  and 
$$\Psi :=\diag(\{c_\alpha^{-1}\}_{\alpha=1}^n),  
\qquad \bar e_\alpha : = \Psi  e_\alpha =  c_\alpha^{-1}  e_\alpha .$$ 
Then the inner product of $\Omega$ is given by
 $$( e_\alpha, e_\beta)^{\tw}:= \delta_{\alpha\beta} c_\alpha^{2}\qquad \text{  or } \qquad
 ( \bar e_\alpha, \bar e_\beta)^{\tw}:= \delta_{\alpha\beta}.
  $$
We define the normalized $R$-matrix  via  $$\tilde R(z)=  R(z) \Psi^{-1} =   \id +O(z),$$
 which is indeed the $R$-matrix defined in \cite{PPZ}. By using Dilaton flow, one checks 
 $$
 \Omega \  =  \  R.  \big( \Psi. (I_{\pt}^{\oplus n})\big)  \  = \  R. \omega, 
 $$
where the $\Psi$-matrix transforms the trivial CohFT $I_{\pt}^{\oplus n}$ with standard pairing, to the topological part $\omega$ of $\Omega$ with the twisted pairing $(,)^{\tw}$  
$$
\omega_{g,n}(e_{\alpha_1},\cdots, e_{\alpha_n}) =  \delta_{\alpha_1,\cdots,\alpha_n}  c_\alpha^{ -(2g-2)}.
$$
Here $\delta_{\alpha_1,\cdots,\alpha_n}=1$ if $\alpha_1=\cdots=\alpha_n$, otherwise it is zero.
\end{example}

\vspace{0.5cm}

\section{Explicit formulae for $R$-matrices} \label{explicitformulaR}
First we give the explicit formulae for the leading terms of $R^{[0]}$. We hope it will make the arguments in the \S3 and \S4 more clear, though we do not really use it  in our proof.
 \begin{lemma}\label{explicitR0}
We have the following explicit formula (with understanding that $R^{[0]}(z)$ is identity operator on odd classes \footnote{  in this paper all operators from $\sH_Q$ to $\sH$ or conversely $\sH$ to $\sH_Q$ are assumed to be identity on odd classes. Thus we only describe their action on even classes.  })
 \begin{align}  \label{explicitR0f}
   R^{[0]}(z)^* =
   &  \tiny\left( \arraycolsep=1.4pt\def\arraystretch{1.2} \begin{array} {*{15}{@{}C{\mycolwda}}}
 I_0& & \\&  I_0 I_{11}\\&&  I_0 I_{11} I_{22}&  \\&&&  I_0 I_{11}^2 I_{22}&
  \end{array} \  \  \right)
\! \cdot  \!  \left[  \left(  \arraycolsep=1.4pt\def\arraystretch{1.2}\begin{array} {*{4}{@{}C{\mycolwda}}*{5}{@{}C{\mycolwda}}cc}
1& & &&&   &-120q \\
 &  1& &&& &&  -890q \\
 &&  1& & &&& &  -2235q & \\
 &&&  1&  &\cdots    &  & &&  -3005q  &
  \end{array}\right)\right.  \nonumber\\
 & +z
 {\tiny \left(\!\!\!\! \arraycolsep=1.4pt\def\arraystretch{1.3}\begin{array} {*{10}{@{}C{\mycolwd }}cccc}
 0&B& &&&&& -q(890B\atop+120) \\&0& A+2B&&&&&& -2235q(A+2B)\atop-1010q\\
 && 0& B-X &&&&& & -3005q(B-X)\atop -120q \\&&&0&-2X&\qquad \cdots & \cdots&&&& &&&
  \end{array}\,\,\right) } \nonumber\\
 & +z^2
 {\tiny \left(\!\!\!\!\arraycolsep=1.4pt\def\arraystretch{1.4} \begin{array} {*{10}{@{}C{\mycolwd}}*{4}{@{}C{\mycolwda}}}
 0&0 & B_2&&&&&&-5q \,(447 B_2+\atop 202 B+24)  \\&0&0& (A+2B)(B-X)\atop-B_2-\frac{2}{5}X&&&&&&3005q\,\big(B_2+\frac{2}{5}X-\frac{226}{601} \atop -(A+2 B) (B+\frac{649}{601}-X)\big)&&&\\&& 0&0 &-(2B-2X\atop+\frac{7}{5})X\\&&&0 & 0&  {\tiny \text{$\!\!\!-6X+\frac{17}{5}$}} &\qquad \cdots&\cdots
  \end{array}\!\!\right)}\qquad\qquad \nonumber\\
&  +z^3  {\tiny \left.
\left(\!\!\!\! \arraycolsep=1.4pt\def\arraystretch{1.4} \begin{array} {*{9}{@{}C{\mycolwd}}*{3}{@{}C{\mycolwdb}}}
 0& 0 &0 & B_3&&&&&&&-5 q \, ( 601 B_3+649 B_2\atop +226 B+24)&  \\&0& 0 &0 &  2(A+2 B) (B-X+\frac{7}{10})\atop-2B_2-\frac{4}{5}X+\frac{2}{5}  \\&& 0& 0 &0 & -6(B-X+\frac{7}{10}) X \atop+\frac{17}{5} (B- X)+\frac{9}{5} \\&&&0 &0 &0 & \frac{2}{5}(60X^2\atop-66X+13)  &\qquad\cdots&\cdots
  \end{array}\!\!\!\right)\!\right]}\quad \nonumber\\
&  \quad +  O(z^{4}),
\end{align}
  under the basis $\{\phi_j\}_{j=0}^{\n+3}$ and $\{H^i\}_{i=0}^3$,
where $\cdots$ are all zeros.
 \end{lemma}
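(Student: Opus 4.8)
The plan is to compute $R^{[0]}(z)^*$ coefficient by coefficient in $z$, using the QDE \eqref{QDEforR0} as a recursion and fixing the integration constants by the known leading terms. Writing $R^{[0]}(z)^* = \sum_{k\ge 0} R_k^* z^k$ with each $R_k^*$ a $q$-dependent operator $\sH \to \sH_Q$, and extracting the coefficient of $z^{k+1}$ from
\[
zD\,R^{[0]}(z)^* = R^{[0]}(z)^*\cdot A^{M} - A^Q\cdot R^{[0]}(z)^*,
\]
one obtains the recursion $D R_k^* = R_{k+1}^*A^{M} - A^Q R_{k+1}^*$. Here $A^Q$ and $A^{M}$ are the explicit, $z$-independent QDE matrices recorded in the footnote to \eqref{QDEforR0}; because $A^{M}$ carries the column shift $(A^M)^i_{j}=1$ for $i=j+1$ together with the wrap-around entries $c_{j+1}q-\delta_{i,4}t^{\n}$, right multiplication by $A^{M}$ lets one read off $R_{k+1}^*$ from $R_k^*$ entrywise, the residual freedom being eliminated by the normalization below.

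First I would record $R_0^*$. By \eqref{R0z} and \eqref{degcontofR0}, the degree-$0$ (in $z$) part is the leading, mod-$\n$ degree-preserving operator; under the identification $\varphi_i = I_0 I_{11}\cdots I_{ii}H^i$ together with the special-geometry identities $I_{33}=I_{11}$ and $5I_0^2I_{11}^2I_{22}=5Y$, this is exactly the diagonal normalization $\diag(I_0,\,I_0I_{11},\,I_0I_{11}I_{22},\,I_0I_{11}^2I_{22})$ on the $\{H^i\}_{i=0}^3$ block, while the columns $\phi_{\n},\dots,\phi_{\n+3}$ carry the constants $(c'_{\n},\dots,c'_{\n+3})=(-120,-890,-2235,-3005)$ of \eqref{degcontofR0} times $q$. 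Moreover, Lemma \ref{Rpreservedeg} forces the global shape of every $R_k^*$: the $z^k$ coefficient can connect $\phi_j$ only to $\varphi_i$ with $i\equiv j-k \pmod{\n}$, which produces the banded, upper-triangular pattern displayed in \eqref{explicitR0f} and confines the $q$-terms to the far right-hand columns (with $\lfloor j/\n\rfloor =1$ forcing the extra factor $q$).

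Next I would run the recursion for $k=1,2,3$. At each step the left-hand side $D R_k^*$ is obtained by differentiating the previously computed entries, for which the only inputs are $D\log I_0 = B$, $D\log I_{11}=A$, and the Yamaguchi--Yau relations \eqref{YYrelation}; the latter let one rewrite $A_2$, $B_2$, $B_3$, $B_4$ and all higher derivatives inside the ring $\sR=\QQ[A,B,B_2,B_3,X]$, so that every entry lands in $\sR$. Solving the resulting commutator equation against $A^Q$ and $A^{M}$ then yields the stated values, e.g.\ $B$, $A+2B$, $B-X$, $-2X$ in the $z$-coefficient, and $B_2$, $(A+2B)(B-X)-B_2-\tfrac25 X$, $-(2B-2X+\tfrac75)X$, $-6X+\tfrac{17}{5}$ in the $z^2$-coefficient; the $q$-linear entries in the last four columns are produced identically, their coefficients being fixed by the same recursion applied across the wrap-around part of $A^{M}$.

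The computation is a finite recursion and presents no conceptual difficulty; the main obstacle is purely the bookkeeping. One must consistently reduce every differentiation back into the five generators via \eqref{YYrelation}, and must track the Sylvester-type operator $T\mapsto TA^{M}-A^Q T$ over the full $(\n+4)$-dimensional index range while using the mod-$\n$ grading to discard all bands but the relevant ones. Since the paper does not logically rely on this lemma, it suffices to verify the expansion up to $O(z^4)$ by this recursion, a step that is conveniently confirmed by a short symbolic computation.
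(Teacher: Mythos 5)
Your proposal is correct and follows essentially the same route as the paper: the paper's proof is exactly the observation that the QDE \eqref{QDEforR0}, together with the initial data $R^{[0]}(z)^*\mathbf 1$ from \eqref{R0z} (and the mod-$\n$ degree constraint of Lemma \ref{Rpreservedeg} to pin down the banded shape and the $q$-columns), determines $R^{[0]}(z)^*$ recursively, after which the displayed coefficients are verified by direct computation. Your order-by-order-in-$z$ organization of the recursion is just a reindexing of the paper's column-by-column one, so there is nothing genuinely different to compare.
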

 \begin{proof}
By using the QDE \eqref{QDEforR0} of $R^{[0]}(z)$ and the initial data $R^{[0]}(z)^*\mathbf 1$ in
 \eqref{R0z}, $R^{[0]}(z)$ can be computed recursively. A direct computation shows this lemma.
 \end{proof}
 
 Next, we give the explicit formulae for the leading terms of $R^X(z)$, as defined in \eqref{0-X-A}.
 \begin{lemma}
 We have the following explicit form of $R^X(z)\sta: \sH\to \sH_Q[\![z]\!]\otimes A$ in terms of basis
$\{\phi_j\}$ for $\sH$ and $\{ \varphi_i\}_{i=0}^3$ respectively:
 \begin{align} \label{RXz}
\ R^X(z)\sta   =&
   \left( {\tiny\arraycolsep=1.4pt\def\arraystretch{1.2}\begin{array} {*{15}{@{}C{\mycolwda}}}
1& & &&& &  &-120q \\
 &  1& &&& &&&  -890q \\
 &&  1& & &&& &&  -2235q & \\
 &&&  1&  &\cdots &\cdots  &  & &&  -3005q  &
  \end{array} }\right) \\
 &+z\cdot
 \left(\!\!\!\!\!\!\!  {\tiny \arraycolsep=1.4pt\def\arraystretch{1.2}\begin{array} {*{8}{@{}C{\mycolwd
}}*{18}{@{}C{\mycolwdd
}}}
 0&0& &&& \cdots&-120q\\&0&0 &&& \cdots& & -1010q\\
 && 0& - X && \cdots&& & 5q(601X-649)  \\&&&0&-2X&\cdots &&&
  \end{array}\ }\right)  \nonumber\\
 &+z^2\!\! \cdot
 \left(\!\!\!\!\!\! {\tiny\arraycolsep=1.4pt\def\arraystretch{1.3} \begin{array} {*{7}{@{}C{\mycolwd}}*{1}{@{}C{\mycolwda}}cccc}
 0&0 & 0 &&&&\qquad\cdots&& { -120q }
  \\&0&0& -\frac{2}{5}X  &&&\qquad\cdots&&&{ \!\!\!-2q(601X-565)}\\
  && 0&0 & (10X-7) \atop \cdot  X/5  &&\qquad\cdots \\&&&0 & 0&   (30X-17) \atop \cdot  X/5&\qquad\cdots & &
  \end{array} } \right)\qquad\qquad \nonumber \\
& +z^3\!\! \cdot
\left(\!\!\!\!\!\! {\tiny \arraycolsep=1.4pt\def\arraystretch{1.3} \begin{array} {*{12}{@{}C{\mycolwd}}}
 0& 0 &0 & 0&&&&\,\,\quad\cdots& \qquad -120q
  \\&0& 0 &0 &{(2X-1)\atop\cdot 2X/5}&&&\,\,\quad\cdots
  \\&& 0& 0 &0 &  {-(30X^2-38Y\atop +9) \cdot X/5} &&\,\,\quad\cdots\\&&&0 &0 &0 & \!\!\!\!\!\!\!\!  -2 X (60X^2-\atop 66X+13)  /5  &\,\,\quad\cdots
  \end{array} \quad \ \  } \right)
 +O(z^4) , \nonumber
\end{align}
where $\cdots$ are all zeros.  
\end{lemma}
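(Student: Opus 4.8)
The plan is to compute $R^X(z)^*$ by exactly the recursive scheme used for its companion $R^{[0]}(z)^*$ in Lemma \ref{explicitR0}, now driven by the QDE for $R^X$ established in Lemma \ref{RXprop}. Recall from \eqref{QDEforRX} that
$$
R^X(z)^*\cdot A\msp = zD\bigl(R^X(z)^*\bigr) + R^{\bA,\vec 0}(-z)\bigl[(zD+A^Q)R^{\bA,\vec 0}(z)^*\bigr]\cdot R^X(z)^*,
$$
where the bracketed coefficient is the \emph{explicit, $z$-polynomial} matrix already computed in \eqref{EQURA}. The initial datum is $R^X(z)^*\phi_0 = 1 + O(z^{\n-3})$, which follows from the factorization \eqref{0-X-A} together with the formula \eqref{R0z} for the leading term of $R^{[0]}$. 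First I would read the QDE as a rule propagating columns: since $A\msp$ raises the index $\phi_j\mapsto \phi_{j+1}$ (up to the lower correction $c_{j+1}q-\delta_{i,4}t^\n$), the left-hand side expresses $R^X(z)^*\phi_{j+1}$ in terms of $R^X(z)^*\phi_j$, the $zD$ term, and the small correction \eqref{EQURA} applied to the already-known columns. Starting from $\phi_0$, this determines $R^X(z)^*\phi_j$ for all $j$ recursively.

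Second, I would pin down the \emph{shape} of the answer before touching any coefficient, using statements 1--3 of Lemma \ref{RXprop}: (i) $R^X$ preserves the mod-$\n$ degree, so the coefficient of $z^k$ in $(\phi_j, R^X(z)^*\varphi^a)$ vanishes unless $j\equiv a+k \mod \n$, which fixes the diagonal-band placement of the nonzero entries seen in \eqref{RXz}; (ii) for $j<\n$ the entry lies in $X\,\QQ[X]_{k-1}$, forcing the overall factor of $X$ visible in every non-unit entry of the small $4\times4$ blocks; and (iii) for $j\ge\n$ the entry lies in $q\,\QQ[X]_k$, which is exactly why the only surviving far-right columns (indices $\n,\dots,\n+3$) carry the explicit $q$-coefficients. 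The top-$X$-degree coefficients of those $q$-columns are further constrained by the relation \eqref{keypropR} tying $(R_k)_a$ to $(R_k)_{a-\n}$, which I would transcribe to $R^X$ as in the proof of Lemma \ref{key01}.

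Finally, with the shape fixed, I would run the recursion through order $z^3$ and read off the rational coefficients ($-\tfrac{24X}{625}$, $\tfrac{2X}{5}$, and the quadratic and cubic $X$-polynomials), checking at each order that the propagated $X$-degree respects the bounds in (ii)--(iii). I expect no conceptual obstacle: as in Lemma \ref{explicitR0} this is a finite, purely mechanical solve. The only real difficulty is bookkeeping --- managing the $(\n+4)\times4$ matrices while tracking that only the first four rows/columns and the last four ($q$-bearing) columns are nontrivial, and confirming that the closed-form entries satisfy the QDE identically in $q$ and $z$; this is best verified, as the authors note for the analogous statement, by a direct (and if desired machine-assisted) computation.
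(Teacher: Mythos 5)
Your proposal is correct and follows essentially the same route as the paper: the authors' proof consists precisely of running the recursion from $R^X(z)^*\phi_0 = 1+O(z^{\n-3})$ via the QDE \eqref{QDEforRX} (with the explicit coefficient matrix \eqref{EQURA}), exactly the algorithm you describe, and your use of Lemma \ref{RXprop} to fix the band/vanishing pattern before computing coefficients is the same structural input the paper relies on. The remaining work is, as you say, a finite mechanical computation.
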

\begin{proof}
The $R^X$-matrix can be computed by using the algorithm introduced in the proof of Lemma \ref{RXprop}, which starts from $R^X(z)^* \phi_0 = 1+O(z^{\n-3})$ and computes $R^X(z)^* \phi_j (j>0)$ recursively by using the equation \eqref{QDEforRX}.
\end{proof}

\begin{corollary}\label{VXformula}
	Recall $
	V_X(z,w) :=   \frac{\sum_{i=0}^3\tp_i\otimes \tp^i-\sum_{j=0}^{\n+3} R^X\!(-z)^* \phi_j \otimes R^X\!(-w)^* \phi^j}{z +w} $,
	we have
		\begin{align} \label{VXz}
		& Y\cdot  V_X(z,w)=
		\\
		&{\footnotesize\left(\arraycolsep=1.2pt\def\arraystretch{1.1}
		\begin{array} {*{3}{@{}C{\mycolwddd}}c} -{\frac { \left( 24\,{w}^{2}-24\,zw+24\,{
					z}^{2} \right) X}{625}}&-{\frac { \left( 24\,w-24\,z \right) X}{625}}&
		\!\!\!\!\!\!\!\!\!\!\!\!\!-{\frac {24\,X}{625}} &0\\ \noalign{\medskip}{\frac { \left( 24\,w-24\,
				z \right) X}{625}}&-{\frac {202\,X}{625}}&\!\!\!\!\!\!\!\!\!\!\!\!\!0&0\\ \noalign{\medskip}-{
			\frac {24\,X}{625}}&0&\!\!\!\!\!\!\!\!\!\!\!\!\!0&0\\ \noalign{\medskip}0&0&\!\!\!\!\!\!\!\!\!\!\!\!\!0&0
		\end {array}
		\ \right)}+\sum_iO(z^{i})O(w^{\n-i}). \nonumber
		\end{align}
\end{corollary}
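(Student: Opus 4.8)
The plan is to evaluate the edge bivector $V_X(z,w)$ directly from the explicit expansion \eqref{RXz} of $R^X(z)^*$, organizing the computation by cohomology degree so that the ``wrap-around'' contributions are automatically confined to the error term $\sum_i O(z^i)O(w^{\n-i})$.

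First I would record two structural inputs. Since $\Omega^{[0]} = R^X.\Omega^{\bA,\vec 0}$ is a genuine $R$-matrix action, $R^X$ is symplectic, so the numerator
$$\sum_{i=0}^3 \tp_i \otimes \tp^i - \sum_{j=0}^{\n+3} R^X(-z)^* \phi_j \otimes R^X(-w)^* \phi^j$$
vanishes along $z+w=0$, and $V_X(z,w)$ is therefore an honest power series in $z,w$. Next, by Lemma \ref{RXprop} the operator $R^X(z)^*$ preserves the mod-$\n$ degree (with $\deg z = 1$ and $\deg\varphi_i = i = \deg\phi_i$); consequently the edge bivector $V_X(z,w)$ carries cohomology degree two. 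This is the key bookkeeping device: the ``unwrapped'' part of $V_X$, built from the moderate powers of $p$, has its $z,w$-degree bounded by the cohomology constraint and is therefore pinned down by the low-order part of \eqref{RXz}, whereas any contribution that restores degree two after raising the available degree must wrap by exactly $\n$ — and only the dual classes $\phi^0,\dots,\phi^3$ (which carry the factor $p^\n$) or the $q$-entries of \eqref{RXz} (Lemma \ref{RXprop}(3)) can supply such a wrap. Those wrapped contributions are precisely of the shape $z^i w^{\n-i}$, i.e. the asserted error.

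With these reductions in hand the rest is computation. For the main term I would expand $R^X(-z)^*\phi_j$ and $R^X(-w)^*\phi^j$ from \eqref{RXz} through order $z^3$ (which is enough, since after clearing $z+w$ the surviving numerator has total degree $\le 3$), substitute the explicit dual basis $\{\phi^j\}$, and use $\tp^i = (5Y/\ft^\n)^{-1}\tp_{3-i}$ to pull out the overall factor $Y$. Dividing the resulting numerator by $z+w$ — legitimate by the regularity established above — would then produce the displayed $4\times4$ matrix, its $X$-dependence coming from the propagator entries of \eqref{RXz} together with the degree bounds $(\phi_j, R_k^X\varphi^a)\in X\,\mathbb Q[X]_{k-1}$ of Lemma \ref{RXprop}(2). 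The hard part will be the second step: I must argue cleanly that no monomial of total $(z,w)$-degree strictly between $2$ and $\n$ survives, so that the explicit low-order truncation of \eqref{RXz} already captures everything outside the error term. This is where the cohomology-degree-two property of $V_X$, combined with the fact that wrapping the degree costs exactly $\n$, does the real work; what remains is then a finite and routine matrix calculation.
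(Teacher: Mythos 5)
Your proposal follows essentially the same route as the paper's own (very terse) proof: the paper deduces from Lemma \ref{RXprop} that the coefficient of $z^iw^j$ in $V_X$ with $i+j<\n$ can be nonzero only when $i+j\leq 3$, and then reads the matrix off the explicit expansion \eqref{RXz} — exactly the degree bookkeeping plus finite computation you describe. The step you flag as ``the hard part'' is in fact immediate from the mod-$\n$ constraint of Lemma \ref{RXprop}(1): a numerator term $\varphi_a z^k\otimes \varphi_b w^l$ requires $a+b+k+l\equiv 3 \pmod{\n}$, and since $a,b\leq 3$ this forces either $k+l\leq 3$ or $k+l\geq \n-3$, which is precisely the dichotomy confining all remaining contributions (including wraps coming from the pairing of a moderate column $\phi_j$ with its dual $\phi_{\n+3-j}$, not only from the $p^\n$-factor in $\phi^0,\dots,\phi^3$ or the $q$-entries) to the stated error term.
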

\begin{proof}
	By Lemma \ref{RXprop},  the coefficients of $z^iw^j$ for $i+j<\n$ is non-zero only when $i+j\leq 3$. Then the matrix can be computed directly by \eqref{RXz}.
\end{proof}

\vspace{1cm}

\section{List of symbols}
{
\begin{table}[H] \centering 
\begin{center}
 \noindent\begin{tabular}{cp{0.8\textwidth}}
 $\n$ & a prime that will be taken large \\
 $t_\alpha$  & $t_{\alpha}=- \zeta_{\n}^{\alpha} t$ for $\alpha \in [\n]$, where $\zeta_{\n}$ is the primitive $\n$-th root of unity \\
  $p$ & the equivariant  hyperplane class $c_1(\sO_{\PP^{4+\n}}(1))$    
 \\
 $H$ & hyperplane class of  the quintic $3$-fold $Q$ \\
  $F$ & the base field $F=\QQ(\ft)$ for all CohFTs\\
  $\aA$ & coefficient ring  $\aA=\QQ(\ft)[\![q]\!]$ of all cohomologies and CohFTs\\
  $\sH$ & the extended $\nmsp$ 
  state space with twisted inner product $(\, , \,)^\tw$\\
  $\{\phi_i\}$ & the basis $\{\phi_i:=p^i\}_{i=0}^4$ of $\sH$ with dual basis $\{\phi^{i,\tw}\}_{i=0}^4$.  \\
  $\{\varphi_i\}$ & the normalized basis $\{\varphi_i:=I_0\cdots I_{ii}H^i\}$ of $\sH_Q$ with dual basis $\{\varphi^i\}_{i=0}^3$
  \\
  $\Omega^X$ & the CohFT defined by the Gromov-Witten class of $X$\\
   $R(z)$ & the $R$-matrix action from $\Omega^{\aleph}$ (Defn.\,\ref{Omegaaleph}) to $\nmsp$-$[0,1]$-theory $\Omega^{[0,1]}$\\
    $R^{[0]}, R^{[1]}$ & the restriction of the $R$-matrix to $\sH_Q$ or $\sH_\npt$\\
   $\Omega^{[0]}, \Omega^{[1]}$ & the theory defined via $R^{[0]}/R^{[1]}$-action on quintic CohFT $\Omega^Q$\\
 $f^{[0]} , f^{[1]}$ &  the generating function for the $[0]/[1]$-theory \\
 $A_k,\!B_k$ & generators defined via quintic $I$-function, in particular $A:=A_1$, $B:=B_1$\\
  $H_\bA,H_\bB$& the state space for the $\bA/\bB$-model quintic  theory \\
    $R^{\bA}, R^{\bB}$ & the symplectic transformation from quintic to the $\bA/\bB$ master theory
    \\
  $P^\bA, P^\bB$  & the generating function for the $\bA/\bB$-model quintic  theory\\
   $f^\bA, f^\bB$ &  the generating function for the $\bA/\bB$-model master theory\\
\end{tabular}
\end{center}
\end{table}
}

\end{appendix}

\vspace{1cm}

\end{document}